\documentclass[11pt]{article}
  \usepackage{a4wide}
  \usepackage{amsmath}
  \usepackage{amssymb}
  \usepackage{latexsym}
  \usepackage[pdftex]{graphicx}
  \usepackage{subcaption}
  \usepackage{color}
  \usepackage[mathscr]{euscript}
  \usepackage{aliascnt}

  \newenvironment{proof}{\vspace{1ex}\noindent{\bf Proof.}}{\hspace*{\fill}$\blacksquare$\vspace{1ex}}
  \newenvironment{proofof}[1]{\vspace{1ex}\noindent{\bf Proof of #1.}}{\hspace*{\fill}$\blacksquare$\vspace{1ex}}

\newtheorem{theorem}{Theorem} 

\newaliascnt{lemma}{theorem}
\newtheorem{lemma}[lemma]{Lemma}
\aliascntresetthe{lemma}

\newaliascnt{corollary}{theorem}
\newtheorem{corollary}[corollary]{Corollary}
\aliascntresetthe{corollary}

\newaliascnt{claim}{theorem}

\aliascntresetthe{claim}

\newaliascnt{proposition}{theorem}
\newtheorem{proposition}[proposition]{Proposition}
\aliascntresetthe{proposition}

\newaliascnt{definition}{theorem}

\aliascntresetthe{definition}

\newaliascnt{remark}{theorem}

\aliascntresetthe{remark}

\newaliascnt{conjecture}{theorem}
\newtheorem{conjecture}[conjecture]{Conjecture}
\aliascntresetthe{conjecture}

\newaliascnt{problem}{theorem}

\aliascntresetthe{problem}

\newcommand{\Bcal}[0]{\ensuremath{{\mathcal B}}}

\newcommand{\Pcal}[0]{\ensuremath{{\mathcal P}}}
\newcommand{\Qcal}[0]{\ensuremath{{\mathcal Q}}}

\newcommand{\Vcal}[0]{\ensuremath{{\mathcal V}}}

\newcommand{\Xcal}[0]{\ensuremath{{\mathcal X}}}
\newcommand{\Ycal}[0]{\ensuremath{{\mathcal Y}}}
\newcommand{\Zcal}[0]{\ensuremath{{\mathcal Z}}}
\newcommand{\eR}[0]{\ensuremath{ \mathbb R}}

\newcommand{\eN}[0]{\ensuremath{ \mathbb N}}
\newcommand{\Zed}[0]{\ensuremath{ \mathbb Z}}

\newcommand{\norm}[1]{\ensuremath{\left\|#1\right\|}}

\newcommand{\Pee}[0]{\ensuremath{{\mathbb P}}}

\newcommand{\isd}[0]{\hspace{.2ex} \raisebox{-.1ex}{$=$} \hspace{-1.5ex} 
\raisebox{1ex}{{$\scriptstyle d$}} \hspace{.8ex} }

 \newcommand{\eps}{\varepsilon}

\newcommand{\orig}{\underline{0}}

\DeclareMathOperator{\dist}{dist}

\DeclareMathOperator{\vol}{vol}

\DeclareMathOperator{\Po}{Po}

\DeclareMathOperator{\Be}{Be}

\DeclareMathOperator{\dd}{d}

\DeclareMathOperator{\kap}{cap}
\definecolor{orange}{RGB}{255,127,0}
\definecolor{pink}{RGB}{255,150,150}

\usepackage{cleveref}
\usepackage{tikz}
\usepackage{mathtools}
\usepackage{bbm}
\usepackage{enumitem}
\usetikzlibrary{arrows.meta,calc,intersections,decorations.pathreplacing,backgrounds,shadings}   
\crefname{theorem}{Theorem}{Theorems}
\crefname{lemma}{Lemma}{Lemmas}
\crefname{corollary}{Corollary}{Corollaries} 	
\crefname{proposition}{Proposition}{Propositions}
\crefname{definition}{Definition}{Definitions}
\crefname{example}{Example}{Examples}
\crefname{remark}{Remark}{Remarks}
\crefname{equation}{Equation}{Equations}
\crefname{figure}{Figure}{Figures}
\crefname{section}{Section}{Sections}

\newcommand{\RR}{\mathbb{R}}     
\newcommand{\ZZ}{\mathbb{Z}}     
\newcommand{\NN}{\mathbb{N}}

\newcommand{\Pro}{\mathbb{P}}
\newcommand{\Ex}{\mathbb{E}}
\newcommand{\indicator}[1]{\mathbbm{1}_{\{#1\}}}     
\newcommand{\dists}{{\mathrm{dist}}}        
\newcommand{\typ}{\underline{0}}
\DeclarePairedDelimiter{\ceil}{\lceil}{\rceil}   

\newcommand{\nocon}{%
	\mathrel{\tikz[baseline=(a.base)]{
			\node[inner sep=0pt] (a) {$\longleftrightarrow$};
			\draw[line width=0.4pt]
			([xshift=-2.2pt,yshift=-4.5pt]a.center)
			--
			([xshift=1.4pt,yshift=5.5pt]a.center);
	}}%
}
\makeatletter
\newcommand*{\rom}[1]{\romannumeral #1\relax}
\makeatother

\definecolor{lightpurple}{rgb}{0.70,0.55,0.90}

\begin{document}

\title{Thresholds for colouring the random Borsuk graph}

\author{%
\'Alvaro Acitores Montero\thanks{Universitat Polit\`ecnica de Catalunya, Barcelona, Spain. E-mail: 
{\tt alvaroacitores@gmail.com}. 
Part of the work in this paper was done while this author was a visiting student at Groningen University. 
We acknowledge the CFIS Mobility Program for the funding of this visit, particularly 
Fundaci\'o Privada Mir-Puig, CFIS partners (G-Research, Qualcomm and Semidynamics), and donors of the CFIS 
crowdfunding program.} 
\and
Matthias Irlbeck\thanks{Bernoulli Institute, Groningen University, The Netherlands. E-mail: {\tt m.irlbeck@rug.nl}.} 
\and
Tobias M\"uller\thanks{Bernoulli Institute, Groningen University, The Netherlands. E-mail: {\tt tobias.muller@rug.nl}.}
\and 
Mat\v ej Stehl\'ik\thanks{Universit\'e de Paris, CNRS, IRIF, F-75006, Paris, France. E-mail: {\tt matej@irif.fr}.}
}

\date{\today}

\maketitle

\begin{abstract} 
We consider the chromatic number of the random Borsuk graph.
The random Borsuk graph is obtained by sampling $n$ points i.i.d.~uniformly at random on the $d$-dimensional
sphere $S^d$, and joining a pair of points by an edge whenever their geodesic distance is $>\pi-\alpha$ where 
the parameter $\alpha=\alpha(n)$ may depend on $n$.
Kahle and Martinez-Figueroa~\cite{KahleFig} have shown that the switch from being $(d+1)$-colourable to needing $\geq d+2$ colours 
occurs in the regime where the average degree is of logarithmic order.
We show that for each $2\leq k\leq d$, the switch from being $k$-colourable to needing $> k$ colours
occurs in the regime when the average degree is constant.
What is more, we show that for $k=2$ there is a sharp threshold of the form $\alpha(n) = c \cdot n^{-1/d}$, where 
the constant $c$ can be expressed in terms of the critical intensity for continuum AB percolation on $\eR^d$.
For $k=3,\dots,d+1$ we show that there is a sharp threshold for ``almost all $n$''.
\end{abstract}

\section{Introduction and statement of results}

The {\em Borsuk graph} with parameters $d, \alpha$ has as vertices the points of the $d$-dimensional sphere $S^d$ and 
two points $u,v \in S^{d}$ are joined by an edge if and only if $\dist(u,v) > \pi -\alpha$.
Here and in the rest of the paper

$$ \dist(u,v) := \angle u\underline{0}v, $$

\noindent
denotes the geodesic distance of the sphere, where $\typ\coloneqq(0,\dots,0)\in \RR^{d+1}$. That is, $\dist(u,v)$ is the length of the 
shortest curve between $u$ and $v$ that stays on the sphere, which happens to equal
the angle between the vectors $u,v$. 
Typically the parameter $\alpha$ is chosen small, so that points that are joined by an edge are ``nearly antipodal''.
The Borsuk graph was introduced in 1967 by Erd\H{o}s and Hajnal~\cite{ErdosHajnal67} as an explicit example of a graph with 
a large chromatic number and without short odd cycles.
In particular, Erd\H{o}s and Hajnal showed there exists an $\alpha_0=\alpha_0(d)$ such that the 
Borsuk graph has chromatic number exactly $d+2$ for all $0 < \alpha < \alpha_0$. Some thought shows that 
any odd cycle in the Borsuk graph must contain at least $\Omega(1/\alpha)$ vertices.

More recently, Kahle and Martinez-Figueroa~\cite{KahleFig} introduced the {\em random Borsuk graph}, which we
will denote by $G_d(n,\alpha)$. 
It has $n$ vertices $X_1,\dots,X_n \in S^d$, chosen i.i.d.~uniformly at random, and 
an edge $X_iX_j$ if and only if $\dist(X_i,X_j) > \pi -\alpha$.
When the dimension $d$ is clear from the context, we will suppress the 
subscript and just write $G(n,\alpha)$.
Recently the random Borsuk graph featured in a breakthrough result giving an exponential lower bound 
for asymmetric Ramsey numbers by Ma et al.~\cite{MaEtalArXiv}.
(But apparently the authors were not aware that the model has been studied before under the 
name random Borsuk graph.)

\begin{figure}[h!]
    \begin{center}
    \includegraphics[width=0.5\linewidth]{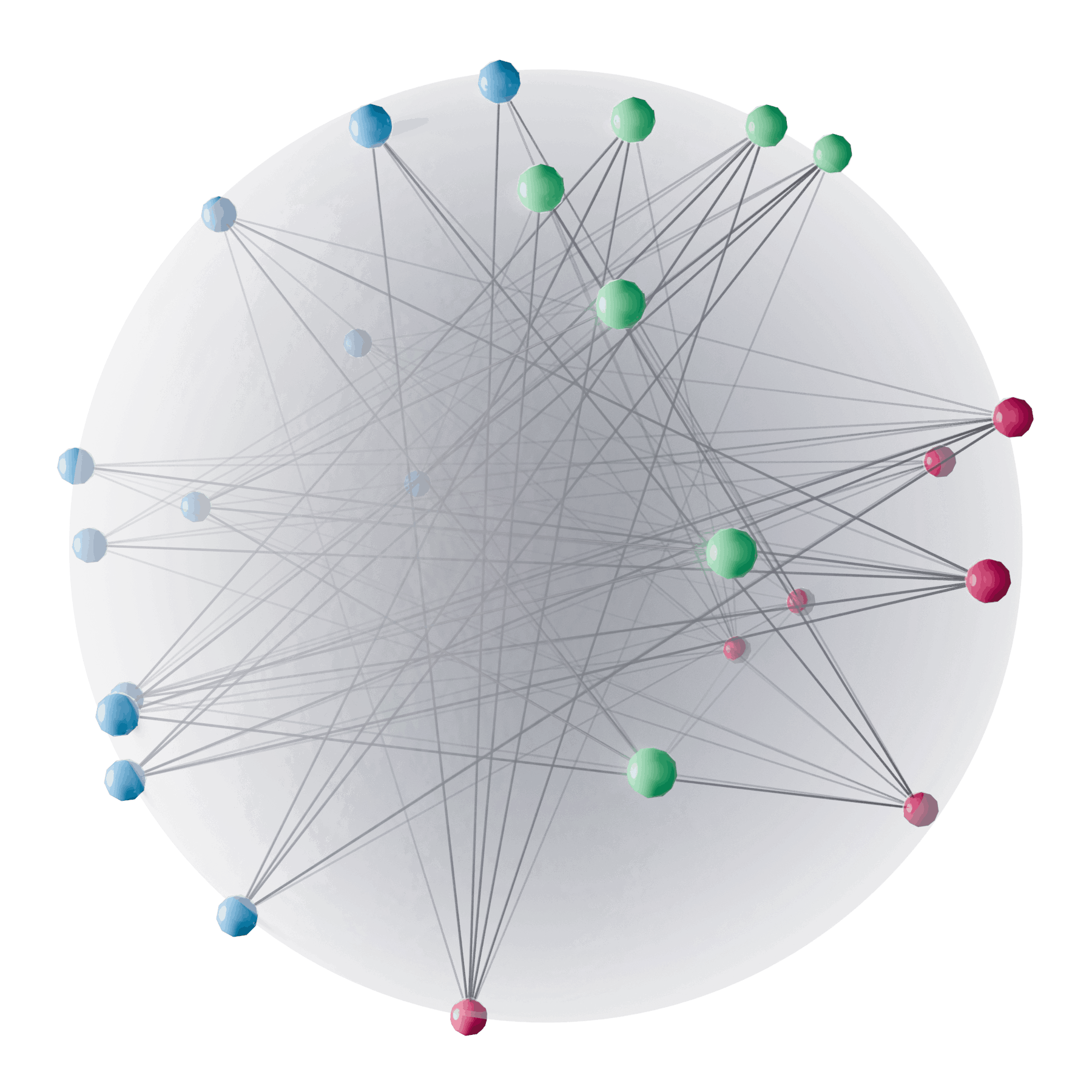}
    \caption{Visualization of the random Borsuk graph with $d=2, n=32, \alpha=1$ and 
    a proper vertex colouring with three colours.\label{fig:placeholder2}}
    \end{center}
\end{figure}

We should perhaps point out that our definition of the (random) Borsuk graph is different from, but equivalent to, the one 
used by Erd\H{o}s and Hajnal~\cite{ErdosHajnal67} and Kahle and Martinez-Figueroa~\cite{KahleFig}.
In those works, instead of the parameter $\alpha$ there is a 
parameter $\eps>0$ and we include an edge between the vertices $u$ and $v$ iff.~$\norm{u-v} > 2-\eps$.
We choose a formulation in terms of the geodesic distance instead, that is also used by other 
authors (e.g.~\cite{AdamsEtal}).
We find this setup more convenient to work with. What is more, the parameter $\alpha$ behaves analogously to the parameter $r$ in the 
standard definition of the random geometric graph (see e.g.~\cite{penroseboek}).
For instance, the average degree scales with $n r^d$ in the random geometric graph and with $n \alpha^d$ in the random Borsuk 
graph.

Kahle and Martinez-Figueroa have shown that (translated into our setup) 
there exist constants $c_L,c_U$ satisfying

\begin{equation}\label{eq:KF} 
\lim_{n\to\infty} \Pee\left( \chi\left( G(n,\alpha) \right) > d+1 \right) =
 \begin{cases}
 1 & \text{ if $\alpha > c_U\cdot(\ln n/n)^{1/d}$, } \\
 0 & \text{ if $\alpha < c_L\cdot(\ln n/n)^{1/d}$. }
 \end{cases}
\end{equation}

\noindent
As pointed out by Kahle and Martinez-Figueroa, this also gives that if $\alpha_0 > 0$ is the constant 
provided by the result of Erd\H{o}s and Hajnal for the (non-random) Borsuk graph, the chromatic number 
of $G(n,\alpha)$ is precisely $d+2$ with probability $1-o_n(1)$ whenever
$c_U (\ln n/n)^{1/d} < \alpha < \alpha_0$.

The results of Kahle and Martinez-Figueroa show that the random Borsuk graph transitions from 
being $(d+1)$-colourable to needing $\geq d+2$ colours in the regime where the average degree is of order $\ln n$.
Our first result shows that the random Borsuk graph achieves chromatic number $\geq d+1$ much ``sooner'', namely when the
average degree is constant.

\begin{theorem}\label{thm:maind+1}
For $d\geq 2$, there is a constant $c=c(d)>0$ such that:
 
 $$ \lim_{n\to\infty} \Pee\left( \chi\left( G(n, c \cdot n^{-1/d}) \right) > d \right) = 1.$$
 
\end{theorem}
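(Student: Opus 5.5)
The plan is to prove the statement by a Lyusternik--Schnirelmann (LS) argument, using a topological obstruction rather than a local gadget. A local gadget cannot work: inside a pair of small antipodal caps every edge goes from one cap to the other, so the local graph is bipartite. Moreover, odd cycles in $G(n,\alpha)$ have length $\Omega(1/\alpha)=\Omega(n^{1/d})$. So the obstruction must be global, and the natural tool is the LS theorem: if $S^d$ is covered by $d+1$ closed sets, one of them contains an antipodal pair $x,-x$.

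Fix $c$ large, set $\alpha=c n^{-1/d}$, and fix a radius $r$ with $2r<\alpha$, say $r=\alpha/3$. Suppose $\varphi$ is a proper $d$-colouring of $G(n,\alpha)$. For $i=1,\dots,d$ let $F_i$ be the union of the closed geodesic balls $B(X_j,r)$ over the vertices $X_j$ of colour $i$. No $F_i$ contains an antipodal pair. Indeed, if $x,-x\in F_i$, there are vertices $u,v$ of colour $i$ with $\dist(u,x)\le r$ and $\dist(v,-x)\le r$. Then $\dist(u,v)\ge \pi-2r>\pi-\alpha$, so $uv$ is an edge, which is a contradiction. Let $V=\overline{S^d\setminus\bigcup_i F_i}$ be the closed vacant region. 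Applying LS to $F_1,\dots,F_d,V$ shows that $V$ must contain an antipodal pair. We therefore need to rule this out with high probability, or weaken what is required of the $(d+1)$-st set.

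This is the main obstacle. For a fixed $x$, the probability that both $B(x,r)$ and $B(-x,r)$ contain no vertex is about $e^{-2\kappa c^d/3^d}$, with $\kappa$ the volume constant. This is a positive constant, so with high probability such doubly vacant pairs do exist. The naive version therefore fails, and the set $V$ has to be modified. The plan is as follows.

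First, using continuum percolation estimates, I would show that for $c$ large the \emph{doubly vacant} set
\[
W=\{x:\ B(x,r)\cap\{X_1,\dots,X_n\}=\emptyset=B(-x,r)\cap\{X_1,\dots,X_n\}\}
\]
is deep in the subcritical regime. Concretely, with high probability every connected component of $W$, and indeed of $V$, has geodesic diameter $O(\ln n\cdot n^{-1/d})=o(1)$, since the vacancy probability at scale $r$ is $e^{-\Theta(c^d)}$.

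Second, I would replace LS by its $\mathbb{Z}_2$-index form. LS is equivalent to the statement that there is no odd continuous map $S^d\to S^{d-1}$, so it suffices to build such a map from the colouring. Use a partition of unity subordinate to the $F_i$ (slightly thickened) together with the vacant set. Then handle each small component $K$ of $V$ by extending the map across it while respecting oddness. Because components are tiny and $K$ and $-K$ are disjoint and far apart, this can be done independently on $K$ and on $-K$. Doing so requires controlling the colours seen on $\partial K$, and I would enlarge $r$ to a second radius $r'<\alpha/2$ to ensure that boundaries of vacant components are covered consistently.

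I expect the extension step to be the crux. If it fails directly, the fallback is a Tucker/Fan-type combinatorial lemma applied to an antipodally symmetric triangulation of $S^d$ at mesh $\ll r$. In that version, vertices in vacant components are labelled by the colour of a nearest vertex, and the extension step becomes the requirement that this labelling creates no complementary edge. That requirement is where the subcriticality of $W$ and the smallness of its components would be used.
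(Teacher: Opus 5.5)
The gap is the extension step. It cannot be carried out component by component, as the following makes precise.

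Take the odd map your partition of unity produces: with $\phi_i(x)=\max(0,r-\dist(x,C_i))$, where $C_i$ is colour class $i$, set $\Phi(x)=\phi(x)-\phi(-x)\in\eR^d$. This map is odd and, by properness, vanishes exactly on the doubly vacant set. Filling in a component $K$ (and $-K$ by oddness) requires $\Phi/\norm{\Phi}$ on $\partial N(K)$ to extend over $N(K)$; for a ball-shaped neighbourhood this means local degree $0$.

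By Borsuk--Ulam these local obstructions can never all vanish. So ``fill in each component'' is not a local lemma; it is the whole theorem in disguise. Local information also cannot supply it. Take local coordinates $y$ around a vacant ball $K$ of radius $R=\sqrt d\,\alpha$. Put vertices of colour $i$ near the points with $y_i\ge R/\sqrt d$. Put vertices of colour $i$ near $-K$ at the antipodes of the points with $y_i\le -R/\sqrt d$. Two colour-$i$ vertices $a,b$ placed this way satisfy $\dist(a,-b)\gtrsim 2R/\sqrt d>\alpha$, so the colouring is locally proper. Yet $\langle \Phi(x),y\rangle>0$ on $\partial N(K)$, so the local degree is $\pm1$.

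Neither the smallness of $K$, nor the separation of $K$ from $-K$, nor passing to a radius $r'<\alpha/2$ changes this. The Tucker fallback fails for the same reason. Tucker's lemma produces a complementary edge for every antipodal labelling. Properness excludes such an edge only when both labels come from vertices within $<\alpha/2$ of the endpoints, and that is exactly what fails inside doubly vacant regions.

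The missing idea is to route around the vacant components rather than fill them in. You need an antipodally symmetric ``equator'' lying entirely in the covered region, and then Lyusternik--Schnirelmann applied one dimension lower, with the $d$ colour classes as the closed sets. The paper builds this equator explicitly as the image of $u\mapsto(1+h(u))u$ in stereographic coordinates, with $h$ odd, $\eps$-Lipschitz and $|h|<n^{-0.9/d}$. The function $h$ comes from perturbing the equator scale by scale to dodge bad cubes (Lemmas~\ref{lem:boxes}--\ref{lem:embed}). The Lipschitz control lets the paper project the nearby points radially to $S^{d-1}$ and apply Lemma~\ref{lem:KFchi} to $G_{d-1}(\{U_1,\dots,U_M\},\alpha/5)$.

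Your subcriticality estimate can also yield such an equator, more cheaply, if you prove it for $V\cup(-V)$. The configurations near $x$ and $-x$ are independent, so the same Peierls argument applies. Suppose no component of $V\cup(-V)$ contains an antipodal pair. Pair the components as $K_j,-K_j$. Let $U$ be the open upper hemisphere, minus small closed neighbourhoods of all $K_j$ and $-K_j$, plus small open neighbourhoods of the $K_j$. Set $\Sigma=S^d\setminus(U\cup -U)$; then $\Sigma=-\Sigma\subseteq\bigcup_i F_i$. Apply Borsuk--Ulam to the odd map $x\mapsto\bigl((\dist(x,F_i)-\dist(-x,F_i))_{i<d},\ \dist(x,S^d\setminus U)-\dist(x,S^d\setminus(-U))\bigr)$. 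Its zero lies in $\Sigma$ and satisfies $x,-x\in F_d$, which is the contradiction you want.
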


\noindent
So in particular the chromatic number of $G(n,\alpha)$ equals $d+1$ with probability $1-o_n(1)$ whenever 
$c \cdot n^{-1/d} \leq \alpha < c_L \cdot (\ln n / n)^{1/d}$.

The regime when the average degree is constant is sometimes called the ``thermodynamic regime'' in the random (geometric) graphs 
literature.
We will show that the switch from being $k$ colourable to needing $>k$ colours 
occurs in the thermodynamic regime, for all $2\leq k \leq d$. 
In fact, for $k=2$ we can be quite precise:

\begin{theorem}\label{bt1}
For $d\geq 2$, there is a constant $c_2=c_2(d)>0 $ such that for all fixed $\eps>0 $ and all sequences $\alpha=\alpha(n)$ 
it holds that
\begin{equation*}
\lim_{n\to\infty}\Pro(\chi(G({n},\alpha)) > 2)=\begin{cases}
1 & \text{if }\alpha\geq (c_2+\eps)\cdot n^{-1/d},\\
0 & \text{if }\alpha\leq (c_2-\eps)\cdot n^{-1/d}.
\end{cases}
\end{equation*}
\end{theorem}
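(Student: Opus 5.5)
Throughout I write $\lambda := \alpha^d n$, which is proportional to the average degree, and I will identify $c_2$ via the critical intensity for continuum AB percolation on $\eR^d$.

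\textbf{Reduction to a two-type geometric graph.} Fix a hemisphere decomposition and use the antipodal map $x\mapsto -x$. Two points $u,v$ are adjacent in $G(n,\alpha)$ iff $\dist(u,-v)<\alpha$. An odd cycle $v_1,\dots,v_{2m+1}$ in $G(n,\alpha)$ then becomes a closed walk that alternates between points and antipodes of points, with consecutive entries at distance $<\alpha$. Replacing every second vertex by its antipode, this is a path in the ``AB-type'' graph. The A-points are the $X_i$ and the B-points are the $-X_i$. Edges are only allowed between an A-point and a B-point, and only when they are at distance $<\alpha$.

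An odd cycle corresponds to such an AB-path that starts at some $X_i$ and ends at $-X_i$. In other words, it is an AB-path whose endpoints are antipodal. This means the AB-connected component containing $X_i$ also contains $-X_i$. Conversely, if every AB-component avoids containing an antipodal pair, then the original graph is bipartite. So the event $\chi>2$ is the event that some AB-cluster contains an antipodal pair. Such a cluster has diameter of order $1$, i.e.\ of order $\alpha^{-1}$ in rescaled units.

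\textbf{Local picture.} Rescale a small cap by $n^{1/d}$. The points near $x$ and the antipodes of points near $-x$ then form two independent Poisson processes on $\eR^d$. Their intensities depend on $\lambda$ and are of the right form. The AB-graph is locally continuum AB percolation with connection radius $1$, with both intensities proportional to $\lambda$. Define $c_2$ through the critical $\lambda$ of that symmetric AB model.

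\textbf{Subcritical case ($\alpha\le (c_2-\eps)n^{-1/d}$).} Subcritical continuum AB percolation has exponentially decaying cluster diameters; this should hold by a sharpness result for AB percolation, or at least by a Menshikov-type argument. I would then take a union bound over the $n$ points after Poissonization. The probability that an AB-cluster reaches macroscopic distance $\asymp n^{1/d}$ is $e^{-c n^{1/d}}$, which is $o(1/n)$. To apply the local approximation along the whole path, the path stays in a chain of caps and the curvature is negligible. Hence w.h.p.\ no odd cycle exists.

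\textbf{Supercritical case ($\alpha\ge(c_2+\eps)n^{-1/d}$).} Here I want w.h.p.\ a single giant AB-cluster that crosses the whole sphere. It then contains both $X_i$ and a point near $-X_i$, but not $-X_i$ itself; I need to close this up. The giant cluster is symmetric under the antipodal map in distribution, and it is in fact literally self-antipodal: the B-points are the antipodes of the A-points, so the antipode of an AB-path is again an AB-path with roles swapped. So I need the unique giant to be unique globally. Its image under antipodality is also a giant, so uniqueness of the giant forces it to contain some pair $X_i,-X_i$, which yields an odd cycle.

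Establishing uniqueness of the macroscopic cluster on $S^d$ is the main obstacle. I would use a renormalization/block argument: tile the sphere by boxes of size $K$ in rescaled units, call a box good if it has a unique crossing cluster connecting to neighbouring boxes, and use supercritical estimates in $\eR^d$. This requires that slab/box crossing probabilities tend to 1, which is a Grimmett–Marstrand-type input for AB percolation for $d\ge2$. Good boxes dominate a highly supercritical site percolation on a graph approximating $S^d$, which w.h.p.\ has a connected component of good boxes meeting every large region. This gives the unique global cluster.
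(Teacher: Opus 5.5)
Your reduction (an odd cycle in $G(\Xcal,\alpha)$ is equivalent to a cluster of the bipartite graph $G_{geo}$ on $\Xcal\cup(-\Xcal)$ containing an antipodal pair), your definition of $c_2$ via the critical intensity of the symmetric AB model, and your subcritical half (exponential decay plus a union bound over points) all follow the paper's route. One caveat even in the subcritical half: exponential decay for subcritical continuum AB percolation is not available off the shelf. The paper obtains it by discretizing at a slightly larger radius $r_1>1$ and intensity $\lambda_1\in(\lambda,\lambda_c(r_1))$. The induced site percolation on a transitive graph of polynomial growth is then strictly subcritical, and the discrete result (Lemma~\ref{bolloexpdecay}) applies.

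The supercritical half has a genuine gap. Everything rests on good boxes having probability tending to one as $K\to\infty$. You do not prove this, and for continuum AB percolation it is exactly the main technical work of the proof. Moreover, the input you name (box or slab crossing probabilities tending to one) is weaker than what your own good-box definition needs. A unique crossing cluster that links to those of neighbouring boxes is a local-uniqueness statement. Even for Bernoulli percolation in $d\geq 3$, that requires Grimmett--Marstrand plus a further Pisztora-type coarse-graining argument. Without it, you justify neither the domination by highly supercritical site percolation nor the global uniqueness of the macroscopic cluster. Your antipodal-symmetry step depends on that uniqueness. Given local uniqueness, your static renormalization plus symmetry would work.

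The paper avoids uniqueness altogether. It proves a Grimmett--Marstrand sprinkling lemma for AB percolation (Lemma~\ref{ABlongconnection}). Start from an admissible already-explored region containing a $k$-ball, with only a small fresh intensity $\delta$ inside it. Then, with probability $>1-\eps$, one reaches a new $2k$-seed in a prescribed ball at distance $\ell$. The paper then runs an exploration on the grid $H=\pi^{-1}(\eta\ZZ^d\cap D)$ with $\eta=6\ell_0\alpha$. It uses intensity layers $\lambda_c<\lambda_1<\dots<\lambda_A<\lambda$, so each region is revisited fewer than $A(d)$ times and always with fresh randomness. The open edges dominate bond percolation with $p$ close to one, so by Lemma~\ref{dependentperco} the cluster of the south-pole seed covers most of $H$. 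Since $H^2=-H$, its antipodal image covers most of $H^2$. A last unused sprinkling layer then joins the two clusters near some point of the overlap band. Only these two specific clusters need to meet; antipodal invariance of $G_{geo}$ then gives $X_i\leftrightarrow -X_i$.
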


\noindent
The proof provides an expression for $c_2$ in terms of the critical intensity for 
{\em continuum AB percolation}.

Since having chromatic number one is the same as having no edges, the switch from chromatic number one to 
at least two occurs well before the thermodynamic regime.
For completeness we offer the following precise result, that has a relatively straightforward proof 
(which we give in Section~\ref{sec:bt2}).

\begin{theorem}\label{bt2}
We have

$$
\lim_{n\to\infty}\Pro(\chi(G({n},\alpha)) > 1)
=\begin{cases}
    1 & \text{if }n^2\alpha^d\to \infty,\\
    1-e^{- c \cdot \nu} & \text{if }n^2\alpha^d\to \nu \in (0,\infty), \\
    0 & \text{if } n^2\alpha^d\to 0,
\end{cases}
$$

\noindent
where $c := \Gamma((d+3)/2) / \left( 2 \cdot (d+1)\cdot \sqrt{\pi} \cdot \Gamma((d+2)/2) \right)$.
\end{theorem}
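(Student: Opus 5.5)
The plan is to study the number of edges $E=\sum_{i<j}\indicator{\dist(X_i,X_j)>\pi-\alpha}$ and show that it is asymptotically Poisson with mean $\binom n2 p$, where $p=p(\alpha)$ is the probability that two independent uniform points are joined. Since $\chi(G(n,\alpha))>1$ exactly when $E\geq 1$, this gives all three cases at once. Once $\Ex E\to c\nu$, the middle case gives $\Pro(E\ge1)\to 1-e^{-c\nu}$. The first case follows by monotonicity in $\alpha$: for any fixed $\nu$ we can couple with a smaller $\alpha'$ satisfying $n^2\alpha'^d\to\nu$, which gives $\liminf\Pro(E\geq1)\geq 1-e^{-c\nu}$, and then we let $\nu\to\infty$. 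The third case follows from Markov's inequality, since $\Pro(E\geq 1)\le \Ex E\to 0$.

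\textbf{Step 1: computing $p$.} By symmetry, $p$ is the normalized surface measure of a spherical cap of angular radius $\alpha$, namely the cap around $-X_1$. Write $\omega_d=2\pi^{(d+1)/2}/\Gamma((d+1)/2)$ for the area of $S^d$. The cap has measure $\omega_{d-1}\int_0^\alpha\sin^{d-1}t\,dt=(1+o(1))\,\omega_{d-1}\alpha^d/d$. Hence
\[ \binom n2 p=(1+o(1))\frac{n^2\alpha^d}{2}\cdot\frac{\omega_{d-1}}{d\,\omega_d}. \]
It remains to check that $\omega_{d-1}/(2d\,\omega_d)$ equals the stated $c$. We have $\omega_{d-1}/\omega_d=\Gamma((d+1)/2)/(\sqrt\pi\,\Gamma(d/2))$. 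Then $\Gamma((d+3)/2)=\frac{d+1}{2}\Gamma((d+1)/2)$ and $\Gamma((d+2)/2)=\frac d2\Gamma(d/2)$, so the given constant is $\Gamma((d+1)/2)/(2d\sqrt\pi\,\Gamma(d/2))$, which matches. I will do this bookkeeping carefully but expect no difficulty.

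\textbf{Step 2: Poisson convergence in the regime $n^2\alpha^d\to\nu$.} I will use the method of moments, or equivalently the Chen--Stein method for dissociated indicators. We need the factorial moments $\Ex (E)_k$ to converge to $(c\nu)^k$.

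1. Ordered $k$-tuples of pairwise vertex-disjoint pairs contribute $(1+o(1))(\binom n2 p)^k$, since disjoint pairs are independent.
2. Tuples of pairs sharing vertices span a "forest or more" structure. A connected configuration with $m$ edges on $v\le m+1$ vertices has probability $O(p^{v-1})$: conditionally on a spanning tree, each added vertex must land in a cap of measure $O(p)$.
3. Such a configuration therefore contributes $O(n^{v}p^{v-1})$. This is $o(1)$ whenever $v\geq3$, because $np\to0$.

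Combining components, every non-disjoint configuration is negligible. An alternative is Chen--Stein with dependency neighbourhoods given by pairs sharing a vertex: $b_1=O(n^3p^2)\to0$, $b_2=O(n^3p^2)\to0$ and $b_3=0$. This is quicker, and I will use it.

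The only real obstacle is the bound for two edges sharing a vertex. Here we need $\Pro(X_1X_2,X_1X_3\text{ edges})=p^2$. This holds exactly by conditioning on $X_1$, since the events are then independent. So the total-variation distance from Poisson tends to $0$, and the result follows.
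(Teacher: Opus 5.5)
Your proposal is correct. It follows the same overall strategy as the paper: show that the edge count is asymptotically $\Po(c\nu)$, then read off $\Pro(\chi>1)=\Pro(W_n\geq 1)$. The technical ingredients differ in three ways.

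\begin{itemize}
\item \textbf{Computing $p$.} The paper obtains $p_n$ by sandwiching the cap between two Euclidean balls under stereographic projection (Lemma~\ref{probconv}, via Lemmas~\ref{lem:piunif} and~\ref{lem:metric}). You use the exact cap-area formula $\omega_{d-1}\int_0^\alpha\sin^{d-1}t\,\dd t$, which is more direct, and your Gamma-function bookkeeping against the stated $c$ checks out.
\item \textbf{Poisson limit.} The paper uses factorial moments, bounding overlapping configurations by a spanning-forest estimate $p_n^{s-c}$ together with $s\geq 2c+1$. Your Chen--Stein route has $b_1,b_2=O(n^3p^2)=O(1/n)$, and $b_3=0$ because $I_{ij}$ depends only on $X_i,X_j$. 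This avoids the configuration counting and even gives an $O(1/n)$ rate in total variation. One small slip in your alternative moment sketch: $n^vp^{v-1}=(n^2p)^{v-1}n^{2-v}\to0$ for $v\geq3$ needs $n^2p=O(1)$, not merely $np\to0$. This is harmless since you opt for Chen--Stein.
\item \textbf{The two extreme cases.} The paper's proof only explicitly derives the case $n^2\alpha^d\to\nu$ from Lemma~\ref{empty2}. You handle $n^2\alpha^d\to\infty$ by the monotone coupling with $\alpha'=(\nu/n^2)^{1/d}$, which correctly does not require $\alpha\to 0$. You handle $n^2\alpha^d\to0$ by the first-moment bound. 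These make explicit what the paper leaves implicit, so your write-up is slightly more complete.
\end{itemize}
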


The behaviour exhibited by Theorem~\ref{bt1} is a prime example of what is called a {\em sharp threshold} in the 
random graphs community, and Theorem~\ref{bt2} corresponds to what is called a {\em coarse threshold}.
If $\Pcal$ is a monotone graph property (formally a family of graphs closed under isomorphism and under the 
addition of edges) then we say $\beta = \beta(n)$ is a {\em threshold} for $\Pcal$ if
$\lim_{n\to\infty}\Pee( G(n,\alpha) \in \Pcal ) = 1$ for all sequences $\alpha=\alpha(n)$ 
satisfying $\alpha \gg \beta$ and 
$\lim_{n\to\infty}\Pee( G(n,\alpha) \in \Pcal ) = 0$ whenever $\alpha \ll \beta$. It is a 
{\em sharp threshold} if $\Pee( G(n,\alpha) \in \Pcal ) \to 1$ whenever $\alpha \geq (1+\eps) \beta$ and 
$\Pee( G(n,\alpha) \in \Pcal ) \to 0$ whenever $\alpha \leq (1-\eps)\beta$ for all $\eps>0$ 
(but where we keep $\eps$ fixed as we take the limit $n\to\infty$).
A {\em coarse threshold} is simply a threshold which is not sharp.
The situation in~\eqref{eq:KF} is sometimes called a {\em semi-sharp threshold}.
For $k=2,\dots,d$, the threshold for $k$-colourability is also semi-sharp, as a 
consequence of Theorems~\ref{thm:maind+1} and~\ref{bt1}.
The semi-sharpness of these thresholds leaves open the possibility that they are in fact sharp.
We conjecture that this is the case. 
(See Section~\ref{sec:discuss} for more discussion and more open problems.)
We have not been able to prove our conjecture, but we can offer the following weaker result.

\begin{theorem}\label{thm:main}
 For $d\geq 2$, here is a set $N \subseteq \eN$ of density one and 
 sequences $\alpha_k = \alpha_k(n)$ for $k=3,\dots,d+1$, such that for every fixed $\eps>0$, 
 every sequence $\alpha=\alpha(n)$ and every increasing sequence $n_1,n_2,\dots \in N$: 
 
 $$ \lim_{i\to\infty} \Pee\left( \chi\left( G(n_i,\alpha) \right) > k \right) =
 \begin{cases}
 1 & \text{ if $\alpha > (1+\eps)\alpha_k$, } \\
 0 & \text{ if $\alpha < (1-\eps)\alpha_k$. }
 \end{cases} $$
 
\end{theorem}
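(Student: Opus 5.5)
We will derive Theorem~\ref{thm:main} from a general sharp-threshold statement for monotone properties of the random Borsuk graph in the thermodynamic regime, combined with a diagonal/density argument over $n$. Fix $k\in\{3,\dots,d+1\}$ and write $\alpha = t\, n^{-1/d}$. Set $f_n(t) := \Pee\bigl(\chi(G(n,t n^{-1/d}))>k\bigr)$. Coupling all $\alpha$ on the same point set shows $f_n$ is non-decreasing in $t$, since increasing $\alpha$ only adds edges. By Theorem~\ref{thm:maind+1} and Theorem~\ref{bt1}, for $k\le d$ we have $f_n(t)\to 0$ for $t< c_2$ and $f_n(t)\to 1$ for $t>c(d)$. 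Hence the transition window lies in a fixed compact interval $[a,b]$ of $t$ values. For $k=d+1$ the window instead sits at scale $(\ln n)^{1/d}$ by \eqref{eq:KF}, and we rescale accordingly. We define $\alpha_k(n) := t_k(n)\, n^{-1/d}$ with $t_k(n) := \inf\{t : f_n(t)\ge 1/2\}$. It then suffices to show that for $n$ in a density-one set $N$, $f_n(t_k(n)(1-\eps))\to 0$ and $f_n(t_k(n)(1+\eps))\to 1$.

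The key input is a Friedgut--Kalai / Bourgain type sharp-threshold theorem, but in the ``number of points'' parameter rather than in $\alpha$. Consider the property $\Pcal$ of a finite point set in $S^d$ that its Borsuk graph at a fixed $\alpha$ is not $k$-colourable. It is monotone under adding points and invariant under the rotation group $O(d+1)$, which acts transitively. We switch to a Poisson process of intensity $\lambda$ on $S^d$. By Russo--Margulis, $\frac{d}{d\lambda}\Pee_\lambda(\Pcal)$ equals the expected number of pivotal locations. With transitive symmetry this total influence is large unless the property is ``local''. Here we would use the version of Bourgain's theorem for product measures, or Talagrand-type inequalities for symmetric monotone events in continuum models (as in Bollob\'as--Riordan's sharp thresholds for continuum percolation). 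The conclusion is that in any $\lambda$-window where $\Pee_\lambda(\Pcal)\in[\delta,1-\delta]$, the derivative $\lambda\,\frac{d}{d\lambda}\Pee_\lambda(\Pcal)$ is large, unless $\Pcal$ is approximated by the appearance of a bounded-size local configuration.

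That exceptional case is the main obstacle. In Bourgain's alternative one must exclude the possibility that non-$k$-colourability is driven by a small dense witness. Any $k+1$... points forming a non-$k$-colourable subgraph in a ball of radius $O(\alpha)$ around antipodal pairs have probability $n^{O(1)}\alpha^{\Theta(d)}$ per configuration. Odd-cycle-like obstructions require $\Omega(1/\alpha)$ vertices, as noted after the Erd\H{o}s--Hajnal result. So no bounded witness has probability bounded away from $0$ and $1$ in the thermodynamic regime. We would make this rigorous by showing that, for any fixed $m$, the probability that some $m$-point subset is non-$k$-colourable is $o(1)$ at $\alpha = O(n^{-1/d})$ when $k\ge 3$.

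The above yields sharpness in the Poisson intensity at fixed $\alpha$, equivalently in $n$, not in $\alpha$. This mismatch is exactly why we only get ``almost all $n$''. For fixed $\alpha$, $g_\alpha(n):=\Pee(\chi(G(n,\alpha))>k)$ jumps from $\delta$ to $1-\delta$ within an $n$-window of relative width $o(1)$, and we translate back using the scaling $n\alpha^d$. The only remaining defect is that scaling on the sphere is not exact, and $t_k(n)$ need not be regular in $n$. Via a counting argument we show the set of $n$ where $t_k(n)$ and $t_k(n(1\pm\eps'))$ disagree by more than a factor $1\pm\eps$ has density zero. This holds because $t_k$ is confined to $[a,b]$, so it can make only boundedly many large multiplicative oscillations per dyadic block. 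A diagonal choice of $\eps'\to0$ then produces the density-one set $N$.
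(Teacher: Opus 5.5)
Your outer architecture matches the paper's: Poissonize, get sharpness in the intensity, use semi-sharpness and a dyadic counting argument to show $\alpha_k(n)$ rarely jumps, then diagonalize. Your last paragraph is essentially the paper's $U,V$ argument. One correction there: confining $t_k$ to $[a,b]$ does not by itself bound the number of oscillations. You also need that $\alpha_k(n)$ is non-increasing in $n$, which caps how fast $t_k$ can rise and hence how often it can drop by a factor $1-\eps$.

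\textbf{The gap is the middle step.} The alternative in Bourgain's theorem (case~(a) of Proposition~\ref{prop:BourTobias}) is \emph{not} that the event is approximated by the appearance of a bounded witness. It says that conditioning on $L=O(1)$ specific cells being occupied, i.e.\ planting points at specific locations $x_1,\dots,x_L$, raises $\Pee(\chi>k)$ by a constant. That boost can come from the planted points interacting with the macroscopic random configuration. So showing that bounded point sets are never non-$k$-colourable rules out nothing. (That fact is even deterministic for $k\geq 2$ once $\alpha$ is small, by the odd-girth bound.) For comparison, $G(n,c/n)$ likewise has no small non-$k$-colourable subgraphs, and Achlioptas--Friedgut still need a real argument that planting a small graph can be imitated by sprinkling random edges.

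Symmetry-based results (Friedgut--Kalai, Talagrand) do not help either. The cell probabilities $p_i\approx\mu\nu(C_i)$ tend to $0$, and ``contains a triangle'' in $G(n,p)$ is transitive-symmetric yet has a coarse threshold. So your claimed sharpness in $\lambda$ at fixed $\alpha$ is unproved.

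\textbf{What the paper does instead.} It does not exclude case~(a); it exploits it.
\begin{itemize}
\item Iterating Proposition~\ref{prop:BourTobias} over $[\mu,(1+\delta)\mu]$ yields $x_1,\dots,x_L$ with $L=L(\eps,\delta)$ and $\Pee_{(1+\delta)\mu}\bigl(\chi(G(\Xcal\cup\{x_1,\dots,x_L\},\alpha))>k\bigr)>1-\eps$. Case~(b) is impossible because all $p_i$ are tiny.
\item An independent sprinkled process of intensity $\delta\mu$ contains, with high probability, a rotated copy of $\{x_1,\dots,x_L\}$ up to error $\eta\mu^{-1/d}$. This holds because there are $\Omega(\mu)$ disjoint candidate positions, each succeeding with probability bounded below.
\item The rotation depends only on the sprinkled points, and the main process is rotation-invariant. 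By the triangle inequality, the planted points are therefore imitated at the cost of enlarging $\alpha$ by $\eta\mu^{-1/d}$.
\end{itemize}
This gives Corollary~\ref{cor:etacor}, a sharp threshold jointly in $(\mu,\alpha)$ rather than in $\mu$ at fixed $\alpha$. The additive error $\eta\mu^{-1/d}$ is then absorbed using $\alpha_k(n)\geq c\,n^{-1/d}$ from \eqref{eq:muis}. This simulation step is the missing idea; without it your proposal does not go through.
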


\noindent
Here $N$ having density one means that 

$$ \lim_{n\to\infty} \frac{|N \cap [n]|}{n} = 1, $$
where for a positive integer $n \in \eN$ we write $[n] := \{1,\dots, n\}$.
One could paraphrase the result as 
``the property of having chromatic number $> k$ has a sharp threshold for {\em almost all} $n$''. 

Kahle and Martinez-Figueroa (\cite{KahleFig}, item 2 in Section 4) asked whether, for each $1\leq k \leq d+1$ there exists a
sequence $\alpha=\alpha(n)$ such $\chi( G(n,\alpha) ) = k$ with probability $1-o_n(1)$.
Our results show that this is true for $k \in \{1,2,d+1\}$ :
for $k=1$ we can take any $\alpha$ satisfying $\alpha \ll n^{-2/d}$;
for $k=2$ we can take any $\alpha$ satisfying $n^{-2/d} \ll \alpha \ll n^{-1/d}$; 
for $k=d+1$ we can take any $\alpha$ satisfying $n^{-1/d} \ll \alpha \ll (\ln n/n)^{1/d}$.
For the remaining values of $k$ our results and proof methods leave us in the dark.
We conjecture that the property of having chromatic number $> k$ has a sharp threshold 
of the form $c_k \cdot n^{-1/d}$ for each $3\leq k \leq d$, where $c_2<c_3<\dots<c_{d}$ (with $c_2$ 
as provided by Theorem~\ref{bt1}). 
This would in particular provide a positive answer to the question of Kahle and Martinez-Figueroa.
As mentioned previously, see Section~\ref{sec:discuss} for more discussion and open problems.

\vspace{1ex}

{\bf Related work.} For context, we give a very brief and very incomplete overview of related results for 
the Erd\H{o}s-R\'enyi random graph and random geometric graph model.
The study of thresholds for monotone properties in general has played a
central role in literature on the Erd\H{o}s-R\'enyi random graph model.
Landmarks include the work of Bollob\'as and Thomason~\cite{BollobasThomason87} who 
observed that every monotone property has a threshold, the work of
Friedgut and Kalai~\cite{FriedgutKalai96} showed that any monotone property will transition from 
holding with probability $o_n(1)$ to probability $1-o_n(1)$ inside an interval for the parameter $p$
of length $O(1/\ln n)$ and Friedgut's seminal paper~\cite{Friedgut99} that essentially gave a characterization of the 
graph properties that have a sharp threshold. 
For more background on thresholds for monotone properties in the Erd\H{o}s-R\'enyi model 
we refer the reader to~\cite{Friedgut_hunting,Perkins_survey} and the references therein.
For random geometric graphs, McColm~\cite{McColm_threshold} established an analogue of the result of Bollob\'as and Thomason, but 
only in dimension $d=1$. For other dimensions it remains an open problem whether every 
monotone graph property has a threshold. Goel et al.~\cite{GoelEtal} proved a result analogous to the one 
of Friedgut and Kalai. 

Perhaps even more than monotone properties, the study of the chromatic number of random graphs has had a huge influence
on the development of the field. 
It was already discussed in the founding work of Erd\H{o}s and R\'enyi~\cite{ErdosRenyi60}.
In 1975 Grimmett and McDiarmid~\cite{GrimmettMcDiarmid75} gave upper and lower bounds for the case of constant $p$ that were a factor two 
apart, and in a seminal 1988 paper, Bollob\'as~\cite{Bollobas88} was able to show the probability mass 
of the chromatic number is concentrated near the lower bound of Grimmett and 
McDiarmid. More detailed questions on the 
distribution of the chromatic number for the Erd\H{o}s-R\'enyi random graph however remained open even for constant $p$, and 
the research line is actively pursued to this day. A recent highlight is for instance~\cite{Heckel21}.

For $p = p(n)$ tending to zero with $n$ faster than $n^{-5/6-o_n(1)}$, {\L}uczak~\cite{Luczak91} showed that the 
distribution of the chromatic number has almost all of its probability mass concentrated on 
two consecutive integers $k,k+1$, where $k=k(n,p)$ depends on both $n$ and $p$. Alon and Krivelevich~\cite{AlonKrivelevich} subsequently 
extended the range of $p$ where {\L}uczak's result holds.
Achlioptas and Friedgut~\cite{AchlioptasFriedgut} showed that for each fixed $k\geq 3$, the property of 
being $k$-colourable has a sharp threshold in the regime when the average degree is constant, and 
subsequently Achlioptas and Naor~\cite{AchlioptasNaor} obtained more detailed information on 
the shape of the threshold and on the value of the aforementioned $k(n,p)$ in the 
case when $p = \text{const} \cdot n^{-1}$.
It is widely believed in the community, but remains unproven in general, that the (sharp) threshold for $k$-colourability 
of the Erd\H{o}s-R\'enyi random graph is of the form $p = \text{const} \cdot n^{-1}$.
In this light, our Conjecture~\ref{conj:conjk} seems (even more) natural.
For random geometric graphs, the chromatic number was studied in~\cite{McDiarmidMuller, Mullertwopoint, penroseboek}.
In particular, the third author of the current paper showed an analogue of {\L}uczak's two-point concentration result : 
if the average degree is not too large, then the probability mass of the chromatic number is concentrated on 
two consecutive integers. However, in the thermodynamic regime (when the average degree is constant) the
value of these two integers tends to infinity with $n$, so that the behaviour is rather different 
from what we observe in the current paper for the random Borsuk graph, and the mentioned results
on the Erd\H{o}s-R\'enyi graph. Intuitively speaking, the reason is that the chromatic number 
for random geometric graphs is ``forced'' by groups of points that are close together in 
the ambient space. In particular, in the thermodynamic regime the random geometric graph even has cliques of
growing size, while the random Borsuk graph does not have cliques of size $>2$.

\subsection{The structure of the paper.}

In Section~\ref{sec:informal} we give an informal description of some of the intuitions and strategies guiding 
the proofs of our main results. 
Next, in Section~\ref{sec:prelim} we list some notations, definitions and results from the literature that we
will be using in the rest of the paper.
Section~\ref{sec:maind+1} contains the proof of Theorem~\ref{thm:maind+1}, Section~\ref{sec:main} contains the proof of
Theorem~\ref{thm:main}, Section~\ref{sec:bt1} contains the proof of Theorem~\ref{bt1}, and Section~\ref{sec:bt2} 
contains the proof of Theorem~\ref{bt2}.
We include some more discussion and suggest some open problems in Section~\ref{sec:discuss}.
We conclude the paper with an appendix containing proofs of some auxiliary results that we include for completeness.

\subsection{Informal description of the proofs.\label{sec:informal}}

In this section, we describe some of the main intuitions and strategies guiding the proofs in an informal, non-rigorous 
manner in order to aid the reader in digesting the paper more easily.

\vspace{1ex}

{\bf Theorem~\ref{thm:maind+1}.} The strategy for the proof of Theorem~\ref{thm:maind+1} is to show that, when the average 
degree is a large constant, we can identify a subgraph of the random Borsuk graph that has chromatic number (at least) $d+1$.
More specifically, we will exhibit a subgraph that behaves like a Borsuk graph but in dimension $d-1$ rather than $d$.
The technique for lower bounding the chromatic number used by Kahle and Martinez-Figueroa involves showing that
the spherical caps (balls with respect to $\dist$, see Section~\ref{sec:geoprel} for a detailed definition)
with opening angle $\alpha/2$ around the antipodes of the random points cover the sphere, and then apply
a version of the Lyusternik-Shnirelmann theorem. See Lemma~\ref{lem:KFchi} below.
A candidate subgraph that we might want to try this technique on is 
the one induced by all points within a small distance (of order a small constant times $\alpha$, say) of the 
equator $S^{d-1} \times \{0\}$. 
Unfortunately that will not work however, because in the thermodynamic regime, even when the 
average degree is a large constant, there will be lots of ``empty patches'' on the sphere 
with diameter of order $\alpha$ and larger that are completely devoid of points. 
In particular the equator will not be covered by the aforementioned spherical caps around antipodes of points.

To get around this, we find a suitable embedding $\varphi : S^{d-1} \to S^d$ that stays close to the equator, avoids 
empty patches, is nearly antipodal (satisfies $\varphi(-u) \approx -\varphi(u)$) and is ``locally nearly flat''. 
To construct this embedding, we find it technically easier to work in $\eR^d$ rather than $S^d$. So we 
apply stereographic projection to obtain a random set of points in $\eR^d$ and 
construct a map $h : S^{d-1} \to (-1,1)$ such that 
for every $u \in S^{d-1}$ one of the random points is within distance $\eps\cdot\alpha$ of $\psi(u) := (1+h(u))\cdot u$, and 
moreover $h(u)+h(-u)=0$, and $h$ is Lipschitz with an appropriate constant (so that it ``looks flat'' at the scale $O(\alpha)$).
The construction of $\psi$ proceeds iteratively. 
Roughly speaking, we order the ``bad patches'' by scale. We start by avoiding the largest patches. 
Each iteration we add a small perturbation in order to also avoid patches on the next smaller scale.
The perturbations are made in such a way that on a local scale the function $h$ changes very gradually.

We now consider the subgraph spanned by all (stereographically projected) points within distance $\eps\alpha$ 
of the image of $\psi$.
Because of the careful choice of $h$, it turns out that this subgraph itself contains an induced
subgraph of the $(d-1)$-dimensional Borsuk graph to which the lower bound technique of Kahle and Martinez-Figueroa 
(stated as Lemma~\ref{lem:KFchi} below) applies.

\vspace{1ex}

{\bf Theorem~\ref{bt1}.}
The main intuition behind the proof of Theorem~\ref{bt1} is that locally the random Borsuk graph in the 
thermodynamic regime behaves like the continuum AB percolation model.
This model, first introduced by Iyer and Yogeshwaran in~\cite{IyerYogesh}, is defined by taking two 
independent, homogeneous Poisson processes $\Zcal_A$ and $\Zcal_B$ on $\eR^d$, the 
first having intensity $\lambda_A$ and the second $\lambda_B$, and joining each point 
of $\Zcal_A$ to any point of $\Zcal_B$ within distance $1$. (There are no edges with 
both endpoints in $\Zcal_A$ or both endpoints in $\Zcal_B$.)
Let us consider a spherical cap $C$ of opening angle some large constant $K$ times $\alpha$. 
In the thermodynamic regime, as $n\to\infty$, up to rescaling by $1/\alpha$, the geometry of $C$ 
behaves like that of a ball of radius $K$ in $\eR^d$. 
And, under the same rescaling, the set $\Xcal_A$ of all random points that fall inside the cap $C$ behaves like a constant 
intensity Poisson process restricted to this ball of radius $K$. 
Now also consider the set of points that fall in $-C$. Their antipodes, which we 
shall denote by $\Xcal_B$, lie in $C$ and, similarly to before, under the appropriate rescaling $\Xcal_B$ behaves like 
a constant intensity Poisson process, independent of the one describing $\Xcal_A$ and of the same intensity. 
What is more, the subgraph of the Borsuk graph spanned by the 
points in $C \cup -C$ is approximately the same as the graph we get if we join a point of the first 
Poisson point process to all points of the second Poisson point process at distance at most one.
That is, the ``local picture'' of the random Borsuk graph is that of 
the continuum AB percolation model with $\lambda_A = \lambda_B$, restricted to some large ball.
Being a bit more careful, one can see that the intensity in fact scales with $\alpha$ (the larger $\alpha$, 
the larger the intensity).
The constant $c_2$ in Theorem~\ref{bt1} corresponds to the {\em critical value} $\lambda_c$ of the continuum AB percolation.
That is, if $\lambda_A=\lambda_B < \lambda_c$ all clusters (connected components) of the model  are finite, and for 
$\lambda_A=\lambda_B > \lambda_c$ there is an infinite cluster -- both statements holding with probability one
and with respect to the model that is not restricted to a ball but lives on all of $\eR^d$.

Recall that a graph is bipartite (two-colourable) if and only if there is no odd cycle, and that 
any odd cycle in the Borsuk graph must have at least $\Omega(1/\alpha)$ (which is $\Omega( n^{1/d} )$ 
if we are in the thermodynamic regime) vertices.
That the random Borsuk graph is two colourable when we choose $\alpha$ in such a way that the corresponding continuum AB percolation 
model is subcritical follows relatively easily once we have established that 
cluster sizes decay exponentially in the subcritical regime of continuum AB percolation.
Even though continuum AB percolation has been considered in a few papers (e.g.~\cite{PenroseAB,DereudrePenrose}) 
since the work of Iyer and Yogeshwaran, the literature does not provide the subcritical exponential decay result we need. 
(We work in the specific situation when $\lambda_A=\lambda_B$, but the focus of the cited works is on 
other choices of the parameters.) We end up adapting various arguments from the percolation literature
to the continuum AB percolation setting.

Showing that if $\alpha$ is such that the corresponding continuum AB percolation model is supercritical 
then the random Borsuk graph contains an odd cycle is (even) more involved.
As explained, locally the random Borsuk graph is well-approximated by the continuum AB percolation model.
The challenge is to patch together many of these local pictures into a global one where we are sure 
that a (necessarily macroscopic) odd cycle exists with probability $1-o_n(1)$.
In order to achieve this we adapt the ``sprinkling technique'' of 
Grimmett and Marstrand~\cite{GrimmettMarstrand1990Supercritical}, developed for their famous 
``percolation on slabs'' result.

\vspace{1ex}

{\bf Theorem~\ref{bt2}.} As mentioned before, Theorem~\ref{bt2} follows via a relatively straightforward argument. 
We consider the range of $\alpha$ where the expected number of edges is 
constant, and use the ``method of moments'' to show that the number of edges converges in distribution to a 
Poisson random variable.

\vspace{1ex}

{\bf Theorem~\ref{thm:main}.}
The proof of Theorem~\ref{thm:main} leverages a ``sharp thresholds for Boolean functions'' result from~\cite{MullerConfetti}, stated as
Proposition~\ref{prop:BourTobias} below.
We use this result as follows. Rather than choosing precisely $n$ points uniformly at random on $S^d$, we 
switch to considering a Poisson point process on $S^d$ with $\mu = n$ points in expectation.
Lemma~\ref{lem:transferPo} below establishes that it is enough to prove the result for this ``Poissonized'' version 
of the model.
We choose $\alpha_k=\alpha_k(n)$ such that the probability that the 
(Poissonized) random Borsuk graph with parameter $\mu=n$ and $\alpha=\alpha_k$ has chromatic number $> k$ is precisely
$1/2$.
We discretize the model, choosing a dissection of $S^d$ into sets $C_1,\dots, C_m$ of small diameter, and for $i=1,\dots,m$ 
we take $Z_i$ to be the indicator variable that at least one of the random points falls inside $C_i$.
Having chosen the sets $C_i$ sufficiently small, the event that the (Poissonized) random Borsuk graph 
has chromatic number $> k$ is well-approximated by an increasing event defined in terms of $Z_1,\dots, Z_m$.
The aforementioned sharp thresholds result then allows us to conclude that (the discretized version of) the 
event that the chromatic number of the Poissonized random Borsuk graph is $> k$ has a sharp threshold 
in the parameter $\mu$ (the expected total number of points of the Poissonized random graph).
In slightly more detail : the sharp thresholds result implies that if there were no 
sharp threshold, then we could (deterministically) include a bounded number of points in specific locations in the 
vertex set and increase $\alpha$ very slightly to  
boost the probability of having chromatic number $> k$ to close to one. 
Increasing the expected number of points from $\mu$ to $(1+\eps)\mu$
corresponds to adding another Poisson process with $\eps\mu$ points in expectation that is independent of the 
original process. Such a 
Poisson process will contain a set of points that ``looks like'' the points in specific locations 
provided by the sharp threshold result.

Finally we use~\eqref{eq:KF}, respectively Theorems~\ref{thm:maind+1} and~\ref{bt1}, to show that 
the sequence $\alpha_k=\alpha_k(n)$ does not behave too ``wildly''.
More specifically, for most $n$ the ratio of the values $\alpha_k(n)$ and $\alpha_k( \lceil(1+\delta)n\rceil)$
is close to one (but we cannot exclude the possibility that occasionally there are ``jumps'').
For those values of $n$ where the ratio is close to one, we are able to transfer the sharp threshold in terms 
of the expected number of points to one in terms of $\alpha$.


\section{Notation and preliminaries\label{sec:prelim}}

Here we list some notations, definitions and results from the literature that we will use in the proofs.

\subsection{Geometric preliminaries\label{sec:geoprel}}

We will use $B(x,r) := \{ y \in \eR^d : \norm{x-y} < r \}$ to denote the open ball of radius $r$ around $x$.
We use $\vol(\cdot)$ to denote the $d$-dimensional volume (Lebesgue measure) and 
we denote by $\kappa_d := \vol( B(\underline{0}, 1 ) )$ the volume of the $d$-dimensional unit ball.
That is,

\begin{equation}\label{eq:volball} 
\kappa_d := \vol( B(\orig,1) ) = \frac{\pi^{d/2}}{\Gamma(d/2+1)},  
\end{equation}

\noindent
where the stated equality is a classical result (see e.g.~\cite{Schilling}, Corollary 15.15, for a proof).
The {\em spherical cap} around $u \in S^d$ of opening angle $\alpha$ is:

$$ \kap(u,\alpha) := \{ x \in S^d : \dist(u,x) < \alpha \}. $$

For all $u, v \in S^d$ the Euclidean distance and spherical distance are related by the 
following identity:

$$ \norm{u-v} = 
2 \cdot \sin\left(\dist(u,v)/2\right). $$

\noindent
(As can be seen by considering the triangle with 
corners $\orig, u, v$.)
For convenient future reference, we point out the following consequence:

\begin{equation}\label{eq:distnorm} 
(2/\pi) \cdot \dist(u,v) \leq \norm{u-v} \leq \dist(u,v),  
\end{equation}

\noindent
for all $u,v \in S^d$. 
(We use that $\alpha \geq \sin\alpha \geq (2/\pi)\cdot\alpha$ for $0 \leq \alpha \leq \pi/2$.)

\vspace{.5ex}

We denote by $\pi : S^d \setminus \{(0,\dots,0,1)\} \to \eR^d$ the $d$-dimensional stereographic projection from the 
``north pole''. 
It is given explicitly by 

$$ \pi(x_1,\dots,x_{d+1}) := \left(\frac{x_1}{1-x_{d+1}},\dots,\frac{x_d}{1-x_{d+1}}\right). $$

\noindent 
As is easily checked, $\pi$ is a diffeomorphism (a smooth bijection whose inverse is also smooth).
In several of our arguments below, we will want to compare the geodesic distance between two points $a, b \in S^d$ with the 
Euclidean metric between their images $\pi(a),\pi(b) \in \eR^d$.
The following bounds suffice for our purposes.

\begin{lemma}\label{lem:metric}
$\text{ }$
\begin{enumerate}
 \item\label{itm:metricgenlb} For all $a, b \in S^d \setminus \{(0,\dots,0,1)\}$ we have 
 
 $$ \norm{\pi(a)-\pi(b)} \geq (1/2) \cdot \dist(a,b). $$
 
 \item\label{itm:metricub12} For every $\eps > 0$ there is a $\delta=\delta(\eps) > 0$ such that 
 
 $$ \norm{\pi(a)-\pi(b)} \leq (1/2+\eps)\cdot \dist(a,b), $$
 
 \noindent 
 for all $a,b \in \kap( (0,\dots,0,-1), \delta)$.
 
 \item\label{itm:metricgenub} For all $a, b \in S^d \cap \{x_{d+1} \leq t\}$ it holds
 
 $$ \norm{\pi(a)-\pi(b)} \leq \frac{2-t}{(1-t)^2} \dist(a,b). $$
 
\end{enumerate}
\end{lemma}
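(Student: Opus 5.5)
The plan is to use an explicit formula for the Euclidean distance between stereographic images, together with the identity $\norm{a-b} = 2\sin(\dist(a,b)/2)$ and the bounds~\eqref{eq:distnorm}. Write $N := (0,\dots,0,1)$. The standard computation, which I will verify by expanding $\norm{\pi(a)-\pi(b)}^2$ and using $\norm{a}=\norm{b}=1$, gives
$$ \norm{\pi(a)-\pi(b)} = \frac{\norm{a-b}}{\sqrt{(1-a_{d+1})(1-b_{d+1})}}. $$
This holds because stereographic projection is the restriction of the inversion in the sphere centred at $N$ with radius $\sqrt2$, followed by dropping the last coordinate. Equivalently, $1-x_{d+1} = \norm{x-N}^2/2$, and inversions satisfy $\norm{\iota(a)-\iota(b)} = r^2\norm{a-b}/(\norm{a-N}\norm{b-N})$. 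I will either cite this or do the expansion directly; this identity is the only real computation in the proof.

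For item~\ref{itm:metricgenlb}, note that $(1-a_{d+1})(1-b_{d+1}) \leq 4$, so $\norm{\pi(a)-\pi(b)} \geq \norm{a-b}/2$. The Euclidean chord alone is not enough, since $\norm{a-b}\geq (2/\pi)\dist(a,b)$ is too weak. So I will use the sharper estimate: the product is in fact at most $(1+\cos(\dist(a,b)/2))^2$, because $1-a_{d+1} = 1+\langle a,-N\rangle$ and two points at angle $\theta$ have last coordinates whose sum is at least $-2\cos(\theta/2)$. Then AM--GM gives
$$ \sqrt{(1-a_{d+1})(1-b_{d+1})} \leq 1+\cos(\theta/2). $$
Hence
$$ \norm{\pi(a)-\pi(b)} \geq \frac{2\sin(\theta/2)}{1+\cos(\theta/2)} = 2\tan(\theta/4) \geq \theta/2, $$
where the last step uses $\tan x\geq x$. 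Making this extremal step precise (maximizing $(1-a_{d+1})+(1-b_{d+1})$ subject to $\dist(a,b)=\theta$) is the main technical point of the whole proof. It follows from $a_{d+1}+b_{d+1} = \langle a+b, N\rangle \geq -\norm{a+b} = -2\cos(\theta/2)$.

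For item~\ref{itm:metricub12}, if $a,b\in\kap((0,\dots,0,-1),\delta)$ then $1-a_{d+1},\,1-b_{d+1} \geq 1+\cos\delta$. So
$$ \norm{\pi(a)-\pi(b)} \leq \frac{\norm{a-b}}{1+\cos\delta} \leq \frac{\dist(a,b)}{1+\cos\delta}, $$
and we choose $\delta$ small enough that $1/(1+\cos\delta)\leq 1/2+\eps$.

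For item~\ref{itm:metricgenub}, with $a_{d+1},b_{d+1}\leq t$ the same formula gives
$$ \norm{\pi(a)-\pi(b)} \leq \frac{\dist(a,b)}{1-t}, $$
and since $(2-t)/(1-t)^2 \geq 1/(1-t)$ for $t<1$ the stated bound follows. If $t\geq 1$ the claim is vacuous or trivial, noting that the region must avoid $N$.
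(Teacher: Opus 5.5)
Your argument is correct, and it takes a different route from the paper. The paper never writes down the closed-form identity $\norm{\pi(a)-\pi(b)}=\norm{a-b}/\sqrt{(1-a_{d+1})(1-b_{d+1})}$. Instead it proves the infinitesimal version: for a $C^1$ curve $f$ in $S^d\setminus\{(0,\dots,0,1)\}$, the image $g=\pi\circ f$ satisfies $\norm{g'(t)}=\norm{f'(t)}/(1-f_{d+1}(t))$. It then compares curve lengths. For (i) it takes $f$ to be the preimage of the straight segment from $\pi(a)$ to $\pi(b)$. Since $1-f_{d+1}\le 2$ pointwise, this gives $\norm{\pi(a)-\pi(b)}=\text{length}(g)\ge\frac12\,\text{length}(f)\ge\frac12\dist(a,b)$. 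For (ii) it takes $f$ to be the geodesic, which stays near the south pole, and bounds the conformal factor by $1/(1+\cos(3\delta))$. Item (iii) is handled separately by splitting $\pi(a)-\pi(b)$ and applying the triangle inequality, which is where the constant $(2-t)/(1-t)^2$ comes from.

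The trade-off is this. The curve-length argument compares directly with arc length, so in (i) the crude pointwise bound $1-f_{d+1}\le 2$ already suffices. Your identity compares with the chord $2\sin(\theta/2)$, so you must recover the lost factor through a sharper denominator bound. You do this correctly, using AM--GM together with $a_{d+1}+b_{d+1}\ge-\norm{a+b}=-2\cos(\theta/2)$ and then $\tan x\ge x$. This even yields the sharp bound $2\tan(\theta/4)$. In exchange, your approach is purely algebraic, treats all three items uniformly, and gives the stronger constant $1/(1-t)$ in (iii).

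One small correction to your closing remark: for $t\ge 1$ the claim is neither vacuous nor trivial. The set $S^d\cap\{x_{d+1}\le t\}$ then contains the north pole, where $\pi$ is undefined. Even after removing the pole, the inequality fails for $1<t<2$, because $\pi$ is unbounded near the pole, and for $t\ge 2$ the constant is non-positive. The lemma should simply be read with $t<1$, as both your proof and the paper's implicitly do.
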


\noindent
We have not found a convenient reference to the literature.
The proof is relatively straightforward but pedestrian and a bit tedious. In order not to disrupt the flow of the paper
we defer the proof to Appendix~\ref{sec:metric}.

The following result is certainly known in the literature. 
We have not found a convenient reference with a stand-alone proof, but it can for instance be deduced 
from Equation 3.7 in \cite{lee2006riemannian}. 
We give a proof in Appendix~\ref{proof:piunif} for completeness.  

\begin{lemma}\label{lem:piunif}
Let $U \isd \text{Unif}(S^d)$ and $Z := \pi(U)$.
Then the pdf of $Z$ is given by

$$ f_Z(z) = \frac{1}{(d+1)\kappa_{d+1}} \cdot \left(\frac{2}{1+\norm{z}^2}\right)^d. $$

\end{lemma}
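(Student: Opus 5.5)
We compute the density of $Z = \pi(U)$ by the change of variables formula for the inverse stereographic projection. The inverse map $\pi^{-1} : \eR^d \to S^d \setminus \{(0,\dots,0,1)\}$ is given explicitly by
$$ \pi^{-1}(z) = \left( \frac{2z}{1+\norm{z}^2}, \frac{\norm{z}^2-1}{\norm{z}^2+1} \right), $$
as one checks by solving $z_i = x_i/(1-x_{d+1})$ together with $\norm{x}=1$. Since $U$ is uniform, its law is surface measure $\sigma_d$ on $S^d$ normalised by the total surface area $\sigma_d(S^d) = (d+1)\kappa_{d+1}$. This identity follows from \eqref{eq:volball}, or from differentiating $r \mapsto \kappa_{d+1} r^{d+1}$ at $r=1$. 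So for every Borel $A \subseteq \eR^d$,
$$ \Pee(Z \in A) = \frac{\sigma_d(\pi^{-1}(A))}{(d+1)\kappa_{d+1}} = \frac{1}{(d+1)\kappa_{d+1}} \int_A J(z) \,\dd z, $$
where $J(z) = \sqrt{\det(D\pi^{-1}(z)^T D\pi^{-1}(z))}$ is the area element of the parametrisation $\pi^{-1}$. The north pole has measure zero, so we may ignore it. It remains to show $J(z) = (2/(1+\norm{z}^2))^d$.

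The key step, and the only real computation, is to show that $\pi^{-1}$ is conformal with conformal factor $\lambda(z) = 2/(1+\norm{z}^2)$. That is, we must show $D\pi^{-1}(z)^T D\pi^{-1}(z) = \lambda(z)^2 I_d$, which immediately gives $J = \lambda^d$. Write $s = \norm{z}^2$ and differentiate. For the first $d$ coordinates,
$$ \partial_j \frac{2z_i}{1+s} = \frac{2\delta_{ij}}{1+s} - \frac{4 z_i z_j}{(1+s)^2}. $$
For the last coordinate, $\frac{s-1}{s+1} = 1 - \frac{2}{1+s}$, so
$$ \partial_j \frac{s-1}{s+1} = \frac{4z_j}{(1+s)^2}. $$
We then form the Gram matrix entry $\sum_{i=1}^{d+1} \partial_j x_i \, \partial_k x_i$ and expand. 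The cross terms are $-2 \cdot \frac{2}{1+s}\cdot\frac{4z_jz_k}{(1+s)^2}$. The quadratic term from the first $d$ coordinates is $\frac{16 z_j z_k s}{(1+s)^4}$, and the last coordinate contributes $\frac{16 z_j z_k}{(1+s)^4}$. Together these quadratic terms give $\frac{16 z_jz_k(1+s)}{(1+s)^4} = \frac{16 z_j z_k}{(1+s)^3}$, which exactly cancels the cross terms. What remains is $\frac{4\delta_{jk}}{(1+s)^2} = \lambda^2 \delta_{jk}$, as required.

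Combining the two steps gives $f_Z(z) = \frac{1}{(d+1)\kappa_{d+1}} \left(\frac{2}{1+\norm{z}^2}\right)^d$. The main (mild) obstacle is keeping the algebra in the Gram matrix computation clean. An alternative that avoids it is a symmetry argument: the Gram matrix at $z$ has eigenvalue $\lambda^2$ on directions orthogonal to $z$ (by rotational symmetry) and also in the radial direction (by a one-variable computation on a meridian). I would present the direct computation above.
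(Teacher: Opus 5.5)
Your proof is correct, and it takes a genuinely different route from the paper's. You push normalised surface measure forward through the chart $\pi^{-1}$ and compute the area element directly. The Gram-matrix algebra checks out: the cross terms $-16z_jz_k/(1+s)^3$ cancel the quadratic terms exactly, so $D\pi^{-1}(z)^TD\pi^{-1}(z)=(2/(1+\norm{z}^2))^2 I_d$.

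The paper never uses the area formula. It writes $U=Y/\norm{Y}$ with $Y$ uniform in the unit ball of $\eR^{d+1}$, and computes $\Pee(U_{d+1}\le h)$ as the volume of a slab minus a cone, which gives $f_H(h)\propto(1-h^2)^{(d-2)/2}$. It then transfers this to $\norm{Z}$ through the monotone relation $h=(r^2-1)/(r^2+1)$, which is exactly the last coordinate of your formula for $\pi^{-1}$. This last step uses rotational symmetry of $f_Z$ and polar coordinates in $\eR^d$.

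The paper's route therefore needs only Lebesgue volumes and a one-variable change of variables, at the price of a somewhat ad hoc cone computation and an appeal to symmetry. Yours is more direct and proves the stronger local fact that $\pi^{-1}$ is conformal.

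That conformality is in fact already in the paper. The lemma opening Appendix~\ref{sec:metric} gives $\norm{(\pi\circ f)'}=\norm{f'}/(1-f_{d+1})$ for every curve. Since $1-x_{d+1}=2/(1+\norm{z}^2)$, polarisation shows $D\pi^{-1}(z)$ is a similarity with that factor, so you could cite this lemma instead of redoing the Gram computation.

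One small correction: $\sigma_d(S^d)=(d+1)\kappa_{d+1}$ does not follow from \eqref{eq:volball}, which only concerns volumes. Your second justification (polar coordinates, i.e.\ differentiating $\kappa_{d+1}r^{d+1}$ at $r=1$) is the right one.
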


\noindent
Here and in the rest of the paper, $\text{Unif}(A)$ denotes the uniform distribution on the set $A$.

\subsection{Some more ingredients from probability theory}

For $\overline{p} = (p_1,\dots, p_n) \in (0,1)^n$ the notation $\Pee_{\overline{p}}(.)$ will 
signify the situation where $X_1, \dots, X_n$ are independent random variables with 
$X_i \isd \Be(p_i)$.
Observe that, for every $A \subseteq \{0,1\}^n$, the probability
$\Pee_{\overline{p}}[ (X_1,\dots, X_n ) \in A ]$ can be written as a
polynomial in $p_1,\dots, p_n$.
In particular, this probability is a continuous function of the 
$p_i$-s and the partial derivatives $\frac{\partial}{\partial p_i}\Pee_{\overline{p}}{\big[} (X_1,\dots, X_n) \in A {\big]}$
exist.
Recall that a set $A \subseteq \{0,1\}^n$ is called an up-set if 
$a \in A$ implies that also $(a_1,\dots,a_{i-1},1,a_{i+1},\dots,a_n) \in A$ for all $1\leq i \leq n$.
In other words, if we change a zero coordinate into a one then we do not leave the set $A$.
We remark that if $A$ is an up-set then $\Pee_{\overline{p}}[ (X_1,\dots, X_n ) \in A ]$ is non-decreasing 
in each parameter $p_i$.

The following result from~\cite{MullerConfetti} can be considered as an asymmetric version 
of Bourgain's powerful sharp threshold result (that appeared in the appendix of Friedgut's influential paper~\cite{Friedgut99}).

\begin{proposition}\label{prop:BourTobias}\cite{MullerConfetti}
 For every $C > 0$ and $0 < \eps < 1/2$ there exist $K = K(C,\eps) \in\eN, c = c(C,\eps) > 0$ such that
 the following holds, for every $n \in \eN$ and every up-set $A \subseteq \{0,1\}^n$. \\
 If $\overline{p} \in (0,1)^n$ is such that $\Pee_{\overline{p}}{\big[ }(X_1,\dots, X_n) \in A {\big]} \in (\eps,1-\eps)$
and 
\[  \sum_{i=1}^n p_i(1-p_i) \cdot \frac{\partial}{\partial p_i}\Pee_{\overline{p}}{\big[} (X_1,\dots, X_n) \in A {\big]} \leq C, 
 \]%
then there exist indices $i_1, \dots, i_K \in \{1,\dots, n\}$ such that one of the following holds:
\begin{enumerate}
 \item[{\bf(a)}] $\Pee_{\overline{p}}{\big[} (X_1, \dots, X_n) \in A {\big|} X_{i_1} = \dots = X_{i_K} = 1 {\big]}
 \geq \Pee_{\overline{p}}{\big[} (X_1, \dots, X_n) \in A {\big]} + c$, or
 \item[{\bf(b)}] $\Pee_{\overline{p}}{\big[} (X_1, \dots, X_n) \in A {\big|} X_{i_1} = \dots = X_{i_K} = 0 {\big]}
 \leq \Pee_{\overline{p}}{\big[} (X_1, \dots, X_n) \in A {\big]} - c$.
\end{enumerate}
\end{proposition}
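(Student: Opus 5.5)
Since \cref{prop:BourTobias} is quoted from~\cite{MullerConfetti}, the plan below is how I would reprove it. Write $f := 1_A$ and $\mu := \Pee_{\overline{p}}$. By the Russo--Margulis formula,
$\frac{\partial}{\partial p_i}\mu(A) = \mu(\text{$i$ is pivotal})=: I_i$.
So the hypothesis says that the weighted total influence $\sum_i p_i(1-p_i) I_i$ is at most $C$. In the $p$-biased Fourier--Walsh basis $\chi_S = \prod_{i\in S} (x_i-p_i)/\sqrt{p_i(1-p_i)}$ this quantity equals $\sum_S |S|\hat f(S)^2$. Meanwhile $\mathrm{Var}(f)=\sum_{S\neq\emptyset}\hat f(S)^2 \geq \eps(1-\eps)$. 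Hence a constant fraction of the Fourier weight sits on levels $1\leq |S|\leq D$ with $D = O(C/\eps)$.

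\textbf{Step 1: homogenize the biases.} Fix a small $q=q(C,\eps)$. Each coordinate with $p_i\le q$ stays as it is. Each coordinate with $p_i\geq 1-q$ is replaced, after complementing $x_i$, by a small-bias coordinate for the down-set direction; this symmetry is why both alternatives (a) and (b) occur in the conclusion. Each coordinate with $p_i\in(q,1-q)$ is written as an OR of boundedly many independent $\Be(q')$ variables with $q'\leq q$, which keeps the hypotheses with constants depending only on $C,\eps$. After this step every coordinate is either ``tiny'', meaning it has bias at most $q$ in the increasing direction, or its complement is tiny.

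\textbf{Step 2: Bourgain's dichotomy.} Suppose first that the low-degree weight is carried by balanced coordinates, or more generally that hypercontractivity holds with constants depending on $q$. Then Friedgut's junta argument shows that $f$ is $c$-close to a function of $K$ coordinates. It follows that fixing those $K$ coordinates to $1$, or to $0$, moves $\mu(A)$ by at least $c$.

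The genuinely hard case is when the weight is on tiny-bias coordinates, where hypercontractive constants degenerate. Here I follow Bourgain's argument from the appendix of~\cite{Friedgut99}. The quantity $\sum_i p_i I_i \leq C$ bounds the expected number of ``pivotal ones'' in a typical configuration $x\in A$. Concretely, $\Ex[\#\{i: x_i=1 \text{ and } i \text{ pivotal}\}]\le C$. So with probability at least $\eps/2$, the configuration $x\in A$ has a set $W(x)$ of at most $K_0=O(C/\eps)$ ones whose removal leaves $A$.

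Next I show that these witness sets cannot all be ``spread out''. If they were, then a random restriction of the low-degree part $\sum_{1\leq|S|\leq D}\hat f(S)\chi_S$ would have small $L^2$ norm, contradicting the variance bound. Bourgain's averaging over low-degree Fourier coefficients then yields a single set $\{i_1,\dots,i_K\}$ that is a witness with non-negligible probability. Conditioning on $X_{i_1}=\dots=X_{i_K}=1$ then raises $\mu(A)$ by some $c=c(C,\eps)>0$, which is alternative (a). The complemented coordinates from Step 1 give alternative (b).

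\textbf{Main obstacle.} I expect the hardest step to be the spread-out contradiction in the tiny-bias regime. The naive hypercontractive bound has constants like $q^{-D}$, so one must instead exploit that $f$ is Boolean and use the witness-set structure, as in Bourgain. The new issue relative to Bourgain is that the $p_i$ are non-uniform. This is why Step 1 is needed: it reduces all biases to a common scale without changing $\mu(A)$ and without inflating the weighted influence sum. It also guarantees that the $K$ coordinates found in the enlarged model map back to at most $K$ original indices.
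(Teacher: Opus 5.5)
The paper does not prove this proposition; it imports it from~\cite{MullerConfetti}. Your outline therefore has to stand on its own, and it has a genuine gap exactly where you place the main obstacle. The preliminaries are fine: Russo--Margulis, $\sum_S|S|\hat f(S)^2=\sum_i p_i(1-p_i)I_i$, and the concentration of Fourier weight on levels $1\le|S|\le D$.

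The problem is the tiny-bias case. There you assert two things without proof:
\begin{itemize}
\item that spread-out witness sets force a random restriction of $\sum_{1\le|S|\le D}\hat f(S)\chi_S$ to have small $L^2$ norm;
\item that ``Bourgain's averaging'' then extracts one bounded set $T$ with $\Pee_{\overline p}(A\mid X_T=1)\ge\Pee_{\overline p}(A)+c$.
\end{itemize}
Turning low-degree weight into such a set, without the degenerate $p$-biased hypercontractive constants, is the entire content of Bourgain's theorem. The one intermediate claim you do derive also does not follow. Markov's inequality bounds the number of pivotal ones. But a configuration $x\in A$ with no pivotal one has no one-element removal witness, and such configurations can make up almost all of $A$. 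Take $A=\{x:\ x\text{ has at least }k\text{ ones among }m\text{ coordinates of bias }p\}$, with $mp$ tuned so that $\Pee(A)=1/2$. The weighted influence is then about $\sqrt{k/(2\pi)}=:C$. Only a fraction of order $1/C$ of configurations lie in $A$ and have a pivotal one, not $\eps/2$. Controlling the removal distance needs a separate argument.

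You also cannot close the gap by quoting Bourgain's theorem from~\cite{Friedgut99} as a black box. That theorem concerns a monotone property under a single measure $\mu_p$, and Step~1 does not produce this setting.

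First, the coordinates with $p_i\le q$ keep their unequal biases, so there is no common scale. This is repairable: split every coordinate with $p_i\le 1-q$ into an OR of copies of one tiny bias, perturbing the $p_i$ slightly to make this exact. The weighted influence grows by a factor at most $\ln(1/q)/(1-q)$.

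The more serious problem is that after complementing, $1_A$ is increasing in some tiny-bias coordinates and decreasing in others.
\begin{itemize}
\item If you condition on one group and apply a monotone theorem to the other, you get a boosting set that depends on the conditioning. It does not give one set for the unconditioned measure.
\item Your witness-set sketch, phrased via ones of $x\in A$ whose removal exits $A$, only sees the increasing coordinates. So ``the complemented coordinates give (b)'' is unsupported when both groups carry variance.
\end{itemize}

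What is missing is a Bourgain-type argument, for a product of tiny, possibly unequal biases, that does not use monotonicity. It should give a bounded $T$ with $|\Pee_{\overline p}(A\mid X_T\text{ at its rare value})-\Pee_{\overline p}(A)|\ge c$. Monotonicity then lets you drop the decreasing part of $T$ if the change is upward, or the increasing part if it is downward, yielding (a) or (b).

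Finally, the ``balanced'' branch of your Step~2 is empty after Step~1, so the junta argument plays no role.
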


\noindent
For $ m\in \NN $ and $ x\in \RR $ we will write  $ (x)_m:=x(x-1)\dots (x-m+1) $ for the {\em falling factorial}. 
For a proof of the following classical result see e.g. Theorem 1.22 in \cite{Bollobas2001RandomGraphs}.

\begin{lemma}\label{fallingmethod}
	Assume that for some fixed $ \mu>0 $ the sequence of random variables $ X_1,X_2,\dots $ satisfy for all $ m\in \NN $
	\begin{equation*}
		\lim_{n \to \infty} \Ex (X_n)_m = \mu^m.
	\end{equation*}
	Then, $ X_n $ converges to $ \emph{\text{Poi}}(\mu) $ in distribution as $ n\to\infty $.
\end{lemma}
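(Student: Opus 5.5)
The plan is the classical method of moments. First, recall that if $Z\isd\Po(\mu)$ then $\Ex (Z)_m=\mu^m$ for every $m\in\eN$; this follows from $\Ex (Z)_m=\sum_{k\geq m} e^{-\mu}\mu^k/(k-m)!=\mu^m$. Second, every monomial $x^m$ is a fixed linear combination of falling factorials:
\[ x^m=\sum_{j=0}^m S(m,j)\,(x)_j, \]
with $S(m,j)$ the Stirling numbers of the second kind. By linearity of expectation, the hypothesis therefore gives $\lim_{n\to\infty}\Ex X_n^m=\Ex Z^m$ for every $m\in\eN$. In particular $\sup_n \Ex X_n^m<\infty$ for every $m$, once we discard finitely many $n$.

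Next I would show tightness and identify all subsequential limits. Boundedness of $\Ex X_n^2$ gives tightness by Chebyshev's inequality. So by Prokhorov's theorem every subsequence has a further subsequence converging in distribution to some random variable $Y$. Along that subsequence, $\sup_n \Ex X_n^{2m}<\infty$, so the family $\{X_n^m\}$ is uniformly integrable. Hence $\Ex X_n^m\to\Ex Y^m$, which gives $\Ex Y^m=\Ex Z^m$ for all $m$.

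It then remains to check that the Poisson law is determined by its moments. Its moment generating function $\Ex e^{tZ}=\exp(\mu(e^t-1))$ is finite for all real $t$, which is a standard sufficient condition for moment determinacy (Carleman's condition also suffices). Consequently $Y\isd\Po(\mu)$ for every subsequential limit. Since every subsequence has a further subsequence converging to the same law, $X_n$ converges to $\Po(\mu)$ in distribution.

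In our intended application the $X_n$ are nonnegative integer-valued (numbers of edges), and there a more elementary route is available. The Bonferroni inequalities give
\[ \Pee(X_n=k)=\sum_{j\geq0}(-1)^j\frac{\Ex (X_n)_{k+j}}{k!\,j!}, \]
where truncating the sum at even or odd $j$ yields upper and lower bounds. Passing to the limit in these truncations and letting the truncation level grow recovers $e^{-\mu}\mu^k/k!$ directly. The only step requiring care is the moment-determinacy (equivalently, uniform integrability plus uniqueness) step. In the general real-valued form stated, that is where the argument rests, and it is handled by the finiteness of the Poisson moment generating function.
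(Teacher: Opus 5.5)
Your proof is correct. The paper does not prove this lemma itself; it cites Theorem~1.22 of Bollob\'as's \emph{Random Graphs}. There the result is stated for non-negative integer-valued random variables and proved essentially by the route you sketch at the end: the Bonferroni-type inequalities bound $\Pee(X_n=k)$ above and below by even and odd truncations of $\sum_j(-1)^j\Ex(X_n)_{k+j}/(k!\,j!)$, one lets $n\to\infty$, and then lets the truncation level grow.

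Your main route is genuinely different. You convert factorial moments to ordinary moments with Stirling numbers, get tightness from the bounded second moment, get uniform integrability of $X_n^m$ from bounded $\Ex X_n^{2m}$, and identify every subsequential limit using the moment determinacy of $\Po(\mu)$ (finite moment generating function). This costs Prokhorov's theorem and a determinacy criterion. What it buys is generality: it proves the lemma for arbitrary real-valued $X_n$ with finite moments, which is what the paper literally states. The paper silently drops the cited theorem's integer-valuedness hypothesis, although that hypothesis does hold in the only application, the edge count $W_n$. The Bonferroni route needs no identification of subsequential limits and gives explicit two-sided bounds on $\Pee(X_n=k)$ at finite $n$, but it only applies to non-negative integer-valued variables.

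One small imprecision: the full series you display for $\Pee(X_n=k)$ need not converge. For a geometric variable with ratio at least $1/2$ and $k=0$, the terms do not tend to zero. Only the truncated inequalities are valid in general, and those are all your argument uses.
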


\subsection{Ingredients from percolation theory}

We are going to make use of two percolation results for the 2-colourability threshold. 
One in the supercritical regime and the other subcritical. The following is well known, but,
since we did not find a convenient reference, we provide a proof in Appendix~\ref{sec:depperco} for completeness.

\begin{lemma}\label{dependentperco}
	Let $ C_m(\typ) $ be the cluster of $ \typ $ in bond percolation restricted to $ \Lambda_m\coloneqq \{-m,\dots,m \}^d $ with parameter $ p $. Then, for all $ 0<\eps <1/10$ we can choose $ p=p(\eps,d) $ large enough such that for all large enough $ m\geq m_0(\eps,d,p) $ 
	\begin{equation*}
		\Pro_p(|C_m(\typ)|>(1-\eps)|\Lambda_m|)>1-\eps.
	\end{equation*}
\end{lemma}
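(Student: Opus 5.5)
A Peierls-type contour argument is the natural route.

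Call a vertex $x\in\Lambda_m$ \emph{good} if all $2d$ edges incident to $x$ (those present in the box) are open. Otherwise it is bad. The first step is to show that, for $p$ close to $1$, the complement of the giant cluster is small.

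The key observation concerns any finite set $S\subseteq\Lambda_m$ that is separated from the rest of the cluster structure. Suppose $x\in\Lambda_m$ is not in the largest open cluster. Then there is a connected component $D$ of $\Lambda_m\setminus C$ containing $x$, where $C$ is some cluster. Its edge boundary inside the box consists entirely of closed edges.

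By the discrete isoperimetric inequality in a box, a set $D\subseteq\Lambda_m$ with $|D|\le|\Lambda_m|/2$ has edge boundary (within $\Lambda_m$) of size at least $c_d|D|^{(d-1)/d}\ge c_d$. The boundary is connected in a suitable $*$-sense, namely as a set of plaquettes. The number of such $*$-connected closed edge sets of size $k$ containing a given edge near $x$ is at most $\gamma_d^k$. Hence
\[
\Pro_p(x \text{ is separated by a closed cutset from a set of size} > |\Lambda_m|/2) \le \sum_{k\ge c_d}\, k^{d}\,(\gamma_d(1-p))^k ,
\]
which is smaller than $\eps^2/4$ once $p$ is close enough to $1$, uniformly in $m$ and $x$.

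Next, argue that with high probability there is a cluster of size $>|\Lambda_m|/2$. Let $Y$ be the number of vertices $x$ not belonging to any open cluster of size $>|\Lambda_m|/2$. Any such $x$ lies in a set $D$ (the union of components of the complement of the largest cluster, or the cluster of $x$ itself) whose closed boundary surrounds it. A case split handles this. If every cluster has size $\le|\Lambda_m|/2$, then the cluster of $x$ itself has size $\le|\Lambda_m|/2$ and is enclosed by a closed cutset. Otherwise a unique giant cluster $C$ exists, and $x\notin C$ is cut off from $C$ by a closed cutset on the smaller side.

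Either way, the contour bound gives $\Ex Y\le(\eps^2/4)|\Lambda_m|$. Markov's inequality then yields $\Pro(Y>\eps|\Lambda_m|)\le \eps/4$. On the event $Y\le\eps|\Lambda_m|$, at least $(1-\eps)|\Lambda_m|>|\Lambda_m|/2$ vertices lie in giant clusters, and there is at most one such cluster. Hence the giant cluster has size $>(1-\eps)|\Lambda_m|$.

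It remains to pass from the giant cluster to the cluster of the origin. By the same contour estimate applied to $x=\typ$, the origin lies in the giant cluster except with probability $\le\eps^2/4$. A union bound gives a total failure probability $<\eps$ for $m\ge m_0$. Here $m_0$ is only needed so that $|\Lambda_m|$ is large relative to the isoperimetric constants, and so that "smaller side" makes sense.

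The main obstacle is the contour-counting step. One must make precise that the closed boundary of a component $D$ of the box that contains or surrounds $x$ forms a $*$-connected plaquette set, even when $D$ touches the boundary of $\Lambda_m$. One must also show that this set has size at least logarithmic (or at least constant) and passes within distance comparable to its size from $x$. I would handle this with the standard dual-surface argument, Timár's lemma on connectivity of boundaries. For sets touching $\partial\Lambda_m$ I would use the isoperimetric inequality relative to the box, which still gives boundary at least $c_d\min(|D|,|\Lambda_m\setminus D|)^{(d-1)/d}$. Counting by the position of a boundary plaquette along a ray from $x$ then gives the $k^d\gamma_d^k$ bound.
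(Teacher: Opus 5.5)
Your route is genuinely different from the paper's and can be made to work, but one step is wrong as written. In the case where no cluster exceeds $|\Lambda_m|/2$, it is not true that each $x$ counted by $Y$ lies on the small side of a single connected closed cutset.

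Take $d=2$ and let the open edges be exactly those on rows and columns $\equiv 0 \pmod{10}$. This ``net'' is one cluster of about $|\Lambda_m|/5$ vertices. Any connected $D\ni x$ (with $x$ on the net) whose boundary is closed must contain the whole net. So if $\Lambda_m\setminus D$ is connected, it sits inside a single pocket of at most $81$ vertices, and $|D|>|\Lambda_m|/2$. The closed boundary of $C(x)$ is a union of many disjoint pocket boundaries, which your enumeration of connected cutsets never sees. Writing $Z$ for the number of vertices on the small side of a closed connected cutset, the pointwise bound $Y\le Z$ therefore fails, and ``either way $\Ex Y\le(\eps^2/4)|\Lambda_m|$'' is unjustified.

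The fix is a deterministic statement: if no cluster is giant, then $Z\ge|\Lambda_m|/2$. Suppose $x$ is not counted in $Z$, and let $E$ be a component of $\Lambda_m\setminus C(x)$. Then $\Lambda_m\setminus E$ is connected (it is $C(x)$ together with components adjacent to $C(x)$) and has closed boundary, so $|E|>|\Lambda_m|/2$ would put $x$ in $Z$. Hence every such $E$ is small, and every vertex of $\Lambda_m\setminus C(x)$ (at least $|\Lambda_m|/2$ of them) is counted. This gives $\Pro(\text{no giant})\le 2\Ex Z/|\Lambda_m|\le\eps^2/2$. On the giant event $Y\le Z$ does hold, so your conclusion survives.

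Two smaller points:
\begin{enumerate}
\item The fact that $D$ and $\Lambda_m\setminus D$ are both connected is exactly what boundary-connectivity needs. Tim\'ar's theorem can be applied to the box graph itself, whose cycle space is generated by unit squares, so sets touching $\partial\Lambda_m$ need no special treatment.
\item Locating an anchor plaquette ``along a ray from $x$'' fails if $D$ contains the axis rays through $x$ all the way to $\partial\Lambda_m$. Instead, if the nearest edge of $\partial D$ is at graph distance $r$ from $x$, then $D$ contains the box-ball of radius $r-1$. So $|D|\ge c r^d$, and isoperimetry gives $k\ge c r^{d-1}$, leaving only polynomially many anchors.
\end{enumerate}

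The paper avoids $(d-1)$-dimensional surfaces altogether by working in two-dimensional coordinate planes, where planar duality is classical. Take $u,v$ in a plane $\Pi$ at distance at least $r=\lceil\beta\log m\rceil$ from $\partial\Lambda_m$. If $u$ and $v$ are not connected inside $\Pi$, there is a dual circuit crossing only closed edges around $u$ or around $v$, or such a dual path from $\partial\Pi$ reaching depth $r$. Each is bounded by Peierls counts of self-avoiding paths. Chaining $d-1$ planes gives $\Pro(\typ\text{ not connected to }z)\le 3d\rho$ for all $z$ outside a boundary layer of size $o(|\Lambda_m|)$. Hence $\Ex|\Lambda_m\setminus C_m(\typ)|\le\eps^2|\Lambda_m|$, and Markov finishes.

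This controls the cluster of the origin directly, with no isoperimetric inequality, no boundary-connectivity theorem and no uniqueness-of-giant step. The price is needing $m\ge m_0(p)$ to absorb the boundary layer and the boundary-path term. Your argument uses heavier tools but gives bounds uniform in $m$. It is also the version that extends to other graphs with good isoperimetry, or to stronger surface-order estimates.
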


\noindent
 A proof for the following subcritical exponential decay result can be found at Theorem 10 in Section 4.5 of \cite{bollobas2006percolation}. 
\begin{lemma}\label{bolloexpdecay}
    Let $G$ be a connected, infinite, transitive, locally-finite graph with $|B_r(x)|\leq r^{(\log r)/100} $ for all $x\in G$, where $B_r(x)$ is the ball with radius $r$ w.r.t. the graph distance around $x$. Then, if we consider site percolation on $G$ with $p<p_c(G)$ there exists a constant $c=c(p,G)>0$ such that for all $n\in \NN$
    \begin{equation*}
        \Pro_p(|C_x| \geq n)\leq e^{-cn},
    \end{equation*}
    where $C_x$ is the connected component of $x$.
\end{lemma}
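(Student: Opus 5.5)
The plan is to go through finiteness of the mean cluster size (susceptibility) and then upgrade to an exponential tail for the cluster size itself. Throughout, fix $p<p_c(G)$, write $\chi(p):=\Ex_p|C_x|$ (independent of $x$ by transitivity), and let $\{x\leftrightarrow \partial B_r(x)\}$ denote the event that the open cluster of $x$ reaches graph distance $r$. The argument has three steps.

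\textbf{Step 1: exponential decay of the radius.} First show there is $c_1=c_1(p,G)>0$ with $\Pro_p(x\leftrightarrow\partial B_r(x))\le e^{-c_1 r}$ for all $r$. I would use Menshikov's differential-inequality argument, which is where the growth hypothesis $|B_r(x)|\le r^{(\log r)/100}$ enters. Set $g_p(r):=\Pro_p(x\leftrightarrow\partial B_r(x))$. Russo's formula expresses $\frac{d}{dp}g_p(r)$ through the expected number of pivotal sites. A BK-type argument counting successive disjoint crossings of annuli then gives an inequality of the form
$$\frac{d}{dp}\log g_p(r)\;\ge\; \frac{r}{\sum_{i\le r} g_p(i)}-1 .$$
The growth bound on $|B_r|$ is needed to control the entropy of the possible pivotal locations. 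Integrating this inequality over $[p,p']$ with $p<p'<p_c$, and using $g_{p'}(r)\to\theta(p')=0$, bootstraps first to a stretched-exponential bound and then to a genuinely exponential bound. As an alternative, the Duminil-Copin--Tassion route would also work: define $\tilde p_c$ via finite sets $S\ni x$ with $\varphi_p(S)<1$, show exponential radius decay below $\tilde p_c$ by iterating $\varphi_p(S)$, and show $\theta(p)>0$ above $\tilde p_c$ via the OSSS inequality. This route does not even need the growth bound.

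\textbf{Step 2: finite susceptibility.} Next deduce $\chi(p)<\infty$. Writing $S_r(x)$ for the sphere of radius $r$, we have
$$\chi(p)\le\sum_{r\ge0}|S_r(x)|\,g_p(r)\le\sum_r r^{(\log r)/100}e^{-c_1 r}<\infty .$$
The sum converges because the growth bound is subexponential.

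\textbf{Step 3: exponential tail of $|C_x|$.} Bound the moments via the tree-graph inequality of Aizenman--Newman, which follows from the BK inequality and transitivity:
$$\Ex_p|C_x|^k\le k!\,\chi(p)^{2k-1}\quad\text{for all }k\ge1 .$$
Summing, $\Ex_p e^{t|C_x|}<\infty$ for $0<t<\chi(p)^{-2}$. Markov's inequality then gives
$$\Pro_p(|C_x|\ge n)\le \Ex_p e^{t|C_x|}\,e^{-tn}\le e^{-cn}$$
after shrinking $c$ to absorb the constant (we may take $n\ge1$; for small $n$ simply reduce $c$).

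The main obstacle is Step 1, and in particular the version of Menshikov's inequality with the given growth condition. The delicate point is bounding the number of pivotals from below in terms of the radius while using only BK and the subexponential volume growth. If this becomes too messy, I would switch to the Duminil-Copin--Tassion argument, for which Steps 2--3 go through unchanged.
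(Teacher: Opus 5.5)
The paper gives no argument for this lemma. It quotes it from Bollob\'as--Riordan (Theorem 10 in Section 4.5) and uses it only for the auxiliary graph $G'$, whose balls grow polynomially. Your proposal is correct in outline. Its three steps are subcritical sharpness for the radius, finiteness of $\chi(p)$, and Aizenman--Newman tree-graph moments for the volume; this is the standard proof behind such textbook statements, so the real difference is that you open the black box rather than take a new route.

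Doing so through your Duminil-Copin--Tassion option buys something. That argument works for site percolation on every infinite, locally finite, transitive graph. Moreover, the same decomposition over $\Delta S$, applied to connections inside a finite exhaustion of $G$, gives a bound of the form $\chi(p)\le |S|/(1-\varphi_p(S))$ directly. So the growth hypothesis is not needed anywhere. This is just as well: as literally written, $|B_r(x)|\le r^{(\log r)/100}$ already fails at $r=2$ in every infinite connected graph, so it can only be meant for large $r$. The citation, of course, spares the paper all of this.

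Three small corrections, none fatal:
\begin{enumerate}
\item In Duminil-Copin--Tassion, the lower bound on $\theta$ above $\tilde p_c$ comes from Russo's formula and an exploration of the vertices not connected to the boundary, not from OSSS. OSSS is the Duminil-Copin--Raoufi--Tassion proof.
\item What BK actually gives in the bond case is $\Ex_p|C_x|^k\le (2k-3)!!\,\chi(p)^{2k-1}$, and $(2k-3)!!$ exceeds $k!$ once $k\ge 6$.
\item In site percolation, each branch vertex of the tree is shared by three paths. BK must therefore be applied with the paths started at neighbours of the branch vertex, which costs powers of the degree.
\end{enumerate}
Since $(2k-3)!!\le 2^{k-1}(k-1)!$, you still get $\Ex_p e^{t|C_x|}<\infty$ for all sufficiently small $t>0$, and your Step 3 goes through.
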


\subsection{Preliminaries on (random) Borsuk graphs}

For $V \subseteq S^d$, we denote by $G(V,\alpha)$ the subgraph of the Borsuk graph (with parameter $\alpha$) induced
by the vertices $V$.
Throughout the rest of the paper, we let $X_1,X_2,\dots$ be i.i.d.~uniform on $S^d$ and 
let $N \isd \Po(n)$ be a Poisson distributed random 
variable, independent of the $X_i$-s. 
We consider the Poissonized Borsuk graph $G(\Xcal,\alpha)$ where $\Xcal = \{X_1,\dots,X_N\}$.
It suffices to prove Theorems~\ref{thm:maind+1},~\ref{bt1} and~\ref{thm:main} for $G(\Xcal,\alpha)$ instead of 
$G(n,\alpha) = G(\{X_1,\dots,X_n\},\alpha)$ because:

\begin{lemma}\label{lem:transferPo}
For every $k \in \eN$, we have 
\begin{enumerate}
\item If $\Pee_n(\chi(G(\Xcal,\alpha))> k) = 1-o_n(1)$ then also 
$\Pee(\chi(G(n,\alpha))> k) = 1-o_n(1)$, and;
\item If $\Pee_n(\chi(G(\Xcal,\alpha))> k) = o_n(1)$ then also 
$\Pee(\chi(G(n,\alpha))> k) = o_n(1)$.
\end{enumerate}
\end{lemma}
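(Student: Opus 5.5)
The plan is a standard de-Poissonization via monotone coupling. The key observation is that for fixed $\alpha$ and $k$, the property $\chi(G(V,\alpha))>k$ is monotone increasing in the vertex set $V$: adding vertices can only add edges, and an induced subgraph has chromatic number at most that of the whole graph. So if $V\subseteq V'$ then $\chi(G(V,\alpha))\le\chi(G(V',\alpha))$. We then couple $G(\Xcal,\alpha)$ with $G(m,\alpha)=G(\{X_1,\dots,X_m\},\alpha)$ for various deterministic $m$, using the same sequence $X_1,X_2,\dots$. Because Poissonization is only available at the mean $n$ of our choice, we compare with Poisson processes of slightly different means, $n_\pm := n \pm n^{3/4}$.

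For part 1, suppose $\Pee_{n'}(\chi(G(\Xcal,\alpha))>k)=1-o(1)$. We need the hypothesis at a slightly larger mean, so we first reduce to that case.
\begin{itemize}
\item We read the hypothesis as a statement about a sequence, indexed by $n$, with $\alpha=\alpha(n)$. We want the version with mean $n_-$ but the same $\alpha(n)$.
\item The cleanest way is to avoid the shift altogether. Fix $n$ and let $N\isd\Po(n)$. By monotonicity,
\[ \Pee(\chi(G(n,\alpha))>k)\ \ge\ \Pee(\chi(G(\Xcal,\alpha))>k,\ N\le n)\ \ge\ \Pee(\chi(G(\Xcal,\alpha))>k)-\Pee(N>n). \]
\item The error term $\Pee(N>n)$ tends to $1/2$, not to $0$, so this naive bound fails.
\end{itemize}

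The fix is a more careful conditioning argument. Conditional on $N=m$, the graph $G(\Xcal,\alpha)$ has the law of $G(m,\alpha)$. Set $f(m):=\Pee(\chi(G(m,\alpha))>k)$ for our fixed $\alpha=\alpha(n)$; by the coupling, $f$ is non-decreasing in $m$. Then
\[ \Pee_n(\chi(G(\Xcal,\alpha))>k)=\sum_m \Pee(N=m)\,f(m)\ \le\ \Pee(N\le n)\,f(n)+\Pee(N>n)\cdot 1. \]
If the left side is $1-o(1)$, then $1-f(n)\le o(1)/\Pee(N\le n)$. Since $\Pee(N\le n)\to1/2$, this gives $f(n)=1-o(1)$.

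Part 2 is symmetric. We have
\[ \Pee_n(\chi(G(\Xcal,\alpha))>k)\ \ge\ \Pee(N\ge n)\,f(n), \]
and $\Pee(N\ge n)\to 1/2$ by the central limit theorem (or an elementary Stirling computation for the Poisson median). Hence $f(n)\le 2\,o(1)+o(1)=o(1)$.

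This route needs no shifting of the mean and no tail bounds beyond $\liminf_n \Pee(N\le n)>0$ and $\liminf_n \Pee(N\ge n)>0$. These follow from the central limit theorem for $\Po(n)$, which says both probabilities tend to $1/2$.

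The only points needing care are, first, that monotonicity of $f$ holds for a fixed $\alpha$, which it does because $\alpha(n)$ is held fixed while $m$ varies. Second, the conditional law given $N=m$ must be exactly that of $G(m,\alpha)$, which holds since $N$ is independent of the $X_i$. I expect no real obstacle; the main thing to get right is to use the monotone mixture bound rather than a crude union bound.
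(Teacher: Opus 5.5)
Your final argument is correct and is essentially the paper's own proof: condition on $N=m$, use that $f(m)=\Pee(\chi(G(m,\alpha))>k)$ is non-decreasing in $m$ for fixed $\alpha$, split the Poisson mixture at $m=n$, and use $\Pee(N\le n),\Pee(N\ge n)\to 1/2$ by the central limit theorem. The preliminary discussion of shifted means $n_\pm$ and the naive bound is never used, so you can delete it.
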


\begin{proof} By obvious monotonicity, we have 

$$ \begin{array}{rcl} 
\Pee_n(\chi(G(\Xcal,\alpha))> k)
& = & \sum_{m} \Pee( \chi(G(\Xcal,\alpha))> k | N = m ) \Pee( N=m )\\
& = & \sum_{m} \Pee( \chi(G(m,\alpha))> k ) \Pee( N=m )\\
& \leq & \Pee( \chi(G(n,\alpha)) > k ) \cdot \Pee( N\leq n) + \Pee( N > n ) \\
& = & \Pee( \chi(G(n,\alpha)) > k ) \cdot (1/2+o_n(1)) + 1/2+o_N(1),
\end{array} $$

\noindent
where we use $\Pee( N \leq n ) = 1/2+o_n(1)$, which follows for instance from the central limit theorem.
The first item follows.
For the second item, we similarly have

$$ \begin{array}{rcl} 
\Pee_n(\chi(G(\Xcal,\alpha))> k)
& \geq & \sum_{m\geq n} \Pee( \chi(G(m,\alpha))> k ) \Pee( N=m ) \\
& \geq & \Pee( \chi(G(n,\alpha)) > k ) \cdot \Pee( N \geq  n ) \\
& = & \Pee( \chi(G(n,\alpha)) > k ) \cdot (1/2+o_n(1)). 
\end{array} $$

\end{proof}

As a side remark we point out that the lemma and its proof actually hold 
if we replace ``chromatic number $>k$'' with any graph property that is ``preserved under taking supergraphs''
(i.e.~if we arbitrarily add vertices and/or edges to a graph having the property then the 
resulting graph also has the property).

\noindent
We shall make use of the following observation due to Kahle and Martinez-Figueroa~\cite{KahleFig}.

\begin{lemma}[\cite{KahleFig}]\label{lem:KFchi}
If $\displaystyle \bigcup_{v\in V} \kap(v,\alpha/2) = S^d$ then 
$\displaystyle \chi( G(V,\alpha) ) \geq d+2$.
\end{lemma}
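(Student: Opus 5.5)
The plan is to argue by contradiction using the Lyusternik--Schnirelmann theorem. We use it in the following form: if $S^d$ is covered by $d+1$ sets, each of which is open (or each closed), then one of these sets contains a pair of antipodal points $x,-x$. Throughout we assume $0<\alpha<\pi$, which is the only regime of interest in the paper. For $\alpha\geq\pi$ the graph is essentially complete, and the statement has to be checked directly; this degenerate case is not needed and is treated separately if at all.

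Suppose, for a contradiction, that $\bigcup_{v\in V}\kap(v,\alpha/2)=S^d$ and that $c:V\to\{1,\dots,d+1\}$ is a proper colouring of $G(V,\alpha)$. For $i=1,\dots,d+1$ set
$$ A_i := \bigcup_{v\in V,\ c(v)=i} \kap(v,\alpha/2). $$
Each $A_i$ is a union of open spherical caps, hence open. By the covering hypothesis, $A_1\cup\dots\cup A_{d+1}=S^d$. This holds whether or not $V$ is finite, since only the colour classes enter. By the open-set version of Lyusternik--Schnirelmann, there is an index $i$ and a point $x\in S^d$ with $x,-x\in A_i$. So there are $u,w\in V$ with $c(u)=c(w)=i$ such that $\dist(x,u)<\alpha/2$ and $\dist(-x,w)<\alpha/2$.

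Next we apply the triangle inequality for the geodesic metric:
$$ \pi=\dist(x,-x)\leq \dist(x,u)+\dist(u,w)+\dist(w,-x) < \alpha+\dist(u,w), $$
so $\dist(u,w)>\pi-\alpha$. If $u=w$, this would give $0>\pi-\alpha$, contradicting $\alpha<\pi$. Hence $u\neq w$ and $uw$ is an edge of $G(V,\alpha)$ whose endpoints have the same colour, contradicting properness. Therefore no proper $(d+1)$-colouring exists, and $\chi(G(V,\alpha))\geq d+2$.

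The only nontrivial ingredient is the topological one: the version of Lyusternik--Schnirelmann we need is for \emph{open} covers, since the caps are open. If only the closed-set version is available, we would shrink the cover. For finite $V$ (the case actually used in the paper), compactness gives a slightly smaller radius $\alpha/2-\eta$ whose closed caps still cover $S^d$, since the open caps cover the compact sphere and the minimum over $x$ of $\max_v(\alpha/2-\dist(x,v))$ is positive. The closed version then yields the same conclusion with strict inequalities intact. Everything else is a one-line metric computation.
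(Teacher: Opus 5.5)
Your proof is correct and is essentially the paper's argument: the colour classes give $d+1$ open unions of caps, the open-set Lyusternik--Schnirelmann theorem puts an antipodal pair in one of them, and the triangle inequality for $\dist$ then yields a monochromatic edge. Your explicit check that $u\neq w$, which the paper skips, is a good addition and works verbatim for $\alpha=\pi$ too. For $\alpha>\pi$, however, the statement is false rather than something to be ``checked directly'': two antipodal vertices already cover $S^d$ and give $\chi=2$, so that case is simply outside the intended regime.
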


This result is not stated as a separate lemma or theorem in~\cite{KahleFig}, but is proved 
integral to the proof of Theorem 1.1. The proof in~\cite{KahleFig} is also phrased 
in terms of a different -- but equivalent -- definition
for the Borsuk graph from the one we employ. 
Since the computations translating between the two settings take more mental effort 
than the proof itself and the proof is short and sweet, we include it for the benefit of the reader.

\begin{proof}
Suppose that $G(V,\alpha)$ can be coloured with $d+1$ colours. 
Then we can write $V=V_1\cup\dots\cup V_{d+1}$ such that there are no 
edges with both endpoints in the same $V_i$.
Let us set 

$$ U_i := \bigcup_{v \in V_i} \kap(v,\alpha/2). $$

\noindent
By a version of the Lyusternik-Shnirelmann theorem for open sets (see e.g.~\cite{MatousekBorsukUlam}, Theorem 2.1.1.) 
there exist $1\leq i\leq d+1$ and $u \in S^d$ such that $u,-u \in U_i$.
By definition of $U_i$, there are $v,w \in V_i$ such that $u \in \kap(v,\alpha/2), -u \in \kap(w,\alpha/2)$.
The triangle inequality for the geodesic distance gives:

$$ \dist(v,w) \geq \dist(u,-u) - \dist(u,v) - \dist(-u,w) 
> \pi - \alpha, $$

\noindent
so that $vw$ must be an edge of $G(V,\alpha)$, contradicting 
the choice of $V_1,\dots,V_{d+1}$. It follows there cannot be a colouring of $G(V,\alpha)$ in $d+1$ colours.
\end{proof}

\section{Proof of Theorem~\ref{thm:maind+1}.\label{sec:maind+1}}

Rather than working directly on the $d$-sphere, we find it conceptually easier to 
work in $\eR^d$.
%
%
%
We let $\Xcal = \{X_1,\dots,X_N\}$ be as above and set 

$$   \Zcal := \pi[\Xcal] = \{Z_1,\dots,Z_N\}. $$

We say a function $f : S^{d-1} \to \eR$ is (spherically) {\em odd} if $f(u)+f(-u)=0$ for all $u \in S^{d-1}$.
It is {\em $\eps$-Lipschitz} if $|f(u)-f(v)| \leq \eps \cdot \norm{u-v}$ for all $u,v \in S^{d-1}$.

\begin{lemma}\label{lem:embed}
For every $\eps >0$ there exists a $c=c(\eps)>0$ such that the following holds, 
setting $\alpha = c \cdot n^{-1/d}$.
With probability $1-o_n(1)$, there exists a function 

$$ h : S^{d-1} \to \left(-n^{-0.9/d},n^{-0.9/d}\right),$$ 

\noindent
that is odd, $\eps$-Lipschitz and  

$$ \Zcal \cap B\left( (1+h(u))\cdot u, \eps\cdot\alpha \right) \neq \emptyset,$$ 

\noindent 
for all $u \in S^{d-1}$.
\end{lemma}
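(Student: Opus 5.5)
The plan is a \emph{multi-scale renormalisation} argument carried out in $\eR^d$ near the unit sphere $S^{d-1}$, where the random set $\Zcal$ looks like a homogeneous Poisson process. By Lemma~\ref{lem:piunif}, on the shell $\{1/2\leq\norm{z}\leq 2\}$ the intensity of $\Zcal$ is $n f_Z(z)$, and this is bounded between two positive constants times $n$. With $\alpha=c\,n^{-1/d}$ and $\delta:=\eps\alpha/(4\sqrt d)$, I tile the shell by cubes of side $\delta$. A cube is \emph{empty} with probability at most $q:=\exp(-a\,(\eps c)^d)$ for some $a=a(d)>0$, and distinct cubes are independent. Choosing $c$ large makes $q$ as small as we like. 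If every point of the graph $\{(1+h(u))u : u\in S^{d-1}\}$ lies within distance $\eps\alpha/2$ of some nonempty cube, the covering requirement holds.

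\textbf{Oddness.} The constraint $h(-u)=-h(u)$ couples the location $(1+t)u$ with the location $(1-t)(-u)$. I therefore call a \emph{site} $(u,t)$, where $u$ ranges over a $\delta$-net of $S^{d-1}$ and $t\in\delta\Zed$, \emph{bad} if either of the two corresponding cubes is empty. Then a site is bad with probability at most $2q$. The set of bad sites is invariant under $(u,t)\mapsto(-u,-t)$. Bad events at sites whose positions are at distance $\gg\delta$ and are not antipodal are independent, so the field is $1$-dependent up to the antipodal identification. It then suffices to build an odd $\eps$-Lipschitz $h$ that avoids the bad sites. I will build $h$ on all of $S^{d-1}$ and enforce oddness by making every modification in antipodal pairs: a bump $+\phi(u)$ near $u_0$ always comes together with $-\phi(-u)$ near $-u_0$.

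\textbf{Multi-scale construction.} I define scales $r_k:=\delta L^k$ for a large constant $L$. A $k$-block is a region of angular diameter about $r_k$ (with radial extent of the same order). A $k$-block is \emph{$k$-bad} if it contains two well-separated $(k-1)$-bad blocks, and a $0$-block is $0$-bad if it contains a bad site. The standard renormalisation recursion gives $\Pee(\text{$k$-block is $k$-bad})\leq (C L^{2d}\,q)^{2^k}$. By a union bound over the roughly $n$ blocks, with probability $1-o_n(1)$ there are no $k$-bad blocks for $k\geq k^*=O(\log\log n)$.

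I then construct $h$ top-down. Set $h_{k^*}\equiv 0$. At scale $k$, each $k$-bad block meets $h_{k+1}$ in at most a bounded cluster of isolated $(k-1)$-bad blocks. For each such block I add an odd smooth bump of height about $r_k/L$, supported on a cap of radius about $\sqrt{L}\,r_k$. The height is chosen so the graph is pushed off the bad block, and the bump has Lipschitz constant about $L^{-1/2}$. Since bumps at a given scale have disjoint supports, the total Lipschitz constant is bounded by $\sum_k L^{-1/2}\cdot(\text{overlap})$. I plan to make the overlap bounded by using the separation that comes from the definition of badness at the next scale, which would give a total Lipschitz constant $\leq\eps$ for $L$ large. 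The total height is at most $\sum_{k\leq k^*} r_k=\delta L^{O(\log\log n)}=n^{-1/d+o(1)}$, and this is $< n^{-0.9/d}$.

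\textbf{Expected main obstacle.} The delicate step is consistency between scales. A fine-scale bump added at scale $k$ must not push the surface back into an empty region that was avoided at a coarser scale, and the bumps placed near $u_0$ and near $-u_0$ must not interfere with each other. The latter is harmless because $\alpha\ll 1$. For the former, my first approach is to require that after each stage the graph stays at distance at least $r_k/(2L)$ from all bad blocks of scale $>k$. I would then show that a bump of height $r_{k-1}/L\ll r_k/(2L)$ preserves this distance. If the bookkeeping fails, the fallback is to replace the iterative bumps by a DDGHS-type ``Lipschitz percolation'' construction: take the lowest $\eps$-Lipschitz surface lying above all bad sites that are reachable by admissible paths, and antisymmetrise the resulting surface. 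In that version the main work would be showing that the antisymmetrisation still avoids bad sites.
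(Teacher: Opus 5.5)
There is a genuine gap. It is in the Lipschitz bookkeeping, not in the cross-scale consistency issue you flagged.

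\textbf{Why your Lipschitz bound fails.} With a fixed scale ratio $L$, every scale contributes bumps of slope of constant order ($L^{-1/2}$ in your accounting). To get a total slope $\le\eps$, you need the number of scales whose bump supports contain a given $u$ to be bounded, and it is not. Non-badness of a $(k+1)$-block constrains only the $k$-bad blocks inside it. It says nothing about isolated bad blocks at scales $k-1,\dots,0$ that sit inside the support of a scale-$k$ bump.

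Consider a configuration near a given $u$ in which each scale $j\le m$ contributes one isolated $j$-bad block next to the current surface, with all the tents tilted the same way. It needs only $O(2^m)$ prescribed empty cubes, so it has probability $e^{-O(2^m)}$. There are $n^{(d-1)/d-o(1)}$ disjoint patches of $S^{d-1}$ at scale $r_m$, so with high probability this configuration occurs somewhere with $m\asymp\log\log n$. Your construction therefore has slope of order $L^{-1/2}\log\log n$ somewhere. You cannot compensate by letting $L$ grow with $n$: the renormalisation needs $CL^{2d}q<1$, and $q$ is fixed once $c=c(\eps)$ is fixed.

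\textbf{The missing idea.} This is what the paper does: make the slope allowed at level $j$ decay geometrically in $j$, so coarse levels get tiny slopes, and pay for this with scale ratios that grow with the level. The paper takes $s_j=K^js_{j-1}$. Going from $h_{j+1}$ to $h_j$, it adds antisymmetric tents of slope about $a=\eps C^{-(j+1)}$, supported within the isolation radius $r=s_{j+1}/2$. Hence $h_j$ is $\eps C^{-j}$-Lipschitz, and $h_0$ is $\eps$-Lipschitz no matter how many scales overlap at a point.

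A tent of slope $a$ and radius $r$ clears only a distance of about $ar$. After the boundedly many orthant-splitting rounds in Lemma~\ref{lem:fg}, which handle up to $2^d$ nearby bad cubes, it clears about $a^Cr$. So one needs $(\eps C^{-(j+1)})^C s_{j+1}/2>2\sqrt d\,s_j$, which is exactly why $s_{j+1}/s_j=K^{j+1}$ with $K\gg C^C$. This costs nothing:
\begin{itemize}
\item the recursion becomes $p_j\le K^{2dj}p_{j-1}^2$, which still gives $p_j\le (K^{4d}p_0)^{2^j}$;
\item the total height is at most $(i+1)K^{i(i+1)/2}s_0=n^{-1/d+o(1)}$ for $i=\lceil 2\log\log n\rceil$.
\end{itemize}

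\textbf{Your other concerns.} In this scheme the consistency worry disappears. $h_j$ only has to avoid bad $j$-cubes with margin $2\sqrt d\,s_j$. This ensures every $s_j/2$-ball around its graph meets at most $2^d$ bad $(j-1)$-cubes, which is the input for the next step. Coarser bad cubes never need to be avoided again, because only bad $0$-cubes matter at the end.

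Your handling of oddness (a symmetric bad set, bumps added in antipodal pairs) is fine and corresponds to the paper's tents $f_p$. The Lipschitz-percolation fallback does not rescue the argument as stated. The map $(u,t)\mapsto(-u,-t)$ reverses the vertical order, so antisymmetrising a lowest surface that avoids bad sites need not keep it off them, as you noted yourself.
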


The lemma and its proof are inspired by a clever proof technique in the recent paper~\cite{BalisterEtal} (Sections 3, 4 and 5).

\subsection{The proof of Lemma~\ref{lem:embed}.}

We fix constants $\delta=\delta(\eps),K=K(\eps)$ to be chosen more precisely over the 
course of our arguments. The constant $\delta$ will be a small positive real and $K$ will be a
large integer.
We dissect $\eR^d$ into axis-parallel hypercubes of side-length 

$$s_0 := \delta\cdot\alpha, $$ 

\noindent 
in the obvious way. These cubes are called {\em level-0 cubes}.
A level-1 cube is a hypercube of side length $s_1 := K\cdot s_0$ 
that is the union of a group of $K\times\dots\times K$ level-0 cubes.
For each $i \geq 1$, having defined the level $(i-1)$-cubes, a level-$i$ cube
is a hypercube of side length 

\begin{equation}\label{eq:sidef} 
s_i := K^{i} \cdot s_{i-1} = K^{i+(i-1)+\dots+1} \cdot s_0
= K^{i(i+1)/2} \cdot s_0,  
\end{equation}

\noindent
that is the union of a group of $K^{i}\times\dots\times K^{i}$ level-$(i-1)$ cubes.

Specifically, the set of $i$-cubes is 

$$ \Qcal_i := \left\{ p + [0,s_i]^d : p \in s_{i} \Zed^d \right\}. $$

We say a level-0 cube $q \in \Qcal_0$ is {\em good} if $q \cap \Zcal \neq \emptyset$. Otherwise it is {\em bad}.
For each $i \geq 1$, each level-$i$ cube $q \in \Qcal_i$ is the union of $(s_i/s_{i-1})^d = K^{di}$ level-$(i-1)$ cubes.
The cube $q\in\Qcal_i$ is called {\em good} if at most one of these level-$(i-1)$ cubes is bad.
Otherwise, if at least two of these level-$(i-1)$ cubes are bad, then $q$ is bad.

\begin{lemma}\label{lem:boxes}
For every $K, \delta$ there is a $c = c(K,\delta)$ such that, 
setting $\alpha := c \cdot n^{-1/d}, s_0 := \delta \cdot \alpha$, etc., as above, for every $i \geq 2 \log\log n$, 
with probability $1-o_n(1)$, every level-$i$ cube $q \in \Qcal_i$ such that 
$q \subseteq B(\orig,100)$ is good.
\end{lemma}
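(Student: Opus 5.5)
The plan is a standard multiscale (renormalization) bound. Let $p_i$ be the supremum, over all level-$i$ cubes $q\in\Qcal_i$ with $q\subseteq B(\orig,100)$, of the probability that $q$ is bad. I will show that $p_i$ decays doubly exponentially in $i$, and then take a union bound over the polynomially many level-$i$ cubes inside $B(\orig,100)$.

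\emph{Base case.} By the Poissonization and the mapping theorem, $\Zcal=\pi[\Xcal]$ is a Poisson process on $\eR^d$ with intensity $n f_Z$, where $f_Z$ is as in Lemma~\ref{lem:piunif}. On $B(\orig,100)$ this intensity is at least
$$ n\,\gamma_d, \qquad \gamma_d:=\frac{1}{(d+1)\kappa_{d+1}}\left(\frac{2}{1+10^4}\right)^d. $$
Hence a level-0 cube $q\subseteq B(\orig,100)$ has volume $s_0^d=\delta^d c^d n^{-1}$ and is bad with probability at most
$$ p_0\le \exp\left(-\gamma_d\,\delta^d c^d\right). $$
This can be made as small as we like, for fixed $K,\delta$, by taking $c$ large.

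\emph{Recursion.} A cube $q\in\Qcal_i$ contained in $B(\orig,100)$ is the disjoint union (up to boundaries of measure zero) of $K^{di}$ level-$(i-1)$ cubes, each also contained in $B(\orig,100)$. Badness of a cube is determined by the points of $\Zcal$ inside it, so by the independence property of the Poisson process the badness events of distinct subcubes are independent. Since $q$ is bad only if two of its subcubes are bad, a union bound over pairs gives
$$ p_i\le \binom{K^{di}}{2}p_{i-1}^2\le K^{2di}p_{i-1}^2. $$
Now set $r_i:=K^{2d(i+2)}p_i$. The exponents satisfy $2d(i+2)+2di-4d(i+1)=0$, so the recursion becomes $r_i\le r_{i-1}^2$, and therefore $r_i\le r_0^{2^i}$. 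Choosing $c=c(K,\delta)$ so large that $r_0=K^{4d}p_0\le 1/2$, we obtain
$$ p_i\le r_i\le 2^{-2^i}\qquad\text{for all } i. $$

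\emph{Union bound.} The number of level-$i$ cubes meeting $B(\orig,100)$ is at most $(200/s_i+2)^d\le (200/s_0+2)^d=O(n)$, since $s_0=\delta c\,n^{-1/d}$. For $i\ge 2\log\log n$ we have $2^i\ge(\log n)^{2\ln 2}$ (and $(\log_2 n)^2$ if $\log$ means $\log_2$). In either case the exponent of $\log n$ exceeds $1$, so
$$ 2^{-2^i}\le \exp\left(-\ln 2\,(\ln n)^{1+\eta}\right) \quad\text{for some }\eta>0, $$
which is $o(n^{-1})$ and indeed $o(n^{-A})$ for every fixed $A$. Summing over the $O(n)$ cubes gives failure probability $o_n(1)$. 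If one wants the statement simultaneously for all levels $i\ge 2\log\log n$, one can also sum over $i$. Only levels with $s_i\le 200$ can have a cube inside $B(\orig,100)$, and since $s_i$ grows like $K^{i^2/2}$ this leaves only $O(\sqrt{\log n})$ levels; the bound survives the extra factor.

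The main point needing care, more bookkeeping than a real obstacle, is the independence step together with the uniform lower bound on the intensity. Both hold because every subcube of a cube inside $B(\orig,100)$ is itself inside $B(\orig,100)$, where $f_Z$ is bounded below, and because disjoint regions of a Poisson process are independent. The other step to get right is choosing the normalization $r_i$ so that the polynomial factor $K^{2di}$ is absorbed into the squaring.
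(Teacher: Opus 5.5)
Your proof is correct and follows the paper's argument: the same uniform lower bound on the projected intensity inside $B(\orig,100)$ bounds $p_0$, the same pairwise union bound with Poisson independence gives $p_i\le K^{2di}p_{i-1}^2$, and the same union bound over $O(n)$ cubes finishes the proof. The differences are cosmetic. Your normalization $r_i=K^{2d(i+2)}p_i$ replaces the paper's explicit unrolling of the recursion, and both give $p_i\le (K^{4d}p_0)^{2^i}$. You are also more careful than the paper about the base of the logarithm and about the version that holds simultaneously for all $i$.
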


\begin{proof}
By Lemma~\ref{lem:piunif} and the mapping theorem for Poisson processes 
(see e.g.~\cite{Kingmanboek}, Section 2.3), $\Zcal$ is a Poisson point process on $\eR^d$ with intensity function 

$$ f(x) := n \cdot  \frac{1}{(d+1)\kappa_{d+1}} \cdot \left(\frac{2}{1+\norm{x}^2}\right)^d. $$

\noindent 
It follows that each level-0 cube $q \in \Qcal_0$ that is contained in $B(\orig,100)$
is bad with probability at most

\begin{equation}\label{eq:aap}\begin{array}{rcl} 
\Pee(\text{$q$ is bad} ) 
& \leq & 
\exp\left[ -s_0^d \cdot n  \cdot \frac{1}{(d+1)\kappa_{d+1}} \cdot \left(\frac{2}{10001}\right)^d \right] \\[2ex]
& = & 
\exp\left[ -  \frac{1}{(d+1)\kappa_{d+1}} \cdot \left(\frac{2}{10001}\right)^d \cdot \delta^d \cdot c^d\right] \\
& =: & p_0. 
\end{array} 
\end{equation}

\noindent
We point out that by choosing $c$ large enough, we can get $p_0$ to be as close to zero as we like.
Applying the union bound, we see that for every $q \in \Qcal_1$ that is contained in $B(\orig,100)$, we have 

$$ \Pee(\text{$q$ is bad }) \leq  {(s_1/s_0)^{d} \choose 2} p_0^2 \leq (s_1/s_0)^{2d} p_0^2
= K^{2d} p_0^2 =: p_1. $$

\noindent 
Repeating the argument, we obtain upper bounds $p_2,p_3,\dots$ for the probability that 
a cube $q \in \Qcal_i$ that is contained in $B(\orig,100)$ is bad, satisfying the recursion:

$$ p_i = \left(s_i/s_{i-1}\right)^{2d} \cdot p_{i-1}^2 = K^{2di} p_{i-1}^2. $$

\noindent
Iterating (feeding the recursion back into itself) we find

$$ p_i = \left(K^{2d}\right)^{i + 2(i-1) + 4(i-2) + \dots + 2^{i-1}} \cdot 
p_0^{2^i}. $$

\noindent
The exponent of $K^{2d}$ can be written as:

$$ \sum_{j=0}^i 2^j \cdot (i-j)
= 2^i \sum_{j=0}^i \left(1/2\right)^{i-j} \cdot (i-j)
\leq 2^i \cdot \left( \sum_{k\geq 0} k 2^{-k} \right) = 2^{i+1}. $$

\noindent 
We see that 

$$ p_i \leq \left( K^{4 d} \cdot p_0 \right)^{2^i} 
\leq 2^{-2^i}, $$

\noindent 
where the last inequality holds provided we chose $c = c(K,\delta)$ sufficiently large.
(We use the remark following~\eqref{eq:aap}.) 

Let $N_i := |\{ q \in \Qcal_i : q \subseteq B(\orig,100)\}|$ denote the number of level-$i$-cubes that 
are contained in $B(\orig,100)$.
We have 

$$ N_i \leq \frac{\vol( B(\orig,100) )}{s_i^d} \leq \frac{\vol( B(\orig,100) )}{s_0^d} = O(n). $$

\noindent
When $i \geq 2\log\log n$ then 

$$p_i \leq 2^{-2^{2\log\log n}} = 2^{-\log^2 n} = o(1/n). $$

\noindent 
So, by the union bound, for $i \geq 2\log\log n$ we have 

$$ \Pee\left(\text{There is a bad level-$i$ cube contained in $B(\orig,100)$}\right) 
\leq N_i p_i = o_n(1). $$

\noindent 
This concludes the proof of Lemma~\ref{lem:boxes}.
\end{proof}

The rough idea for the proof of Lemma~\ref{lem:embed} is to 
massage an (odd and appropriately Lipschitz) function that avoids bad $(j+1)$-cubes
into one that avoids bad $j$-cubes without making drastic local changes, and then 
repeat until finally we avoid bad $0$-cubes.
The following lemma is the main tool we shall be using for this purpose.

\begin{lemma}\label{lem:fg}
For every $d\geq 2$ and $k\geq 1$, there exist $t=t(d,k), a_0=a_0(d,k), r_0=r_0(d,k), C=C(d,k) > 0$ such 
that the following holds.
Suppose that $0 < a < a_0, 0 < r < r_0$, the finite set $V \subseteq \eR^d$
and the function $f :S^{d-1} \to (-t,t)$ are such that  
$f$ is odd and $a$-Lipschitz and moreover

$$ \left|B\left( (1+f(u))\cdot u, r \right) \cap V\right| \leq k, $$

\noindent
for all $u \in S^{d-1}$.
Then there also exists a function $g : S^{d-1} \to \eR$ that is odd, $(Ca)$-Lipschitz and satisfies 

$$ |f(u)-g(u)| < Car \quad \text{ and } \quad 
B\left( (1+g(u)) \cdot u, a^{C}\cdot r  \right) \cap V = \emptyset, $$

\noindent 
for all $u \in S^{d-1}$.
\end{lemma}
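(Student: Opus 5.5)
We plan to construct $g$ as $f$ plus a small odd ``bump'' perturbation. Locally, the surface $\Sigma_f := \{(1+f(u))u : u\in S^{d-1}\}$ is, at scale $r$, nearly a flat hyperplane, because $f$ is $a$-Lipschitz with $a$ small and $|f|<t$ is small. The hypothesis says that every ball of radius $r$ centred on $\Sigma_f$ contains at most $k$ points of $V$. Points of $V$ at distance $\geq r$ from $\Sigma_f$ pose no problem as long as we move the surface by less than $r/2$ in the radial direction. So only the points $v\in V$ within distance $r$ of $\Sigma_f$ matter, and near any location there are at most $k$ of them, up to a bounded overlap constant. The idea is to push the surface radially around each such point by a displacement of order $ar$, spread over a tangential region of radius of order $r$. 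The slope introduced is then of order $a$, and radial shifts of order $a r$ are far larger than the needed clearance $a^C r$.

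\textbf{Step 1: local choice of heights.} Let $\tilde V := \{v\in V : \dist(v,\Sigma_f)<r\}$. For each $v\in\tilde V$, write $v = (1+\rho_v)u_v$ with $u_v\in S^{d-1}$, so that $|\rho_v - f(u_v)|\lesssim r$. Cover $S^{d-1}$ by a bounded-overlap family of patches of angular radius $\asymp r$. Within the enlarged patch around $u$ there are at most $k'=O_{d}(k)$ values $\rho_v - f(u_v)$. By pigeonhole, the interval $[-ar, ar]$ contains a subinterval of length $\asymp ar/(k'+1)$, centred at some level $\eta$, avoiding all of them. Setting the relative height $g-f \approx \eta$ on the patch keeps the surface radially at distance $\gtrsim ar/k'$ from every nearby point. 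The surface is nearly flat with slope $O(a)$, so Euclidean distance is comparable to radial distance up to a factor $1+O(a)$, and we get clearance $\gtrsim ar/k' \geq a^C r$ for $C$ large and $a<a_0$.

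\textbf{Step 2: gluing and oddness.} Adjacent patches choose different levels $\eta$, so we interpolate with a partition of unity at scale $r$ that has gradient $O(1/r)$. The level differences are $O(ar)$, so $g-f$ has Lipschitz constant $O(a)$, and $g$ is therefore $(Ca)$-Lipschitz, with $|f-g|\le ar<Car$. The interpolation regions, however, might pass through levels occupied by points.

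This is the main obstacle. To resolve it, we make the choice of levels consistent along transition zones. We take a sequence of nested scales: patches, then a $k$-step refinement in which transition regions get progressively thinner (factor $a$ each time). At each stage a pigeonhole argument over at most $k$ obstacles gives a free level, and iterating at most $k$ times yields clearance $a^{O(k)} r$; this is the source of the exponent $C$. Alternatively, and more simply, we would try ordering the points and processing them one at a time. Around each obstacle $v$ we insert a localized radial bump $\phi_v$ of height $\pm c\,a r$ and width $\asymp r$. Because at most $k$ obstacles interact, we choose signs and heights greedily, each at a scale a factor $a$ smaller than the previous one, so that later bumps do not undo the clearance gained earlier.

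Oddness is imposed by symmetrization. We perform the construction on a hemisphere-compatible patch family that is invariant under $u\mapsto -u$, and we define the perturbation on one representative of each antipodal pair of patches, setting $(g-f)(-u) := -(g-f)(u)$. Since $f$ is odd, antipodal patches see the obstacle configuration $-V$ mirrored, so a level $\eta$ that is free at $u$ becomes the free level $-\eta$ at $-u$. The constraint $t<t_0$ together with $r<r_0$ guarantees that antipodal patches are disjoint and far apart, so the two choices never interact. Finally, we check that $|g|$ stays bounded by $t + Car$, which does not matter since $g$ is only required to map into $\eR$.
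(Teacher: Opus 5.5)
There is a genuine gap, and it sits at the delicate point of the lemma: the antipodal coupling imposed by oddness. In your symmetrization paragraph you assert that, because $f$ is odd, the patch at $-u$ sees ``the obstacle configuration $-V$ mirrored''. From this you conclude that a level $\eta$ free at $u$ gives a free level $-\eta$ at $-u$. This is false. $V$ is an arbitrary finite set, not centrally symmetric, and the points of $V$ near $(1+f(-u))\cdot(-u)$ bear no relation to those near $(1+f(u))\cdot u$. Oddness forces $(g-f)(-u)=-(g-f)(u)$, so pushing the surface outward at $u$ pushes it inward at $-u$, possibly onto an obstacle there. That the two patches are far apart does not decouple them, since one value determines the other. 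For your piecewise-constant levels you could repair this by pigeonholing over the at most $2k'$ forbidden values coming from both antipodal patches. For bumps, however, the coupling cannot be ignored: an odd bump created for a point $p$ near $(1+f(u))u$ has a mirror part near $-u$, where it overlaps the bump of any point $q$ near $(1+f(-u))(-u)$.

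The paper handles this with Lemma~\ref{lem:part}. It splits $V$ into $\ell=k2^d$ classes, each contained in a single orthant and meeting every ball $B((1+f(u))u,r/4)$ in at most one point. Within one orthant you cannot have a class point near $\varphi(u)$ and another near $\varphi(-u)$, because one of $\varphi(\pm u)$ has a coordinate $\le -0.9/\sqrt d$. Hence the explicitly odd tent functions $f_p(u)=b_p\bigl(\max(0,r-\norm{\varphi(u)-p})-\max(0,r-\norm{\varphi(-u)-p})\bigr)$ have pairwise disjoint supports. Here $b_p=\pm a$ is chosen according to whether $p$ lies inside or outside the surface, so each bump pushes the surface away from $p$. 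This is Lemma~\ref{lem:fgorth}, which gives a $9a$-Lipschitz $g$ with clearance $ar/2$.

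The gluing problem you correctly identify as the main obstacle is also left unresolved. The nested-patch idea is not made precise. The greedy alternative as written (one point at a time, each at a scale a factor $a$ smaller than the previous) would use as many scales as there are points, giving clearance of order $a^{|V|}r$ rather than $a^{C(d,k)}r$.

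What is needed has three parts:
\begin{itemize}
\item a partition of $V$ into a number of classes bounded in terms of $d$ and $k$ only;
\item each class handled in a single step by non-interacting bumps;
\item an iteration over the classes with $a_{i+1}=9a_i$ and $r_{i+1}=(a_i/2)r_i$.
\end{itemize}
The key is the inclusion $B((1+g_{i+1}(u))u,r_{i+1})\subseteq B((1+g_i(u))u,r_i)$, which follows from $a_ir_i+r_{i+1}<r_i$. It guarantees that later stages neither undo the clearance obtained for earlier classes nor destroy the at-most-one-point property for the classes still to be processed. The orthant/packing decomposition and this nesting argument are the ingredients your sketch is missing.
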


We find it convenient to first prove a preliminary version of the lemma with a restriction on the set $V$
and with $k=1$.

\begin{lemma}\label{lem:fgorth}
Suppose that $0 < a < 0.01, 0 < r < 0.01/\sqrt{d}$, the finite set $V \subseteq [0,\infty)^d$
and the function $f :S^{d-1} \to (-0.1,0.1)$ are such that  
$f$ is odd and $a$-Lipschitz and moreover

$$ \left|B\left( (1+f(u))\cdot u, r \right) \cap V\right| \leq 1, $$

\noindent
for all $u \in S^{d-1}$.
Then there also exists a function $g : S^{d-1} \to \eR$ that is odd, $(9a)$-Lipschitz and satisfies 

$$ \left|f(u)-g(u)\right| < ar \quad \text{ and } \quad 
B\left( (1+g(u)) \cdot u, (a/2) \cdot r  \right) \cap V = \emptyset, $$

\noindent 
for all $u \in S^{d-1}$
\end{lemma}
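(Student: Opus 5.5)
The plan is to build $g$ from $f$ by adding small, localized radial bumps, one for each point of $V$ that comes close to the graph of $f$, and then to antisymmetrize. Write $p(u) := (1+f(u))\cdot u$. Since $|f|<0.1$ and $f$ is $a$-Lipschitz, the map $p$ is Lipschitz with constant at most $1.1+a \le 1.2$.

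For each $v \in V$ define the support set $S_v := \{u \in S^{d-1} : \norm{p(u)-v} < r\}$. The first observation is that the sets $S_v$ are pairwise disjoint. Indeed, if $u \in S_v \cap S_{v'}$ with $v \neq v'$, then the ball $B(p(u),r)$ would contain two points of $V$, contradicting the hypothesis. The second observation is where $V \subseteq [0,\infty)^d$ is used. Every $u \in S_v$ makes $p(u)$ lie within distance $r<0.01/\sqrt d$ of the nonnegative orthant, while $\norm{p(u)}\geq 0.9$. Hence $u$ lies close to the positive orthant of $S^{d-1}$; concretely, each coordinate satisfies $u_i \geq -0.02/\sqrt d$ or so. Consequently $-u$ lies in no $S_{v'}$: it would need every coordinate to be at least about $-0.02/\sqrt d$, which is impossible for both $u$ and $-u$ on the unit sphere. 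I will check this with a short computation. In short, the union $S := \bigcup_v S_v$ is disjoint from its antipode $-S$.

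Next, for each $v$ I fix a sign $\sigma_v \in \{-1,+1\}$ and set
$$ b_v(u) := \sigma_v \cdot a\cdot \min\left( r(1-\eta),\; 2\left(r-\norm{p(u)-v}\right)\right)_+ , $$
with $\eta$ a small absolute constant. Let $b:=\sum_v b_v$, and define $g(u) := f(u) + b(u) - b(-u)$. This $g$ is odd by construction. Because the supports are disjoint and $S\cap -S=\emptyset$, at every $u$ at most one term of $b(u)-b(-u)$ is nonzero. This gives $|f-g|\le ar(1-\eta) < ar$. Each $b_v$ is $2a\cdot 1.2$-Lipschitz, and it vanishes on the boundary of its support. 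A piecewise argument along geodesics (being careful at the boundaries of the supports) then gives a Lipschitz constant for $g-f$ of at most $2\cdot 2.4a$. Hence $g$ is at most $(1+4.8)a \le 9a$-Lipschitz.

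The remaining, and main, step is the avoidance property $B((1+g(u))u,(a/2)r)\cap V=\emptyset$. If $u\notin S$ then $g$ differs from $f$ only through $-b(-u)$. In that case the new point $(1+g(u))u$ is still at distance at least roughly $r - ar > ar/2$ from every $v$, because $|g-f|<ar$ and $p(u)$ was at distance $\ge r$ from $V$ (or close to it near the boundary of $S$, where $b$ is small). If $u\in S_v$, the new point is $p(u)+b_v(u)u$. Writing
$$\norm{p(u)+su-v}^2=\norm{p(u)-v}^2+2s\langle p(u)-v,u\rangle+s^2,$$
I want the sign of the radial shift $s=b_v(u)$ to agree with the sign of $\langle p(u)-v,u\rangle$. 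Then the distance is at least $|s|$, which is $\ge ar/2$ whenever $\norm{p(u)-v}\le r/2$ or so. Where $\norm{p(u)-v}$ is larger than that, the distance already exceeds $ar/2$ trivially. So I choose $\sigma_v$ according to whether $\norm{v}$ is larger or smaller than $1+f(u_v)$, where $u_v := v/\norm v$ (shifting inward if $v$ is outside the graph, outward otherwise). The obstacle is showing that this single choice works uniformly on the relevant part of $S_v$, since $\langle p(u)-v,u\rangle$ could change sign as $u$ varies when $v$ sits almost on the graph. I would handle this by splitting into two cases. If $v$ is radially far from the graph, i.e.\ $|\norm v-(1+f(u_v))|\ge ar/4$, then the $a$-Lipschitz tilt of $f$ keeps the sign constant on the region $\norm{p(u)-v}\le r/2$. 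If $|\norm v-(1+f(u_v))|< ar/4$, either sign works because the shift of size $\approx ar$ dominates the small radial offset. This may require adjusting constants such as $\eta$, the plateau width, and the factor $2$ in $b_v$.
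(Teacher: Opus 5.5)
Your construction of $g$ is essentially the paper's. You use the same sign rule $\sigma_v$, the same antisymmetrisation $b(u)-b(-u)$, and the same orthant argument for disjoint supports and $S\cap(-S)=\emptyset$; only the bump profile differs. The bound $|f-g|<ar$ and the case $u\notin S$ are fine.

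\textbf{The gap.} It is in the avoidance step for $u\in S_v$, which is the heart of the lemma. Put $D:=1+f(u_v)-\|v\|$. Your Case~1 assertion is that for $|D|\ge ar/4$ the sign of $\langle p(u)-v,u\rangle$ is constant on $\{\|p(u)-v\|\le r/2\}$. This is false when $r\lesssim a$, which is exactly the regime in which the lemma is used: in Lemma~\ref{lem:embed}, $a$ is a constant while $r$ is of order $n^{-1/d}$. With $\theta$ the angle between $u$ and $u_v$,
\[ \langle p(u)-v,u\rangle = D+\bigl(f(u)-f(u_v)\bigr)+\|v\|(1-\cos\theta). \]
On $\{\|p(u)-v\|\le r/2\}$ the angle $\theta$ can be nearly $r/2$. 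So the tilt term can be nearly $-ar/2$, while the curvature term is only about $r^2/8$.

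\textbf{A concrete counterexample.} Take:
\begin{itemize}
\item $u_v=(1,\dots,1)/\sqrt d$ and $v=(1-ar/4)u_v$;
\item $f(u)=a\langle u,w\rangle$ with $w\perp u_v$ a unit vector, which is odd and $a$-Lipschitz;
\item $r\le a$, and $u=\cos\theta\,u_v-\sin\theta\,w$ with $\theta=0.45r$.
\end{itemize}
Then $\|p(u)-v\|<r/2$ and $\sigma_v=+1$, so $s=ar(1-\eta)>0$. Yet $\langle p(u)-v,u\rangle\le -0.2ar+0.11r^2<0$. So your bound ``distance $\ge|s|$'' is unavailable there, and tuning $\eta$, the plateau or the slope does not help. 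Case~2 has the same defect: the radial offset at $u$ is not just $|D|<ar/4$, it also contains the tilt, which can be about $ar/2$ and is comparable to the shift.

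\textbf{The missing idea.} You need the component of $(1+g(u))u-v$ orthogonal to $u$. For $\theta\le\pi/2$ we have $\|(1+g(u))u-v\|\ge\|v\|\sin\theta$. Hence every $u\in S_v$ at angle $\theta\ge 2ar$ from $u_v$ is at distance $>ar/2$ from $v$, whatever the radial shift. Only the window $\theta<2ar$ needs a radial argument, and there the tilt is $O(a^2r)$.

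\textbf{How the paper does it.} It splits at angle $2ar$. For small angles it compares with the collinear point $v/\|v\|$ via the $9a$-Lipschitz bound on $g$:
\[ \|(1+g(u))u-v\|\ge\bigl|1+g(u)-\|v\|\bigr|\ge\bigl|1+g(u_v)-\|v\|\bigr|-18a^2r. \]
At $u_v$ the sign rule gives $|1+g(u_v)-\|v\||=|D|+|b_v(u_v)|$. For your bump this is $\ge ar(1-\eta)$ whatever $D$ is, or at least $r(1-a)$ if $u_v\notin S_v$. So no case split on $D$ is needed.

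\textbf{Repair within your framework.} It also suffices to replace the threshold $r/2$ by $2ar$. Outside $\{\|p(u)-v\|\le 2ar\}$ the distance exceeds $2ar-ar$ trivially. Inside it $\theta=O(ar)$, so the tilt is $O(a^2r)\ll ar/4$ and both your cases go through.

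\textbf{A minor point.} Your geodesic argument bounds the Lipschitz constant of $b$ with respect to geodesic distance. Converting to chordal distance costs a factor of up to $\pi/2$, but $1+4.8\cdot\pi/2<9$, so this is harmless.
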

%

\begin{proof}
Without loss of generality we can and do assume $\orig \not\in V$. (If we obtain the 
claimed $g$ wrt.~$V \setminus \{\orig\}$ then automatically $\orig \not\in B\left( (1+g(u)) \cdot u, (a/2)\cdot r  \right)$
for all $u$.)

For notational convenience, let us write $\varphi(u) := (1+f(u)) \cdot u$.
For each $p \in V$ we define:

$$ f_{p}(u) := b_p \cdot \left( \max\left(0,r-\norm{\varphi(u)-p}\right)
- \max\left(0, r-\norm{\varphi(-u)-p}\right)\right), $$

\noindent
where 

$$ b_p := \begin{cases}
         a & \text{ if $\norm{p} < 1+f(p/\norm{p})$, }\\
         -a & \text{ otherwise. }
        \end{cases} $$

\noindent
We now set

$$ g := f + \sum_{p \in V} f_p. $$

\noindent
Since $f_p(u)+f_p(-u)=0$ for all $p$ by construction, 
we clearly have $g(u)+g(-u)=f(u)+f(-u)=0$ for all $u$. So $g$ is odd as required.

Next, we point out:

\begin{equation}\label{eq:disjsupp} 
\text{The $f_p$ have disjoint supports.} 
\end{equation}

\begin{quote}\begin{proofof}{\eqref{eq:disjsupp}} 
Aiming for a contradiction, suppose $u \in S^{d-1}$ and $p \neq q \in V$ are such that 
$f_p(u) \neq 0$ and $f_{q}(u) \neq 0$.
We have either {\bf a-1)} $\norm{\varphi(u)-p} < r$ or {\bf a-2)} $\norm{\varphi(-u)-p} < r$, and 
either {\bf b-1)} $\norm{\varphi(u)-q} < r$ or {\bf b-2)} $\norm{\varphi(-u)-q} < r$.
Swapping $u$ for $-u$ if needed, we can and do assume without loss of generality that 
{\bf a-1)} holds.
The condition {\bf b-1)} cannot simultaneously hold, because otherwise 
$|B( \varphi(u), r ) \cap V | \geq 2$. So {\bf b-2)} must hold.

There is a coordinate $1\leq i \leq d$ for which $|\varphi_i(u)| = (1+f(u)) \cdot |u_i| \geq 0.9/\sqrt{d}$. 
As $p \in [0,\infty)^d$, we in fact have $\varphi_i(u) = |\varphi_i(u)| \geq 0.9/\sqrt{d}$, because otherwise

$$ \norm{\varphi(u)-p} \geq |\varphi_i(u)-p_i| \geq 0.9/\sqrt{d} > r. $$

\noindent
This implies that $\varphi_i(-u) = - |\varphi_i(u)| \leq -0.9/\sqrt{d}$ and, since $q \in [0,\infty)^d$, 
we find

$$ \norm{\varphi(-u)-q} \geq |\varphi_i(-u)-q_i| \geq 0.9/\sqrt{d} > r, $$

\noindent
contradicting {\bf b-2)}.
So $u,p,q$ satisfying $f_p(u) \neq 0, f_q(u) \neq 0$ cannot exist, as was to be shown.
\end{proofof}\end{quote}

\noindent 
Having established~\eqref{eq:disjsupp}, it immediately follows that 
$|g(u)-f(u)| \leq a r$ for all $u \in S^{d-1}$.

Now let $u,v \in S^{d-1}$ be arbitrary. By~\eqref{eq:disjsupp}, there are $p,q \in V$ such 
that $g(u) = f(u) + f_p(u)$ and $g(v) = f(v) + f_q(v)$.
It follows that 

\begin{equation}\label{eq:gugv} |g(u)-g(v)| \leq 
|f(u)-f(v)| + |f_p(u)-f_p(v)| + |f_q(u)-f_q(v)|.
\end{equation}

\noindent 
(If $p=q$ it is trivially true and if $p\neq q$ then $f_p(v)=f_q(u)=0$.)
By definition of $f_p$:

\begin{equation}\label{eq:fpufpv}
\begin{array}{rcl} 
|f_p(u)-f_p(v)| 
& \leq & 
a \cdot \left( \left| \norm{\varphi(u)-p} - \norm{\varphi(v)-p} \right| 
+ \left| \norm{\varphi(-u)-p} - \norm{\varphi(-v)-p} \right| \right) \\
& \leq & 
a \cdot \left( \norm{\varphi(u)-\varphi(v)} + \norm{\varphi(-u)-\varphi(-v)} \right), 
\end{array}
\end{equation}

\noindent
where we've used the triangle inequality for the second line.
We also have

$$ \begin{array}{rcl} 
\norm{\varphi(u)-\varphi(v)} 
& \leq & 
\norm{ (1+f(u)) u - (1+f(u)) v} + \norm{(1+f(v))v-(1+f(u))v} \\
& = & 
(1+f(u)) \cdot \norm{u-v} + |f(v)-f(u)| \\
& \leq &  
1.1 \cdot \norm{u-v} + a\cdot \norm{u-v} \\
& \leq & 
2 \cdot \norm{u-v}. 
\end{array} $$

\noindent
Filling this into~\eqref{eq:fpufpv}
gives

$$ |f_p(u)-f_p(v)| \leq 4a \cdot \norm{u-v}. $$

\noindent
Repeating the argument, the same bound applies to $|f_q(u)-f_q(v)|$.
Combining with~\eqref{eq:gugv} and recalling that $f$ is $a$-Lipschitz gives

$$ |g(u)-g(v)| \leq 9a \cdot \norm{u-v}. $$

\noindent
In other words, we've verified that $g$ is $9a$-Lipschitz.

For notational convenience, we write $\psi(u) := (1+g(u))\cdot u$.
It remains to see that $B( \psi(u), a r/2 ) \cap V = \emptyset$ for all $u \in S^{d-1}$.
To this end, let $u \in S^{d-1}$ be arbitrary.
If $u$ does not lie in the support of any $f_p$ then 
$\norm{\psi(u)-p} = \norm{\varphi(u)-p} \geq r$ for any $p \in V$.
We can thus assume $u$ lies in the support of some (unique) $p \in V$.
For $q \neq p$ we have 

$$ \begin{array}{rcl} 
\norm{ \psi(u) - q} 
& \geq &  
\norm{\varphi(u)-q} - \norm{\psi(u)-\varphi(u)}  
= \norm{\varphi(u)-q} - |g(u)-f(u)| \\
& > & 
r - a r = (1-a)r > ar/2. 
\end{array} $$

It thus remains to consider the distance between $\psi(u)$ and $p$.
If $\norm{\varphi(-u)-p} < r$ then 

$$ \norm{\varphi(u)-p} \geq \norm{\varphi(u)-\varphi(-u)} - \norm{\varphi(-u)-p}
= 2 - \norm{\varphi(-u)-p} > 2 - r. $$

\noindent
Hence 

$$ \begin{array}{rcl}
\norm{\psi(u)-p} 
& \geq & \norm{\varphi(u)-p} - \norm{\psi(u)-\varphi(u)}
=  \norm{\varphi(u)-p} - |f(u)-g(u)| \\
& > & 2-r-ar > ar/2.  
\end{array} $$

\noindent
So we can assume that $\norm{\varphi(u)-p} < r$.
We denote $v := p/\norm{p}$ and $\alpha := \angle v\orig u$.
If $\alpha \geq \pi/2$ then 

$$ \norm{\psi(u)-p} \geq \norm{\psi(u)} = (1+g(u)) > 0.9-ar > ar/2. $$

\noindent
We can therefore assume $0 \leq \alpha \leq \pi/2$.
By considering the distance between $\psi(u)$ and the linear hull $\text{span}(\{p\})$ of $p$, we see that

$$ \norm{\psi(u)-p} \geq (1+g(u)) \cdot \sin \alpha 
\geq (0.9-ar) \cdot (2/\pi) \cdot \alpha > \alpha/4, $$

\noindent
(where we use that $\sin\alpha \geq (2/\pi)\cdot\alpha$ for $0 \leq \alpha \leq \pi/2$).
So if $\alpha \geq 2ar$ then $\norm{\psi(u)-p} \geq ar/2$.
We therefore assume $0 \leq \alpha \leq 2ar$.
We have

$$ \begin{array}{rcl} 
\norm{\psi(u)-p} 
& \geq & 
\norm{p-\psi(v)} - \norm{\psi(u)-\psi(v)} 
\geq
\norm{p-\psi(v)} - 9a \cdot \norm{u-v} \\
& = & 
\norm{p-\psi(v)} - 9a \cdot 2\sin(\alpha/2) 
\geq
\norm{p-\psi(v)} - 9a \cdot \alpha \\
& \geq & 
\norm{p-\psi(v)} - 18 a^2 r. 
\end{array} $$

\noindent
(In the fourth line we use that $\sin x \leq x$ for $x \geq 0$.)
Because of the way we have chosen the sign of $b_p$ in the definition of $f_p$, we find that

$$ \begin{array}{rcl} 
\norm{p - \psi(v)} 
& = & |\norm{p}-(1+g(v))| = |\norm{p}-(1+f(v))|+|f(v)-g(v)| \\
& = & \norm{p-\varphi(v)} + a(r-\norm{p-\varphi(v)}) = ar + (1-a)\norm{p-\varphi(v)} \\
& \geq & ar. 
\end{array} $$

\noindent
It follows that 

$$ \norm{p - \psi(u)} \geq ar - 18 a^{2} r > ar/2. $$

\noindent
We have now verified that $B( \psi(u), ar/2 ) \cap V = \emptyset$, which concludes the 
proof of the lemma.
\end{proof}

We separate one more observation to facilitate the derivation of the more 
general Lemma~\ref{lem:fg} from the more restricted Lemma~\ref{lem:fgorth}.
Recall that an {\em orthant} of $\eR^d$ is one of the $2^d$ 
subsets $O_\sigma := \{ x \in \eR^d : x_i\sigma_i \geq 0 \}$ with $\sigma \in \{-1,+1\}^d$.

\begin{lemma}\label{lem:part}
For all $d\geq 2, k\geq 1$ there exists a $\ell = \ell(d,k)$ such that the following holds.
Suppose $r>0$ and $U, V \subseteq \eR^d$ are such that $V$ is finite and 
$|B(u,r) \cap V| \leq k$ for all $u \in U$.
Then we can write $V$ as 

$$ V = V_1 \cup \dots \cup V_\ell, $$

\noindent
where each $V_i$ is contained in some orthant and $|B(u,r/4) \cap V_i| \leq 1$ for all $u \in U$
and $1\leq i\leq\ell$.
\end{lemma}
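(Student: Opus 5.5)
First split $V$ by orthant: set $W_\sigma := V \cap O_\sigma$, choosing for points on coordinate hyperplanes (which lie in several orthants) one $\sigma$ arbitrarily. This gives at most $2^d$ classes, each contained in an orthant, and each still satisfies $|B(u,r)\cap W_\sigma|\leq k$ for all $u\in U$. It then suffices to split each $W_\sigma$ into at most $\ell'(d,k)$ parts $W_{\sigma,1},\dots,W_{\sigma,\ell'}$ with $|B(u,r/4)\cap W_{\sigma,j}|\leq 1$ for all $u \in U$. Subsets of a set in an orthant stay in that orthant, so we may take $\ell := 2^d \ell'$ (padding with empty sets).

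For the refinement I would use a greedy colouring of a conflict graph. Let $H$ be the graph on $W := W_\sigma$ with $p\sim q$ iff $p\neq q$ and some $u\in U$ has $p,q\in B(u,r/4)$; in particular this forces $\norm{p-q}<r/2$. A proper colouring of $H$ with $\ell'$ colours gives the required parts, since two points of the same colour class in a common ball $B(u,r/4)$ would be adjacent. So it suffices to bound the maximum degree of $H$ by $\Delta(d,k)$ and take $\ell' := \Delta+1$.

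The degree bound is the only step with real content. Fix $p\in W$ and suppose it has a neighbour. Then there is $u_0\in U$ with $\norm{u_0-p}<r/4$, and every neighbour $q$ of $p$ satisfies $\norm{q-p}<r/2$. All neighbours therefore lie in $B(p,r/2)$, but this ball need not be centred at a point of $U$, so the hypothesis cannot be applied to it directly. To handle this, I would cover $B(p,r/2)$ by a bounded number $M=M(d)$ of sets of diameter $<r/4$ (small cubes, say). Take one such small set $S$ containing a neighbour $q$, and let $u$ be a witness of the edge $pq$, so $\norm{u-q}<r/4$. Any $q'\in S$ satisfies $\norm{u-q'}<r/4+r/4=r/2<r$, so $S\cap W\subseteq B(u,r)$. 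The hypothesis then gives $|S\cap W|\leq k$. Hence $p$ has degree at most $Mk$, and we can set $\ell = 2^d(Mk+1)$.

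I do not expect any real obstacle. The point needing care is the one just described: the hypothesis is stated only for balls centred at points of $U$, so every counting step must be routed through such a centre. The shrinking of the radius from $r$ to $r/4$ is exactly what leaves room for that.
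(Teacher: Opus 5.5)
Your proof is correct. It differs from the paper's mainly in how the degree bound is obtained.

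The paper proceeds as follows:
\begin{itemize}
\item It first sets aside the points of $V$ at distance $\geq r/2$ from $U$. These lie in no ball $B(u,r/4)$, so they form one free class.
\item For the remaining set $W$, it notes that $B(w,r/2)\subseteq B(u,r)$ whenever $\norm{w-u}<r/2$, hence $|B(w,r/2)\cap W|\leq k$.
\item It then peels off maximal $r/2$-separated subsets of $W$ (a greedy colouring in disguise) and intersects with orthants at the end.
\end{itemize}
Your conflict graph, defined through shared balls $B(u,r/4)$, makes far-away points automatically isolated, and correctness is then immediate from properness.

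Your covering by $M(d)$ small cubes is valid, but your remark that the hypothesis cannot be applied to $B(p,r/2)$ is too pessimistic. You already have $u_0\in U$ with $\norm{p-u_0}<r/4$, so every neighbour of $p$ lies in $B(u_0,3r/4)\subseteq B(u_0,r)$. Hence $\deg p\leq k-1$, and $k$ colours per orthant suffice, giving $\ell=k2^d$ with no covering at all. This is the same containment idea the paper uses.

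What your covering buys is robustness. It works for any conflict radius $\rho<r$ (cover $B(p,2\rho)$ by sets of diameter less than $r-\rho$), whereas the direct containment needs $3\rho\leq r$. The price is the worse constant $2^d(M(d)k+1)$, which is harmless here since only the existence of $\ell(d,k)$ is used downstream.

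Incidentally, the paper's peeling needs $k$ separated classes to exhaust $W$, not the stated $k-1$; for $k=1$ the stated union is empty. So its constant is really $(k+1)2^d$. Phrasing the step as a proper colouring, as you do, avoids that kind of bookkeeping slip.
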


\begin{proof}
We set 

$$ W := \{ v \in V : \norm{v-u} < r/2 \text{ for some $u \in U$}\}. $$

\noindent
We have $|B( w, r/2 ) \cap W| \leq k$ for all $w \in W$, since 
$B(w,r/2) \subseteq B( u, r)$ for some $u \in U$.

Let $W_1 \subseteq W$ be a subset such that if $w,w'\in W_1$ are distinct then $\norm{w-w'} \geq r/2$
and, subject to this, it has the maximum possible size.
For $2 \leq i \leq k-1$ we similarly pick

$$ W_i \subseteq W \setminus (W_1\cup\dots W_{i-1}), $$

\noindent 
of the maximum possible size subject to the demand that 
$\norm{w-w'} \geq r/2$ for all distinct $w,w'\in W_i$.
In particular

$$|B( u, r/4 ) \cap W_i| \leq 1. $$

\noindent
(Because the diameter of $B(u,r/4)$ is $r/2$.)

For each $w \in W \setminus W_1$ we must have 

$$|B(w,r/2) \cap (W\setminus W_1)| \leq k-1. $$

\noindent
(Otherwise $B(w,r/2)$ does not intersect $W_1$ and $w$ could have been added to $W_1$, contradicting 
maximality of $|W_1|$.) Repeating the argument, we find

\begin{equation}\label{eq:maxW} 
|B(w,r/2)\cap (W\setminus (W_1\cup\dots\cup W_{i-1}))| \leq k-i+1,  
\end{equation}

\noindent
for all $i$ and all $w\in W \setminus (W_1\cup\dots\cup W_{i-1})$.
In particular we have 

$$ W = W_1 \cup \dots \cup W_{k-1}. $$

\noindent
(By~\eqref{eq:maxW} and the maximality of $|W_{k-1}|$, we must have 
$W_{k-1} = W \setminus (W_1\cup\dots\cup W_{k-2})$.)
Writing 

$$ W_k := V \setminus W,$$ 

\noindent 
we thus have
$V = W_1\cup\dots\cup W_k$ and $|B( u, r/4 ) \cap W_i| \leq 1$
for all $1\leq i \leq k$ and all $u \in U$. 

We label the $2^d$ orthants of $\eR^d$ arbitrarily 
as $O_1,\dots,O_{2^d}$ and now define $V_1,\dots,V_{k2^d}$ by:

$$ V_{2^d\cdot(i-1) + j}  := W_i \cap O_j, $$

\noindent
for $1 \leq i \leq k$ and $1\leq j \leq 2^d$. Setting $\ell := k2^d$, we have constructed 
the sought decomposition $V = V_1\cup\dots\cup V_\ell$.
\end{proof}

\begin{proofof}{Lemma~\ref{lem:fg}}
Suppose that $t,a,r,f : S^{d-1} \to (-t,t)$ and $V \subseteq \eR^d$ are such that
$f$ is odd and $a$-Lipschitz and $|B( (1+f(u)) u, r) \cap V| \leq k$ for all $u$. 
By the previous lemma, for some constant $\ell = \ell(d,k)$, we can write $V = V_1 \cup \dots \cup V_\ell$
where each $V_i$ is contained in some orthant and $|B( (1+f(u)) u, r/4 ) \cap V_i| \leq 1$
for all $u \in S^{d-1}, 1\leq i \leq \ell$.

We now ``iterate'' Lemma~\ref{lem:fgorth}. That is, we do the following.
We set $g_0 := f, t_0 := t, a_0 := a, r_0 := r/4$. 
Suppose that, for some $0 \leq i\leq \ell-1$, we have defined $g_i, t_i, a_i, r_i$ such that 
$g_i : S^{d-1} \to (-t_i,t_i)$ is odd, $a_i$-Lipschitz and 

$$ \begin{array}{l} 
|B( (1+g_i(u)) u, r_i) \cap V_j| = 0 \text{ for $j \leq i$, and } \\[2ex]
|B( (1+g_i(u)) u, r_i) \cap V_j| \leq 1 \text{ for $j > i$, }
\end{array} $$

\noindent
for all $u \in S^{d-1}$.
Provided $t_i < 0.1, a_i < 0.01$ and $r_i < 0.01/\sqrt{d}$, we can 
apply Lemma~\ref{lem:fgorth} (and symmetry) to $g_i$ and $V_{i+1}$ to obtain $g_{i+1}$.
Setting 

$$ a_{i+1} = 9a_i, \quad r_{i+1} = (a_i/2)\cdot r_i, 
\quad \Delta_{i+1} := a_ir_i, \quad t_{i+1} = t_i + \Delta_{i+1}, $$

\noindent
we have that $g_{i+1} : S^{d-1} \to (-t_{i+1},t_{i+1})$ is odd, $a_{i+1}$-Lipschitz and 
satisfies 

$$ |g_{i+1}(u)-g_{i}(u)| \leq \Delta_{i+1}, \quad B( (1+g_{i+1}(u)) u, r_{i+1}) \cap V_{i+1} = \emptyset, $$ 

\noindent
for all $u \in S^{d-1}$.
We have $\Delta_{i+1}+r_{i+1} = (3/2)a_ir_i < r_i$, which implies

\begin{equation}\label{eq:Bsubs} 
B( (1+g_{i+1}(u)) u, r_{i+1}) \subseteq B( (1+g_i(u)) u, r_i)
\end{equation}

\noindent 
for all $u \in S^{d-1}$. In particular

$$ \begin{array}{l} 
|B( (1+g_{i+1}(u)) u, r_{i+1}) \cap V_j| = 0 \text{ for $j \leq i+1$, and } \\[2ex]
|B( (1+g_{i+1}(u)) u, r_{i+1}) \cap V_j| \leq 1 \text{ for $j > i+1$, }
\end{array} $$
%

\noindent
The function $g = g_{\ell}$ is the one we seek. It is well defined 
provided the conditions of Lemma~\ref{lem:fgorth} were met each time 
we applied it. That is, provided 
$t, t_1, \dots, t_{\ell} < 0.1$ and $a, a_1, \dots, a_{\ell} < 0.01$ 
and $r, r_1, \dots, r_{\ell} < 0.01/\sqrt{d}$. 
We note that 

$$ \begin{array}{rcl} 
a_{i} & = & 9 a_{i-1} = \dots = 9^{i}\cdot a, \\[2ex]
r_i & = & (a_{i-1}/2) r_{i-1} = \dots = (a_{i-1}\cdot\dots\cdot a_1\cdot a)\cdot 2^{-i}\cdot r_0\\[1.5ex]
& = &  
9^{i(i-1)/2} \cdot 2^{-(i+2)} \cdot a^i \cdot r
= 3^{i(i-1)}\cdot 2^{-(i+2)} \cdot a^i \cdot r, \\[2ex]
\Delta_i & = & a_{i-1} r_{i-1} = 3^{i(i-1)} \cdot 2^{-(i+3)} \cdot a^{i} \cdot r, \\[2ex]
\end{array} $$ 

\noindent
This gives 

$$ \begin{array}{l} 
\Delta_1 + \dots + \Delta_i \leq i \cdot 3^{i(i-1)}\cdot a \cdot r \leq 3^{i^2} \cdot a \cdot r,
\\[2ex]
t_i = t + \Delta_1 + \dots + \Delta_i \leq t +  3^{i^2} \cdot a \cdot r. 
\end{array} $$

\noindent
Setting $C := 3^{\ell^2}$, the function $g$ is clearly well-defined if 
$a< 0.01/C, r < 0.01/(C\sqrt{d}), t < 0.05$, with plenty of room to spare.

To see that our choice of $g$ and $C$ is as required, we note that 
$|f(u)-g(u)| \leq \Delta_1+\dots+\Delta_\ell \leq C a r$ for all $u\in S^{d-1}$, that 
$a_{\ell} < C a$ so that $a_{\ell}$-Lipschitz implies $Ca$-Lipschitz, and 
similarly $r_{\ell} >a^C r$ so that
$B( (1+g(u)) u, r_{\ell} ) \supseteq B( (1+g(u))u, a^C r)$ for all $u\in S^{d-1}$.
%
%
%
%
%
%
\end{proofof}

\begin{proofof}{Lemma~\ref{lem:embed}}
We apply Lemma~\ref{lem:boxes} with parameters $K=K(\eps), \delta=\delta(\eps)$ to be determined 
during the course of the proof. 
For the remainder of the proof, we assume that $\Zcal$ is such that 
the conclusion of Lemma~\ref{lem:boxes} holds. (Which happens with probability $1-o_n(1)$.)
Let $C=C(d,2^d), t=t(d,2^d), a_0=a_0(d,2^d), r_0=r_0(d,2^d)$ be as provided by Lemma~\ref{lem:fg}
applied in dimension $d$ with parameter $k=2^d$.

For $0 \leq j \leq i := \lceil 2\log\log n \rceil$, we set

$$ \Bcal_j := 
\left\{  p \in s_j\Zed^d : \text{$p+[0,s_j]^d$ is bad and contained  in $B(\orig,100)$}\right\}. $$

We construct the sought function $h$ by reverse induction.
Specifically, for $0 \leq j \leq i$, the function $h_j : S^{d-1} \to \eR$ 
will be odd, $(\eps \cdot C^{-j})$-Lipschitz, and satisfy: 

\begin{equation}\label{eq:dem1} 
|h_j(u)| \leq s_{j+1} + s_{j+2} + \dots + s_i, 
\end{equation}

\noindent
and

\begin{equation}\label{eq:dem2} 
B\left( (1+h_j(u))\cdot u, 2\sqrt{d}\cdot s_j \right) \cap \Bcal_j = \emptyset, 
\end{equation}

\noindent
for all $u \in S^{d-1}$.

\noindent
Let us point out that

\begin{equation}\label{eq:sums} 
\begin{array}{rcl}
s_0 + s_1 + \dots + s_i 
& \leq & (i+1)s_i = (i+1) K^{i(i+1)/2} s_0 = e^{O\left( (\log\log n)^2 \right)} \cdot n^{-1/d} \\
& = & 
n^{-1/d+o(1)}. 
\end{array} 
\end{equation}

\noindent
In particular demand~\eqref{eq:dem1} implies that 

$$ h_j(u) \in \left(-n^{-0.9/d},n^{-0.9/d}\right), $$

\noindent
for $n$ sufficiently large. 
Demand~\eqref{eq:dem2} implies that $(1+h_0(u)) \cdot u$ lies in a 
good level-0 cube. In particular 

$$ B( (1+h_0(u))\cdot u, \sqrt{d} \cdot s_0 ) \cap \Zcal \neq \emptyset. $$

\noindent 
Provided $\delta$ was sufficiently small, $\sqrt{d} \cdot s_0 = \sqrt{d} \cdot 
\delta \cdot \alpha < \eps \cdot \alpha$.
So the function $h_0$ can serve as the function $h$ we seek.

As mentioned, we shall establish the existence of $h_0$ by reverse induction. 
We start by defining $h_i$. Clearly $h_i \equiv 0$ does the trick, as $\Bcal_i=\emptyset$.
Now suppose that we've managed to construct $h_{j+1}$ for some $j < i$.

Before proceeding, let us point out that if $h_{j+1}$ satisfies the demands, we have:

\begin{equation}\label{eq:ballsiplus1}
\left| B\left( (1+h_{j+1}(u))\cdot u, \frac{s_{j+1}}{2}\right) \cap \Bcal_j \right| \leq 2^d \quad 
\text{ for all $u \in S^{d-1}$.} 
\end{equation}

\noindent
(Because $B\left( (1+h_{j+1}(u))\cdot u, s_{j+1}/2\right)$ intersects at most $2^d$ level $j+1$ cubes.
None of these can be bad, because that would contradict~\eqref{eq:dem2}.)
We can thus obtain $h_j$ by invoking Lemma~\ref{lem:fg} with 

$$ k=2^d, \quad f=h_{j+1}, \quad V = \Bcal_j, \quad a = \eps\cdot C^{-(j+1)}, \quad 
r = s_{j+1}/2. $$

\noindent
(For $n$ sufficiently large, we are allowed to apply Lemma~\ref{lem:fg} because
$|h_{j+1}| < t$ and $r < r_0$ by~\eqref{eq:sums}, and 
$a < a_0$ assuming without loss of generality $\eps < a_0$.)
We note that 

$$ Ca = \eps \cdot C^{-j}, $$

\noindent
so that $h_j$ is $(\eps C^{-j})$-Lipschitz as required. We also have 

$$ \begin{array}{rcl} 
a^C r 
& = &\left(\eps \cdot C^{-(j+1)}\right)^C \cdot (s_{j+1}/2) \\
& = & \left(\eps \cdot C^{-(j+1)}\right)^C \cdot \frac12 \cdot K^{j+1} \cdot s_{j} \\
& = & \left(\eps^C/2\right) \cdot \left(K/C^C\right)^{j+1} \cdot s_{j} \\
& > & 2\sqrt{d} \cdot s_j,
\end{array} $$

\noindent
where the last line follows because we chose $K$ sufficiently large in the beginning.
So 

$$B\left( (1+h_{j+1}(u))\cdot u, 2\sqrt{d} \cdot s_j\right) \cap \Bcal_j 
\subseteq B\left( (1+h_{j+1}(u))\cdot u, a^C r \right) \cap \Bcal_j 
= \emptyset, $$

\noindent
for all $u \in S^{d-1}$, as
required.
Also note that $\Delta := Car = \eps C^{-j} s_{j+1}/2 < s_{j+1}$, giving

$$ \begin{array}{rcl} 
|h_j(u)| 
& \leq & 
|h_{j+1}(u)| + |h_j(u)-h_{j+1}(u)| \leq 
(s_{j+2} + \dots + s_i) + \Delta \\ 
& < & 
s_{j+1} + s_{j+2} + \dots + s_i. 
\end{array} $$

\noindent
We see that $h_j$ satisfies the demands provided that $h_{j+1}$ does. This concludes the proof.
\end{proofof}

\subsection{Finishing the proof of Theorem~\ref{thm:maind+1}.}

We fix $\eps > 0$, small and to be determined more precisely in what follows.
For the remainder of the section, we assume that $\Xcal$ is such that 
the conclusion of Lemma~\ref{lem:embed} holds (which happens with probability $1-o_n(1)$) and we 
let $\alpha$ and $h$ be as provided by that lemma.
Let us write

$$ A := \bigcup_{u \in S^{d-1}} B( (1+h(u)) \cdot u, \eps \cdot \alpha ). $$

\noindent
We arbitrarily label $\Xcal \cap \pi^{-1}[A]$ as 

$$ \Xcal \cap \pi^{-1}[A] = \{X_1,\dots,X_M\}, $$ 

\noindent 
and set 

$$ Z_i := \pi(X_i), \quad U_i := Z_i / \norm{Z_i}. $$

\noindent
Perhaps superfluously, we note that the $U_i$ lie on the $(d-1)$-sphere $S^{d-1}$.
Our construction of $h,\alpha$ has the following consequence:

\begin{corollary}\label{cor:metriccc}
Provided the constant $\eps>0$ was chosen sufficiently small and $n$ is sufficiently large, we have:
\begin{enumerate} 
\item\label{itm:metriccc1} $\displaystyle \bigcup_{i=1}^M \kap_{S^{d-1}}(U_i,\alpha/10) = S^{d-1}$.
\item\label{itm:metriccc2} If $\dist_{S^{d-1}}(U_i,U_j) > \pi-\alpha/5$ then  $\dist_{S^d}(X_i,X_j) > \pi-\alpha$.
\end{enumerate}
\end{corollary}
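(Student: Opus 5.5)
The plan is to work entirely in stereographic coordinates and use that, by Lemma~\ref{lem:embed}, the curve $\psi(u) := (1+h(u))\cdot u$ lies within $n^{-0.9/d} = o(1)$ of the unit sphere $S^{d-1}\subseteq\eR^d$. This unit sphere is exactly $\pi$ of the equator of $S^d$. Throughout I use that $\alpha = c\, n^{-1/d}$, so that $n^{-0.9/d}\cdot\alpha^{-1}\to\infty$ but $(n^{-0.9/d})^2 = n^{-1.8/d} = o(\alpha)$. I also use that the radial projection $x\mapsto x/\norm{x}$ is $(1+o(1))$-Lipschitz on the shell $\{1-o(1)\leq\norm{x}\leq 1+o(1)\}$. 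Finally, I use the elementary bound $\norm{r_1 a - r_2 b}\leq \norm{a-b} + |r_1-r_2|$ for unit vectors $a,b$ and $r_1\geq 0$.

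\textbf{Item~\ref{itm:metriccc1}.} Fix $u\in S^{d-1}$. By Lemma~\ref{lem:embed} there is $Z\in\Zcal$ with $\norm{Z-\psi(u)}<\eps\alpha$. Such a $Z$ lies in $A$, so $Z=Z_i$ for some $i\leq M$. Since $u=\psi(u)/\norm{\psi(u)}$ and both $Z_i$ and $\psi(u)$ lie in the shell $\{1-o(1)\leq\norm{x}\leq 1+o(1)\}$, we get $\norm{U_i-u}\leq(1+o(1))\eps\alpha$. By~\eqref{eq:distnorm} (in dimension $d-1$) this gives $\dist_{S^{d-1}}(U_i,u)\leq (\pi/2)(1+o(1))\eps\alpha<\alpha/10$, provided $\eps$ is small and $n$ is large.

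\textbf{Item~\ref{itm:metriccc2}.} I will bound $\dist_{S^d}(X_i,-X_j)<\alpha$, which is equivalent to the claim, via Lemma~\ref{lem:metric}, item~\ref{itm:metricgenlb}: $\dist(X_i,-X_j)\leq 2\norm{\pi(X_i)-\pi(-X_j)}$. The key observation is that in stereographic coordinates the antipodal map reads $\pi(-x) = -\pi(x)/\norm{\pi(x)}^2$, which is a direct computation from the formula for $\pi$. Write $Z_i = \psi(v_i)+e_i$ and $Z_j=\psi(v_j)+e_j$ with $v_i,v_j\in S^{d-1}$ and $\norm{e_i},\norm{e_j}<\eps\alpha$. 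Then:
\begin{enumerate}
\item $Z_i$ has direction $U_i$ and norm $1+h(v_i)+O(\eps\alpha)$.
\item $\pi(-X_j)$ has direction $-U_j$ and norm $1/\norm{Z_j} = 1-h(v_j)+O(h(v_j)^2)+O(\eps\alpha) = 1-h(v_j)+o(\alpha)+O(\eps\alpha)$.
\end{enumerate}
Hence
$$\norm{Z_i-\pi(-X_j)}\leq \norm{U_i+U_j} + |h(v_i)+h(v_j)| + O(\eps\alpha)+o(\alpha).$$
For the first term, $\norm{U_i+U_j}\leq \dist_{S^{d-1}}(U_i,-U_j)<\alpha/5$. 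For the second term I use that $h$ is odd and $\eps$-Lipschitz: $|h(v_i)+h(v_j)| = |h(v_i)-h(-v_j)|\leq \eps\norm{v_i+v_j}$. As in item~1, $\norm{v_i-U_i}$ and $\norm{v_j-U_j}$ are $O(\eps\alpha)$, so $\norm{v_i+v_j}\leq \alpha/5+O(\eps\alpha)$. Altogether $\norm{\pi(X_i)-\pi(-X_j)}\leq \alpha/5+O(\eps\alpha)+o(\alpha)$. This gives $\dist(X_i,-X_j)\leq 2\alpha/5+O(\eps\alpha)+o(\alpha)<\alpha$ for $\eps$ small and $n$ large.

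\textbf{Main obstacle.} The delicate point is the radial comparison in item~\ref{itm:metriccc2}. The perturbation $h$ itself is of size up to $n^{-0.9/d}\gg\alpha$, so it must cancel exactly to first order. This cancellation comes from combining the inversion formula $\pi(-x)=-\pi(x)/\norm{\pi(x)}^2$ (which turns $1+h$ into $1-h$) with the oddness of $h$. The second-order remainder $h^2 = O(n^{-1.8/d})$ is then $o(\alpha)$, which is why the bound $|h|<n^{-0.9/d}$, rather than merely $|h|=o(1)$, is needed. I would carefully check the expansion of $1/\norm{Z_j}$ including the $e_j$ error term, but I expect no other difficulties.
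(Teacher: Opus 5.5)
Your argument is correct and follows essentially the paper's route. Both proofs use the fact that the antipodal map in stereographic coordinates is the inversion $z\mapsto -z/\norm{z}^2$ (the paper's~\eqref{eq:phialt}). This inversion turns $1+h$ into $1-h+O(h^2)$, so oddness and the $\eps$-Lipschitz property of $h$ kill the first-order term, and $h^2=O(n^{-1.8/d})=o(\alpha)$; part~\ref{itm:metricgenlb} of Lemma~\ref{lem:metric} then finishes. The paper only organises the bookkeeping differently: it routes through $(1+h(U_i))U_i$ using Lemmas~\ref{lem:eenplushu}, \ref{lem:phieenplushu} and~\ref{lem:phizw}, whereas you split directly into direction and norm, which incidentally gives the constant $2/5$ instead of $4/5$.

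There is one small slip. The inequality $\norm{r_1a-r_2b}\le\norm{a-b}+|r_1-r_2|$ is false for general $r_1\geq 0$ (take $b=-a$, $r_1=r_2=10$). The correct form is $\norm{r_1a-r_2b}\le\min(r_1,r_2)\norm{a-b}+|r_1-r_2|$, which here costs only a harmless factor $1+o(1)$ on $\norm{U_i+U_j}$.
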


\begin{proofof}{Theorem~\ref{thm:maind+1} assuming Corollary~\ref{cor:metriccc}}
Part~\ref{itm:metriccc2} of Corollary~\ref{cor:metriccc}
shows that $G_{d-1}(\{U_1,\dots,U_M\},\alpha/5)$ is a subgraph of $G_d(\Xcal,\alpha)$.
We thus have

$$ \chi( G_d(\Xcal,\alpha) ) \geq \chi(G_{d-1}(\{U_1,\dots,U_M\},\alpha/5)) \geq d+1, $$

\noindent 
where the last inequality follows by part~\ref{itm:metriccc1} of Corollary~\ref{cor:metriccc}
together with Lemma~\ref{lem:KFchi}.
\end{proofof}

It remains to verify Corollary~\ref{cor:metriccc}.
We find it convenient to separate out a few preliminary 
calculations before starting its proof in earnest.

\begin{lemma}\label{lem:eenplushu}
Provided the constant $\eps>0$ was chosen sufficiently small and $n$ is sufficiently large, we have:

$$\norm{ (1+h(u))\cdot u - (1+h(v))\cdot v} \leq 2 \cdot \norm{u-v}, $$
 
\noindent 
for all $u, v \in S^{d-1}$. 
\end{lemma}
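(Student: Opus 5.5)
The plan is to repeat the triangle-inequality estimate from the proof of Lemma~\ref{lem:fgorth}, now using the properties of $h$ guaranteed by Lemma~\ref{lem:embed}. Insert the intermediate point $(1+h(u))\cdot v$ and split:
$$ \norm{(1+h(u))u-(1+h(v))v} \leq \norm{(1+h(u))u-(1+h(u))v} + \norm{(1+h(u))v-(1+h(v))v}. $$
Since $\norm{v}=1$, the right-hand side equals $(1+h(u))\cdot\norm{u-v} + |h(u)-h(v)|$.

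Each term is then bounded separately. For the first term, Lemma~\ref{lem:embed} gives $|h(u)|<n^{-0.9/d}$. Hence $1+h(u)\leq 1+n^{-0.9/d}\leq 1.5$ once $n$ is large enough. For the second term, the $\eps$-Lipschitz property of $h$ gives $|h(u)-h(v)|\leq\eps\cdot\norm{u-v}$. Combining the two bounds yields $(1+n^{-0.9/d}+\eps)\cdot\norm{u-v}\leq 2\norm{u-v}$, provided $\eps\leq 1/2$ and $n$ is large.

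There is no real obstacle here. The only point requiring attention is that the Lipschitz condition on $h$ is measured in the Euclidean norm on $S^{d-1}$, matching the norm in the claimed inequality. This holds by the definition of $\eps$-Lipschitz given just before Lemma~\ref{lem:embed}, so no conversion between $\dist$ and $\norm{\cdot}$ (and hence no use of \eqref{eq:distnorm}) is needed.
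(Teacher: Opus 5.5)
Your proposal is correct and follows the paper's proof essentially verbatim: the same split through the intermediate point $(1+h(u))\cdot v$, then $|h(u)|<n^{-0.9/d}$ and the $\eps$-Lipschitz bound give $(1+h(u)+\eps)\cdot\norm{u-v}\leq 2\norm{u-v}$. Your remark that $\eps$-Lipschitz is defined with respect to the Euclidean norm, so no conversion via \eqref{eq:distnorm} is needed, is also accurate.
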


\begin{proof}
We have 

$$ \begin{array}{rcl} 
\norm{ (1+h(u))u - (1+h(v))v } 
& \leq & (1+h(u)) \cdot \norm{u - v} + |h(v)-h(u)| \\
& \leq & (1+h(u)) \cdot \norm{u - v} + \eps \cdot \norm{u-v} \\
& = & (1+h(u)+\eps) \cdot \norm{u-v} \\
& \leq & 2 \cdot \norm{u-v},
\end{array} $$

\noindent
where we've used that $h$ is $\eps$-Lipschitz and the last line follows assuming 
$\eps$ is sufficiently small and $n$ sufficiently large (recalling $|h| \leq n^{-0.9/d}$).
\end{proof}

For $z \in \eR^d \setminus \{\orig\}$, we write:

$$ \varphi(z) := \pi(-\pi^{-1}(z)). $$

\noindent
That is, if $z = \pi(x)$ with $x \in S^d$, then $\varphi(z)$ corresponds to the 
image of the antipode of $x$ under the stereographic projection $\pi$.
We remark that $\varphi$ can be written alternatively as:

\begin{equation}\label{eq:phialt} \varphi(z) = -z/\norm{z}^2. \end{equation}

\noindent
(An easy way to see this is to note that 
$z, \varphi(z)$  must lie on a line through the origin, and on opposite sides of the origin, and 
if $z = \pi(x)$ then 

$$\begin{array}{rcl} 
\norm{z} 
& = & 
\sqrt{(x_1^2+\dots+x_d^2)/(1-x_{d+1})^2} = \sqrt{(1-x_{d+1}^2)/(1-x_{d+1})^2} \\
& = & 
\sqrt{(1+x_{d+1})/(1-x_{d+1})}, 
\end{array} $$

\noindent
and similarly $\varphi(z) = \pi(-x)$ has norm $\norm{\varphi(z)}=\sqrt{(1-x_{d+1})/(1+x_{d+1})} = 1/\norm{z}$.)

\begin{lemma}\label{lem:phieenplushu}
Provided the constant $\eps>0$ was chosen sufficiently small and $n$ is sufficiently large, we have:

$$ \norm{\varphi( (1+h(u))\cdot u ) + (1+h(-u)) u} \leq \eps\cdot\alpha, $$

\noindent
for all $u \in S^{d-1}$.
\end{lemma}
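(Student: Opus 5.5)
We argue directly from the explicit formula~\eqref{eq:phialt}, $\varphi(z) = -z/\norm{z}^2$, together with oddness of $h$. Fix $u \in S^{d-1}$ and put $z := (1+h(u))\cdot u$. Then $\norm{z} = 1+h(u)$, because $|h(u)| < n^{-0.9/d} < 1$ for large $n$. Consequently
$$ \varphi(z) = -\frac{(1+h(u))\, u}{(1+h(u))^2} = -\frac{u}{1+h(u)}. $$
Since $h$ is odd, $h(-u) = -h(u)$, and so $(1+h(-u))u = (1-h(u))u$. This gives
$$ \norm{\varphi(z) + (1+h(-u))u} = \left| 1-h(u) - \frac{1}{1+h(u)} \right| = \frac{|(1-h(u))(1+h(u)) - 1|}{1+h(u)} = \frac{h(u)^2}{1+h(u)}. $$

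It remains to compare $h(u)^2$ with $\eps\alpha$. By Lemma~\ref{lem:embed}, $|h(u)| < n^{-0.9/d}$, so
$$ \frac{h(u)^2}{1+h(u)} \leq 2 n^{-1.8/d}. $$
On the other hand $\eps\alpha = \eps c\, n^{-1/d}$. Because $1.8/d > 1/d$, we get $2n^{-1.8/d} \leq \eps c\, n^{-1/d}$ once $n$ is sufficiently large (depending on $\eps$ and $c$). This proves the claim.

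No step is a genuine obstacle. The only points to check are that $1+h(u) > 0$, so that $\norm{z} = 1+h(u)$ (this is immediate from the bound on $|h|$), and that the exponent $-0.9/d$ in Lemma~\ref{lem:embed} is small enough that its square beats $n^{-1/d}$. The Lipschitz property of $h$ is not needed; only oddness and the sup bound are used.
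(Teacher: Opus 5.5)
Your proposal is correct and follows essentially the same route as the paper: apply $\varphi(z)=-z/\norm{z}^2$, use oddness of $h$ to reduce the norm to $h(u)^2/(1+h(u))$, and then use $|h|<n^{-0.9/d}$ to beat $\eps\alpha=\eps c\,n^{-1/d}$ for large $n$. The only cosmetic difference is that the paper expands $1/(1+h(u)) = 1-h(u)+h(u)^2/(1+h(u))$, whereas you combine the fractions directly.
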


\begin{proof}
By~\eqref{eq:phialt}, we have 

$$ \begin{array}{rcl} 
\varphi( (1+h(u))\cdot u ) & = & -(1/(1+h(u))) \cdot u \\[2ex]
& = & - \left(1 - h(u) + h(u)^2/(1+h(u))\right) \cdot u \\[2ex]
& = & - \left(1+h(-u)+h(u)^2/(1+h(u))\right) \cdot u,
\end{array} $$

\noindent
using that $h$ is odd in the last step.
So

$$\begin{array}{rcl} 
\norm{\varphi( (1+h(u))\cdot u ) + (1+h(-u)) u} 
& = & 
\norm{(h(u)^2/(1+h(u))) \cdot u} \\[2ex]
& = & 
h(u)^2 / (1+h(u)) \\[2ex]
& < & 
n^{-1.8/d} / (1-n^{-0.9/d}) \\[2ex]
& < & \eps \cdot \alpha,
\end{array} $$

\noindent
for $n$ sufficiently large, using that $\alpha = c \cdot n^{-1/d}$ in the last step.
\end{proof}

\begin{lemma}\label{lem:phizw} 
If $w, z \in \eR^d$ satisfy $0.9999 \leq \norm{w},\norm{z} \leq 1.0001$ then 

$$ \norm{\varphi(z)-\varphi(w)} \leq 4 \cdot \norm{z-w}. $$

\end{lemma}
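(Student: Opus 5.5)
We use the explicit formula~\eqref{eq:phialt}, $\varphi(z) = -z/\norm{z}^2$, and compute $\norm{\varphi(z)-\varphi(w)}$ exactly. Up to the sign, which does not affect the norm, this is the inversion in the unit sphere, and for inversion there is the classical identity
$$ \norm{\frac{z}{\norm{z}^2} - \frac{w}{\norm{w}^2}} = \frac{\norm{z-w}}{\norm{z}\cdot\norm{w}}. $$

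To prove this identity, square the left-hand side and expand:
$$ \frac{1}{\norm{z}^2} - \frac{2\langle z,w\rangle}{\norm{z}^2\norm{w}^2} + \frac{1}{\norm{w}^2} = \frac{\norm{w}^2 - 2\langle z,w\rangle + \norm{z}^2}{\norm{z}^2\norm{w}^2} = \frac{\norm{z-w}^2}{\norm{z}^2\norm{w}^2}. $$
Taking square roots gives the identity, valid for all nonzero $z,w$.

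With the identity in hand, the lemma follows immediately. The assumption $\norm{z},\norm{w} \geq 0.9999$ gives
$$ \norm{\varphi(z)-\varphi(w)} = \frac{\norm{z-w}}{\norm{z}\norm{w}} \leq \frac{\norm{z-w}}{0.9999^2} \leq 4\cdot\norm{z-w}. $$
The upper bound $1.0001$ is not needed here.

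There is no real obstacle in this argument. The only point requiring care is to verify the algebraic identity correctly. As a fallback, one could instead bound the derivative of $\varphi$ on the annulus, whose operator norm is $1/\norm{x}^2$. This route, however, runs into the fact that the segment from $z$ to $w$ may leave the annulus, so the exact identity is preferable.
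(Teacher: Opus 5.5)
Your proof is correct, and it handles the difference $z/\norm{z}^2 - w/\norm{w}^2$ differently from the paper.

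The paper does not use the inversion identity. It inserts the intermediate vector $w/\norm{z}^2$, applies the triangle inequality, and then uses $\left|\norm{w}^2-\norm{z}^2\right| \leq \norm{z-w}\cdot(\norm{z}+\norm{w})$. This gives the constant $1/\norm{z}^2 + (\norm{z}+\norm{w})/(\norm{z}^2\norm{w})$, which is roughly $3$ and is then rounded up to $4$.

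Your exact identity $\norm{\varphi(z)-\varphi(w)} = \norm{z-w}/(\norm{z}\norm{w})$ is sharper: it gives the constant $1/0.9999^2 \approx 1.0002$. It is just as short once the squared norm is expanded, and it makes transparent that only the lower bound on the norms is needed. The paper's route is a cruder estimate, but it is entirely elementary and requires no identity to be recalled or checked.

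The lemma is only ever used with the constant $4$, namely to get the $36\cdot\eps\cdot\alpha$ bound in the proof of Corollary~\ref{cor:metriccc}. So either argument serves equally well there.
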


\begin{proof}
We have

$$ \begin{array}{rcl}
\norm{\varphi(z)-\varphi(w)} 
& = & 
\norm{(1/\norm{z})^2\cdot z- (1/\norm{w})^2 \cdot w} \\[2ex]
& \leq & 
\norm{(1/\norm{z})^2\cdot z- (1/\norm{z})^2 \cdot w}
+ \norm{\left((1/\norm{z})^2- (1/\norm{w})^2\right) \cdot w} \\[2ex]
& = & 
(1/\norm{z})^2 \cdot \norm{z- w} 
+ \left|(\norm{w}^2-\norm{z}^2)/(\norm{z}\cdot\norm{w})^2\right| \cdot \norm{w}.
\end{array} $$

\noindent
We now note that 

$$ \left|\norm{w}^2-\norm{z}^2\right| = \left|(\norm{w}-\norm{z})\cdot(\norm{w}+\norm{z})\right|
\leq \norm{z-w} \cdot (\norm{w}+\norm{z}). $$

\noindent
Feeding this into the previous inequality gives

$$ \begin{array}{rcl}
\norm{\varphi(z)-\varphi(w)} 
& \leq & 
\left((1/\norm{z})^2 + \norm{w}\cdot (\norm{z}+\norm{w})/(\norm{z}\cdot\norm{w})^2 \right) \cdot \norm{z-w}  \\
& \leq & 
4 \cdot \norm{z-w},
\end{array} $$

\noindent 
where we've used the bounds $0.9999 < \norm{z},\norm{w} < 1.0001$ in the last line.
\end{proof}

\begin{proofof}{Corollary~\ref{cor:metriccc}}
Fix $u \in S^{d-1}$. By construction of $h$, we have $B( (1+h(u)) u, \eps \alpha ) \cap \Zcal \neq \emptyset$.
In other words, there is an $1\leq i \leq M$ such that $\norm{ (1+h(u))\cdot u - Z_i } < \eps \alpha$.
We have

\begin{equation}\label{eq:Uui} 
\begin{array}{rcl} 
\norm{u - U_i} 
& = & 
\left(\frac{1}{1+h(u)}\right) \cdot \norm{ (1+h(u))\cdot u - (1+h(u))\cdot U_i} \\[2ex]
& \leq & 
2 \cdot \left( \norm{(1+h(u))u - Z_i} + \norm{(1+h(u))U_i - Z_i} \right) \\[2ex]
& = & 
2 \cdot \left( \norm{(1+h(u))u - Z_i} + \left|1+h(u)-\norm{Z_i}\right| \right) \\[2ex]
& < & 
4 \cdot \eps \cdot \alpha. 
\end{array} \end{equation}

\noindent 
where we used $h>-1/2$ (true for $n$ sufficiently large) in the second line. 
Appealing to~\eqref{eq:distnorm}, this gives

$$ \dist_{S^{d-1}}(u,U_i) \leq \pi \cdot \norm{u - U_i} <  4\cdot\pi\cdot\eps\cdot\alpha < \alpha/10, $$

\noindent
(provided $\eps$ was chosen sufficiently small) so that part~\ref{itm:metriccc1} follows.

\vspace{.5ex}

Suppose $1\leq i<j \leq M$ are such that $\dist(U_i,U_j) > \pi-\alpha/5$.
Let us pick $u \in S^{d-1}$ with $\norm{Z_i - (1+h(u))u} < \eps\cdot\alpha$.
By Lemma~\ref{lem:eenplushu} and~\eqref{eq:Uui}, we have:

$$ \norm{ (1+h(U_i))U_i - (1+h(u))u } \leq  2 \cdot \norm{u-U_i} \leq 8 \cdot \eps \cdot \alpha, $$

\noindent 
It follows that 

$$ \begin{array}{rcl} 
\norm{Z_i-(1+h(U_i))U_i} 
& \leq & \norm{Z_i - (1+h(u))u} + \norm{ (1+h(U_i))U_i - (1+h(u))u } \\
& < & 9 \cdot \eps \cdot \alpha. 
\end{array} $$

\noindent 
Of course the same bound applies to $\norm{Z_j-(1+h(U_j))U_j}$. 
Lemma~\ref{lem:phizw} now gives:

$$ \norm{\varphi(Z_j) - \varphi((1+h(U_j))U_j)} \leq 36 \cdot \eps \cdot \alpha. $$

\noindent
(We are allowed to invoke Lemma~\ref{lem:phizw} as $|\norm{Z_j}-1| < n^{-0.9/d} + \eps\alpha < 0.0001$ 
for $n$ sufficiently large, and similarly for $(1+h(U_j))U_j$.)
It follows that 

\begin{equation}\label{eq:ZivarphiZj} \begin{array}{rcl} 
\norm{Z_i - \varphi(Z_j)} 
& \leq &  
\norm{(1+h(U_i))U_i - \varphi((1+h(U_j))U_j)} + 45 \cdot \eps \cdot \alpha \\
& \leq & 
\norm{(1+h(U_i))U_i + (1+h(-U_j))U_j} + 46 \cdot \eps \cdot \alpha, 
\end{array} \end{equation}

\noindent 
where we've used Lemma~\ref{lem:phieenplushu} in the second inequality.
Applying Lemma~\ref{lem:eenplushu} once again, we have 

\begin{equation}\label{eq:UiUj} 
\norm{(1+h(U_i))U_i + (1+h(-U_j))U_j} \leq 2 \cdot \norm{U_i+U_j} 
\end{equation}
   
\noindent
Combining~\eqref{eq:ZivarphiZj} and~\eqref{eq:UiUj} we find:

$$ \norm{Z_i - \varphi(Z_j)} \leq 2 \norm{U_i+U_j} + 46 \cdot \eps \cdot \alpha. $$

\noindent
By part~\ref{itm:metricgenlb} of Lemma~\ref{lem:metric}, we have 

$$ \begin{array}{rcl} 
\dist_{S^d}(X_i,-X_j) 
& \leq & 
2\cdot \norm{\pi(X_i)-\pi(-X_j)} \\
& = & 
2 \cdot \norm{Z_i - \varphi(Z_j)} \\
& \leq & 
4 \cdot \norm{U_i+U_j} + 92 \cdot \eps \cdot \alpha.
\end{array} $$

\noindent 
To conclude the argument, we observe that $\dist(U_i,U_j) > \pi-\alpha/5$ implies that  

$$ \norm{U_i+U_j} \leq \dist(U_i,-U_j) < \alpha/5. $$

\noindent 
So

$$ \dist_{S^d}(X_i,-X_j) < (4/5 + 92\cdot\eps)\cdot \alpha < \alpha. $$

\noindent
(Provided $\eps$ was chosen sufficiently small.)
Equivalently, $\dist_{S^d}(X_i,X_j) > \pi-\alpha$.
\end{proofof}
\section{Proof of \cref{thm:main}.\label{sec:main}}

\vspace{1ex}

For $\beta > 0$ chosen appropriately shortly, we let $y_1,\dots,y_M \in S^d$ be
a $\beta$-net. That is, for every $x \in S^d$ there is 
some $i$ such that $\norm{x-y_i} < \beta$.
We let $C_i$ denote the (spherical) Voronoi cell of $y_i$, that is 

$$ C_i := \{ x\in S^d : \norm{x-y_i} = \min_{j=1,\dots,M} \norm{x-y_j} \}, $$

\noindent 
and we let 

$$ \Ycal := \{ y_i : C_i \cap \{X_1,\dots,X_N\} \neq \emptyset \}, $$

\noindent
be the set of those $y_i$ for which at least one of the Poisson points falls inside its cell.

\begin{lemma}\label{lem:aprevlem}
For every $\eps, \mu, \alpha > 0$ there exists a
$\beta=\beta(\eps,\mu,\alpha)$ such that:
 
 $$ \Pee_\lambda\left( G(\Xcal,\alpha) \cong G(\Ycal,\alpha) \right) > 1-\eps, $$
 
 \noindent
 for all $\lambda \leq \mu$, where the error term can be chosen uniform in $\lambda \leq \mu$.
\end{lemma}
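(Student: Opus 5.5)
We will show that, with probability at least $1-\eps$, the natural map $X_j\mapsto y_{i(j)}$ (sending each Poisson point to the centre of the Voronoi cell containing it) is a graph isomorphism between $G(\Xcal,\alpha)$ and $G(\Ycal,\alpha)$. Two things can go wrong. Two distinct points of $\Xcal$ may fall in the same cell, so the map is not injective. Or some pair $X_j,X_k$ may have $\dist(X_j,X_k)$ on the other side of $\pi-\alpha$ from $\dist(y_{i(j)},y_{i(k)})$, so an edge is created or destroyed. If neither happens, the map is a bijection onto $\Ycal$ preserving adjacency. (Points exactly on cell boundaries form a null event, and ties can be broken arbitrarily.)

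\textbf{Step 1: displacement is small.} Every $x\in C_i$ satisfies $\norm{x-y_i}<\beta$, since some net point lies within $\beta$ of $x$ and $y_i$ is the nearest. By~\eqref{eq:distnorm} (for $\beta$ small), $\dist(x,y_i)\le (\pi/2)\norm{x-y_i}<2\beta$. By the triangle inequality, each pairwise distance changes by less than $4\beta$. So the second failure can only occur if some pair of Poisson points satisfies
\[
\dist(X_j,X_k)\in(\pi-\alpha-4\beta,\ \pi-\alpha+4\beta].
\]

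\textbf{Step 2: bound both failure probabilities uniformly in $\lambda\le\mu$.} By Mecke's formula (or conditioning on $N$), the expected number of pairs with distance in that window is at most
\[
\lambda^2\cdot \Pee\bigl(\dist(U,U')\in(\pi-\alpha-4\beta,\pi-\alpha+4\beta]\bigr)\le \mu^2\,\sigma\bigl(\{x:\dist(x,-e)\in[\alpha-4\beta,\alpha+4\beta]\}\bigr),
\]
where $U,U'$ are independent uniform points, $e$ is a fixed point, and $\sigma$ is normalized surface measure. This annulus has measure tending to $0$ as $\beta\to0$ for fixed $\alpha$, since the sphere $\{\dist(x,-e)=\alpha\}$ has measure zero (continuity of measure). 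Similarly, the expected number of pairs lying in the same cell is at most $\mu^2\,\Pee(\norm{U-U'}<2\beta)\to0$. Choosing $\beta$ small enough that both bounds are below $\eps/2$ gives the claim, and since the bounds depend only on $\mu$, uniformity in $\lambda\le\mu$ is automatic. The only step needing care is measurability of the tie cases and the annulus continuity, neither of which is a real obstacle.
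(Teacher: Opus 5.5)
Your proof is correct and follows the paper's argument: you exclude the same two bad events, namely a pair of Poisson points at geodesic distance within $O(\beta)$ of $\pi-\alpha$, or two distinct points falling in one Voronoi cell. The only difference is how the probabilities are made small uniformly in $\lambda\le\mu$. You use a Mecke first-moment bound, which is automatically monotone in $\lambda$. The paper instead uses continuity of measure as $\beta\searrow 0$, together with the monotone coupling $\Xcal_\lambda\subseteq\Xcal_\mu$.
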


\begin{proof}
Let us write 

$$ \begin{array}{rcl} 
E_\beta & := & \left\{\text{there exist $x,y\in\Xcal$ with $\dist(x,y) \in (\pi-\alpha-\pi\beta,\pi-\alpha+\pi\beta)$}\right\}, \\
F_\beta & := & \left\{\text{there exist distinct $x,y\in\Xcal$ with $\dist(x,y) < \pi\beta$}\right\}, \\
\end{array} $$

\noindent
We note that if none of these two events hold then $G(\Xcal,\alpha) \cong G(\Ycal,\alpha)$.
(We use that $\dist(x,y) \leq (\pi/2)\norm{x-y}$ for all $x,y\in S^d$ by~\eqref{eq:distnorm}.)
Now note that 

$$ 
\lim_{\beta\searrow 0} \Pee( E_{\beta} ) 
= \Pee\left( \bigcap_{\beta>0} E_{\beta} \right)
= \Pee\left(\text{there exist $x,y\in \Xcal$ such that $\dist(x,y) = \pi-\alpha$}\right) = 0. $$

Analogously, $\lim_{\beta\searrow 0} \Pee( F_{\beta} ) = 0$ as well.
Uniformity in $\lambda \leq \mu$ follows from the fact that we can 
couple the Poisson processes of intensities $\lambda, \mu$ such that 
$\Xcal_\lambda \subseteq \Xcal_\mu$.
%
\end{proof}

We fix a $\beta$-net $y_1,\dots,y_M \in S^d$ and let $\Ycal$ be as above.
Let us set 

$$ Z_i := 1_{\{y_i \in \Ycal\}} = 1_{\{C_i \cap \Xcal \neq \emptyset\}}. $$

Then $Z_1,\dots,Z_M$ are independent and $Z_i \isd \Be(p_i)$, with

$$ p_i = \Pee( C_i \cap \Xcal \neq \emptyset ) = 1 - e^{-\mu\cdot\nu(C_i)}, $$

\noindent 
where $\nu(.)$ denotes the uniform measure on the sphere.
For clarity, we emphasize that the random set $\Ycal$ is completely determined 
by $Z_1,\dots, Z_M$.

Let $A$ be the event

$$ A := \left\{ \chi( G(\Ycal,\alpha) ) > k \right\}. $$

Note that $A$ can be identified with a subset of $\{0,1\}^M$ 
(i.e.~$A = \{ (Z_1,\dots,Z_M) \in B \}$ for some $B \subseteq \{0,1\}^M$)
that is an up-set. So 
Proposition~\ref{prop:BourTobias} applies.
We now consider the derivative of $\Pee_\mu(A)$ with respect to $\mu$.
It satisfies

\begin{equation}\label{eq:muderiv} 
\begin{array}{rcl}
    \frac{\dd}{\dd\mu} \Pee_\mu ( A ) 
    & = & \displaystyle
    \sum_{i=1}^M \frac{\partial}{\partial p_i} \Pee_{\overline{p}}( A )  \cdot \frac{\dd p_i}{\dd\mu} \\
    & = & \displaystyle
    \sum_{i=1}^M \frac{\partial}{\partial p_i} \Pee_{\overline{p}}( A )  \cdot \nu(C_i) \cdot e^{-\mu\cdot\nu(C_i)} \\
    & = & \displaystyle
    \sum_{i=1}^M \frac{\partial}{\partial p_i} \Pee_{\overline{p}}( A )  \cdot \nu(C_i) \cdot (1-p_i) \\
    & \geq & \displaystyle 
    \frac{1}{3\mu} \cdot \sum_{i=1}^M \frac{\partial}{\partial p_i} \Pee_{\overline{p}}( A )  \cdot p_i(1-p_i),
\end{array} 
\end{equation}

   \noindent
where the last line holds assuming $\beta$ is chosen sufficiently small
(so that $\max_i \nu(C_i) \leq 1/\mu$ say), 
using that $1-e^{-x} \geq x/3$ for all $0\leq x \leq 1$.

\begin{corollary}
Fix $\eps,\delta > 0$. There exists a $L=L(\eps,\delta) \in \eN$ such that, for every $\mu,\alpha>0$ and $k \in \eN$ 
and all sufficiently small $\beta>0$, one of
the following holds:

\begin{enumerate}
 \item $\Pee_{\mu}(A) < \eps$, or;
 \item\label{itm:z1} There exist indices $i_1,\dots,i_L$ such that
 $\Pee_{(1+\delta)\mu}( A | Z_{i_1}=\dots=Z_{i_L}=1) > 1-\eps$.
 %
 %
 %
\end{enumerate}
\end{corollary}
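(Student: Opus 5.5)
We consider the function $\mu' \mapsto \Pee_{\mu'}(A)$ on the interval $[\mu,(1+\delta)\mu]$, with $\beta$ small enough that the derivative bound~\eqref{eq:muderiv} holds throughout. We may assume $\Pee_\mu(A)\geq\eps$, since otherwise the first alternative holds. There are then two cases.

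\emph{Case 1: $\Pee_{(1+\delta)\mu}(A)>1-\eps$.} The second alternative holds trivially. Take any indices: conditioning an up-set on $Z_{i_j}=1$ can only increase its probability, by Harris/FKG or simply by monotonicity in the $p_i$.

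\emph{Case 2: $\Pee_{\mu'}(A)\in[\eps,1-\eps]$ for all $\mu'\in[\mu,(1+\delta)\mu]$.} This uses monotonicity of $\Pee_{\mu'}(A)$ in $\mu'$. Since the probability increases by at most $1$ over this interval, the mean value theorem gives a $\mu'$ in the interval with $\frac{\dd}{\dd\mu}\Pee_{\mu'}(A)\leq 1/(\delta\mu)$. By~\eqref{eq:muderiv} (with $\max_i\nu(C_i)\leq 1/((1+\delta)\mu)$, arranged by taking $\beta$ small),
$$\sum_i p_i(1-p_i)\frac{\partial}{\partial p_i}\Pee_{\overline p}(A)\leq 3\mu'/(\delta\mu)\leq 3(1+\delta)/\delta=:C.$$
Here $C$ depends only on $\delta$, which is the crucial point: the factors of $\mu$ cancel, so nothing depends on $\mu$, $\alpha$, $k$ or $M$.

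We then apply Proposition~\ref{prop:BourTobias} at $\overline p=\overline p(\mu')$ with this $C$ and a parameter $\eps'\leq\eps$. This yields $K$ indices and an increment $c$ such that either (a) conditioning on ones raises the probability by $c$, or (b) conditioning on zeros lowers it by $c$.

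The main obstacle is converting (a) or (b) into alternative~\ref{itm:z1}, namely probability $>1-\eps$ at $(1+\delta)\mu$. This needs iteration, and case (b) needs to be excluded or converted. I would handle it by iterating the argument in the style of Friedgut--Bourgain. Split $[\mu,(1+\delta)\mu]$ into $m=\lceil 2/c\rceil+1$ subintervals of ratio $(1+\delta)^{1/m}$ and use the augmented up-set $A_S:=\{A \text{ given } Z_i=1, i\in S\}$. This set is again an up-set in the remaining coordinates, and its probability is monotone in $\mu$.

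At each step, either $A_S$ already has probability $>1-\eps$ at $(1+\delta)\mu$, and we are done after padding $S$ with arbitrary indices to size $L$. Otherwise we apply the case analysis on the next subinterval and obtain $K$ new indices. In case (a), the conditional probability jumps by $c$. In case (b), I would use that for up-sets $\Pee(A_S)\geq \Pee(A_S\mid Z=0)$ and run the standard argument: a drop of $c$ when fixing $K$ coordinates to zero means those coordinates carry total influence that forces an increase of order $c\cdot\min p_i$... This is delicate. My first attempt would instead be to note that $p_i\leq 1-e^{-1}$ is bounded away from $1$, so fixing $K$ coordinates to $1$ versus their typical value yields at least a comparable gain. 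Each such step raises probability by an amount $\gtrsim c$ depending only on $\eps,\delta$.

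After at most $O(1/c)$ steps, the probability must exceed $1-\eps$. This gives $L\leq K\cdot O(1/c)$, depending only on $(\eps,\delta)$.
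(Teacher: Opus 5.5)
There is a genuine gap in your iteration: the constants are chosen circularly. Proposition~\ref{prop:BourTobias} must be invoked with a bound $C$ on $\sum_i p_i(1-p_i)\frac{\partial}{\partial p_i}\Pee_{\overline p}(A_S)$ that is fixed \emph{before} $c=c(C,\eps)$ and $K=K(C,\eps)$ are known. Your $C=3(1+\delta)/\delta$ comes from the mean value theorem on the whole interval $[\mu,(1+\delta)\mu]$, using only that the probability rises by at most $1$.

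Once you split into $m=\lceil 2/c\rceil+1$ subintervals and rerun the same case analysis on each, a subinterval has relative length about $\delta/m$. The ``rises by at most $1$'' argument then only gives a derivative of order $m/(\delta\mu)$, i.e.\ $C\approx 3m/\delta\approx 6/(c\delta)$. This $C$ depends on $c$, which itself depends on $C$. Feeding the larger $C$ back into the Proposition in general gives a smaller $c$, hence more subintervals, hence a larger $C$, and nothing makes this loop close.

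The missing idea, which is how the paper proceeds, is a dichotomy on each subinterval $[\mu_j,(1+x)\mu_j]$. First fix $C:=1000/\delta$, obtain $c,K$ from the Proposition, and only then set $x:=c\delta/100$. Either the unconditioned probability of the current up-set already rises by at least $c$ across the subinterval, in which case the step costs no new indices. Or it rises by less than $c$, and the mean value theorem gives a point $\lambda$ with derivative $<c/(x\mu_j)=100/(\delta\mu_j)$. By~\eqref{eq:muderiv} the influence sum at $\lambda$ is then below $C$, independently of $c$. After $R=\lceil 1/c\rceil$ steps the total dilation is $(1+x)^R\le e^{\delta/50}<1+\delta$, and $L=RK$.

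On case (b), your heuristic can be made rigorous. Write $q:=\Pee(Z_{i_1}=\dots=Z_{i_K}=0)$. Since $A$ is an up-set, $\Pee(A)\le q\,\Pee(A\mid Z_{i_1}=\dots=Z_{i_K}=0)+(1-q)\,\Pee(A\mid Z_{i_1}=\dots=Z_{i_K}=1)$. So (b) yields a gain of at least $cq/(1-q)\ge ce^{-K}$ from conditioning on ones, provided all $p_i\le 1-e^{-1}$. However, the subinterval length $x$ in the dichotomy must then be tied to this smaller gain $ce^{-K}$ rather than to $c$.

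The paper avoids this detour. Because $\beta$ may be taken as small as desired, every $p_i$ is tiny, so $q=1-o(1)$. Then (b) would force $\Pee_\lambda(A)\le \Pee_\lambda(A)-c+o(1)$, which is impossible, so only (a) can occur.
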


\begin{proof}
We assume $\Pee_\mu(A) \geq \eps$, otherwise there is nothing to prove.

We let $c, K$ be as provided by Proposition~\ref{prop:BourTobias}
with inputs $\eps$ and $C := 1000/\delta$. 
For notational convenience, we write $x := c\delta/100$.

We first claim that there are indices $i_1,\dots,i_K$ such that 

\begin{equation}\label{eq:aap2} 
\Pee_{(1+x)\mu}(A|Z_{i_1}=\dots=Z_{i_K}=1 ) \geq \min\left( \Pee_{\mu}(A)+c, 1-\eps \right).
\end{equation}

By obvious monotonicity, $\Pee_{(1+x)\mu}(A|Z_{i_1}=\dots=Z_{i_K}=1 ) \geq \Pee_{(1+x)\mu}(A)$.
So if $\Pee_{(1+x)\mu}(A) > 1-\eps$ or $\Pee_{(1+x)\mu}(A) > \Pee_{\mu}(A)+c$ then~\eqref{eq:aap2} is trivially true
for any choice of indices $i_1,\dots,i_K$. 
We therefore assume this is not the case.
By the intermediate value theorem, there is some $\mu \leq \lambda \leq (1+x)\mu$ such that 

$$\frac{\dd}{\dd\lambda} \Pee_\lambda ( A ) < c/(x\mu). $$

\noindent
Hence, by~\eqref{eq:muderiv}, we have 

$$ \sum_{i=1}^n \frac{\partial}{\partial p_i} \Pee_{\overline{p}}( A )  \cdot p_i(1-p_i) < 
3\lambda \cdot \frac{c}{x\mu} \leq \frac{3(1+x)\mu c}{x\mu}
= \frac{300(1+\frac{c\delta}{100})}{\delta} < C, $$

\noindent
at that particular value of $\lambda$, assuming without loss of generality $\delta < 1$ in the last step.
Case {\bf(a)} or {\bf(b)} of Proposition~\ref{prop:BourTobias} must hold. 
We now note that 

$$ \begin{array}{rcl} \Pee_{\lambda}( A ) 
& \leq & \Pee_{\lambda}( A | Z_{i_1}=\dots=Z_{i_K}=0) + 
\Pee( (Z_{i_1},\dots,Z_{i_K})\neq (0,\dots, 0) ) \\
& = & \Pee_{\lambda}( A | Z_{i_1}=\dots=Z_{i_K}=0) + o_\lambda(1), 
\end{array} $$

\noindent 
where the last equality follows provided $\beta$ is small 
(so that $\Pee( Z_i = 1 ) \leq \lambda \nu(C_i) = o(1)$, uniformly in $1\leq i\leq M$).
We see that, indeed, we cannot be in case {\bf(b)}, 
because that would imply $\Pee_{\lambda}( A )  \leq \Pee_{\lambda}( A )  - c + o_\lambda(1)$.
So we are in case {\bf(a)}.
As $\Pee_{(1+x)\mu}( A | Z_{i_1}=\dots=Z_{i_K}=1) \geq 
\Pee_{\lambda}( A | Z_{i_1}=\dots=Z_{i_K}=1)$ by obvious monotonicity, this establishes the claim~\eqref{eq:aap2}.

Now let $A^{(1)}$ denote the event that $A$ holds if we force the bits with indices $i_1,\dots,i_K$ 
to be one. (I.e.~we randomly sample all bits, and then we set the values of the
selected bits to one.) 
In particular 

$$ \Pee_\lambda(A^{(1)}) = \Pee_\lambda(A|Z_{i_1}=\dots=Z_{i_K}=1), $$

\noindent
for all $\lambda$. To aid the reader, let us point out that $A^{(1)}$ is an event that depends on (is determined by)
the $M-K$ independent Bernoulli 
random variables $Z_j : j \in [M] \setminus \{i_1,\dots,i_K\}$. 
Completely analogous to the previous, we find $i_{K+1},\dots,i_{2K}$ such that

$$ \Pee_{(1+x)^2\mu}(A^{(1)}|Z_{i_{K+1}}=\dots=Z_{i_{2K}}=1 ) \geq \min\left( \Pee_{(1+x)\mu}(A^{(1)})+c, 1-\eps \right). $$

Repeating our reasoning several more times, we define events $A^{(2)}, \dots, A^{(R)}$
and indices $i_{2K+1}, \dots, i_{L}$, where $R := \lceil 1/c\rceil$ and $L := R \cdot K$.
That is, $A^{(j)}$ is the event that $A$ holds if we set $Z_{i_1}=\dots=Z_{jK}=1$, and
by our arguments:

$$ \begin{array}{rcl} \Pee_{(1+x)^{j+1}\mu}(A^{(j+1)}) 
& = & \Pee_{(1+x)^{j+1}\mu}(A^{(j)}|Z_{i_{jK+1}}=\dots=Z_{i_{(j+1)K}}=1 ) \\
& \geq & \min\left( \Pee_{(1+x)\mu}(A^{(j)})+c, 1-\eps \right), 
\end{array} $$

\noindent 
for $j=1,\dots,R-1$.
It follows that 

$$ \begin{array}{rcl} 
\Pee_{(1+x)^R\mu}( A | Z_{i_1}=\dots=Z_{i_L}=1 )
& = & \Pee_{(1+x)^R\mu}( A^{(R)} ) \\
& \geq & \min\left( \Pee_\mu(A) + R\cdot c, 1-\eps\right) \\
& = & 1-\eps, 
\end{array} $$

\noindent
where the last equality holds by choice of $R = \lceil 1/c\rceil$.

To conclude, we remark that 

$$ (1+x)^R \leq e^{xR} = e^{\lceil 1/c\rceil \cdot \frac{c\delta}{100}}
\leq e^{\frac{(c+1)\delta}{100}}
\leq e^{\delta/50} < 1+\delta, $$

\noindent
where the last inequality holds assuming without loss of generality that $\delta$ was chosen 
sufficiently small.
\end{proof}

\begin{corollary}\label{cor:contcor}
Fix $\eps,\delta > 0$. There exist
$L=L(\eps,\delta) \in \eN$ such that, for every $\mu,\alpha>0$ and $k \in \eN$, one of
the following holds:

\begin{enumerate}
 \item $\Pee_{\mu}( \chi( G(\Xcal,\alpha) ) > k ) < \eps$, or;
 
 \item There exist $x_1,\dots,x_L \in S^d$ such that
 $\Pee_{(1+\delta)\mu}( \chi(G(\Xcal \cup \{x_1,\dots,x_L\},\alpha)) > k ) > 1-\eps$.
\end{enumerate}
\end{corollary}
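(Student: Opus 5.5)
The idea is to deduce Corollary~\ref{cor:contcor} from the preceding corollary, using Lemma~\ref{lem:aprevlem} to pass between the continuous model $G(\Xcal,\alpha)$ and the discretized model $G(\Ycal,\alpha)$.

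Fix $\eps,\delta$. Apply the previous corollary with $\eps/3$ in place of $\eps$ and the same $\delta$, which gives $L=L(\eps/3,\delta)$. Given $\mu,\alpha,k$, use Lemma~\ref{lem:aprevlem} to choose $\beta$ small enough that
\[
\Pee_\lambda\bigl(G(\Xcal,\alpha)\cong G(\Ycal,\alpha)\bigr)>1-\eps/3
\]
for all $\lambda\le (1+\delta)\mu$. Shrink $\beta$ further if needed so that the previous corollary applies. Suppose $\Pee_\mu(\chi(G(\Xcal,\alpha))>k)\ge\eps$. Then $\Pee_\mu(A)\ge \eps-\eps/3>\eps/3$, so the first alternative of the previous corollary fails. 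Hence there are indices $i_1,\dots,i_L$ with
\[
\Pee_{(1+\delta)\mu}(A\mid Z_{i_1}=\dots=Z_{i_L}=1)>1-\eps/3 .
\]
Set $x_j:=y_{i_j}$.

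It remains to compare this conditional probability with $\Pee_{(1+\delta)\mu}(\chi(G(\Xcal\cup\{x_1,\dots,x_L\},\alpha))>k)$. By independence of the $Z_i$, conditioning on $Z_{i_1}=\dots=Z_{i_L}=1$ gives the same law as forcing these bits to one. Forcing them to one means the vertex set becomes $\Ycal\cup\{y_{i_1},\dots,y_{i_L}\}$, which is exactly the $\Ycal$-set of the point configuration $\Xcal\cup\{x_1,\dots,x_L\}$, since $y_{i_j}\in C_{i_j}$. So the task reduces to showing that, with probability at least $1-\eps/3$, the graph $G(\Ycal',\alpha)$ built from $\Xcal'=\Xcal\cup\{x_1,\dots,x_L\}$ is isomorphic to $G(\Xcal',\alpha)$. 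The map sending each point to the centre of its cell must preserve edges and non-edges and be injective.

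This comparison is the main obstacle, because the added points are deterministic and Lemma~\ref{lem:aprevlem} only handles the random part. I would rerun its proof with the events $E_\beta,F_\beta$ extended to pairs involving the $y_{i_j}$. The pairs among the $y_{i_j}$ themselves need care. Edges among them are unchanged, since $y_{i_j}$ is its own cell centre. For pairs between a Poisson point and a $y_{i_j}$, the bad event is that some Poisson point lies at distance within $\pi\beta$ of $\pi-\alpha$ from $y_{i_j}$, or within $\pi\beta$ of $y_{i_j}$. This set has measure $O(\beta)$, so the probability is $O(L\mu\beta)$, uniformly in the choice of $y$'s. One subtlety is that the $y_{i_j}$ depend on $\beta$, but this uniform bound handles it, provided $L$ does not depend on $\beta$, which it does not. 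Taking $\beta$ small then makes the total error at most $\eps/3$, and the three $\eps/3$ losses give probability $>1-\eps$.

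Two remaining points need checking. First, a Poisson point landing in $C_{i_j}$ merges with $x_j$ in $\Ycal'$, while in $G(\Xcal',\alpha)$ they are distinct vertices. Their being distinct only adds vertices, and the property $\chi>k$ is preserved under supergraphs. So the clean way is to show that $G(\Ycal',\alpha)$ is isomorphic to an induced subgraph of $G(\Xcal',\alpha)$ with high probability, which gives the needed monotonicity. Second, for the other direction we need $\Pee_\mu(A)$ to be close to $\Pee_\mu(\chi(G(\Xcal,\alpha))>k)$, and this follows directly from Lemma~\ref{lem:aprevlem}.
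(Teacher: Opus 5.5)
Your proposal is correct and follows the same route as the paper. You take $x_j:=y_{i_j}$ from the previous corollary, observe that conditioning on $Z_{i_1}=\dots=Z_{i_L}=1$ is the same as running the discretization on $\Xcal\cup\{y_{i_1},\dots,y_{i_L}\}$, and transfer back to $G(\Xcal\cup\{x_1,\dots,x_L\},\alpha)$ via a modification of Lemma~\ref{lem:aprevlem}. The paper leaves that modification implicit, whereas you spell it out: the uniform $O(L\mu\beta)$ bound for pairs involving the added points, and the observation that embedding $G(\Ycal',\alpha)$ as a subgraph suffices, which handles Poisson points that fall in the cells $C_{i_j}$.
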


\begin{proof}
We notice that $\left(\Ycal|Z_{i_1}=\dots=Z_{i_K}=1\right) 
\isd \Ycal \cup \{y_{i_1},\dots,y_{i_K}\}$.
Now notice that $\Ycal \cup \{y_{i_1},\dots,y_{i_K}\}$ is
what we get if we follow the procedure constructing $\Ycal$ from $\Xcal$, but 
starting with $\Xcal \cup \{y_{i_1},\dots,y_{i_K}\}$ in place of $\Xcal$.
The result follows by (a modification of) Lemma~\ref{lem:aprevlem}.
\end{proof}

We say that distinct $y_1,\dots,y_K \in S^d$ form an $\eta$-near configuration with respect to $x_1,\dots,x_K \in S^d$
if there is an orthogonal transformation $T$ 
such that $\dist(T(y_i),x_i) < \eta$ for $i=1,\dots,K$.
(That is, we can apply an isometry of the sphere and 
map $y_i$ to a point close to $x_i$, for each $i$.)

\begin{lemma}
 For any fixed $K \in \eN, \eps > 0$ and $x_1,\dots,x_K \in S^d$, we have 
 
 $$ \Pee_\mu( \text{$\Xcal$ contains an $\eps \mu^{-1/d}$-near configuration} ) = 1 - o_\mu(1). $$
 
\end{lemma}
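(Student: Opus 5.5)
The plan is a second-moment (or, more simply, independent-trials) argument over many disjoint small regions of the sphere. Set $\eta := \eps\mu^{-1/d}$ and fix the target configuration $x_1,\dots,x_K$. For each rotation $T$ the ``target cells'' $\kap(T(x_i),\eta)$, $i=1,\dots,K$, are $K$ caps. If $\Xcal$ contains a point in each of them (distinct points), then these points form an $\eta$-near configuration with respect to $x_1,\dots,x_K$, using $T^{-1}$ as the orthogonal transformation. The idea is to find many rotations $T_1,\dots,T_m$ with $m=m(\mu)\to\infty$ such that the unions $\bigcup_i \kap(T_j(x_i),\eta)$ are pairwise disjoint for distinct $j$. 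By the independence property of Poisson processes on disjoint sets, the events ``each cap of the $j$-th configuration contains a point'' are then independent.

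\textbf{Step 1 (probability of a single success).} I will assume without loss of generality that the $x_i$ are distinct; if some coincide, the caps merge, and we need several points in one cap instead, which changes the argument only in constants. For $\eta$ small, the caps $\kap(T(x_i),\eta)$ are pairwise disjoint and each has measure $\nu(\kap(\cdot,\eta)) \asymp \eta^d = \eps^d/\mu$. Hence each contains a Poisson point independently, with probability $1-e^{-\Theta(\eps^d)} \geq q$ for some constant $q=q(\eps,d)>0$ not depending on $\mu$. The success probability for one rotation $T_j$ is therefore at least $q^K>0$, a constant.

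\textbf{Step 2 (many disjoint copies).} This is the main obstacle: the configuration $\{x_i\}$ is macroscopic, spread over the whole sphere, while the caps are tiny. I need rotations whose images of the $K$ points stay at mutual distance $\geq 2\eta$ from one another across different $j$. I would first try choosing $T_1,\dots,T_m$ i.i.d.\ Haar-random rotations, with $m = \lfloor \mu^{1/2}\rfloor$ say. For fixed $j\neq j'$ and fixed $i,i'$, the point $T_j(x_i)$ is uniform on $S^d$ given $T_{j'}$, so $\Pee(\dist(T_j(x_i),T_{j'}(x_{i'}))<2\eta) = O(\eta^d)=O(1/\mu)$. A union bound over $O(m^2K^2)$ pairs gives expected number of collisions $O(K^2 m^2/\mu)$. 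With $m=\mu^{1/3}$ this is $o(1)$, so a deterministic family of $m\to\infty$ rotations with pairwise disjoint cap unions exists. A deterministic alternative is to let $T_j$ range over rotations fixing a generic axis with angles spaced by about $\eta$, but the random choice is cleaner.

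\textbf{Step 3 (conclusion).} With these $m$ disjoint configurations, independence gives
\[
\Pee_\mu(\text{no $\eta$-near configuration}) \leq (1-q^K)^m = o_\mu(1),
\]
since $m\to\infty$ as $\mu\to\infty$.
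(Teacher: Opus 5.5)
Your proof is correct and follows the same strategy as the paper: take many rotated copies of the configuration whose $\eta$-caps are pairwise disjoint, bound each copy's success probability below by a constant, and use Poisson independence to get failure probability $(1-q^K)^m=o_\mu(1)$. The only difference is how the copies are found. You take $m=\mu^{1/3}$ i.i.d.\ Haar rotations and apply a first-moment bound on collisions; your initial $m=\lfloor\mu^{1/2}\rfloor$ only gives an $O(K^2\varepsilon^d)$ bound, so $\mu^{1/3}$ is the right choice. The paper instead chooses rotations greedily and combines maximality with a union bound under one random rotation to get $\Omega(\mu)$ copies; either suffices, since only $m\to\infty$ is needed.
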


\begin{proof}
We (greedily) pick orthogonal transformations $T_1,\dots,T_L$ such that 
$\dist\left(T_i(x_j),T_{i'}(x_{j'})\right) > 2 \eps \mu^{-1/d}$ for all
$(i,j)\neq(i',j')$.
This can be done for $L = \Omega( \mu )$, which can be seen as follows.
Suppose $T_1,\dots,T_L$ are such that we cannot find a suitable orthogonal transformation $T_{L+1}$.
If we pick an orthogonal transformation $T \in O(d+1)$ uniformly at random (this makes sense 
as the set of all orthogonal $(d+1)\times(d+1)$-matrices $O(d+1)$ can be seen as a smooth and compact manifold), then 
with probability one, at least one of the random points $Y_1 = T(x_1),\dots,Y_K = T(x_K)$ falls within 
the set  

$$ A := \bigcup_{1\leq i \leq L, \atop 1\leq j\leq K} \kap( T_i(x_j), 2\eps\mu^{-1/d} ). $$

By symmetry considerations, each of $Y_1,\dots,Y_K$ has the uniform distribution on $S^d$ (but they are not independent).
We see that 

\begin{equation}\label{eq:eendje} 
\begin{array}{rcl} 
1 
& = & \Pee( \{Y_1,\dots,Y_K\} \cap A \neq \emptyset )
\leq \sum_{j=1}^K \Pee( Y_j \in A ) \\
& = & K \nu(A) \leq K \sum_{i=1}^L \sum_{j=1}^K \nu( \kap( T_i(x_j), 2 \eps \mu^{-1/d} ) ) \\
& \leq & L \cdot K^2 \cdot \text{const} \cdot \eps^d \cdot (1/\mu), 
\end{array}
\end{equation}

\noindent
where $\text{const}$ is a constant depending only on $d$ (the measure of a cap with opening radius $x$ scales
with $x^d$).
Rephrasing,~\eqref{eq:eendje} states that $L = \Omega(\mu)$, as claimed.

For $1\leq i \leq L$, let us define the events

$$
E_i := \bigcap_{j=1}^K E_{i,j}, \quad 
E_{i,j} :=  \{ \Xcal \cap \kap(T_i(x_j),\eps\mu^{-1/d}) \neq \emptyset \}. $$

Then by choice of $T_1,\dots, T_L$, the events $E_1,\dots,E_L$ are independent and, 
by symmetry considerations, they all have the same probability.
This gives

\begin{equation}\label{eq:kikker1} 
\Pee\left( \bigcup_{j=1}^L E_j \right) = 1 - \left(1 - \Pee(E_1)\right)^L.  
\end{equation}

\noindent
The events $E_{1,1},\dots,E_{1,K}$ are positively dependent, giving

\begin{equation}\label{eq:kikker2} 
\Pee( E_1 ) \geq \Pee(E_{1,1})\cdot\dots\cdot\Pee(E_{1,K}) =
\left(1 - \exp[ - \mu \cdot \nu( \kap(x,\eps\mu^{-1/d})) ] \right)^K = \Omega(1), 
\end{equation}

\noindent
Combining~\eqref{eq:kikker1} and~\eqref{eq:kikker2} with $L = \Omega(\mu)$, we see that 
$\Pee\left( \bigcup_{j=1}^L E_j \right) = 1 - o_\mu(1)$, which proves the lemma.
\end{proof}

\begin{corollary}\label{cor:etacor}
For every $\eps,\delta,\eta > 0$ and $3\leq k \leq d+2$, one of the following holds for all sufficiently 
large $\mu$ and all $\alpha>0$:
\begin{enumerate}
 \item $\Pee_\mu( \chi( G(\Xcal,\alpha)) > k ) < \eps$, or;
 \item $\Pee_{(1+\delta)\mu}( \chi( G(\Xcal,\alpha+\eta\mu^{-1/d})) > k ) \geq 1-\eps$.
\end{enumerate}
\end{corollary}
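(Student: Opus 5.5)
We combine Corollary~\ref{cor:contcor} with the preceding lemma on near configurations, splitting the extra intensity $\delta\mu$ into two independent halves. Fix $\eps,\delta,\eta$ and $k$, and let $L=L(\eps/2,\delta/2)$ be as in Corollary~\ref{cor:contcor}. Assume item 1 fails, i.e.\ $\Pee_\mu(\chi(G(\Xcal,\alpha))>k)\geq\eps$. Then there are points $x_1,\dots,x_L\in S^d$ (depending on $\mu,\alpha$) with
\[ \Pee_{(1+\delta/2)\mu}\left(\chi(G(\Xcal\cup\{x_1,\dots,x_L\},\alpha))>k\right)>1-\eps/2. \]
We write the Poisson process of intensity $(1+\delta)\mu$ as $\Xcal'\cup\Xcal''$. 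Here $\Xcal'$ has intensity $(1+\delta/2)\mu$, and $\Xcal''$ is independent of it with intensity $(\delta/2)\mu$.

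Apply the near-configuration lemma to $\Xcal''$ with $K=L$ and closeness parameter $\eta'\cdot((\delta/2)\mu)^{-1/d}$, where $\eta'$ is chosen so that this equals $(\eta/2)\mu^{-1/d}$. This gives the following: with probability $1-o_\mu(1)$, $\Xcal''$ contains points $y_1,\dots,y_L$ and an orthogonal $T$ with $\dist(T(y_i),x_i)<(\eta/2)\mu^{-1/d}$. The key geometric step is to transfer the configuration. Since $T^{-1}$ is an isometry and the law of $\Xcal'$ is rotation invariant, conditionally on $\Xcal''$ the set $T^{-1}(\Xcal')$ has the same law as $\Xcal'$, and it is independent of $\Xcal''$. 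Hence, with conditional probability $>1-\eps/2$, the graph $G(\Xcal'\cup\{T^{-1}x_1,\dots,T^{-1}x_L\},\alpha)$ has chromatic number $>k$.

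Now replace each $x_i':=T^{-1}x_i$ by the nearby point $y_i$, which satisfies $\dist(y_i,x_i')<(\eta/2)\mu^{-1/d}$. Any edge $uv$ of the old graph, with $\dist(u,v)>\pi-\alpha$, becomes an edge between the replacements, because by the triangle inequality the distance decreases by at most $\eta\mu^{-1/d}$. So the map $x_i'\mapsto y_i$ (identity on $\Xcal'$) is a homomorphism from $G(\Xcal'\cup\{x_i'\},\alpha)$ into $G(\Xcal'\cup\Xcal'',\alpha+\eta\mu^{-1/d})$. A homomorphism can only increase the chromatic number, provided it is well defined. The delicate points are the following.
\begin{itemize}
\item Distinct $x_i'$ must map to distinct vertices. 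The lemma gives distinct $y_i$; if some $x_i$ coincide, drop duplicates first.
\item A homomorphism cannot map an edge to a loop. Any edge $x_i'y$ with $y=y_i$ would need $\dist\approx\pi$ versus $\approx 0$, which is impossible for small $\alpha$; and the $\alpha$ relevant here are small, since for $k\ge3$ and $\alpha$ bounded below the probability is trivially 1.
\end{itemize}
Adding the remaining points of $\Xcal''$ only adds vertices, so the chromatic number does not decrease.

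The main obstacle is exactly this transfer: the independence and rotation-invariance argument above, plus checking the edge-preservation inequality $\dist(y_i,v)\ge \dist(x_i',v)-(\eta/2)\mu^{-1/d}$, applied twice when both endpoints are moved. Combining the two error terms gives $\Pee_{(1+\delta)\mu}(\chi(G(\Xcal,\alpha+\eta\mu^{-1/d}))>k)\geq 1-\eps/2-o_\mu(1)\geq 1-\eps$ for $\mu$ large.

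Uniformity in $\alpha$ holds because the $o_\mu(1)$ from the near-configuration lemma does not depend on the points $x_i$. Its proof only uses $\nu$ of caps and rotation invariance, so its bounds are uniform in the configuration. This is why the statement holds for all $\alpha>0$ once $\mu$ is large.
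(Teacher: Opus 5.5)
Your proposal is correct and is essentially the paper's argument. You apply Corollary~\ref{cor:contcor}, write the higher-intensity process as two independent Poisson processes, find a near configuration in the sprinkled part, and use rotation invariance plus the triangle inequality to replace the deterministic points $x_i$ by nearby Poisson points at the cost of raising $\alpha$ by $\eta\mu^{-1/d}$. The paper instead passes to intensity $(1+2\delta)\mu$ and readjusts $\eps,\delta$ at the end, whereas you split $\delta\mu$ into halves from the start; this difference is purely cosmetic.

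One small caveat concerns your uniformity claim for the near-configuration lemma. Its proof, as written, chooses the $T_i$ with $\dist(T_i(x_j),T_{i'}(x_{j'}))>2\eps\mu^{-1/d}$ for all $(i,j)\neq(i',j')$, and this forces $\dist(x_j,x_{j'})>2\eps\mu^{-1/d}$. Since the $x_j$ here depend on $\mu$ and $\alpha$, uniformity over configurations needs the routine tweak of separating only centres with different $i$. Your homomorphism formulation does not need distinct $y_i$ except in the trivial regime $\alpha>\pi-\eta\mu^{-1/d}$. The paper passes over this point as well.
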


\noindent
(Just to be clear, this holds uniformly over all $\alpha$. In particular 
$\alpha = \alpha(n)$ is allowed to vary with $n$.)

\begin{proof}
Assume $\Pee_\mu( \chi( G(\Xcal,\alpha)) > k ) \geq \eps$.
By Corollary~\ref{cor:contcor}, 
there are $x_1,\dots,x_K$ such that 
$\Pee_{(1+\delta)\mu}( \chi( G(\Xcal \cup \{x_1,\dots,x_K\},\alpha)) > k ) \geq 1-\eps$.

Now consider the model with parameter $(1+2\delta)\mu$.
The point set $\Xcal$ is the union of $\Xcal_1 \cup \Xcal_2$
where $\Xcal_1$ is a PPP with parameter $(1+\delta)\mu$ and $\Xcal_2$ 
an independent PPP with parameter $\delta\mu$.

By the previous lemma, with probability $1-o(1)$, $\Xcal_2$ contains 
at least one $(\eta/2)\cdot\mu^{-1/d}$-near configuration $Y_1,\dots,Y_K$.
Let $T$ be a corresponding (random) orthogonal transformation. That is, 
$\dist( T(Y_1), x_1 ), \dots, \dist( T(Y_K), x_K ) < (\eta/2)\cdot\mu^{-1/d}$. 
(For definiteness, we can say that we pick $Y_1,\dots,Y_K$ 
uniformly at random from all eligible $K$-tuples and, having chosen 
$Y_1,\dots,Y_K$, we pick $T$ 
uniformly at random from all eligible transformations. Again
this last step makes sense because we are picking from an
open subset of a compact, smooth manifold.)

We now note that 

$$ \begin{array}{rcl} G( \Xcal, \alpha+\eta\mu^{-1/d} ) & \supseteq & 
G( \Xcal_1 \cup \{Y_1,\dots,Y_K\}, \alpha+\eta\mu^{-1/d} ) \\
& \cong & G( T\left[\Xcal_1 \cup \{Y_1,\dots,Y_K\}\right], \alpha+\eta\mu^{-1/d} ) \\
& \supseteq &  G( T\left[\Xcal_1\right] \cup \{x_1,\dots,x_K\}, \alpha ), 
\end{array} $$

\noindent
where $\subseteq$ denotes the subgraph relation and in the last 
step we use the triangle inequality for the spherical distance $\dist(.,.)$.
 
The random orthogonal transformation $T$ is determined entirely by $\Xcal_2$ and 
in particular is independent of $\Xcal_1$.
It follows that $T[\Xcal_1]\isd \Xcal_1$ by the symmetry properties 
of a constant intensity measure $\lambda\cdot \nu(.)$ on the $d$-sphere.
This gives

$$ \begin{array}{rcl} 
\Pee_{(1+2\delta)\mu}\left( \chi\left(G( \Xcal, \alpha+\eta\mu^{-1/d} )\right) > k \right)
& \geq & 
\Pee_{(1+2\delta)\mu}\left( \chi\left(G( T\left[\Xcal_1\right] \cup \{x_1,\dots,x_K\}, \alpha )\right) > k \right)-o(1)\\
& = & \Pee_{(1+2\delta)\mu}\left( \chi\left(G( \Xcal_1 \cup \{x_1,\dots,x_K\}, \alpha )\right) > k \right) -o(1) \\
& = & \Pee_{(1+\delta)\mu}\left( \chi\left(G( \Xcal \cup \{x_1,\dots,x_K\}, \alpha )\right) > k \right) - o(1)\\
& \geq & 1-\eps-o(1). \end{array} $$

\noindent
(The $o(1)$ term accounts for the event that $\Xcal_2$ does not have the required near configuration.)
Adjusting the values of $\eps,\delta$, the result follows.
\end{proof}

For $n\in\eN$ and $3\leq k \leq d+1$ we let $\alpha_k(n)$ be such that

$$ \Pee_n( \chi(G(\Xcal,\alpha_k)) > k ) = 1/2, $$

\noindent
(This $\alpha_k$ exists by a straightforward continuity argument.)
We remark that $\alpha_k$ is non-increasing.
By~\eqref{eq:KF} and Theorems~\ref{bt1} and~\ref{thm:maind+1}, there exists a constant $c > 0$ such that 

\begin{equation}\label{eq:muis} 
\alpha_k(n) \geq c n^{-1/d},  
\end{equation}

\noindent
and also 

\begin{equation}\label{eq:walnoot}
\alpha_k(2n) > c \cdot \alpha_k(n), 
\end{equation}

\noindent 
for all $3\leq k \leq d+1$ and all $n \in \eN$.

\vspace{1ex}

We fix a $3\leq k \leq d+1$ and $\eps,\delta > 0$ and define

\begin{equation}\label{eq:UVdef} 
U := \Big\{ n \in \eN : \alpha_k(n) \cdot (1-\eps/2) > \alpha_k\left( \left\lceil (1+\delta) n\right\rceil\right) \Big\}, 
\quad V := \left\{ n \in \eN : \left\lfloor\frac{n}{1+\delta}\right\rfloor \in U \right\}. 
\end{equation}

\begin{lemma}
We have $\displaystyle \frac{|U \cap [n]|}{n} \leq 100 \cdot \ln(1/c) \cdot (\delta/\eps)$, for all $n \in \eN$.
\end{lemma}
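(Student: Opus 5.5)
The approach is a telescoping/counting argument that uses the monotonicity of $\alpha_k$ together with the doubling bound~\eqref{eq:walnoot}. Set $m := \lceil (1+\delta) n\rceil$ and $\phi(n) := \ln \alpha_k(n)$. Since $\alpha_k$ is non-increasing, $\phi$ is non-increasing. For $n\in U$ we have $\phi(n) - \phi(\lceil(1+\delta)n\rceil) > -\ln(1-\eps/2) \geq \eps/2$. On the other hand, \eqref{eq:walnoot} gives $\phi(n)-\phi(2n) < \ln(1/c)$ for every $n$. Iterating over dyadic scales shows that the total drop of $\phi$ over an interval $[x,2^j x]$ is at most $j\ln(1/c)$. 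The idea is that each element of $U$ forces a drop of at least $\eps/2$ over a multiplicative window of length $1+\delta$, and the total drop over $[n/2^j, n]$ is only $j\ln(1/c)$. So $U$ cannot be too dense.

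To make this precise, I would select from $U\cap[n]$ a subfamily of elements whose windows $[u,\lceil(1+\delta)u\rceil]$ are pairwise disjoint, chosen greedily. Each window meets at most a bounded number of others, so a positive fraction of the elements can be kept. More cleanly, partition $(n/2^{j}, n]$ into blocks $I_t := ((1+\delta)^{t}, (1+\delta)^{t+1}]$. The windows of elements in $I_t$ lie inside $I_t\cup I_{t+1}\cup I_{t+2}$ (up to rounding, for $\delta$ small). Hence if blocks $t_1 < t_2 < \dots$ with $t_{i+1}\geq t_i+3$ each contain an element of $U$, the corresponding drops add up. This gives: the number of blocks in a dyadic interval $(x,2x]$ that contain an element of $U$ is at most $3\cdot(2\ln(1/c)/\eps + 1)$.

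This alone bounds the number of occupied blocks, not the number of elements, and a block $I_t$ contains about $\delta(1+\delta)^t$ integers. So per dyadic interval $(x,2x]$ we get $|U\cap(x,2x]| \lesssim \delta x\cdot 3(2\ln(1/c)/\eps+1)$. There are about $\ln 2/\ln(1+\delta)\approx 1/\delta$ blocks per dyadic interval, so this is consistent. Summing over dyadic intervals $(n2^{-j-1}, n2^{-j}]$ gives a geometric series, yielding $|U\cap[n]|/n \lesssim 6\delta(2\ln(1/c)/\eps + 1)$. Using $\eps$ small (so the additive $+1$ is absorbed into $\ln(1/c)/\eps$, assuming w.l.o.g.\ $c<1/e$, say) gives a bound of the form $C\ln(1/c)\delta/\eps$. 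Tracking constants carefully should give $100$.

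The main obstacle is bookkeeping rather than ideas. Three points need care: the rounding in $\lceil(1+\delta)n\rceil$ for small $n$, which is where the window may exceed one block; the bounded-overlap argument turning "many elements of $U$" into "many disjoint drops"; and the fact that the lemma claims the bound for all $n$ rather than asymptotically. For small $n$ (e.g.\ $n<1/\delta$, where $\lceil(1+\delta)n\rceil = n+1$), I would instead argue directly. Consecutive-integer windows overlap in at most their endpoints, so every element $u$ of $U$ in $(x,2x]$ contributes a drop of $\eps/2$ on $[u,u+1]$. These are disjoint, which gives $|U\cap(x,2x]|\le 2\ln(1/c)/\eps$, and this is $\le 100\ln(1/c)\delta x/\eps$ once $\delta x\gtrsim 1$. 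For $x<1/\delta$, the trivial bound $|U\cap[n]|\le n$ already suffices if $100\ln(1/c)\delta/\eps\ge1$. Otherwise the windows-as-unit-steps argument applies throughout $[1,n]$ and bounds $|U\cap[n]|$ by $(2\ln(1/c)/\eps)\log_2 n$; this needs checking and may require adjusting constants.
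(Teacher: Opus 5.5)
Your argument is essentially the paper's, and it is sound for large $n$. Both proofs use \eqref{eq:walnoot} to cap the drop of $\ln\alpha_k$ across a dyadic scale by $\ln(1/c)$. Both note that each $u\in U$ forces a drop of at least $\eps/2$ across its window $[u,\lceil(1+\delta)u\rceil]$, conclude that only $O(\ln(1/c)/\eps)$ pairwise disjoint windows fit into one scale, and charge $O(\delta x)$ integers to each.

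The difference is in the bookkeeping. The paper uses a maximal chain $n_1<\dots<n_K$ in $[2^\ell,2^{\ell+1})$, where you use fixed geometric blocks $I_t$ with a mod-$3$ selection. Your version is the more careful one, because you take the representatives from $U$ itself, so their windows are disjoint and the drops genuinely add up. The paper's claim that maximality of $K$ puts every $n\in U$ within a factor $1+\delta$ of some $n_i$ does not follow as written. Suppose $\alpha_k$ is constant on the block except for one drop by a factor $1-\eps$ near $3\cdot 2^{\ell-1}$. Then $2^\ell,2^{\ell+1}-1$ is a maximal chain, yet the elements of $U$ just below the drop are far from both. The repair is exactly to build the chain greedily from $U$, as you do.

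The part you flag at the end cannot be fixed by adjusting constants, because the inequality is false for small $n$. For $u<1/\delta$ the window is a single unit step. Monotonicity and \eqref{eq:walnoot} are the only properties either proof uses, and they allow about $2\ln(1/c)/\eps$ elements of $U$ in every dyadic scale below $1/\delta$, independently of $\delta$. For instance, $1\in U$ is compatible with both, and then the left-hand side equals $1$ at $n=1$. The right-hand side is small whenever $\delta/\eps$ is, which is exactly the regime used later. In the actual model, $\alpha_k(n)$ is not even defined for small $n$, since there $\Pee_n(\chi(G(\Xcal,\alpha))>k)\le\Pee_n(|\Xcal|\ge k+1)<1/2$. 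The paper's count $\sum_i 2\delta n_i$ silently drops the same $+1$ rounding.

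What your argument does prove is $|U\cap[n]|\le 3(2\ln(1/c)/\eps+1)\bigl(2\delta n+O(\log n)\bigr)+O_{\eps,\delta,c}(1)$. This gives the stated bound for all $n\ge n_0(\eps,\delta,c)$, where you may assume without loss of generality that $c\le 1/e$ and $\eps,\delta\le 1$. That is all the proof of Theorem~\ref{thm:main} needs: only $|U_i\cap[n]|/n$ and $|V_i\cap[n]|/n$ for $n\ge n_i$ enter, and the $n_i$ may be chosen as large as desired.
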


\begin{proof}
For some $\ell \in \eN$, let $2^\ell \leq n_1 < n_2 < \dots n_K < 2^{\ell+1}$ be a 
sequence with the properties that $n_{i+1} \geq (1+\delta)n_i$ and 
$\alpha_k( n_{i+1} ) < (1-\eps/2) \alpha_k(n_i)$ and, subject to these constraints, $K$ is 
as large as possible.
We must have 

$$ \alpha_k( 2^{\ell+1} ) \leq (1-\eps/2)^K \alpha_k(2^\ell). $$

\noindent
So, by~\eqref{eq:walnoot}, we have $c < (1-\eps/2)^K$. Rewriting, we have

$$ K < \frac{\ln c}{\ln(1-\eps/2)} = \frac{\ln(1/c)}{-\ln(1-\eps/2)} \leq \frac{2\ln(1/c)}{\eps}. $$

\noindent
By maximality of $K$, every $n \in U \cap \{2^\ell, \dots, 2^{\ell+1}-1\}$ must satisfy 
$n_i/(1+\delta) < n < (1+\delta)n_i$ for some $1\leq i \leq K$.
It follows that 

$$ \left| U \cap \{2^\ell, \dots, 2^{\ell+1}-1\} \right| \leq 
\sum_{i=1}^K 2\delta\cdot n_i \leq K \cdot 2\delta \cdot 2^{\ell+1}
\leq 8\ln(1/c) \cdot (\delta/\eps) \cdot 2^\ell. $$

Now let $n$ be arbitrary and let $\ell \in \eN$ be the value such that 
$2^\ell \leq n < 2^{\ell+1}$. 
Then 

$$ |U \cap [n]| \leq \sum_{i=0}^\ell |U \cap \{2^i, \dots, 2^{i+1}-1\}|
\leq 8\ln(1/c) \cdot (\delta/\eps) \cdot \sum_{i=0}^\ell 2^i 
\leq 16\ln(1/c) \cdot (\delta/\eps) \cdot n. $$

\noindent
This proves the result.
\end{proof}

\begin{corollary}
We have $\displaystyle \frac{|V \cap [n]|}{n} \leq 1000 \cdot \ln(1/c) \cdot (\delta/\eps)$.
\end{corollary}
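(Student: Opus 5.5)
The plan is to derive the density bound for $V$ from the preceding lemma on $U$. By definition, $n \in V$ exactly when $m := \lfloor n/(1+\delta)\rfloor \in U$. So $V$ is the preimage of $U$ under the map $n \mapsto \lfloor n/(1+\delta)\rfloor$. It therefore suffices to bound how many $n$ can share a given image $m$, and to control the range of $m$ as $n$ runs over $[n]$.

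\textbf{Fibres and range.} Suppose $\lfloor n'/(1+\delta)\rfloor = m$. Then $(1+\delta)m \le n' < (1+\delta)(m+1)$. This interval has length $1+\delta$, so it contains at most $\lceil 1+\delta\rceil \le 2$ integers; we may take $\delta<1$ without loss of generality. Moreover, if $n' \le n$ then $m \le n/(1+\delta) \le n$. The case $m=0$ is irrelevant, since $U \subseteq \eN$. Hence
$$ |V \cap [n]| \le 2\,|U \cap [n]| \le 2 \cdot 100 \cdot \ln(1/c) \cdot (\delta/\eps) \cdot n, $$
using the previous lemma, whose proof in fact gives the stronger constant $16$. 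This is comfortably below $1000 \cdot \ln(1/c)\cdot(\delta/\eps)\cdot n$.

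\textbf{Obstacles.} The only points needing care are the edge cases. First, for small $n$ the map may send $n$ to $0$, which is harmless. Second, if one prefers not to assume $\delta<1$, the fibre size is at most $\lceil 1+\delta\rceil$, and the argument still goes through. In that case either the stated constant $1000$ absorbs the factor, or we use $|U\cap[m]| \le 100\ln(1/c)(\delta/\eps)\, m$ with $m \le n/(1+\delta)$, so that the fibre factor $\lceil 1+\delta\rceil$ and the range factor $1/(1+\delta)$ roughly cancel. I expect no real obstacle: this is a direct counting consequence of the previous lemma.
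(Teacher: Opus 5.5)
Your proposal is correct and is essentially the paper's argument. The paper also assumes $\delta<1$ and notes that each $u=\lfloor n/(1+\delta)\rfloor$ has at most two preimages, namely $\lceil u(1+\delta)\rceil$ and $\lceil u(1+\delta)\rceil+1$; your fibre-and-range counting, giving $|V\cap[n]|\le 2|U\cap[n]|$, then combines this with the preceding lemma on $U$ exactly as the paper does.
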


\begin{proof}
This follows from the previous lemma and the fact that if $u = \left\lfloor\frac{n}{1+\delta}\right\rfloor$ then 
$n \in \left\{ \left\lceil u(1+\delta)\right\rceil, \left\lceil u(1+\delta)\right\rceil + 1 \right\}$
(assuming without loss of generality $\delta<1$).
\end{proof}

\begin{lemma}\label{lem:elf}
There exists $n_0 = n_0(\eps,\delta,k)$ such that for all $n \in \{n_0,n_0+1,\dots\} \setminus (U\cup V)$ we have 
\begin{enumerate}
\item\label{itm:x1} $\Pee_n\left( \chi( G(\Xcal,(1+\eps)\alpha_k(n)))  > k \right) > 1-\eps$, and;
\item\label{itm:x2} $\Pee_n\left( \chi( G(\Xcal,(1-\eps)\alpha_k(n))) > k \right) < \eps$.
\end{enumerate}
\end{lemma}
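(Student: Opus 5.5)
Both items come from Corollary~\ref{cor:etacor}, applied with $\mu=n$ for item~\ref{itm:x1} and with $\mu=m:=\lfloor n/(1+\delta)\rfloor$ for item~\ref{itm:x2}. The defining property $\Pee_n(\chi(G(\Xcal,\alpha_k(n)))>k)=1/2$ supplies the input that rules out the first alternative of that corollary. The conditions $n\notin U$ and $n\notin V$ convert the increase $\mu\mapsto(1+\delta)\mu$ into only a small multiplicative change in $\alpha_k$. The lower bound~\eqref{eq:muis} converts the additive increment $\eta\mu^{-1/d}$ into a multiplicative one. Throughout I take $\eps<1/2$, use monotonicity in $\alpha$ and in the intensity (by coupling), and replace $(1+\delta)n$ by $\lceil(1+\delta)n\rceil$, which costs only a negligible change in intensity.

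\emph{Item~\ref{itm:x2}.} Let $m:=\lfloor n/(1+\delta)\rfloor$. Since $n\notin V$, we have $m\notin U$, so $\alpha_k(\lceil(1+\delta)m\rceil)\geq(1-\eps/2)\alpha_k(m)$. Moreover $\lceil(1+\delta)m\rceil\leq n$ (up to the usual rounding) and $\alpha_k$ is non-increasing, so $\alpha_k(n)\leq\alpha_k(\lceil(1+\delta)m\rceil)\leq\alpha_k(m)$, with $\alpha_k(m)$ not much larger than $\alpha_k(n)$. Suppose, for contradiction, that $\Pee_n(\chi(G(\Xcal,(1-\eps)\alpha_k(n)))>k)\geq\eps$. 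Then the same bound holds at intensity $n$ with the smaller radius $\beta:=(1-\eps)\alpha_k(n)$, and hence also at intensity $(1+\delta)m\leq n$ after a small adjustment.

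The corollary has to be applied at intensity $m$, however. So I would instead apply Corollary~\ref{cor:etacor} with $\mu=m$ and $\alpha=\beta':=(1-\eps)\alpha_k(m)$, using $\eta$ small. Since $\beta'\geq(1-\eps)\alpha_k(n)$, the first alternative would give $\Pee_m(\chi(G(\Xcal,\beta'))>k)<\eps$; the contradiction hypothesis needs to be phrased at intensity $m$ for this to bite. The second alternative gives
$$\Pee_{(1+\delta)m}\left(\chi(G(\Xcal,\beta'+\eta m^{-1/d}))>k\right)\geq 1-\eps,$$
and by~\eqref{eq:muis} we have $\beta'+\eta m^{-1/d}\leq(1-\eps+\eta'/c)\alpha_k(m)$. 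Then $\alpha_k(\lceil(1+\delta)m\rceil)\geq(1-\eps/2)\alpha_k(m)$ makes this radius smaller than $\alpha_k(\lceil(1+\delta)m\rceil)$, so the probability would exceed $1/2$ at the median, which is a contradiction.

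The correct bookkeeping is therefore to run the dichotomy at $\mu=n$ for the ``$1-\eps$'' side and at $\mu=m$ for the ``$\eps$'' side. This is exactly why the two exceptional sets $U$ and $V$ appear.

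\emph{Item~\ref{itm:x1}.} Apply Corollary~\ref{cor:etacor} with $\mu=n$ and $\alpha=\alpha_k(n)$. The first alternative fails because the probability equals $1/2>\eps$. Hence
$$\Pee_{(1+\delta)n}\left(\chi(G(\Xcal,\alpha_k(n)+\eta n^{-1/d}))>k\right)\geq1-\eps.$$
I would rather land on the statement at intensity $n$ itself, so I would apply the corollary at $\mu=m$ with $\alpha=(1+\eps/2)\alpha_k(n)$. Since $\alpha_k(n)\geq\alpha_k(m)(1-\eps/2)$ by $m\notin U$, at intensity $m\leq n$ this radius is at least roughly $\alpha_k(m)$, so its probability is $\geq1/2-o(1)>\eps$. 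The corollary then gives probability $\geq1-\eps$ at intensity $(1+\delta)m\leq n$ with radius $(1+\eps/2)\alpha_k(n)+\eta m^{-1/d}\leq(1+\eps)\alpha_k(n)$, using~\eqref{eq:muis} and $\eta\ll\eps c$. Monotonicity in intensity finishes.

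Item~\ref{itm:x2} is handled symmetrically using $n\notin U$. Assume the probability at radius $(1-\eps)\alpha_k(n)$ is at least $\eps$ at intensity $n$, and apply the corollary at $\mu=n$. Intensity $(1+\delta)n$ and radius at most $(1-\eps/2)\alpha_k(n)<\alpha_k(\lceil(1+\delta)n\rceil)$ would then give probability $\geq1-\eps>1/2$, contradicting the definition of $\alpha_k$.

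The main obstacle is the careful matching of radii with the margins: one needs $\eta/c<\eps/4$ and the $\eps/2$ slack from $U$, together with the rounding between $(1+\delta)\mu$ and integer arguments of $\alpha_k$. I would choose $\eta:=c\eps/10$ and take $n_0$ large enough that Corollary~\ref{cor:etacor} applies for all $\mu\geq n_0/2$.
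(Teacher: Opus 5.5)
After your false starts, the pairing you actually carry out is the paper's. Item~\ref{itm:x2} comes from Corollary~\ref{cor:etacor} at $\mu=n$ using $n\notin U$, and item~\ref{itm:x1} comes from the corollary at $\mu=m=\lfloor n/(1+\delta)\rfloor$ using $n\notin V$, i.e.\ $m\notin U$. Your closing argument for item~\ref{itm:x2} is exactly the paper's and is fine. Be aware, though, that your opening sentence and your ``correct bookkeeping'' sentence state the opposite pairing. Your first attempt at item~\ref{itm:x2} via $\mu=m$ is a dead end, as you half-noticed: it only yields $\Pee_m(\chi(G(\Xcal,(1-\eps)\alpha_k(m)))>k)<\eps$. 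That is an upper bound at the \emph{smaller} intensity $m$, and monotonicity cannot carry it up to intensity $n$.

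The genuine gap is in your final argument for item~\ref{itm:x1}. You apply the corollary at $\mu=m$ with input radius $(1+\eps/2)\alpha_k(n)$ and assert that its probability at intensity $m$ is $\geq 1/2-o(1)>\eps$. But $m\notin U$ only gives $\alpha_k(n)\geq(1-\eps/2)\alpha_k(m)$. So your radius is only known to be $\geq(1-\eps^2/4)\alpha_k(m)$, possibly \emph{below} the median radius $\alpha_k(m)$. Monotonicity then gives no lower bound, and no continuity of $\alpha\mapsto\Pee_m(\chi(G(\Xcal,\alpha))>k)$ is available to keep it near $1/2$. On the contrary, the sharp-threshold behaviour you are trying to prove predicts this probability tends to $0$ for fixed $\eps$. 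In that case the corollary's first alternative holds and you learn nothing.

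The repair is the paper's choice of input radius, $\alpha=\alpha_k(m)$, where the probability is exactly $1/2\geq\eps$. The second alternative then gives probability $\geq 1-\eps$ at intensity $(1+\delta)m\leq n$ and radius $\alpha_k(m)+\eta m^{-1/d}$. Since $\alpha_k(m)\leq\alpha_k(n)/(1-\eps/2)$ and $(1+\eps)(1-\eps/2)>1$, the bound \eqref{eq:muis} with small $\eta$ gives $\alpha_k(m)+\eta m^{-1/d}<(1+\eps)\alpha_k(n)$. Monotonicity in radius and intensity finishes. Any input radius $\geq\alpha_k(m)$ would do (e.g.\ $\alpha_k(n)/(1-\eps/2)$); $(1+\eps/2)\alpha_k(n)$ is simply not guaranteed to be one.
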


\begin{proof}
 Pick $n \in \eN \setminus (U\cup V)$.
 Writing $m := \left\lfloor\frac{n}{1+\delta}\right\rfloor$ for convenience, we have 
 
$$ (1+\eps)\alpha_k(n) \geq 
(1+\eps)(1-\eps/2)\alpha_k\left(m\right) >  \alpha_k\left(m\right) + \eta\cdot m^{-1/d},     
$$
 
\noindent
where $\eta>0$ is a sufficiently small constant and the last inequality holds for $n$ sufficiently large, 
and we use~\eqref{eq:muis}.
Corollary~\ref{cor:etacor} now shows that~\ref{itm:x1} holds for $n$ sufficiently large.

It remains to establish~\ref{itm:x2}.
Aiming for a contradiction, suppose that

$$ \Pee_n( \chi(G(\Xcal, (1-\eps)\alpha_k(n))) > k ) \geq \eps. $$

Writing $\ell := \lceil(1+\delta)n\rceil$, Corollary~\ref{cor:etacor} tells us that 

$$ \Pee_\ell( \chi(G(\Xcal, (1-\eps)\alpha_k(n)+\eta n^{-1/d})) > k ) > 1-\eps. $$

However, $\alpha_k(\ell) \geq (1-\eps/2)\alpha_k(n)$ as $n \not\in U$ and hence

$$ (1-\eps)\alpha_k(n)+\eta n^{-1/d} \leq \frac{1-\eps}{1-\eps/2} \cdot \alpha_k(\ell) +\eta n^{-1/d}
< \alpha_k(\ell), $$

\noindent
for $n$ sufficiently large and we use~\eqref{eq:muis} again, and that $\eta$ is an appropriately chosen small constant.
But now it follows that, for $n$ sufficiently large: 

$$ \Pee_\ell( \chi(G(\Xcal, \alpha_k(\ell))) > k ) > 1-\eps, $$ 

\noindent
contradicting the definition of $\alpha_k$.
It follows that, for $n \in \eN \setminus (U\cup V)$ sufficiently large, we have 

$$ \Pee_n( \chi(G(\Xcal, (1-\eps)\alpha_k(n))) > k ) < \eps, $$

\noindent 
proving~\ref{itm:x2}.
\end{proof}

\begin{proofof}{Theorem~\ref{thm:main}}
Fix $3\leq k \leq d+1$.
We choose sequences $\eps_1,\eps_2,\dots$ and $\delta_1,\delta_2, \dots$ 
satisfying $\eps_i \to 0$ and $\delta_i/\eps_i \to 0$ as $i \to \infty$.
Let $U_i = U(k,\eps_i,\delta_i), V_i = V(k,\eps_i,\delta_i)$ be as defined in~\eqref{eq:UVdef}
and let $n_i = n_0(k,\eps_i,\delta_i)$ be as provided by Lemma~\ref{lem:elf}.
Without loss of generality (switching to even larger numbers if needed), we can assume that 
$n_{i+1}/n_i \to \infty$ as $i\to\infty$.

We define 

$$ N_k := \bigcup_{i=1}^\infty \{n_i, \dots, n_{i+1}-1\} \setminus (U_i\cup V_i). $$

Note that if $n_i \leq n < n_{i+1}$ then 

$$ \begin{array}{rcl} \frac{|[n]\setminus N_k|}{n} & \leq & \frac{|U_i \cap [n]|}{n} + \frac{|V_i \cap [n]|}{n}
+ \frac{|U_{i-1} \cap [n_i]|}{n_i} + \frac{|V_{i-1} \cap [n_i]|}{n_i}
+ \frac{n_{i-1}}{n_i} \\
& \leq & O(\delta_i/\eps_i) + O(\delta_{i-1}/\eps_{i-1}) + o(1) \\ & = & o(1). \end{array} $$

\noindent
In other words, $N_k$ has density one. It follows that

$$ N := N_4\cap\dots\cap N_{d+2}, $$

\noindent
also has density one.
Theorem~\ref{thm:main} now follows by the previous lemma (using $\eps_i \to 0$).
\end{proofof}

\section{Proof of Theorem~\ref{bt1}.\label{sec:bt1}}

For this part, it will be the easiest to compare the projected Borsuk graph locally to the continuum $AB$ percolation model.
In this model, we let $ \mathcal{V}_A $ and  $ \mathcal{V}_B $ be independent Poisson point processes in $ \RR^d $ where each has intensity $ \lambda>0 $. We create a random graph $(\mathcal{V},E) $ with vertices $ \mathcal{V}=\mathcal{V}(\lambda)\coloneqq\mathcal{V}_A\cup\mathcal{V}_B $ 
and edges
	\begin{equation*}
		E\coloneqq\left\{ xy \in {\mathcal{V}\choose 2} \,:\, \norm{x-y}\leq 1, x\in 
		\mathcal{V}_A, y\in\mathcal{V}_B\right\}.
	\end{equation*} 
	With a slight abuse of notation we will also write $\mathcal{V}=\mathcal{V}(\lambda)$ for the graph and  define percolation to be
	\begin{equation*}
	\{\text{percolation}\}\coloneqq\{\text{there exists an infinite connected component in }\mathcal{V}\}.
	\end{equation*}
	Since the law of $ (\mathcal{V}_A,\mathcal{V}_B) $ is ergodic (see e.g. Section 8.4 of \cite{Last_Penrose_2017}) 
	and the occurrence of an infinite component is translation invariant, 
	it follows that 
	
	$$\Pro_\lambda(\text{percolation})\in\{0,1\},$$ 
	
	\noindent
	for all $\lambda$. We define 
\begin{equation*}
	\lambda_c\coloneqq\inf\{\lambda>0\,:\Pro_\lambda(\text{percolation})>0\,\}.
\end{equation*}

\begin{lemma}\label{nontriv}
	$\lambda_c\in (0,\infty) $.
\end{lemma}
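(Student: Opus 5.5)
We need to show two things: that $\lambda_c>0$, i.e.\ no percolation for small $\lambda$, and that $\lambda_c<\infty$, i.e.\ percolation for large $\lambda$. Both will come from comparing the continuum AB model with discrete percolation.

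\emph{Lower bound $\lambda_c>0$.} The graph $\mathcal{V}(\lambda)$ is a subgraph of the ordinary Gilbert (Boolean) graph on $\mathcal{V}_A\cup\mathcal{V}_B$, which is a Poisson process of intensity $2\lambda$, with connection radius $1$. I would dominate the latter by site percolation on a lattice. Tile $\eR^d$ by cubes of side $1/\sqrt{d}$ and call a cube open if it contains a point of $\mathcal{V}$. Cubes are independently open with probability $p(\lambda)=1-e^{-2\lambda d^{-d/2}}$, and $p(\lambda)\to0$ as $\lambda\to0$. If two points are within distance $1$, their cubes are at $\ell^\infty$ lattice distance at most $\lceil\sqrt d\,\rceil$. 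Hence an infinite component of $\mathcal{V}$ yields an infinite open cluster in site percolation on the graph $\Zed^d_*$ in which sites at $\ell^\infty$ distance $\le\lceil\sqrt d\rceil$ are adjacent. That graph has bounded degree $D$, so a union bound over self-avoiding paths of length $m$ from the origin gives probability at most $D^m p^m\to0$ once $p<1/D$. Translation invariance and countable additivity then show that percolation has probability zero for small $\lambda$.

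\emph{Upper bound $\lambda_c<\infty$.} Here I would use a renormalization argument together with Lemma~\ref{dependentperco}, or more simply a comparison with a highly supercritical lattice model. Tile $\eR^d$ by cubes $Q_z=z s+[0,s)^d$ with $s$ small, say $s=1/(4\sqrt d)$. Declare $z\in\Zed^d$ good if $Q_z$ contains at least one point of $\mathcal{V}_A$ and at least one point of $\mathcal{V}_B$. This happens independently for different $z$, with probability $q(\lambda)=(1-e^{-\lambda s^d})^2\to1$ as $\lambda\to\infty$.

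If $z,z'$ are nearest neighbours and both good, pick $a\in\mathcal{V}_A\cap Q_z$, $b\in\mathcal{V}_B\cap Q_z$, $a'\in\mathcal{V}_A\cap Q_{z'}$ and $b'\in\mathcal{V}_B\cap Q_{z'}$. All four points lie within distance $2\sqrt d\, s\le 1/2<1$ of one another, so $a b$, $a b'$, $a' b$ and $a' b'$ are all edges. Thus all chosen points of adjacent good cubes lie in a single component. An infinite cluster of good sites in nearest-neighbour site percolation on $\Zed^d$ therefore produces an infinite component of $\mathcal{V}$. Since site percolation on $\Zed^d$ with $d\ge2$ has $p_c^{\mathrm{site}}<1$ (e.g.\ via a Peierls argument, or by comparison with Lemma~\ref{dependentperco}), choosing $\lambda$ with $q(\lambda)>p_c^{\mathrm{site}}$ gives percolation with positive probability.

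The only point that needs care is verifying that the good-cube event forces edges across both bipartition classes, since the graph is bipartite and requires both colours in each cube. This is why the good event demands one point of each type, and it is handled above. Everything else is standard, so no real obstacle is expected.
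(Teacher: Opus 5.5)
Your proof is correct and follows the paper's approach. The lower bound treats the AB graph as a subgraph of the Boolean model on $\mathcal{V}_A\cup\mathcal{V}_B$ (intensity $2\lambda$), and the upper bound couples with nearest-neighbour site percolation on $\Zed^d$, calling a small cube good when it contains a point of each type. The only differences are minor. You prove the Boolean subcriticality directly, by discretization and path counting, where the paper simply cites $\lambda_c^{\text{Bool}}>0$. You also use a slightly smaller cube side, which changes nothing.
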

\begin{proof}
	In the classical Boolean percolation model, we connect any two points of a PPP 
	with constant intensity $ \lambda>0 $ if they are closer than one. As for continuum AB percolation we can define
	\begin{equation*}
	 \lambda_c^{\text{Bool}}\coloneqq\inf\{\lambda>0\,:\Pro_{\lambda}(\text{percolation occurs in the Boolean model\
	 })>0\,\}.
	\end{equation*} 
	It is well known (see e.g. Chapter 3 of \cite{meester1996continuum}) that $ \lambda_c^{\text{Bool}} \in (0,\infty)$.
	Note that $\Vcal_A \cup \Vcal_B$ forms a PPP of intensity $2\lambda$. So if $\lambda < \lambda_c^{\text{Bool}}/2$ then 
	the continuum AB model certainly does not percolate.
	This gives  
	
	$$\lambda_c \geq \lambda_c^{\text{Bool}}/2 > 0. $$
	
	For the upper bound we let $L\coloneqq 1/(2\sqrt{d})$ and tile $\RR^d$ by disjoint 
	cubes $Q_z\coloneqq Lz+[0,L)^d$ for $z\in \ZZ^d$. 
	We couple the continuum AB percolation model with the standard site percolation model on $\ZZ^d$ by declaring $z$ 
	open iff.~$Q_z$ contains at least one 
	point of $\mathcal{V}_A$ and at least one point of $\mathcal{V}_B$. We have 
	
	$$\Pro(z\text{ open})=(1-e^{-\lambda L^d})^2,$$ 
	
	\noindent 
	and 
	the status of a site is independent of the other sites. 
	We can choose $\lambda$ large enough such that $\Pro(z\text{ open})>p_c^\text{site}(\ZZ^d)$. 
	In particular, for $\lambda$ sufficiently large, the site percolation model has an infinite component 
	(with probability one).
	By choice of $L$, for $x\in Q_z$ and $y\in Q_{z'}$ with $\norm{z-z'}=1$ we have $\norm{x-y}^2\leq 1$.
        So if there exists an infinite component in the coupled standard site percolation model on $\Zed^d$, then the 
        $AB$ model has an infinite connected component too. We can conclude that $\lambda_c<\infty$.
\end{proof}

\noindent
Having defined continuum $AB$ percolation, we can now go back to the Borsuk graph. For $ -1<t<1 $ and $ y\in S^d $ we define 
\begin{equation*}
	C_t(y)\coloneqq \text{cap}(y,\arccos{t})=\{ x \in S^d : \dist(y,x) < \arccos{t} \}=\{x\in S^d\,:\, \langle x,y\rangle >t\}
\end{equation*}
and $ C_t\coloneqq C_t((0,\dots,0,-1)) =\{x\in S^d\,:\,x_{d+1}<-t\}$. We write $ \pi_{y}:S^d\setminus\{-y\}\to P_y $ for the stereographic projection  from $ S^{d} $ through $ -y\in S^{d} $ onto the $ d$-dimensional plane $P_y\coloneqq \{x\in \RR^{d+1} \,:\, \langle x,y\rangle=0\} $. The choice of defining it with respect to $ -y $ and not $ y $ was made, as we will use the map to approximate $ S^d $ around $ y $ with $ \RR^d $. Thus, the projection we have used so far is $\pi(x)=\pi_{(0,\dots,0,-1)}(x)$. For $ x,y\in S^d $ we also define the map
\begin{equation*}
	g_y(x)\coloneqq  \frac{2\pi_y(x)}{\alpha(n)}
\end{equation*}
that first projects and then scales points of the sphere. We use the short form $g\coloneqq g_{(0,\dots,0,-1)}$. We also let 
\begin{equation*}
	\rho(t,n)\coloneqq \frac{2\sqrt{1-t^2}}{(1+t)\alpha(n)}.
\end{equation*}
Note that in the upcoming lemma,  $ B(\typ,\rho(t,n)) $ is meant to be a $ d $-dimensional ball that lives in the $ \{x_{d+1}=0\}$ plane. 
The value that will arise in our connection threshold is now set to be 
\begin{equation*}
	c_2=c_2(d)\coloneqq {((d+1)\kappa_{d+1}\lambda_c)^{1/d}},
\end{equation*}
and because of \cref{nontriv} this is a well defined positive constant (dependent only on $d$).

\begin{lemma}\label{ppp}
For all $\eps>0$ we can choose $ t_0=t_0(\eps,d) $ sufficiently close to 
$1$ such that for all $t>t_0$ the following holds. 
The points of $ g[\mathcal{X}\cap C_t] $ form an inhomogeneous Poisson point process on 
$ B(\typ,\rho(t,n)) $. If $\alpha=(c_2+\eps)n^{-1/d}$ we have that $ g[\mathcal{X}\cap C_t] $ 
stochastically dominates a homogeneous PPP with intensity $ \lambda=\lambda(\eps,d)>\lambda_c $ and 
if $\alpha=(c_2-\eps)n^{-1/d}$ it is stochastically dominated by a PPP with intensity $ \lambda=\lambda(\eps,d)<\lambda_c $.
\end{lemma}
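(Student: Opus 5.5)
The plan is to compute the intensity of $g[\mathcal{X}\cap C_t]$ explicitly with the mapping theorem, and then to compare it with a constant. First we identify the image. Since $\pi$ sends $C_t=\{x_{d+1}<-t\}$ to $\{z\in\RR^d:\norm{z}<\sqrt{(1-t)/(1+t)}\}$ (use $\norm{\pi(x)}=\sqrt{(1+x_{d+1})/(1-x_{d+1})}$, computed before \eqref{eq:phialt}), the scaled map $g=2\pi/\alpha$ sends $C_t$ bijectively onto the ball of radius
$$\frac{2}{\alpha}\sqrt{\frac{1-t}{1+t}}=\frac{2\sqrt{1-t^2}}{(1+t)\alpha}=\rho(t,n).$$
By \cref{lem:piunif} and the mapping theorem (as in the proof of \cref{lem:boxes}), $\pi[\mathcal{X}]$ is a Poisson process with intensity $n f_Z(z)$. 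Rescaling by $2/\alpha$ then shows that $g[\mathcal{X}\cap C_t]$ is a Poisson process on $B(\typ,\rho(t,n))$ with intensity
$$\lambda_n(w)=n\Big(\frac{\alpha}{2}\Big)^d\frac{1}{(d+1)\kappa_{d+1}}\Big(\frac{2}{1+\norm{\alpha w/2}^2}\Big)^d=\frac{n\alpha^d}{(d+1)\kappa_{d+1}}\,\big(1+\norm{\alpha w/2}^2\big)^{-d}.$$
This proves the first assertion.

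Next we bound this intensity. For $w\in B(\typ,\rho)$ we have $\norm{\alpha w/2}^2<(1-t)/(1+t)$, so
$$\lambda_n(w)\in\Big[\frac{n\alpha^d}{(d+1)\kappa_{d+1}}\Big(\frac{1+t}{2}\Big)^{d},\ \frac{n\alpha^d}{(d+1)\kappa_{d+1}}\Big].$$
By the definition of $c_2$ we have $c_2^d=(d+1)\kappa_{d+1}\lambda_c$.
\begin{itemize}
\item If $\alpha=(c_2+\eps)n^{-1/d}$, the lower bound equals $\lambda_c\big(\tfrac{c_2+\eps}{c_2}\big)^d\big(\tfrac{1+t}{2}\big)^d$. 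We choose $t_0$ close enough to $1$ that $\big(\tfrac{1+t_0}{2}\big)\tfrac{c_2+\eps}{c_2}>1$, and set $\lambda:=\lambda_c\big(\tfrac{(c_2+\eps)(1+t_0)}{2c_2}\big)^d>\lambda_c$.
\item If $\alpha=(c_2-\eps)n^{-1/d}$, the upper bound is the constant $\lambda:=\lambda_c\big(\tfrac{c_2-\eps}{c_2}\big)^d<\lambda_c$, for any $t$. (If $\eps\ge c_2$ the statement is vacuous or trivial.)
\end{itemize}
Both constants depend only on $\eps$ and $d$, because $t_0$ does.

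Stochastic domination then follows by thinning or superposition. In the first case, the inhomogeneous process is the superposition of a homogeneous PPP of intensity $\lambda$ and an independent PPP of intensity $\lambda_n-\lambda\ge0$, which gives a coupling with the homogeneous process contained in it. In the second case, independently thinning a homogeneous PPP of intensity $\lambda$ with retention probability $\lambda_n(w)/\lambda\le1$ produces a process with the law of $g[\mathcal{X}\cap C_t]$, contained in the homogeneous one. Both constructions are standard coupling facts for Poisson processes.

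There is no real obstacle. The only care needed is bookkeeping: checking the Jacobian factor $(\alpha/2)^d$ and the image radius $\rho(t,n)$, and making sure the choice of $t_0$ is uniform in $n$. That uniformity holds because the bounds on $\lambda_n$ involve $n$ only through $n\alpha^d$, which is constant in both regimes.
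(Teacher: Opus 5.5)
Your proposal is correct and follows essentially the same route as the paper: push the Poisson process forward via \cref{lem:piunif} and the mapping theorem, bound the rescaled intensity on $B(\typ,\rho(t,n))$ between $\frac{n\alpha^d}{(d+1)\kappa_{d+1}}\big(\frac{1+t}{2}\big)^d$ and $\frac{n\alpha^d}{(d+1)\kappa_{d+1}}$, and compare with $\lambda_c$ using $c_2^d=(d+1)\kappa_{d+1}\lambda_c$. Your version is slightly more careful than the paper's: you identify the image of $C_t$ exactly, you fix $\lambda$ via $t_0$ so that it genuinely depends only on $(\eps,d)$, and you spell out the superposition and thinning couplings.
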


\begin{proof}
	For $ x\in S^d $ with $ -x_{d+1}>t $ it holds that
	\begin{equation*}
		\norm{\pi(x)}=\frac{\sqrt{x_1^2+\dots+x_d^2}}{1-x_{d+1}}=\frac{\sqrt{1-x^2_{d+1}}}{1-x_{d+1}} \leq \frac{\sqrt{1-t^2}}{1+t}.
	\end{equation*}
	And thus $ \pi(C_t) $ maps into $ B(\typ,\sqrt{1-t^2}/(1+t)) $. 
	The fact that $ \pi[\mathcal{X}\cap C_t] $ is an inhomogeneous Poisson point process with intensity measure
	\begin{equation*}
		\frac{n}{(d+1)\kappa_{d+1}}\bigg(\frac{2}{1+\norm{y}^2}\bigg)^d\indicator{\norm{y}\leq \sqrt{1-t^2}/(1+t)},
	\end{equation*}
	follows from \cref{lem:piunif}. As $ 0\leq\norm{y}^2\leq (1-t^2)/(1+t)^2  $ we can bound the intensity uniformly above and below by
	\begin{equation*}
    \begin{split}
        \frac{n}{(d+1)\kappa_{d+1}}2^d\geq \frac{n}{(d+1)\kappa_{d+1}}\bigg(\frac{2}{1+\norm{y}^2}\bigg)^d&\geq \frac{n}{(d+1)\kappa_{d+1}}\bigg(\frac{2(1+t)^2}{(1+t)^2+1-t^2}\bigg)^d\\&= \frac{n  }{(d+1)\kappa_{d+1}}(1+t)^d.
    \end{split}
	\end{equation*}
	By the mapping theorem (see e.g. Section 2.3 of \cite{Kingmanboek}), our result now follows after scaling each 
	particle with $  2/\alpha(n)$. For the lower bound we can compute in the $\alpha=(c_2+\eps)n^{-1/d}$ case
    \begin{equation*}
         \frac{n (1+t)^d }{(d+1)\kappa_{d+1}} \frac{\alpha(n)^d}{2^d} = 
         \bigg(\frac{1+t}{2}\bigg)^d \frac{(((d+1)\kappa_{d+1}\lambda_c)^{1/d}+\eps)^d}{(d+1)\kappa_{d+1}}
         =\lambda_c \bigg(1+\frac{\eps}{c_2}\bigg)^d\bigg(\frac{1+t}{2}\bigg)^d
    \end{equation*}
    and this quantity is strictly larger than $\lambda_c$, given that $t$ is sufficiently close to 1. In a similar fashion we compute for the upper bound on the intensity, if we have 
    $\alpha=(c_2-\eps)n^{-1/d}$ that
    \begin{equation*}
        \frac{n2^d}{(d+1)\kappa_{d+1}}\frac{\alpha(n)^d}{2^d} 
        =\frac{(((d+1)\kappa_{d+1}\lambda_c)^{1/d}-\eps)^d}{(d+1)\kappa_{d+1}}
        =\lambda_c\bigg(1-\frac{\eps}{c_2}\bigg)^d,
    \end{equation*}
    and this quantity is less than $\lambda_c$ in any case.
\end{proof}

\noindent
It is sometimes easier to view the Borsuk graph as a random geometric graph on the sphere. 
To achieve that, we will mirror every $ X_i $ and define $ Y_{i}\coloneqq -X_{i} $ and 
$ \mathcal{M}\coloneqq\{Y_1,\dots,Y_N\} $. Then, we connect $ X_i $ to $ Y_j $ iff $ \dists(X_i,Y_j)\leq \alpha $.
We write $ G_{geo} $ for the resulting graph and for $ x\in S^d $ we let $ G_t(x) $ be the restriction of $ G_{geo} $ 
to $ C_t(x) $. We sometimes also use the shorter 
notation $ G_t\coloneqq G_t((0,\dots,0,-1)) $. The following two lemmas help us compare the Borsuk graph with 
continuum $AB$ percolation. Depending on $\alpha$, it will be supercritical in the first, and subcritical in the second case.

\begin{lemma}\label{coupleab}
	For $ \eps>0 $ and $\alpha=(c_2+\eps)n^{-1/d}$ we can choose $t\geq t_0(\eps,d)$ sufficiently close to 1 such that there exists a coupling of $ G_t $ with an continuum AB percolation model with intensity $ \lambda=\lambda(\eps,t)>\lambda_c$ on $ B(\typ,\rho(t,n)) $ such that for any two points $ x,y $ that are adjacent in the continuum AB percolation model, it holds that $ g^{-1}(x),g^{-1}(y) $ are adjacent vertices in $ G_t $.
\end{lemma}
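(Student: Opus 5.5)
Within $G_t$, the $A$-points are $\mathcal{X}\cap C_t$ and the $B$-points are $\mathcal{M}\cap C_t$, i.e.\ the antipodes of $\mathcal{X}\cap(-C_t)$. Since $C_t$ and $-C_t$ are disjoint for $t>0$, the restriction property of Poisson processes makes these two processes independent. By symmetry, each is distributed as $\mathcal{X}\cap C_t$. The coupling is then built in two steps: a thinning to homogeneous processes, and a geometric comparison showing that AB-adjacency in the rescaled picture implies Borsuk adjacency.

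\emph{Step 1 (thinning).} By \cref{ppp}, applied to the $A$-points and, by symmetry, to the $B$-points, the set $g[\mathcal{X}\cap C_t]$ is an inhomogeneous Poisson process on $B(\typ,\rho(t,n))$. Its intensity is bounded below by $\lambda'=\lambda_c(1+\eps/c_2)^d((1+t)/2)^d$, which exceeds $\lambda_c$ once $t$ is close to $1$. Independent thinning, keeping each point at $y$ with probability $\lambda'/f(y)$, yields homogeneous processes $\mathcal{V}_A,\mathcal{V}_B$ of intensity $\lambda'$ that are subsets of $g[\mathcal{X}\cap C_t]$ and $g[\mathcal{M}\cap C_t]$ respectively. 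They are independent, being thinnings of independent processes. I take as the continuum AB model these two processes with connection radius $1$. To leave room for a slightly shrunken radius, I will instead use intensity $\lambda=\lambda'(1-\eta)^{d}$ with a connection radius of $1-\eta$ rescaled to $1$; scaling does not change adjacency up to the intensity adjustment, and $\lambda$ is still $>\lambda_c$ for small $\eta$.

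\emph{Step 2 (geometry).} Suppose $x\in\mathcal{V}_A$ and $y\in\mathcal{V}_B$ satisfy $\norm{x-y}\le 1-\eta$ in the original scale. Write $x=g(X_i)$ and $y=g(Y_j)$. Then $\norm{\pi(X_i)-\pi(Y_j)}\le(1-\eta)\alpha/2$. Both points lie in $C_t=\kap((0,\dots,0,-1),\arccos t)$, so by \cref{lem:metric}, part~\ref{itm:metricgenlb}, we get $\dist(X_i,Y_j)\le 2\norm{\pi(X_i)-\pi(Y_j)}\le(1-\eta)\alpha<\alpha$. Hence $\dist(X_i,-X_j)<\alpha$, that is, $\dist(X_i,X_j)>\pi-\alpha$, which is an edge of the Borsuk graph and equivalently of $G_{geo}$. (Only the lower bound of part~\ref{itm:metricgenlb} is needed in this direction, so in fact $\eta=0$ would already suffice, using the non-strict inequality in $G_{geo}$. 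I will keep the slack only for safety.)

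The main delicate point is not the geometry but independence and measurability. The two thinnings must use independent auxiliary randomness, and the $B$-process must come from points in $-C_t$, not from $C_t$. One must also check that $X_i\ne X_j$, i.e.\ that the two endpoints are distinct vertices; this holds automatically because $X_i\in C_t$ and $X_j\in -C_t$. Choosing $t\ge t_0(\eps,d)$ as in \cref{ppp}, and close enough to $1$ that the intensity exceeds $\lambda_c$, completes the coupling.
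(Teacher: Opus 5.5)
Your proposal is correct and follows essentially the same route as the paper's proof:
\begin{itemize}
\item independence of $\mathcal{X}\cap C_t$ and $\mathcal{M}\cap C_t$, from the disjointness of $C_t$ and $-C_t$;
\item domination of a homogeneous process of intensity $\lambda>\lambda_c$ via \cref{ppp}, which you realize explicitly by thinning;
\item part~\ref{itm:metricgenlb} of \cref{lem:metric}, giving $\dist(x,y)\le\alpha\norm{g(x)-g(y)}\le\alpha$.
\end{itemize}
The $\eta$-slack is unnecessary, and as phrased it would force you to replace $g$ by $g/(1-\eta)$ in the statement; the $\eta=0$ version you mention is exactly the paper's argument.
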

\begin{proof}
	Both $ \mathcal{X}\cap C_t $ and $ \mathcal{M}\cap C_t $ are independent with an identical distribution. By \cref{ppp}, if we project these points with $ g(\cdot) $, each set stochastically dominates a PPP with intensity $ \lambda=\lambda(\eps,t)>\lambda_c$ on $ B(\typ,\rho(t,n)) $. By \cref{lem:metric} it holds for any $ x,y\in C_t $
	\begin{equation*}
		\norm{g(x)-g(y)} = \frac{2}{\alpha(n)}\norm{\pi(x)-\pi(y)}\geq \frac{\dists(x,y)}{\alpha(n)}.
	\end{equation*}
	Thus, if $ g(x)\in B(\typ,\rho(t,n)) $ has an edge to $ g(y)\in B(\typ,\rho(t,n)) $ in the continuum AB percolation model it holds $\norm{g(x)-g(y)}\leq 1$ and thus we obtain
	\begin{equation*}
		\dists(x,y)\leq \alpha(n)\norm{g(x)-g(y)}\leq \alpha(n),
	\end{equation*}
	 so $ x $ is a neighbor of $ y $ in $ G_t $.
\end{proof}

\noindent
Note that by rotational invariance we could also formulate the above for any other cap $ C_t(y)$, part of the 
graph $ G_t(y) $ and projection map $ g_y(\cdot) $. The same is true for the following.

\begin{lemma}\label{coupleab2}
	For $ \eps>0 $ and $\alpha=(c_2-\eps)n^{-1/d}$ we can choose $ t\geq t_0(\eps,d) $ sufficiently close to 1 such that there is a constant $c=c(\eps,d)>0$ that satisfies the following. There exists a coupling of $ G_t $ with an continuum AB percolation model with $ \lambda=\lambda(\eps,d) <\lambda_c $ and 
	on $ B(\typ,\rho(t,n)) $ such that for any two points $ x,y $ that are adjacent in $ G_t $, it holds that  $ cg(x)$ and $cg(y) $ are adjacent in the continuum AB percolation model.
\end{lemma}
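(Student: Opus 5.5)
The plan mirrors the proof of Lemma~\ref{coupleab}, with the inequalities reversed and a rescaling by $c$ to absorb the distortion of the stereographic projection. We use $\pi=\pi_{(0,\dots,0,-1)}$, so $C_t$ shrinks to the point $(0,\dots,0,-1)$ as $t\to1$.

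\textbf{Step 1: the metric comparison.} Fix $\eps'>0$ small. By part~\ref{itm:metricub12} of Lemma~\ref{lem:metric}, if $t$ is close enough to $1$ that $C_t\subseteq\kap((0,\dots,0,-1),\delta(\eps'))$, then for all $x,y\in C_t$
$$\norm{g(x)-g(y)}=\frac{2}{\alpha}\norm{\pi(x)-\pi(y)}\leq (1+2\eps')\frac{\dist(x,y)}{\alpha}.$$
Hence if $x,y$ are adjacent in $G_t$, that is $\dist(x,y)\leq\alpha$, then $\norm{g(x)-g(y)}\leq 1+2\eps'$. Setting $c:=1/(1+2\eps')$, the rescaled points satisfy $\norm{cg(x)-cg(y)}\leq 1$.

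\textbf{Step 2: the point processes.} As in Lemma~\ref{coupleab}, $\Xcal\cap C_t$ and $\mathcal M\cap C_t$ are independent. This holds because $C_t$ and $-C_t$ are disjoint for $t>0$, and $\mathcal M\cap C_t=-(\Xcal\cap -C_t)$. The two processes are also identically distributed. By Lemma~\ref{ppp}, for $t>t_0$ each of $g[\Xcal\cap C_t]$ and $g[\mathcal M\cap C_t]$ is stochastically dominated by a homogeneous PPP of intensity $\lambda'(\eps,d)<\lambda_c$ on $B(\typ,\rho(t,n))$. That lemma gives the explicit value $\lambda'=\lambda_c(1-\eps/c_2)^d$.

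Scaling by $c$ maps a PPP of intensity $\lambda'$ to one of intensity $\lambda' c^{-d}$. We choose $\eps'$ small enough that $\lambda:=\lambda'(1+2\eps')^d<\lambda_c$, which is possible since $\lambda'<\lambda_c$ strictly. Then $\lambda$ depends only on $\eps,d$.

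Next we take a monotone coupling (Strassen, or directly by thinning, since the intensity of $g[\Xcal\cap C_t]$ is pointwise at most $\lambda'$). This produces homogeneous PPPs $\Vcal_A'\supseteq g[\Xcal\cap C_t]$ and $\Vcal_B'\supseteq g[\mathcal M\cap C_t]$ of intensity $\lambda'$, independent of each other. Setting $\Vcal_A:=c\Vcal_A'$ and $\Vcal_B:=c\Vcal_B'$ gives a continuum AB model of intensity $\lambda$. One caveat: its window is $B(\typ,c\rho(t,n))\subseteq B(\typ,\rho(t,n))$. Restricting the statement's model to this window is harmless, and we can simply extend it by independent points outside.

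\textbf{Step 3: conclusion and main obstacle.} Edges of $G_t$ join points $X_i\in\Xcal$ to points $Y_j\in\mathcal M$, i.e.\ A-type to B-type. By Step 1 their images under $cg$ lie within distance $1$, so they are adjacent in the AB model. The only delicate point is the interplay between $t$ and the intensity bound. We must take $t$ close to $1$ both for Lemma~\ref{ppp} and for Lemma~\ref{lem:metric} with the chosen $\eps'$. Since $\eps'$ is fixed first, depending only on $\eps,d$, and $t$ is chosen afterwards, this is consistent.
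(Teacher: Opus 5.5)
Your proposal is correct and follows the paper's route: Lemma~\ref{lem:metric}(ii) gives $\norm{g(x)-g(y)}\le 1+2\eps'$ for adjacent points, you rescale by $c=1/(1+2\eps')$, and you invoke Lemma~\ref{ppp} together with the fact that rescaling multiplies the intensity by $c^{-d}$, which remains below $\lambda_c$ once $\eps'$ is small. You are in fact more explicit than the paper, which leaves three points implicit: the independence of the two labels, the monotone coupling, and the shrunken window $B(\typ,c\rho(t,n))$.
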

\begin{proof}
	For all $ \eps_1>0 $ we obtain by \cref{lem:metric} (\rom{2}) a $ t=t(\eps_1) $ such that for all neighbors $ x,y \in G_t$ it holds
	\begin{equation*}
		\alpha(n)=(c_2-\eps)n^{-1/d}\geq \dists(x,y)> \frac{2\norm{\pi(x)-\pi(y)} }{1+\eps_1}.
	\end{equation*}
	Thus by rearranging
	\begin{equation*}
		\begin{split}
			\norm{g(x)-g(y)}&=\frac{2}{\alpha(n)}\norm{\pi(x)-\pi(y)}< {1+\eps_1}.
		\end{split}
	\end{equation*}
    Note that if $c\coloneqq 1/(1+\eps_1)$, we of course have $\norm{cg(x)-cg(y)}<1$. By the mapping theorem (see e.g. Section 2.3 of \cite{Kingmanboek}), if we scale every point of a subcritical $AB$ percolation model by the same scalar that is sufficiently close to 1, we obtain a new $AB$ percolation model that is still subcritical. Therefore, if we now choose $ \eps_1=\eps_1(\eps,d)>0 $ small enough, we get what we wanted to show by \cref{ppp}.
\end{proof}

\subsection{Continuum AB percolation machinery}

To show an appropriate subcritical exponential decay property, we will approximate $ \mathcal{V} $ by site percolation on a discrete version $ G'=G'(\eps,r_1) $, where $ \eps,r_1>0 $ are parameters to be chosen later. The graph $G'$ is embedded in $\ZZ^{d+1}$ and can be viewed as two copies of $\ZZ^d$ hovering above each other. One copy will be coupled 
with points of $\mathcal{V}_A$ and the other with $\mathcal{V}_B$. For each $ i=(i_1,\dots,i_d)\in\ZZ^d $ 
we will add a site at $(i_1,\dots,i_d,0)$ and $(i_1,\dots,i_d,1)$. 
We define cubes that are anchored at $ i $ with diameter $ \eps $ as
\begin{equation*}
	S_{i}\coloneqq\bigg\{(x_1,\dots,x_d)\,:\,i_1\eps/\sqrt{d}\leq x_1< (i_1+1)\eps/\sqrt{d},\dots,i_d\eps/\sqrt{d}\leq x_d< (i_d+1)\eps/\sqrt{d} \bigg\}.
\end{equation*} 
 We will connect $(i_1,\dots,i_d,0)$ to $(j_1,\dots,j_d,1)$ iff
 \begin{equation*}
 	\sup_{u\in S_i,v\in S_j}\norm{u-v}\leq r_1
 \end{equation*}
and write $C_0'$ for the connected component of $\typ\times\{0\}$ after performing site percolation on $G'$.
 In the following we will add $ \typ $ as a deterministic point of $ \mathcal{V}_A $ (or $ \mathcal{V}_B $, since the intensities for the two PPP's are the same, the next results will hold in either case) to $ \mathcal{V} $ and write $ C_0 $ for the connected component of $ \typ $ in $ \mathcal{V} $. For two points $z,w\in \mathcal{V}$ we write $z\longleftrightarrow w$ if there is a path connecting $z$ to $w$ in $\mathcal{V}$ and $z{\nocon} w$ if not. In a similar fashion if $A\subset \RR^d$ we write $z\overset{A}{\longleftrightarrow}w$ if there is a path connecting the two points while staying inside $A$ and $z\longleftrightarrow A$ if $z$ is connected to some point in $A$.
\begin{lemma}\label{abdecay}
	Let $ \lambda<\lambda_c $. Then, there exists a constant $ c=c(\lambda,d)>0 $ such that for all $ R>0 $
	\begin{equation*}
		\Pro(\typ\longleftrightarrow B(\typ,R)^c)\leq e^{-cR}.
	\end{equation*}
\end{lemma}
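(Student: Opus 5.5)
We plan to reduce the statement to the discrete exponential decay result, Lemma~\ref{bolloexpdecay}, via the auxiliary graph $G'=G'(\eps,r_1)$ set up just before the lemma. The difficulty is that $\lambda<\lambda_c$ is defined through the continuum model. We therefore first need a discretisation that remains subcritical, and only then can we import exponential decay.

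\textbf{Step 1: an intermediate subcritical parameter.} Fix $\lambda<\lambda'<\lambda_c$. Choose $r_1>1$ slightly larger than $1$ and $\eps>0$ small, and declare the site $(i,0)$ of $G'$ open iff $S_i\cap\mathcal{V}_A\neq\emptyset$, and $(i,1)$ open iff $S_i\cap\mathcal{V}_B\neq\emptyset$. Sites are then independent, open with probability $p=1-e^{-\lambda\eps^d d^{-d/2}}$.

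If $x\in S_i\cap \mathcal{V}_A$ and $y\in S_j\cap\mathcal{V}_B$ with $\norm{x-y}\le 1$, then $\sup_{u\in S_i,v\in S_j}\norm{u-v}\le 1+2\eps\le r_1$, once $r_1\geq 1+2\eps$. So every path in $\mathcal{V}$ yields a path of open sites in $G'$. Hence $\{\typ\longleftrightarrow B(\typ,R)^c\}\subseteq\{|C_0'|\geq c'R\}$, since a path reaching distance $R$ must visit at least $\asymp R/r_1$ distinct sites.

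Conversely, an open path in $G'$ is dominated by a path in an AB model with connection radius $r_1+2\eps$. By scaling, that model has the law of the radius-$1$ AB model with intensity $\lambda(r_1+2\eps)^d$, which is $<\lambda'$ for $r_1,\eps$ close enough to $1,0$. Distinct sites carry distinct points, so an infinite open cluster in $G'$ would give an infinite component in a subcritical AB model, which happens with probability $0$. Thus site percolation on $G'$ at this $p$ has no infinite cluster almost surely, i.e.\ $p\le p_c(G')$.

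\textbf{Step 2: strict subcriticality and decay.} $G'$ is connected, infinite, locally finite and vertex-transitive: translations together with the swap of the two layers act transitively. It also has polynomial growth, so the hypothesis $|B_r(x)|\le r^{(\log r)/100}$ holds for large $r$, and small $r$ can be absorbed into constants. To apply Lemma~\ref{bolloexpdecay} we need $p<p_c(G')$ strictly, not just absence of percolation at $p$. We obtain this from the slack between $\lambda$ and $\lambda'$: run the argument of Step 1 at intensity $\lambda''\in(\lambda,\lambda')$ to get $p''=1-e^{-\lambda''\eps^dd^{-d/2}}\le p_c(G')$, while $p<p''$. Lemma~\ref{bolloexpdecay} then gives $\Pro(|C_0'|\geq m)\le e^{-c''m}$.

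Adding the deterministic point $\typ$ only forces one site open, which changes probabilities by at most a factor $1/p$. Combining with the inclusion from Step 1 yields $\Pro(\typ\longleftrightarrow B(\typ,R)^c)\le p^{-1}e^{-c''c'R}$. The constant can be absorbed for large $R$, and small $R$ is handled by shrinking $c$.

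\textbf{Main obstacle.} The delicate point is Step 1's claim that no infinite open cluster in $G'$ exists, since it compares $G'$ with an AB model of enlarged radius and must track constants carefully. One also has to check that the scaling argument is valid: rescaling space by $(r_1+2\eps)^{-1}$ maps the radius-$(r_1+2\eps)$ model at intensity $\lambda$ to the radius-$1$ model at intensity $\lambda(r_1+2\eps)^d$, which stays below $\lambda_c$ for suitable $r_1,\eps$.
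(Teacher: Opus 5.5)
Your proposal is correct and follows essentially the same route as the paper. Both arguments discretise onto $G'(\eps,r_1)$ and take an intermediate intensity ($\lambda''$, the paper's $\lambda_1$) at which an infinite open cluster in $G'$ would force percolation of a subcritical AB model with enlarged radius, so that $p<p''\le p_c(G')$, and both then conclude with Lemma~\ref{bolloexpdecay}. The paper is in fact slightly looser than you on the added point: your explicit factor $1/p$ for the deterministic point at $\typ$, and your treatment of small $R$, fill in details the paper skips. One small bookkeeping fix: choose $r_1,\eps$ after fixing $\lambda''$ so that $\lambda''(r_1+2\eps)^d<\lambda_c$, since your stated condition $\lambda(r_1+2\eps)^d<\lambda'$ alone does not guarantee this. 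Also, the adjacency rule of $G'$ already gives comparison radius $r_1$ rather than $r_1+2\eps$.
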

	\begin{proof}
		We follow the ideas of Theorem 3 in Section 8.1 in \cite{bollobas2006percolation} and aim to use a 
		classical result on exponential decay on $ G' $. 
		We could have also defined $AB$ percolation with an arbitrary radius $r>0$ instead of $1$ and connect 
		two points of opposite label iff the distance is less than $r$. 
		Then, we can define $\lambda_c(r)$ in terms of the radius analogously as done for $\lambda_c$. 
		Note that scaling gives us the relationship 
		
		$$ \lambda_c(r_1)\cdot r_1^d=\lambda_c(r_2)\cdot r_2^d, $$
		
		\noindent 
		for all $r_1,r_2>0$.
        Therefore, we can let $ r_1>1 $ and $ \lambda_1>0 $ be such that $ \lambda<\lambda_1<\lambda_c(r_1)<\lambda_c$. Further, choose $ \eps>0 $ small enough such that $ 1+2\eps<r_1 $. We now let $ \mathcal{V}_A' $ and $ \mathcal{V}_B' $ be independent PPP's in $ \RR^d $ where each has intensity $ \lambda_1>0 $. We consider site percolation on $ G'(\eps,r_1)$ and declare $ S_{i}\times\{0\}$ open if $ \mathcal{V}_A'\cap S_{i}\neq \emptyset $ and $ S_{i}\times\{1\} $ if $ \mathcal{V}_B'\cap S_{i}\neq \emptyset $. Thus, all sites are independently open with probability $ p_1=1-e^{-\lambda_1(\eps/\sqrt{d})^d} $. We note that an infinite component in $ G'(\eps,r_1) $ would imply an infinite cluster in the $AB$ model where we connect points if they are closer than $ r_1 $ to each other instead of $1$ w.r.t. $\mathcal{V}_A',\mathcal{V}_B'$. So it has to hold that $ p_1 $ cannot be supercritical for $ G' $, or in other words any $ p<p_1 $ is subcritical. Thus, if we define the open cubes on $ G'(\eps,r_1) $ w.r.t. $ \mathcal{V}_A $ and $ \mathcal{V}_B $, we are subcritical, as $ \lambda<\lambda_1 $. Since $ 1+2\eps<r_1 $, the corresponding cube for every point of $ C_0 $ with respect to $ \mathcal{V}(\lambda) $ and connection radius 1 will be part of $ C_0' $. Thus if it holds that $\typ\longleftrightarrow B(\typ,R)^c$ we will have $|C'_0|\geq c_1 R$ for some $c_1=c_1(\eps,d,r_1)>0$. This yields by \cref{bolloexpdecay} (we can apply it to our case, since a ball of radius $ n $ in $ G' $ contains no more than $ 2(nr_1/\eps)^d  $ vertices) a constant $c_2=c_2(\lambda,\eps,d)>0$ with
        \begin{equation*}
            \Pro(\typ\longleftrightarrow B(\typ,R)^c)\leq \Pro(|C_0'|\geq c_1R)\leq e^{-c_1c_2R}.
        \end{equation*}
	\end{proof}
	
\begin{corollary}\label{abdecaycor}
	For all  $\lambda<\lambda_c$ there exists a constant $ c=c(d,\lambda)>0 $ such that 
	for all $ k\in\NN $, setting
	\begin{equation*}
		M_{k}\coloneqq|\{a\in B(\typ,k/2)\cap \mathcal{V}_A\,:\, a\longleftrightarrow  B(\typ,k)^c \}|,
	\end{equation*}
	we have:
	\begin{equation*}
		\Pro(M_{k}>0)<e^{-ck}.
	\end{equation*}
\end{corollary}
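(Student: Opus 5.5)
Since $B(\typ,k/2)$ has volume of order $k^d$ and we want an exponentially small bound, the plan is to discretize, apply Lemma~\ref{abdecay} at each discretization point, and take a union bound. The polynomial factor $k^d$ is then absorbed into the exponential by shrinking the constant.

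First, cover $B(\typ,k/2)$ by the cubes $Q_z := z + [0,1)^d$ with $z \in \ZZ^d$ and $Q_z \cap B(\typ,k/2) \neq \emptyset$. There are at most $C_d k^d$ such cubes. If $M_k>0$, then some point $a \in \Vcal_A \cap Q_z$ is connected to $B(\typ,k)^c$. Since $\norm{a} < k/2$, the path must leave $B(a,k/2)$, so every point $a\in Q_z$ with $a \longleftrightarrow B(\typ,k)^c$ also satisfies $a \longleftrightarrow B(a,k/2)^c$. The bound therefore reduces to
\[ \Pro(M_k>0) \leq \sum_z \Ex\bigl|\{a \in \Vcal_A\cap Q_z : a \longleftrightarrow B(a,k/2)^c\}\bigr|. \]

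Next, evaluate each term with the Mecke formula (Slivnyak--Mecke) for the Poisson process $\Vcal_A$:
\[ \Ex\sum_{a\in \Vcal_A \cap Q_z} \indicator{a \longleftrightarrow B(a,k/2)^c} = \lambda\int_{Q_z} \Pro\bigl(x \longleftrightarrow B(x,k/2)^c \text{ in } \Vcal\cup\{x\}, x\in\Vcal_A\bigr)\,dx. \]
By translation invariance the integrand equals the probability that $\typ$, added as a deterministic point of $\Vcal_A$, reaches $B(\typ,k/2)^c$. This is exactly the setting of Lemma~\ref{abdecay}, which bounds it by $e^{-c'k/2}$. Hence
\[ \Pro(M_k>0)\leq C_d\,\lambda\, k^d e^{-c'k/2}. \]

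Finally, choose $c$ small, for example $c<c'/4$, so that $C_d\lambda k^d e^{-c'k/2} < e^{-ck}$ for all $k \geq k_0$. For the finitely many $k<k_0$, note that $\Pro(M_k>0)\leq 1-\Pro(\Vcal_A\cap B(\typ,k/2)=\emptyset)<1$, since for $k \geq 1$ this probability of an empty ball is $e^{-\lambda\kappa_d(k/2)^d}>0$. Shrinking $c$ further makes the inequality hold for these $k$ as well.

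The only delicate point is the use of the Mecke formula to pass from a random point $a$ of $\Vcal_A$ to a deterministic one. Alternatively, one could avoid it by a discretized union bound over cubes combined with a slightly larger radius. No real obstacle is expected.
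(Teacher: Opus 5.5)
Your proposal is correct and follows essentially the same route as the paper. Both use a first-moment bound via the Slivnyak--Mecke formula, translation invariance to reduce to Lemma~\ref{abdecay} at radius $k/2$, and absorption of the polynomial volume factor into the exponential; the only difference is that the paper integrates directly over $B(\typ,k/2)$ rather than summing over unit cubes. Your separate treatment of small $k$ via $\Pro(M_k>0)\le 1-e^{-\lambda\kappa_d(k/2)^d}<1$ is a welcome extra detail, since the paper's final inequality $\lambda\vol(B(\typ,k/2))e^{-c_1k}\le e^{-c_2k}$ passes over that case silently.
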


\begin{proof}
By \cref{abdecay} and the Slivnyak-Mecke formula (see e.g. Corollary 3.2.3 in \cite{schneider2008stochastic}) we obtain a constant $ c_1=c_1(\lambda,d)>0 $ such that
	\begin{equation*}
		\begin{split}
			\Pro(M_{k}>0)\leq \Ex M_k&=\int_{B(\typ,k/2)}\Pro(a\longleftrightarrow  B(\typ,k)^c\text{ w.r.t. }\mathcal{V}_A\cup\{a\})\lambda\,\text{d}a
			\\&\leq\lambda \vol(B(\typ,k/2)) e^{-c_1k}\leq e^{-c_2k},
		\end{split}
	\end{equation*}
where the last inequality holds for some constant $ c_2=c_2(d,\lambda)>0 $.
\end{proof}

\noindent
For $ S\subset \RR^d $ we will say that $ S $ is \textit{covered by} $ \mathcal{V} $ if for every $ y\in S $ we have $ \mathcal{V}_A\cap B(y,1/4)\neq \emptyset $ and $ \mathcal{V}_B\cap B(y,1/4)\neq \emptyset $. For $ k>0,x\in \RR^d $ we say that $ x $ is a $ k$-\textit{seed} if $ B(x,k) $ is covered. For a measurable $ R\subset \RR^d $ and $ \delta>0 $ we are now adapting the intensity of $ \mathcal{V}_A $ (resp. $ \mathcal{V}_B $). Inside the area $ R $, the PPP $ \mathcal{V}_A $ (resp. $ \mathcal{V}_B $) has intensity $ \delta $ and for $ R^c $ we use the original intensity $ \lambda $. We write $ \Pro_{\lambda,\delta,R} $ for the corresponding measure. We also define the event
\begin{equation*}
	E_{\ell,k,R}\coloneqq\bigg\{R\overset{B(\typ,3\ell)}{\longleftrightarrow} x \text{ for some } x\in \mathcal{V}\cap B( \ell e_1,\ell/100) \text{ and } x \text{ is a $ 2k $-seed} \bigg\},
\end{equation*}
where $ e_1=(1,0,\dots,0) $ stands for the first unit vector. We will say that $ R $ is \textit{admissible} if for every $ x\in R $ there is a $ y\in R $ with $ \norm{x-y}<2 $ and $ B(y,1/2)\subseteq R $. We note that if we define for some $ y\in \partial B(\typ,\ell) $
\begin{equation}\label{abconnectionevent}
	E_{k,R}(\typ,y)\coloneqq\bigg\{R\overset{B(\typ,3\ell)}{\longleftrightarrow} x \text{ for some } x\in \mathcal{V}\cap B( y,\ell/100) \text{ and } x \text{ is a $ 2k $-seed} \bigg\},
\end{equation}
 by symmetry the result below also holds for this event.
\begin{figure}[t]
  \centering
  \includegraphics[width=0.5\linewidth]{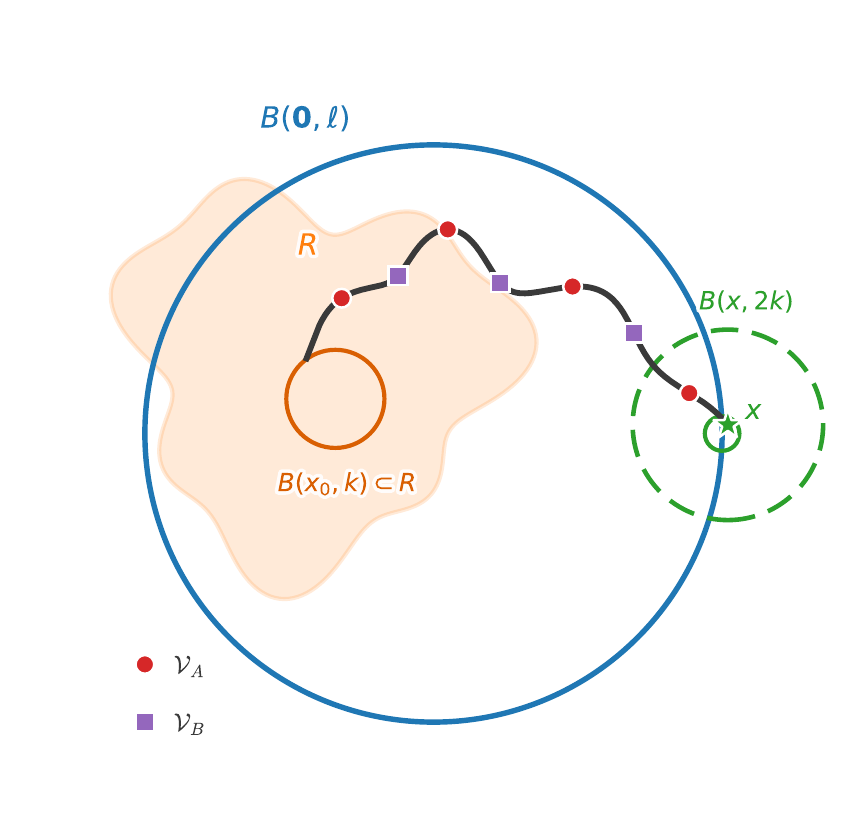}
  \caption{Schematic of the event $E_{\ell,k,R}$.}
\end{figure}

\begin{lemma}\label{ABlongconnection}
	Let $ \lambda>\lambda_c $ and $ \eps,\delta>0 $ be arbitrary with $ \lambda\geq \delta $. Then, there exists a $ k=k(\lambda,\eps,\delta) $ such that for all large enough
	$ \ell\geq\ell_0(\lambda,\eps,\delta,k) $ it holds that
	\begin{equation*}
		\Pro_{\lambda,\delta,R}(E_{\ell,k,R})>1-\eps,
	\end{equation*}
   for all admissible $ R $ with $B(x,k) \subseteq R $ for some $ x\in B(\typ,\ell) $.
\end{lemma}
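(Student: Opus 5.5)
This is the continuum analogue of the Grimmett--Marstrand ``seed to seed'' lemma. I would prove it in four steps: make $R$ a source of local connections, show the infinite cluster comes near $\ell e_1$ with high probability, find a seed near that cluster, and then localise everything to $B(\typ,3\ell)$.

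\emph{Step 1 (the source).} Since $R$ contains a ball $B(x,k)$ and the intensity there is $\delta>0$, choosing $k$ large makes $R$ contain many points of both $\mathcal{V}_A$ and $\mathcal{V}_B$. Admissibility lets me tile a neighbourhood of $R$ by unit-scale balls $B(y,1/2)\subseteq R$. Each such ball contains an $A$-point and a $B$-point with probability at least $(1-e^{-\delta\kappa_d 2^{-d}})^2$.

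\emph{Step 2 (infinite cluster near $\ell e_1$).} Since $\lambda>\lambda_c$, there is a.s.\ a unique infinite cluster. Uniqueness follows by the Burton--Keane argument, adapted to the ergodic pair $(\mathcal{V}_A,\mathcal{V}_B)$. Let $\theta(\lambda)>0$ be the probability that a given point belongs to the infinite cluster. The number of points of $B(x,k)$ whose cluster is infinite and which connect to infinity along a path that enters $R$ tends to infinity as $k\to\infty$. So with probability $\ge 1-\eps/4$, for $k$ large, $R$ touches the infinite cluster. Here I use the standard ``square-root trick'' (FKG) together with the fact that lowering the intensity to $\delta$ inside $R$ only removes finitely many local configurations. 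Similarly, for $\ell$ large, the infinite cluster intersects $B(\ell e_1,\ell/200)$ in at least $c\,\ell^d$ points, by ergodicity and spatial averaging. Each of these points is, with probability bounded below independently of $\ell$, a $2k$-seed: coverage of $B(x,2k)$ by $1/4$-balls of both types is a local event of positive probability $q(k,\lambda)$. An averaging or ergodic argument over $\Theta(\ell^d)$ well-separated candidate locations then shows that some infinite-cluster point in $B(\ell e_1,\ell/100)$ is a $2k$-seed with probability $\ge 1-\eps/4$. The seed events are positively correlated with being connected, so FKG (Harris for Poisson processes) can be used.

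\emph{Step 3 (localisation).} I need the connection from $R$ to the seed to stay inside $B(\typ,3\ell)$. Both endpoints lie in the unique infinite cluster, so they are connected, but possibly only through a path leaving the ball. I handle this by uniqueness: the probability that two points of the infinite cluster in $B(\typ,2\ell)$ are not connected inside $B(\typ,3\ell)$ tends to $0$ as $\ell\to\infty$. This is the standard consequence of uniqueness, namely that the infinite cluster restricted to large boxes has a unique crossing cluster. I would prove it via the equivalent statement that the number of distinct clusters crossing from $B(\typ,2\ell)$ to $\partial B(\typ,3\ell)$ is eventually $1$ with high probability.

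\emph{Main obstacle.} The main difficulty is the uniformity in $R$, which can be an arbitrary admissible set with modified intensity. Step 1 must produce a connection from $R$ to infinity with probability bounded below uniformly over the position of $x\in B(\typ,\ell)$ and the shape of $R$. I would first try to show that $R\supseteq B(x,k)$ alone suffices, ignoring the rest of $R$: restrict attention to the ball $B(x,k)$, use monotonicity (more points only helps, and outside $R$ the intensity is $\lambda\ge\delta$), and the condition $\delta\le\lambda$ lets one dominate by the process with intensity $\delta$ everywhere near $x$.
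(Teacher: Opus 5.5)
The proposal has two genuine gaps, and both sit where the paper's Grimmett--Marstrand-type argument does its work.

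\textbf{First gap: the reduced intensity inside $R$.} The event $E_{\ell,k,R}$ is increasing in $R$, whereas $\Pro_{\lambda,\delta,R}$ is stochastically decreasing in $R$ (because $\delta\le\lambda$). So no monotonicity argument lets you keep only $B(x,k)$. The inequality $\Pro_{\lambda,\delta,R}(E_{\ell,k,R})\ge\Pro_{\lambda,\delta,R}(E_{\ell,k,B(x,k)})$ is true, but its right-hand side can tend to zero.

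For example, take $\delta<\lambda_c$, $x=\typ$ and $R=B(\typ,\ell/10)$, which is admissible and contains $B(\typ,k)$. Any path from $B(\typ,k)$ to $B(\ell e_1,\ell/100)$ must cross $R$ using only points of the intensity-$\delta$ process. By subcritical decay (Lemma~\ref{abdecay}) this has probability $o(1)$ as $\ell\to\infty$, while $E_{\ell,k,R}$ only needs a connection to the outer part of $R$. The same example shows that your Step~2 claim, that lowering the intensity in $R$ ``only removes finitely many local configurations'', is false. Since $\Pro_{\lambda,\delta,R}$ is not stationary, the infinite-cluster and ergodicity arguments of Step~2 do not apply to it either.

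The missing idea is that a path from a seed to $R$, stopped at its first vertex in $R$, uses only points of $R^c$, where the intensity is unchanged. So $R$ affects only the last step. The paper's Lemma~\ref{l4} exploits exactly this:
\begin{itemize}
\item the set $\mathcal U$ of points of $R^c$ within distance $1$ of $R$ that are joined to a seed through $R^c$ does not depend on the points inside $R$;
\item with high probability $\mathcal U$ contains many well-separated points; otherwise emptying the part of $R$ near them (probability at least $e^{-CN}$) would destroy a connection from $B(\typ,k)$ to a seed that has probability close to one when the intensity inside $R$ is not reduced (Lemma~\ref{l3});
\item a $\delta$-sprinkle inside $R$, using admissibility to find a ball $B(z,1/2)\subseteq R$ near each of these points, then attaches one of them to $R$.
\end{itemize}

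\textbf{Second gap: the localisation in Step~3.} Burton--Keane uniqueness only gives that two points at \emph{bounded} distance, both in the infinite cluster, are joined inside $B(\typ,L)$ with probability tending to one as $L\to\infty$. Your claim is that all points of the infinite cluster in $B(\typ,2\ell)$ are joined inside $B(\typ,3\ell)$. That is a quantitative uniqueness-in-boxes statement at proportional scales. In the lattice setting for $d\ge 3$ it is obtained from the Grimmett--Marstrand theorem together with renormalisation, so invoking it here presupposes essentially what this lemma is meant to supply.

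The paper never needs it, because it keeps all connections inside balls from the start:
\begin{itemize}
\item Lemma~\ref{l1} shows that the cluster of $B(\typ,k)$ inside $B(\typ,\ell)$ reaches the annulus $A(\ell)$ at many well-separated places; otherwise a cheap emptying just outside $B(\typ,\ell)$ would disconnect $B(\typ,k)$ from infinity.
\item Lemma~\ref{l2} sprinkles to turn one of these places into a $2k$-seed.
\item Lemma~\ref{l3} uses rotational symmetry and the square-root trick to place such a seed in $B(\ell e_1,\ell/1000)$.
\item A final translation to the centre of the ball $B(x,k)\subseteq R$ gives the statement.
\end{itemize}
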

The proof follows and adapts the ideas of Grimmett and Marstrand \cite{GrimmettMarstrand1990Supercritical} and we will split it into four lemmas. We define the annulus $ A(\ell)\coloneqq  B(\typ,\ell)\setminus B(\typ,\ell-1)$ and
\begin{equation*}
	X_{\ell,k}\coloneqq\{x\in \mathcal{V}\cap A(\ell)\,:\, x\stackrel{B(\typ,\ell)}{\longleftrightarrow} B(\typ,k)\}.
\end{equation*}
We let $Y_{\ell,k}$ be the minimum size of a set of centers $\{y_1,\dots,y_m\}\subset X_{\ell,k}$
such that $X_{\ell,k}\subset \bigcup_{i=1}^m B(y_i,100k)$ and $\|y_i-y_j\|>20k$ for all $i\ne j$.
\begin{lemma}\label{l1}
	For all $ \lambda>\lambda_c $ and $ \eps>0 $ there is a $ k=k(\lambda,\eps) $ such that the following holds. For all $ N\in \NN $ we have for all $\ell\geq \ell_0(\lambda,\eps,N,k) $
	\begin{equation*}
		\Pro(Y_{\ell,k}\geq N)>1-\eps.
	\end{equation*} 
\end{lemma}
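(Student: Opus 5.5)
We follow the first step of the Grimmett--Marstrand argument: in the supercritical regime, the cluster of a large ball must reach far away, and when it does it meets the sphere of radius $\ell$ in many well-separated places. We carry this out in three steps.

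\textbf{Step 1: choosing $k$.} Since $\lambda>\lambda_c$, the probability that $B(\typ,k)$ is connected to infinity tends to one as $k\to\infty$. To see this, note that an infinite cluster exists almost surely, so it intersects $B(\typ,k)$ for some finite $k$; by monotonicity in $k$ the probability that $B(\typ,k)\longleftrightarrow\infty$ tends to $1$. Fix $k=k(\lambda,\eps)$ with
$$\Pro(B(\typ,k)\longleftrightarrow \infty)>1-\eps/2 .$$
On this event, for every $\ell>k+1$ the set $X_{\ell,k}$ is nonempty. Indeed, an infinite path starting in $B(\typ,k)$ must leave $B(\typ,\ell)$. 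Edges have length at most $1$, so the first vertex of the path outside $B(\typ,\ell-1)$, truncated appropriately, lies in the annulus $A(\ell)$. It is connected to $B(\typ,k)$ by a path that stays inside $B(\typ,\ell)$.

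\textbf{Step 2: ruling out few separated clusters.} It remains to show that the event
$$F := \{1\leq Y_{\ell,k}<N\}\cap\{B(\typ,k)\longleftrightarrow\infty\}$$
has probability tending to $0$ as $\ell\to\infty$ when $N$ is fixed. If $Y_{\ell,k}<N$, then every vertex of $X_{\ell,k}$ lies in a union of at most $N$ balls $B(y_i,100k)$ centred on the sphere of radius $\ell$. The infinite path from $B(\typ,k)$ must exit $B(\typ,\ell)$ through one of these balls.

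We then use a ``finite energy'' or insertion-tolerance argument in reverse, applied to deletion. Consider the ball $B(\typ,\ell-1)$ and the boundary regions $B(y_i,100k+1)$. The event $F$ says that all connections from the inside to the outside pass through at most $N$ regions, each of volume $O(k^d)$. Conditioned on the configuration inside $B(\typ,\ell)$, which determines $X_{\ell,k}$ and the $y_i$, we ask whether a cluster continues outward from these bounded regions. For the continuation to infinity to be likely, the configuration outside must connect these specific balls to infinity.

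The cleaner way to phrase this is by contradiction. Suppose $\Pro(F)\geq\eps/2$ along a subsequence of $\ell$. By Step 1, $X_{\ell,k}\neq\emptyset$ with probability close to one, so the set $Y$ would be small with non-vanishing probability. But the ``bottleneck'' regions have bounded total volume $N\cdot O(k^d)$. With probability bounded below by some $\gamma=\gamma(N,k,\lambda)>0$ that does not depend on $\ell$, all $\mathcal V$-points in the $N$ balls $B(y_i,100k+2)$ could be removed. Their number is Poisson with bounded mean, so this probability is positive. Removing them disconnects $B(\typ,k)$ from infinity, which gives
$$\Pro\big(B(\typ,k)\longleftrightarrow\infty,\ \text{yet }B(\typ,k)\nocon\infty\text{ after a bounded modification}\big)\geq \gamma\eps/2 .$$

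To make this rigorous, we condition on the restriction of the process to $B(\typ,\ell)^c$ together with $B(\typ,\ell)$ minus the balls. The points inside the balls are independent of everything else, and the event ``empty'' has probability at least $e^{-\lambda N c k^d}$. The difficulty is that the $y_i$ are themselves determined by the points inside these balls.

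\textbf{Step 3 (main obstacle): measurability.} This circularity is where we expect the main difficulty. We plan to resolve it by discretizing. Partition the annulus into cubes of side $k$, and call a cube-set ``responsible'' for the configuration. There are only at most $\ell^{O(dN)}$ candidate choices of $N$-tuples of cubes, which is too many for a union bound, so instead we argue through the limit. Let
$$Z_\ell := \Pro\big(B(\typ,k)\longleftrightarrow B(\typ,\ell)^c \,\big|\, \mathcal F_{\ell}\big),$$
where $\mathcal F_\ell$ is the $\sigma$-field generated by the points in $B(\typ,\ell)$.

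By Lévy's 0-1 law, $\Pro(B(\typ,k)\leftrightarrow\infty\mid\mathcal F_\ell)\to \mathbbm 1_{\{B(\typ,k)\leftrightarrow\infty\}}$ almost surely. On $F$, however, the conditional probability given the inside configuration is at most $1-\gamma'$. The reason is that with probability at least $\gamma'(N,k)$ the outside process has no points within distance $1$ of the sets $B(y_i,100k)$ beyond the sphere, and this kills every outward connection; it uses only outside points, which are independent of $\mathcal F_\ell$. This is the standard Grimmett--Marstrand trick and it avoids the circularity, because the $y_i$ are $\mathcal F_\ell$-measurable.

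Hence $\Pro(F)\to0$. Combining this with Step 1 gives $\Pro(Y_{\ell,k}\geq N)>1-\eps$ for all $\ell\geq\ell_0(\lambda,\eps,N,k)$.
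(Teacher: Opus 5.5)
Your final argument (Step 1 together with the last paragraph of Step 3) is correct and is essentially the paper's proof. You condition on the configuration inside $B(\typ,\ell)$ and note that on $\{Y_{\ell,k}<N\}$ the independent outside process leaves every $B(y_i,100k+1)\setminus B(\typ,\ell)$ empty with probability at least $e^{-2N\lambda\vol(B(\typ,100k+1))}$, uniformly in $\ell$, and that this blocks every path from $B(\typ,k)$ to infinity. The only difference is the final limit step, where you invoke L\'evy's 0-1 law while the paper uses the elementary fact $\Pro(B(\typ,k)\longleftrightarrow A(\ell))\downarrow\Pro(B(\typ,k)\longleftrightarrow\infty)$, i.e.\ $\Pro(B(\typ,k)\nocon\infty\mid S_1)\to 0$; the abandoned detours in Step 2 (deleting points inside the balls) and Step 3 (discretization, $Z_\ell$) can simply be dropped.
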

\begin{proof}
	We let $ \eps>0 $ and since we are supercritical we can and do choose $ k=k(\lambda,\eps) $ large enough such that
	\begin{equation*}
		\Pro(B(\typ,k)\longleftrightarrow \infty)>1-\eps/2. 
	\end{equation*}
	Then, since edges have a length of less than $ 1 $ we also have for all $ \ell> k$
	\begin{equation*}
		\Pro(B(\typ,k)\longleftrightarrow A(\ell))>1-\eps/2.
	\end{equation*}
 We now reveal $ \mathcal{V} $ in two stages. First, all points inside $ B(\typ,\ell) $ and then the remaining points of $ B(\typ,\ell)^c $. We write $ S_1 $ for the event that $ B(\typ,k)\longleftrightarrow A(\ell) $ occurs after the first stage. Note that if for every center $ y $ of a ball counted by $  Y_{\ell,k}$ it holds that $ \mathcal{V}\cap B(y,100k+1)\setminus B(\typ,\ell)=\emptyset $, the event $ B(\typ,k)\longleftrightarrow \infty $ cannot hold. If after stage one it holds that $ Y_{\ell,k}\leq N $ for some $ N\in\NN $, the probability of this occurring in the second stage is lower bounded by $ \exp(-2N\lambda\vol(B(\typ,100k+1))) $, where the factor of $ 2 $ arises since we have two independent Poisson point processes with intensity $ \lambda $. Thus for all $ N $
 \begin{equation*}
 	\Pro(B(\typ,k)\nocon\infty\,\vert\,S_1)\geq \exp(-2N\lambda\vol(B(\typ,100k+1))) \Pro(Y_{\ell,k}\leq N\,\vert S_1).
 \end{equation*}
Since we have that 
\begin{equation*}
	\Pro(B(\typ,k)\nocon\infty\,\vert\,S_1)=1-\frac{\Pro(B(\typ,k)\longleftrightarrow \infty)}{\Pro(B(\typ,k)\longleftrightarrow A(\ell))},
\end{equation*}
and because the right hand side converges to $ 0 $ as $  \ell\to\infty $, it follows that $ \Pro(Y_{\ell,k}\leq N\,\vert S_1)\to 0 $ as  $  \ell\to\infty $ too. We can conclude that for large enough $ \ell $ 
\begin{equation*}
	\begin{split}
		\Pro(Y_{\ell,k}\leq N)&\leq \Pro(S_1^c)+\Pro(Y_{\ell,k}\leq N\,\vert S_1)\Pro(S_1)
		\leq \Pro(S_1^c)+\Pro(Y_{\ell,k}\leq N\,\vert S_1)<\eps.
	\end{split}
\end{equation*}
\end{proof}

\noindent
We now define the set 
\begin{equation*}
	A_{\ell,k,R}\coloneqq\{x\in \mathcal{V}\cap A(\ell)\,:\, x\stackrel{B(\typ,\ell)}{\longleftrightarrow} R\, \text{ and }x \text{ is a }2k\text{-seed} \}.
\end{equation*}
\begin{lemma}\label{l2}
	For all $ \lambda>\lambda_c $ and $ \eps,\delta>0 $ there exist $ k=k(\lambda,\eps,\delta) $ such that for all large enough $ \ell\geq \ell_0(\lambda,\eps,\delta,k) $
	\begin{equation*}
		\Pro_{\lambda,\delta,B(\typ,\ell-2k-2)^c}(|A_{\ell,k,B(\typ,k)}|> 0)>1-\eps.
	\end{equation*}
\end{lemma}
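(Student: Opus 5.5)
Under $\Pro_{\lambda,\delta,B(\typ,\ell-2k-2)^c}$ the process has intensity $\lambda$ inside $B(\typ,\ell-2k-2)$ and intensity $\delta$ outside it. The plan is a two-stage revealing argument in the spirit of Grimmett--Marstrand. The first stage produces, via Lemma~\ref{l1}, many well-separated points of the annulus $A(\ell')$ that are connected to $B(\typ,k)$, where $\ell'$ is chosen slightly smaller than $\ell$. The second stage uses the fresh intensity-$\delta$ points outside $B(\typ,\ell-2k-2)$, which are independent of the first stage, to turn at least one of these endpoints into a connection to a $2k$-seed in $A(\ell)$.

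\textbf{Stage one.} Fix $k$ from Lemma~\ref{l1} applied with $\eps/2$, and increase it if necessary. Set $\ell' := \ell-2k-3$. We work only with points in $B(\typ,\ell')$; there the measure agrees with the intensity-$\lambda$ model, so Lemma~\ref{l1} applies verbatim. Hence, for any fixed $N$ and all $\ell$ large, with probability at least $1-\eps/2$ we have $Y_{\ell',k}\geq N$. This gives centres $y_1,\dots,y_N\in X_{\ell',k}$ that are pairwise more than $20k$ apart and each joined to $B(\typ,k)$ inside $B(\typ,\ell')$. This event is measurable with respect to the process restricted to $B(\typ,\ell')$.

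\textbf{Stage two.} For each $y_i$ we construct a ``corridor'' event $F_i$ that depends only on the $\delta$-intensity points in a region $D_i$ outside $B(\typ,\ell-2k-2)$ near $y_i$ (for example the part of $B(y_i,8k)$ in that region), and possibly on points in the thin shell $B(\typ,\ell-2k-2)\setminus B(\typ,\ell')$. We choose the $D_i$ pairwise disjoint, which the $20k$-separation allows. The event $F_i$ asks for three things. First, a short chain of alternating $A$/$B$ points linking $y_i$ outward. Second, a point $x_i\in A(\ell)$ such that $B(x_i,2k)$ is covered. Third, that all these points lie in $B(\typ,\ell)$, so the path from $B(\typ,k)$ to $x_i$ stays inside $B(\typ,\ell)$. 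Since $\ell-2k-2+\dots$ lies within $O(k)$ of $A(\ell)$, a radial segment of length $O(k)$ suffices, and $B(x_i,2k)$ fits in a region of bounded diameter. Covering a bounded region by $A$- and $B$-points at intensity at least $\delta$, and planting a bounded chain, has probability at least some $\theta=\theta(\lambda,\delta,k,d)>0$, uniformly in $\ell$ and in the position of $y_i$ (by rotation invariance). Because the regions $D_i$ are disjoint and independent of stage one, the probability that every $F_i$ fails is at most $(1-\theta)^N$. We then take $N$ large enough that $(1-\theta)^N<\eps/2$.

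The main obstacle is geometric bookkeeping. The seed ball $B(x_i,2k)$ must be covered while $x_i$ lies in $A(\ell)$, but part of $B(x_i,2k)$ lies outside $B(\typ,\ell)$. That is harmless, since covering only needs points to exist there, not paths. The main care is in keeping the regions $D_i$ disjoint, and in ensuring that the chain from $y_i\in A(\ell')$ to $x_i$ uses only new randomness. The points of the shell between radii $\ell'$ and $\ell-2k-2$ still have intensity $\lambda$ and are unrevealed, so they can be incorporated as well, or one can shift $\ell'$ so that the stage-one region lies exactly inside $B(\typ,\ell-2k-2)$.

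We also need to handle the fact that Lemma~\ref{l1} only gives a covering by $100k$-balls. This is not a problem: distinct $y_i$ being $20k$ apart is exactly what is needed. Putting the two stages together, $|A_{\ell,k,B(\typ,k)}|>0$ with probability at least $1-\eps/2-(1-\theta)^N>1-\eps$.
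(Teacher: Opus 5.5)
Your Stage one is fine: $Y_{\ell',k}$ only sees points of $B(\typ,\ell')$, where the intensity is $\lambda$. The gap is in Stage two. The uniform bound $\theta=\theta(\lambda,\delta,k,d)$ does not hold, so the estimate $(1-\theta)^N$ is unjustified.

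Rotation invariance only handles the angular position of $y_i$. Radially, $y_i$ can sit anywhere in the unit-width annulus $A(\ell')$, and the only unrevealed randomness lies outside $B(\typ,\ell')$. The first link of your chain must be a fresh point of the opposite label within distance $1$ of $y_i$, i.e.\ in $B(y_i,1)\setminus B(\typ,\ell')$. When $\norm{y_i}$ is close to $\ell'-1$, this lens has volume tending to $0$, so $\Pro(F_i)\to 0$. Nothing in Lemma~\ref{l1} prevents all $N$ centres, and all points of $X_{\ell',k}$ near them, from lying at such depths; a cluster may just barely poke into $A(\ell')$. So re-choosing the $y_i$ does not help.

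The missing idea is sprinkling, which is the actual content of the Grimmett--Marstrand method and what the paper uses.
\begin{itemize}
\item Fix $\lambda_1\in(\lambda_c,\lambda)$. Write the intensity-$\lambda$ process in $B(\typ,\ell-2k-2)$ as the superposition of independent processes of intensities $\lambda_1$ and $\lambda-\lambda_1$.
\item Apply Lemma~\ref{l1} to the $\lambda_1$-process only, at radius $\ell-2k-2$. Paths found there remain paths in the full process.
\item Keep the $(\lambda-\lambda_1)$-process as fresh randomness. It is present everywhere, in particular right on top of each endpoint.
\item For each of the $N$ endpoints $x$ (pairwise more than $20k$ apart), consider the event that $B(x,9k)\cap B(\typ,\ell-2k-2)$ is covered by the sprinkled points and $B(x,9k)\setminus B(\typ,\ell-2k-2)$ is covered by the $\delta$-points.
\end{itemize}
This event depends only on points in $B(x,10k)$. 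Its probability is at least some $q(\lambda-\lambda_1,\delta,k)>0$ regardless of where $x$ lies, since the whole ball around $x$ carries fresh intensity. It simultaneously attaches $x$ to the covered ball and yields a $2k$-seed in $A(\ell)$ joined to $B(\typ,k)$ inside $B(\typ,\ell)$.

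With this change your concluding bound $1-\eps/2-(1-q)^N$ goes through: choose $k$ from Lemma~\ref{l1} at $\lambda_1$, then $q$, then $N$, then $\ell_0$.
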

\begin{proof}
	We let $ \lambda_1=(\lambda+\lambda_c)/2>\lambda_c $ and $ N $ large enough to be determined in the course of the proof. \cref{l1} yields a $ k=k(\lambda_1,\eps/2) $ and $ \ell\geq \ell_0(\lambda_1,\eps/2,N,k) $ such that for continuum AB percolation with intensity $ \lambda_1 $ it holds
	\begin{equation*}
		\Pro_{\lambda_1}(Y_{\ell-2k-2,k}\geq N)>1-\eps/2.
	\end{equation*}
We can split $ \mathcal{V}(\lambda)\cap B(\typ,\ell-2k-2) $ into the union of $ \mathcal{V}({\lambda_1})  $ and $ \mathcal{V}({\lambda-\lambda_1}) $, where we take the union of the $A$ and $B$ label sets separately. We now reveal the points in two steps. First, we will reveal $ \mathcal{V}({\lambda_1})  $ and then in the second stage all points of $ \mathcal{V}({\lambda-\lambda_1}) $ and $ \mathcal{V}(\delta) $. Note that if after stage one $ Y_{\ell-2k-2,k}\geq N $ holds, there exist at least $ N $ disjoint balls of radius $ 10k $ that are centered at points of $ X_{\ell-2k-2,k}$. For each such point $ x $ there is a probability that is lower bounded by some $ q\coloneqq q(\lambda-\lambda_1,\delta,k) >0$ that in the second revealment $ B(x,9k)\cap B(\typ,\ell-2k-2) $ is covered by $ \mathcal{V}(\lambda-\lambda_1) $ and $ B(x,
9k)\cap B(\typ,\ell-2k-2)^c $ is covered by $ \mathcal{V}(\delta) $. Note, for each $ x $, this would yield a $ 2k $-seed w.r.t. the measure $ \Pro_{\lambda,\delta,B(\typ,\ell-2k-2)^c} $ centered at a point inside $ A(\ell)$. Using the fact that for each $ x $, whether $B(x,9k)$ is covered depends only on points inside $B(x,10k)$, we obtain by writing $ S_1 $ if $ Y_{\ell-2k-2,k}\geq N $ occurs after stage one
\begin{equation*}
	\Pro_{\lambda,\delta,B(\typ,\ell-2k-2)^c}(|A_{\ell,k,B(\typ,k)}|> 0\,\vert\,S_1)\geq \Pro(\text{Bi}(N,q)> 0 )=1-(1-q)^N.
\end{equation*}
Since $ q $ does not depend on $ \ell $, we can now choose $ N $ and $ \ell $ large enough such that the right hand side is greater than $ 1-\eps/2 $. As 
\begin{equation*}
	\begin{split}
		\Pro_{\lambda,\delta,B(\typ,\ell-2k-2)^c}(|A_{\ell,k,B(\typ,k)}|> 0)&\geq \Pro_{\lambda,\delta,B(\typ,\ell-2k-2)^c}(|A_{\ell,k,B(\typ,k)}|> 0\,\vert\,S_1)\Pro(S_1)\\&\geq (1-\eps/2)^2\geq 1-\eps,
	\end{split}
\end{equation*}
the proof is finished.
\end{proof}

\noindent
We define an event where there is a point of $ A_{\ell,k,R} $ inside a fixed ball as
\begin{equation*}
	J_{\ell,k,R}\coloneqq\{|A_{\ell,k,R}\cap B(\ell e_1,\ell/1000)|>0\}.
\end{equation*}
\begin{lemma}\label{l3}
	For all $ \lambda>\lambda_c $ and $ \eps,\delta>0 $ there exist $ k=k(\lambda,\eps,\delta) $ such that for all large enough $ \ell\geq \ell_0(\lambda,\eps,\delta,k) $
	\begin{equation*}
		\Pro_{\lambda,\delta,B(\typ,\ell-2k-2)^c}(J_{\ell,k,B(\typ,k)})>1-\eps.
	\end{equation*}
\end{lemma}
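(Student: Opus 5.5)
This is the standard Grimmett--Marstrand ``square root trick'' passage from ``somewhere on the sphere'' to ``in a prescribed cap''. We cover the annulus $A(\ell)$ by a bounded number of congruent regions obtained by rotating the cap $D:=A(\ell)\cap B(\ell e_1,\ell/1000)$. Concretely, fix a finite set of rotations $T_1,\dots,T_m$ of $\eR^d$ with $m=m(d)$ independent of $\ell$, such that the sets $T_j[B(\ell e_1,\ell/1000)]$ cover $\partial B(\orig,\ell)$, and hence (for $\ell$ large) cover $A(\ell)$. Since the annulus has width $1$ while the caps have radius $\ell/1000$, a simple compactness argument on $S^{d-1}$ gives such an $m$ depending only on $d$.

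The measure $\Pro_{\lambda,\delta,B(\orig,\ell-2k-2)^c}$ is invariant under rotations about $\orig$, and so is the target set $R=B(\orig,k)$. Hence the events
$$J^{(j)}:=\{|A_{\ell,k,B(\orig,k)}\cap T_j[B(\ell e_1,\ell/1000)]|>0\}$$
all have the same probability as $J_{\ell,k,B(\orig,k)}$. Each of these events is increasing in the Poisson configuration: adding points only creates more connections and more covered balls, hence more $2k$-seeds. So we may apply the FKG (Harris) inequality for Poisson processes to their complements, which are decreasing:
$$\Pro\Big(\bigcap_{j} (J^{(j)})^c\Big)\geq \prod_j \Pro\big((J^{(j)})^c\big)=\big(1-\Pro(J_{\ell,k,B(\orig,k)})\big)^m.$$
The left side equals $\Pro(|A_{\ell,k,B(\orig,k)}|=0)$, because the rotated caps cover $A(\ell)$.

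Now invoke Lemma~\ref{l2} with $\eps$ replaced by $\eps^m$. This gives $k=k(\lambda,\eps^m,\delta)$ and, for all $\ell\ge\ell_0$, the bound $\Pro(|A_{\ell,k,B(\orig,k)}|=0)<\eps^m$. Combining with the display yields $(1-\Pro(J_{\ell,k,B(\orig,k)}))^m<\eps^m$, that is, $\Pro(J_{\ell,k,B(\orig,k)})>1-\eps$. Since $m$ depends only on $d$, the resulting $k$ depends only on $\lambda,\eps,\delta$ (and $d$), as the statement requires.

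The main point requiring care is the FKG step. We must check that ``$x$ is a $2k$-seed'' and ``$x\stackrel{B(\orig,\ell)}{\longleftrightarrow}B(\orig,k)$'' are increasing functionals of the pair of independent processes $(\Vcal_A,\Vcal_B)$ under the inhomogeneous intensity. Harris--FKG holds for Poisson processes (including products of independent ones) with arbitrary intensity measures, so this is fine once monotonicity is checked. The covering-by-rotations step is routine.
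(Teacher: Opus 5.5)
Your proof is correct and is essentially the paper's argument: cover $A(\ell)$ by boundedly many (your $m$, the paper's $c_d$) congruent balls of radius $\ell/1000$ centred on $\partial B(\typ,\ell)$, use rotation invariance of the measure to see that the corresponding increasing events are identically distributed, apply Harris--FKG to their decreasing complements to get $\Pro(|A_{\ell,k,B(\typ,k)}|=0)\geq (1-\Pro(J_{\ell,k,B(\typ,k)}))^{c_d}$, and invoke Lemma~\ref{l2} with $\eps^{c_d}$. Your compactness remark is a welcome detail that the paper leaves implicit: the open caps covering the sphere contain a neighbourhood of it, so after scaling they cover the width-one annulus once $\ell$ is large.
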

\begin{proof}
	Note that for $ \partial B(\typ,\ell) $ and some dimension dependent constant $ c_d $, we can place $ c_d $ many balls of radius $ \ell/1000 $ with centers at the boundary such that they cover $ A(\ell) $. We let $ B(x_i,\ell/1000) $, $ 1\leq i\leq c_d $ be an enumeration of it and can assume without loss of generality that $ x_1=\ell e_1 $. For $ \eps,\delta>0 $ we use \cref{l2} to obtain a $ k $ such that for all large enough $ \ell $
	\begin{equation*}
		\Pro_{\lambda,\delta,B(\typ,\ell-2k-2)^c}(|A_{\ell,k,B(\typ,k)}|> 0)>1-\eps^{c_d}.
	\end{equation*}
	If we now write $ A_i\coloneqq \{|A_{\ell,k,B(\typ,k)}\cap B(x_i,\ell/1000)|>0\} $ it holds that $ \cup A_i= \{|A_{\ell,k,B(\typ,k)}|> 0\} $ and by symmetry all $ A_i $ are identically distributed. Note that each event $A_i$ is {increasing} w.r.t.\ adding points to either
$\mathcal V_A$ or $\mathcal V_B$. Therefore the decreasing events $A_i^c$ are positively dependent, and by
induction
\begin{equation*}
    \Pro\Big(\bigcap_{i=1}^{c_d} A_i^c\Big)\;\ge\;\prod_{i=1}^{c_d}\Pro(A_i^c).
\end{equation*}
Since the $A_i$ are identically distributed, this gives
\begin{equation*}
    \Pro\Big(\bigcup_{i=1}^{c_d} A_i\Big)
=1-\Pro\Big(\bigcap_{i=1}^{c_d} A_i^c\Big)
\le 1-\big(1-\Pro(A_1)\big)^{c_d},
\end{equation*}
and hence

\begin{equation*}
\begin{array}{rcl}
\Pro_{\lambda,\delta,B(\typ,\ell-2k-2)^c}(J_{\ell,k,B(\typ,k)}) 
& = & \Pro(A_1) \\
& \geq & 1-(1-\Pro_{\lambda,\delta,B(\typ,\ell-2k-2)^c}(|A_{\ell,k,B(\typ,k)}|> 0))^{1/c_d} \\
& >& 1-\eps.
\end{array}
	\end{equation*}
	
\end{proof}

\begin{lemma}\label{l4}
	For all $ \lambda>\lambda_c $ and $ \eps,\delta>0 $ with $ \lambda\geq \delta $ there exist $ k=k(\lambda,\eps,\delta) $ such that for all large enough $ \ell\geq \ell_0(\lambda,\eps,\delta,k) $
	\begin{equation*}
		\Pro_{\lambda,\delta,R}(J_{\ell,k,R})>1-\eps,
	\end{equation*}
for all admissible $ R $ with $ B(\typ,k) \subseteq R$.
\end{lemma}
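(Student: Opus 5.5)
We compare $\Pro_{\lambda,\delta,R}$ with the measure $\Pro_{\lambda,\delta,B(\typ,\ell-2k-2)^c}$ of \cref{l3} by a monotone coupling, so that \cref{l4} reduces to \cref{l3}. The key observation is that $J_{\ell,k,R}$ is increasing in the point configuration. It is also increasing in $R$: enlarging $R$ only makes it easier for a point to be connected to $R$. Since $B(\typ,k)\subseteq R$, we have $J_{\ell,k,B(\typ,k)}\subseteq J_{\ell,k,R}$ pointwise.

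\textbf{Step 1: reduce the region.} It therefore suffices to show that $\Pro_{\lambda,\delta,R}$ stochastically dominates the law under which $J_{\ell,k,B(\typ,k)}$ has probability $>1-\eps$. Under $\Pro_{\lambda,\delta,R}$ the intensity is $\delta$ on $R$ and $\lambda$ on $R^c$. Under $\Pro_{\lambda,\delta,B(\typ,\ell-2k-2)^c}$ it is $\lambda$ inside $B(\typ,\ell-2k-2)$ and $\delta$ outside. Since $\lambda\ge\delta$, a direct domination would need $R\subseteq B(\typ,\ell-2k-2)^c$. This is false in general, because $R$ contains $B(\typ,k)$ and may contain much more.

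\textbf{Step 2: use $R$ itself as the source.} I would rerun the proofs of \cref{l1,l2,l3} with $B(\typ,k)$ replaced by $R$. Consider the cluster of $R$, meaning all points of $\mathcal V$ that lie within distance $1$ of a point of $R$ of opposite type, or are connected to such points. Inside $R$ the intensity is only $\delta$, but we do not need points inside $R$: connection to $R$ only requires reaching $R$.

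Admissibility is what makes this work. Every point of $R$ is within distance $2$ of a ball $B(y,1/2)\subseteq R$. So the relevant notion is ``$B(\typ,k)\longleftrightarrow\infty$'' with $B(\typ,k)$ replaced by the full region $R$, using only the $\lambda$-points outside $R$. Two facts handle this:
\begin{enumerate}
\item If $R\cap B(\typ,\ell-2k-2)^c\neq\emptyset$, then admissibility together with a seed argument gives the needed connection directly near the boundary.
\item Otherwise, $R\subseteq B(\typ,\ell-2k-2)$, and the configuration $\lambda$ on $B(\typ,\ell-2k-2)\setminus R$ dominates the configuration from \cref{l3} restricted to $R^c$. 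Paths leaving $B(\typ,k)$ either stay outside $R$ or enter $R$, and entering $R$ already counts as being connected to $R$.
\end{enumerate}

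\textbf{Step 3: the core coupling.} Take a path in the $\Pro_{\lambda,\delta,B(\typ,\ell-2k-2)^c}$ model from $B(\typ,k)$ to a seed $x\in B(\ell e_1,\ell/1000)$. Truncate it at its last visit to $R$, or more precisely at the last vertex within distance $1$ of $R$. The remaining segment lies in $R^c\cap B(\typ,\ell)$, where both measures have intensity $\lambda$ on $B(\typ,\ell-2k-2)$ and intensity $\ge\delta$ elsewhere. Hence the segment can be coupled monotonically, and it certifies $x\overset{B(\typ,\ell)}{\longleftrightarrow}R$.

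The seed property of $x$ concerns $B(x,2k)$, which may intersect $R$. There the intensity $\delta$ is no smaller than under the other measure, since outside $B(\typ,\ell-2k-2)$ that measure also has intensity $\delta$. Inside $B(\typ,\ell-2k-2)\cap R$, however, intensity $\delta<\lambda$ could destroy coverage. This is the main obstacle. I would handle it by applying \cref{l3} with a slightly larger radius $k'$ and demanding seeds of radius $2k'$. Alternatively, I would observe that since $\delta>0$ is fixed, coverage of a region of volume $O(k^d)$ by intensity-$\delta$ points has probability bounded below. I would then recover the $\eps$ by iterating over $N$ well-separated candidate seeds, as in the proof of \cref{l2}.
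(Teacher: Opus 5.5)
\textbf{The gap.} It is in Step~3 (and in Fact~2 of Step~2), and this is exactly where the difficulty of the lemma lies. The truncated path does not certify $x\stackrel{B(\typ,\ell)}{\longleftrightarrow}R$, because that needs a point of $\mathcal V$ \emph{inside} $R$ at the end of the path. Write $\widehat R:=R\cap B(\typ,\ell-2k-2)$.
\begin{itemize}
\item If you cut the \cref{l3}-path at its last vertex in $R$, that vertex may lie in $\widehat R$. There the comparison measure has intensity $\lambda$, while $\Pro_{\lambda,\delta,R}$ has only $\delta<\lambda$, so no coupling puts that vertex into the target configuration.
\item If you instead cut at the last vertex $v$ within distance $1$ of $R$, the coupled segment ends at $v\notin R$. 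You still need an opposite-type point of $\mathcal V$ in $R\cap B(v,1)$, and under $\Pro_{\lambda,\delta,R}$ this exists only with probability bounded away from $1$.
\end{itemize}
If the seed cluster meets $\widehat R$ only through a bounded number of bottlenecks, a monotone comparison therefore gives a constant, never $1-\eps$. The same objection applies to ``entering $R$ already counts'' in Fact~2. Fact~1 is unsupported: $R$ reaching $B(\typ,\ell-2k-2)^c$ somewhere says nothing about a seed in $B(\ell e_1,\ell/1000)$ being connected to a point of $R$.

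The obstacle you single out, by contrast, is not one. For $x\in A(\ell)$, whether $x$ is a $2k$-seed depends only on points at distance more than $\ell-2k-5/4$ from $\typ$, i.e.\ outside $B(\typ,\ell-2k-2)$; this is why that radius appears in \cref{l2,l3}. There $\Pro_{\lambda,\delta,R}$ has intensity at least $\delta$, so seed coverage couples monotonically, without enlarging $k$ or iterating over seeds.

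\textbf{The missing idea.} What you need is the Grimmett--Marstrand interface argument that the paper uses.
\begin{itemize}
\item Let $\mathcal U$ be the points of $\mathcal V$ outside $\widehat R$, within distance $1$ of $\widehat R$, that are joined to the seed balls $B(x,k)$ ($x$ a $2k$-seed in $A(\ell)\cap B(\ell e_1,\ell/1000)$) by a path in $\widehat R^c\cap B(\typ,\ell)$. This set is measurable with respect to the configuration off $\widehat R$.
\item Apply \cref{l3} with $\lambda_1=(\lambda+\lambda_c)/2$, $\delta/2$ and a small $\eps_1$. Suppose $\mathcal U$ can be covered by at most $N$ balls of radius $100$ with separated centres. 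Then with conditional probability at least $q_1=e^{-2N\lambda_1\vol(B(\typ,101))}$ there are no $\lambda_1$-points of $\widehat R$ near these balls. That disconnects $B(\typ,k)\subseteq\widehat R$ from every seed, so this covering event has probability at most $\eps_1/q_1$.
\item The law of $\mathcal U$ does not see the intensity on $\widehat R$, so the same bound holds when $\widehat R$ carries only intensity $\delta/2$.
\item Now sprinkle the leftover intensity: $\lambda-\lambda_1$ on $B(\typ,\ell-2k-2)\cap R^c$, and another $\delta/2$ on $R\cup B(\typ,\ell-2k-2)^c$. Use admissibility, which gives a ball $B(z,1/2)\subseteq R$ within distance $3$ of each interface point. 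Each of the more than $N$ well-separated interface points then links into $R$ independently with probability at least some $q_2(\lambda,\delta)>0$. This gives success probability at least $(1-(1-q_2)^N)(1-\eps_1/q_1)$.
\item Choose $N$ first, then $\eps_1$, then $k,\ell_0$. Comparing with $\Pro_{\lambda,\delta,R}$ via $\lambda\ge\delta$ and monotonicity finishes the proof.
\end{itemize}
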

\begin{proof}
	We let $ \lambda_1=(\lambda +\lambda_c)/2 >\lambda_c$ and $ \eps_1>0 $ small to be chosen later. We can choose $ k,\ell_0 $ as given by \cref{l3} such that for all $ \ell\geq \ell_0 $
	\begin{equation*}
		\Pro_{\lambda_1,\delta/2,B(\typ,\ell-2k-2)^c}(J_{\ell,k,B(\typ,k)})>1-\eps_1.
	\end{equation*}
Further, we define the random set
\begin{equation*}
	S\coloneqq \bigcup \{B(x,k)\,:\, x\in\mathcal{V}\cap A(\ell)\cap B(\ell e_1,\ell/1000)\text{ is a }2k\text{-seed}\}.
\end{equation*}
Additionally we let $\widehat{R}\coloneqq R\cap B(\typ,\ell-2k-2)$ and define 
\begin{equation*}
\mathcal{U}\coloneqq\bigg\{y\in \mathcal{V}\cap \widehat{R}^c:\inf_{w\in \widehat{R}}\norm{y-w}<1 \text{ and } y\stackrel{\widehat{R}^c\cap B(\typ,\ell)}{\longleftrightarrow} S \bigg\}
\end{equation*}
and write $ Y$ for the minimum number of balls with radius $ 100 $ that are centered at points of $ \mathcal{U} $ in a way that their union covers all points of $ \mathcal{U} $ and such that the centers are at distance of more than $ 10 $ from another. We again reveal the Poisson points in question in two steps. First, we reveal Poisson points in  $\mathcal{V}(\lambda_1) \cap B(\typ,\ell-2k-2)\cap R^c $ and $ \mathcal{V}(\delta/2)\cap B(\typ,\ell-2k-2)^c $. In the second step we reveal the remaining points, so $\mathcal{V}(\lambda_1)\cap \widehat{R} $. Assume that after the first revealment it holds that $ Y\leq N $ for some $ N $. We let $ \ell $ be large enough such that $ \ell-2k-2\geq k $. Then, if in the second step there are no new points of $ \mathcal{V}(\lambda_{1}) $ inside $ B(y,101)\cap B(\typ,\ell-2k-2)\cap R  $ for all $ y$ counted by $ Y $, the event $ J_{\ell,k,B(\typ,k)} $ cannot hold, as $ B(\typ,k)\subseteq R\cap B(\typ,\ell-2k-2) $. The probability of this occurring in the second step is lower bounded by $ q_1\coloneqq \exp(-2N\lambda_1\vol(B(\typ,101))) $. Thus,
\begin{equation*}
	\eps_1>\Pro_{\lambda_1,\delta/2,B(\typ,\ell-2k-2)^c}(J_{\ell,k,B(\typ,k)}^c)\geq q_1\,\cdot\, \Pro_{\lambda_1,\delta/2,B(\typ,\ell-2k-2)^c}(Y\leq N).
\end{equation*}
 As $ Y $ does not depend on any particles of $ R\cap B(\typ,\ell-2k-2) $, its distribution is identical under the measure $ \Pro_{\lambda_1,\delta/2,B(\typ,\ell-2k-2)^c}  $ and $ \Pro_{\lambda_1,\delta/2,B(\typ,\ell-2k-2)^c\cup R} $. This leads us to
\begin{equation*}
	\Pro_{\lambda_1,\delta/2,B(\typ,\ell-2k-2)^c\cup R}(Y\leq N)\leq \frac{\eps_1}{q_1}.
\end{equation*}
We now do two revealments with respect to $ \Pro_{\lambda,\delta/2,B(\typ,\ell-2k-2)^c\cup R} $, so the area where we reveal $ \mathcal{V}(\delta) $ is $ B(\typ,\ell-2k-2)^c\cup R $. Also, we can split $ \mathcal{V}(\delta)=\mathcal{V}_1(\delta/2)\cup\mathcal{V}_2(\delta/2) $ into two i.i.d. models if we take the union with respect to all $A$ and $B$ labels separately. The same holds for $ \mathcal{V}(\lambda)=\mathcal{V}(\lambda_1)\cup\mathcal{V}(\lambda-\lambda_1)$.
First we reveal all points of $\mathcal{V}(\lambda_1)$ and $\mathcal{V}_1(\delta/2)$ and in the second revealment we sprinkle with $ \mathcal{V}(\lambda-\lambda_{1})$ and $\mathcal{V}_2(\delta/2)$. If in the second revealment we find at least one point of $ \mathcal{V}(\lambda-\lambda_{1}) $ inside $ R\cap B(\typ,\ell-2k-2) $ that is connected to some particle of $ \mathcal{U} $, the event $  J_{\ell,k,R} $ holds. For each center of a ball $ y $ counted by $ Y $ there is a chance that $ B(y,5)\cap (B(\typ,\ell-2k-2)\cap R^c) $ is covered with respect to $ \mathcal{V}(\lambda-\lambda_1) $ and $ B(y,5)\cap (B(\typ,\ell-2k-2)^c\cup R) $ is covered with respect to $ \mathcal{V}_2(\delta/2) $. Additionally, as $ R $ is admissible there exists a $ z\in R $ such that $ B(z,1/2)\subseteq B(y,5)\cap R $. So, if in the second revealment there is a point of $ \mathcal{V}(\lambda-\lambda_1)\cup\mathcal{V}_2(\delta/2) $ inside $ B(z,1/2) $ that is connected to $ y $, the event $ J_{\ell,k,R} $ holds. This would be the case if the just described covering is fulfilled. The probability of all this occurring for a fixed $ y\in \mathcal{U} $ in the second revealment is lower bounded by some $ q_2=q_2(\delta,\lambda)>0 $. As distinct $ y $ are at distance of more than $ 10 $ to each other we obtain
\begin{equation*}
	\begin{split}
		\Pro_{\lambda,\delta/2,B(\typ,\ell-2k-2)^c\cup R}(J_{\ell,k,R})&\geq \bigg(1-(1-q_2)^N\bigg)\cdot \Pro_{\lambda_1,\delta/2,B(\typ,\ell-2k-2)^c\cup R}(Y> N)\\&\geq \bigg(1-(1-q_2)^N\bigg)\bigg(1-\frac{\eps_1}{q_1}\bigg).
	\end{split}
\end{equation*}
If we now choose $ \eps_1=\eps_1(\delta,\lambda,\eps) $ and $ N=N(\delta,\lambda,\eps) $ appropriately, we can conclude that
\begin{equation*}
	\Pro_{\lambda,\delta,R}(J_{\ell,k,R})\geq \Pro_{\lambda,\delta/2,B(\typ,\ell-2k-2)^c\cup R}(J_{\ell,k,R}) >1-\eps,
\end{equation*}
using that $ \lambda\geq \delta $ and that $ J_{\ell,k,R} $ remains to hold when adding points.
\end{proof}

\begin{proofof}{\cref{ABlongconnection}}
	We let $ k(\lambda,\eps,\delta) $ and $\widehat{\ell_0}(\lambda,\eps,\delta,k) $ be given by \cref{l4}. Note that by symmetry and translation we have for all $ \widehat{\ell}\geq \widehat{\ell_0} $ and $ y\in \partial B(\typ,\widehat{\ell})$
	\begin{equation*}
		\Pro_{\lambda,\delta,R}(|A_{\widehat{\ell},k,R}\cap B(y,\widehat{\ell}/1000)|>0)>1-\eps,
	\end{equation*}
	for all admissible $ R $ with $ B(\typ,k)\subset R $.
	We now let $ {\ell}\geq 1000\widehat{\ell_0} $ and assume $ B(x,k)\subset R $ for some $ x\in B(\typ,\ell) $. We see that for $ y=1000\ell e_1/999  $ it holds that $ \norm{x-y}\in [\ell/999,3\ell] $, so the distance is more than $ \widehat{\ell_0} $ in any case. Additionally we have that $ B(y,\norm{x-y}/1000)\subseteq B(\ell e_1,\ell/100) $. We obtain by translation invariance 
	\begin{equation*}
		\begin{split}
			\Pro_{\lambda,\delta,R}(E_{\ell,k,R})&\geq \Pro_{\lambda,\delta,R}\bigg(R\overset{B(\typ,3\ell)}{\longleftrightarrow} z 
			\text{ for some $2k$-seed $z\in \mathcal{V}\cap B( y,\norm{x-y}/1000)$}\bigg)
			\\&>1-\eps.
		\end{split}
	\end{equation*}
\end{proofof}
\subsection{Existence of an odd cycle}
From \cref{lem:metric} we get to choose $ t $ sufficiently close to $ 1 $ such that for all $ x_1,x_2\in C_t $
\begin{equation}\label{tcloseto1}
	\norm{\pi(x_1)-\pi(x_2)}\leq \dists(x_1,x_2).
\end{equation}
We fix some $ t>t_0 $ that satisfies this, where $t_0=t_0(\eps,d)$ is from \cref{coupleab}. We are going to write $ \mathcal{V}_{\lambda,y} $ for the corresponding continuum AB percolation model given by \cref{coupleab} and will fix the $\lambda$ it yields in the following. This time the projected points lie in the $ P_y= \{x\in \RR^{d+1} \,:\, \langle x,y\rangle=0\} $ plane. We will work with different projections and to make it clearer in what plane certain $ d $-dimensional balls lie we will write $ B_y(x,r) $ for the ball with radius $ r $ inside $ P_y $ around the point $ x\in P_y $. We also let $ B(x,r)\coloneqq B_{(0,\dots,0,-1)}(x,r) $.\\

\subsubsection{Exploration algorithm}
We now define a graph $H=H(\eta)$ on the sphere that depends on $ \eta>0 $, which we will choose later. The goal is to define an exploration algorithm that successively declares edges of $H$ open or closed. We let 
\begin{equation*}
    m=\bigg\lfloor{\frac{\sqrt{3}}{\sqrt{d}\eta}}\bigg\rfloor,
\end{equation*}
and define $H=\pi^{-1}(\eta\mathbb{Z}^d\cap D)$, where $D\coloneqq [-\eta m, \eta m]^d$ and adjacencies are inherited from the nearest neighbor model. Note that this means it holds $\sqrt{d} m \eta\leq \sqrt{3}$ and therefore for each $x\in H$
\begin{equation}\label{latticecap}
    \sqrt{3}\geq \norm{\pi(x)}=\frac{\sqrt{1-x^2_{d+1}}}{1-x_{d+1}},
\end{equation}
yielding $x_{d+1}\leq 1/2$. Trivially, the south pole is an element of $H$. Additionally, we let $ A=A(d)\in \NN $ be a constant integer that we will determine later, choose $ \lambda_{c}<\lambda_1<\dots<\lambda_{A}<\lambda  $ and define
\begin{equation*}
	\delta\coloneqq\min\bigg\{\lambda_1,\lambda_2-\lambda_1,\dots,\lambda_{A}-\lambda_{A-1}\bigg\}.
\end{equation*}
For all $ x\in H $, the points $ \mathcal{V}_{\lambda,x} $ that consist of the two Poisson point processes that correspond to the two labels of the $ AB $ model with intensity $ \lambda$ can be written as the union
\begin{equation*}
	\mathcal{V}_{\lambda,x}=\mathcal{V}_{\lambda_1,x}\cup\bigg(\bigcup_{i=2}^A \mathcal{V}_{\lambda_{i}-\lambda_{i-1},x}\bigg)\cup \mathcal{V}_{\lambda-\lambda_{A},x},
\end{equation*}
where we take the union of all $ A $ labels and $ B $ labels separately. Note that by \cref{coupleab} each point in these continuum AB percolation models will be a point of the Borsuk graph by mapping them back with $ g^{-1}_x(\cdot) $.
For $ \eps_1>0 $ to be chosen later we let $ k(\lambda_1,\eps_1,\delta),\ell_0(\lambda_1,\eps_1,\delta) $ be the values from \cref{ABlongconnection}. For $ x_1=(0,\dots,0,-1)\in H $ and an arbitrary neighbor $ y_1\in H $ of it we start exploring to determine whether the edge $ x_1 y_1 $ is open or not. We remind ourselves that $ g_{x_1}(x_1)=g(x_1)=\typ $. 
We declare $ x_1 y_1 $ open  if $ E_{k,B(\typ,k)}(\typ,g(y_1))  $ from \cref{abconnectionevent} holds with respect to  $ \mathcal{V}_{\lambda_1,x_1} $. Note that we can determine whether the event occurs by not revealing all points instantaneously. This can be done in the following way: First, we will reveal all $ \mathcal{V}_{\lambda_1,x_1}\cap B(\typ,k)  $. We now write $ \mathcal{V}_{\lambda_1,x_1}(A) $ (resp. $ \mathcal{V}_{\lambda_1,x_1}(B) $) for all points with label $ A $ (resp. label $ B $). Then, for each $  a\in \mathcal{V}_{\lambda_1,x_1}(A)\cap B(\typ,k) $ we can reveal all $ \mathcal{V}_{\lambda_1,x_1}(B)\cap B(a,1) $ that are not yet revealed (so inside $ B(\typ,k)^c $). Those points will then all be connected to $ a $. In the same manner we will reveal for all $  b\in \mathcal{V}_{\lambda_1,x_1}(B)\cap B(\typ,k) $ all $ \mathcal{V}_{\lambda_1,x_1}(A)\cap B(b,1) $ that are not yet found. Iterating this procedure will reveal only $  \mathcal{V}_{\lambda_1,x_1} $ that lie inside $ B(\typ,k) $ and points that are connected to $ B(\typ,k) $. Going back to the graph $ G_t $ on the sphere this means that we only reveal part of the points inside the ball 
  \begin{equation*}
  	B_k\coloneqq\{y\in S^d:\dists(x_1,y)\leq \dists(x_1,g^{-1}(ke_1)) \}
  \end{equation*}
   and all other points found are connected to that set. We will never reveal area outside $B(\typ,3\norm{g(y_1)})$ and stop exploring once either $ E_{k,B(\typ,k)}(\typ,g(y_1))   $ is satisfied or all points we found inside $B(\typ,3\norm{g(y_1)})$ have been used for an exploration. The next steps will be defined iteratively with the same strategy in mind as long as there exists an edge $ x_{i}y_i $ in $ H $ such that $ x_i $ is connected to $ x_1 $ via a path of open edges and the status of $ x_{i}y_i $ is not determined yet. In addition, our exploration will yield a set $ R_i\subset S^d $ where we have already revealed part of the points of $ X $ and $ Y $. We can split it into disjoint $ A_{1},\dots,A_i $ such that for $ 1\leq j\leq i $ we have revealed points in  $ A_j $ for $ j $ different iterations. Similarly to the initial edge, we open $ x_iy_i $ if $ E_{k,g_{x_i}(R_i)}(\typ,g_{x_i}(y_i)) $ occurs when we reveal $ \mathcal{V}_{\lambda_1,x_i} $ in the `fresh area' $ g_{x_i}(R^c_i) $ and $ \mathcal{V}_{\lambda_{j+1}-\lambda_{j},x_i} $ inside $ g_{x_i}(A_j) $ for each $ 1\leq j\leq i $. Note that by doing this, we have not encountered these points in any previous exploration. Using the same procedure as done for the first edge, we again reveal the points successively to make sure every found point is connected to $B_{x_i}(\typ,k)$.

\subsubsection{Verifying requirements}\label{sec:requirements}

In order to use \cref{ABlongconnection} to make sure edges are open with a probability close to 1, there are multiple requirements we have to check in order to make sure it is actually applicable to our setting. Firstly, for our steps in the exploration it is important that neighbors of $ H $ are sufficiently far away after projecting, as the Lemma required a distance of at least $\ell_0$. 
\begin{lemma}\label{mindistproj}
	For
	\begin{equation*}
		\eta=  6\ell_0\alpha(n)
	\end{equation*}
 we have for all large enough $ n=n(d)  $ that it holds for all neighboring $ x,y\in H $
	\begin{equation*}
		 \ell_0 \leq \norm{g_x(y)}\leq 24\,\ell_0 .
	\end{equation*}
\end{lemma}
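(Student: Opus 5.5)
The plan is to sandwich $\norm{g_x(y)}$ between multiples of the geodesic distance $\dist(x,y)$, and then sandwich $\dist(x,y)$ between multiples of the lattice spacing $\eta$. Both steps use only Lemma~\ref{lem:metric}. Let $x,y\in H$ be neighbours. By the definition of $H$ we have $\norm{\pi(x)-\pi(y)}=\eta$. By~\eqref{latticecap}, both $x$ and $y$ lie in $S^d\cap\{x_{d+1}\leq 1/2\}$, so in particular neither is the north pole.

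\textbf{Step 1: comparing $\dist(x,y)$ with $\eta$.} Part~\ref{itm:metricgenlb} of Lemma~\ref{lem:metric} gives $\eta=\norm{\pi(x)-\pi(y)}\geq \dist(x,y)/2$, so $\dist(x,y)\leq 2\eta$. Part~\ref{itm:metricgenub} with $t=1/2$ gives $\eta\leq \frac{2-1/2}{(1/2)^2}\dist(x,y)=6\,\dist(x,y)$, so $\dist(x,y)\geq \eta/6$. With $\eta=6\ell_0\alpha(n)$ these two bounds read
\[
\ell_0\,\alpha(n)\leq \dist(x,y)\leq 12\,\ell_0\,\alpha(n).
\]

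\textbf{Step 2: comparing $\norm{\pi_x(y)}$ with $\dist(x,y)$.} The map $\pi_x$ is stereographic projection through $-x$, so it coincides with $\pi$ after composing with an orthogonal map of $\RR^{d+1}$ sending $x$ to the south pole $(0,\dots,0,-1)$. Such a map preserves $\dist$, so Lemma~\ref{lem:metric} applies to the pair $(x,y)$ transported to the south pole. Note that $\pi_x(x)=\orig$.

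For the lower bound, part~\ref{itm:metricgenlb} gives $\norm{\pi_x(y)}\geq \dist(x,y)/2$. Hence $\norm{g_x(y)}=2\norm{\pi_x(y)}/\alpha(n)\geq \dist(x,y)/\alpha(n)\geq \ell_0$.

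For the upper bound, fix $\eps=1/2$ in part~\ref{itm:metricub12} and let $\delta=\delta(1/2)$ be the constant it provides. Since $\alpha(n)\to 0$, we have $\dist(x,y)\leq 12\ell_0\alpha(n)<\delta$ for all $n\geq n(d)$. Then the transported point $y$ and the south pole both lie in $\kap((0,\dots,0,-1),\delta)$. Part~\ref{itm:metricub12} therefore gives $\norm{\pi_x(y)}\leq \dist(x,y)$. Hence $\norm{g_x(y)}\leq 2\dist(x,y)/\alpha(n)\leq 24\ell_0$.

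I do not expect a real obstacle here. The only points that need care are the following.
\begin{itemize}
\item We must justify applying Lemma~\ref{lem:metric} to $\pi_x$ via rotational invariance, checking that $-x\neq y$ so the projection is defined.
\item We must record that the largeness of $n$ enters only through the requirement $12\ell_0\alpha(n)<\delta(1/2)$.
\item The only place the constraint $x_{d+1}\leq 1/2$ from~\eqref{latticecap} is used is the application of part~\ref{itm:metricgenub} with $t=1/2$. That application produces the factor $6$ which matches the choice $\eta=6\ell_0\alpha(n)$.
\end{itemize}
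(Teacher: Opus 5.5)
Your proof is correct and is essentially the paper's argument. You sandwich $\dist(x,y)$ between $\eta/6$ and $2\eta$ using parts~\ref{itm:metricgenlb} and~\ref{itm:metricgenub} of Lemma~\ref{lem:metric} with $t=1/2$, then bound $\norm{g_x(y)}$ below by part~\ref{itm:metricgenlb} and above by part~\ref{itm:metricub12} after rotating $x$ to the south pole; the paper does the same, citing part~\ref{itm:metricub12} in the form~\eqref{tcloseto1}. Your explicit choice $\eps=1/2$, and your remark that large $n$ is needed only to make $12\ell_0\alpha(n)$ smaller than $\delta(1/2)$, make precise what the paper leaves implicit.
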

\begin{proof}
   For neighbors $ x,y\in H $ we have $ x_{d+1},y_{d+1}\leq 1/2 $ and so by \cref{lem:metric} (\rom{1},\rom{3}) it holds that
   \begin{equation}\label{proofeq1}
   	 \eta=\norm{\pi(x)-\pi(y)}\leq 6\dists(x,y)\leq 12\norm{\pi(x)-\pi(y)}=12\eta. 
   \end{equation}
	By \cref{lem:metric} (\rom{1}) and using the symmetry of different stereographic projections we get
	\begin{equation*}
		\norm{g_x(y)}=\frac{2}{\alpha(n)}\norm{\pi_x(y)}\geq \frac{1}{\alpha(n)}\ \dists(x,y),  
	\end{equation*}
and the lower bound follows. Using that $ \norm{\pi(x)-\pi(y)}=\eta $ we can apply \cref{lem:metric} (\rom{2}) and \cref{tcloseto1} to obtain for all large enough $ n=n(d) $ 
\begin{equation*}
	\begin{split}
		\norm{g_x(y)}&=\frac{2}{\alpha(n)}\norm{\pi_x(y)}\leq\frac{2}{\alpha(n)} \dists(x,y)\leq  \frac{4}{\alpha(n)} \norm{\pi(x)-\pi(y)}=24\ell_0 .
	\end{split}
\end{equation*}
\end{proof}

\noindent
 In the following, we always let $ \eta $ be as given by the lemma above. In order to determine whether we have a long connection in the Borsuk graph or not, we will project caps $ C_t(x) $. It is important that the relative size of the cap is enough to be able to use \cref{ABlongconnection}, as shown in the following. This means, we will not have to reveal points that are outside  $ G_t(x_i) $ in order to determine whether $ E_{k,g_{x_i}(R_i)}(\typ,g_{x_i}(y_i))$ is satisfied 
 wrt.~the points we reveal.
 \begin{lemma}\label{largeenoughcap}
 	We can choose $ n=n(t,\ell_0,d) $ large enough such that for all adjacent $ y,x\in H $ and all $ z\in \partial C_t(x) $ 
 	\begin{equation*}
 		 \norm{g_{x}(z)}\geq 3 \norm{g_{x}(y)} .
 	\end{equation*}
 \end{lemma}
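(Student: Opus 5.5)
The plan is to compare the two sides separately: $\norm{g_x(z)}$ for $z\in\partial C_t(x)$ is large (of order $1/\alpha(n)$), while $\norm{g_x(y)}$ is bounded by a constant. The upper bound comes directly from \cref{mindistproj}, which gives $\norm{g_x(y)}\leq 24\ell_0$ for all adjacent $x,y\in H$ once $n$ is large. So it suffices to show
\[
\norm{g_x(z)}\geq 72\,\ell_0 \quad\text{for all } z\in\partial C_t(x),
\]
for all large $n$.

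For the lower bound, use rotational invariance: the projection $\pi_x$ is obtained from $\pi=\pi_{(0,\dots,0,-1)}$ by an orthogonal transformation sending $x$ to the south pole. Hence $\norm{\pi_x(z)}=\norm{\pi(z')}$, where $z'$ is the rotated point with $z'\in\partial C_t$, i.e.\ $z'_{d+1}=-t$. The computation in the proof of \cref{ppp} then gives
\[
\norm{\pi_x(z)}=\frac{\sqrt{1-t^2}}{1+t},
\]
a positive constant depending only on $t$ (with $t<1$ fixed). Therefore
\[
\norm{g_x(z)}=\frac{2\sqrt{1-t^2}}{(1+t)\alpha(n)}=\rho(t,n).
\]
Since $\alpha(n)=(c_2+\eps)n^{-1/d}\to 0$, we have $\rho(t,n)\to\infty$. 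Choosing $n\geq n(t,\ell_0,d)$ so that $\rho(t,n)\geq 72\ell_0$ finishes the argument.

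There is no serious obstacle. The only point to check is the identification of $\pi_x$ with a rotated copy of $\pi$: $\pi_x$ projects through $-x$ onto $P_x$, exactly as $\pi$ projects through the north pole onto $\{x_{d+1}=0\}$ centred at the south pole. This is consistent with how the symmetry of stereographic projections was already used in the proof of \cref{mindistproj}. One should also make sure the $n$ threshold from \cref{mindistproj} is absorbed, by taking the maximum of the two thresholds.
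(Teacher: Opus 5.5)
Your proposal is correct and is essentially the paper's proof. Both arguments bound $\norm{g_x(y)}\leq 24\ell_0$ via \cref{mindistproj}. Both then use rotational symmetry of the stereographic projections and the computation from \cref{ppp} to get $\norm{g_x(z)}=\rho(t,n)\to\infty$ for $z\in\partial C_t(x)$, and take $n$ large.
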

\begin{proof}
	By \cref{mindistproj} it holds that $ 24\ell_0 \geq \norm{g_{x}(y)} $, provided $ n=n(d) $ is large enough. Further, we have by symmetry of the different stereographic projections and \cref{ppp} for all $ z_2\in \partial C_t $  
	\begin{equation*}
	\norm{g_{x}(z)}=\norm{g(z_2)}=\rho(t,n)=\frac{2\sqrt{1-t^2}}{(1+t)\alpha(n)}.
	\end{equation*}
	The claim now follows by choosing $ n=n(t,d,\ell_0) $ sufficiently large.
\end{proof}

\noindent
  We will choose $ n $ large enough such that the \cref{mindistproj,largeenoughcap} above are satisfied. 
  \begin{lemma}\label{maxspheredist}
		For all edges $ h_1h_2 $ and $ h_3h_4 $ of $ H $ it holds that
		\begin{equation*}
			\dists(h_1,h_2)\leq12\dists(h_3,h_4).
		\end{equation*}
	\end{lemma}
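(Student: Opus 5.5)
Every edge $hh'$ of $H$ has $\norm{\pi(h)-\pi(h')}=\eta$ by construction, since $H=\pi^{-1}(\eta\ZZ^d\cap D)$ with nearest-neighbour adjacencies. So the plan is to sandwich $\dists(h,h')$ between two constant multiples of $\eta$ that are valid for \emph{every} edge of $H$, and then divide the resulting bounds.

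The lower bound comes from part~(i) of Lemma~\ref{lem:metric}: for any edge $h_3h_4$,
$$\dists(h_3,h_4)\geq \norm{\pi(h_3)-\pi(h_4)}=\eta .$$
Part~(i) in fact gives $\dists\geq 2\norm{\pi(\cdot)-\pi(\cdot)}$, which is stronger, but I only need this weaker form.

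For the upper bound I use part~(iii) of Lemma~\ref{lem:metric} with $t=1/2$. This is justified by~\eqref{latticecap}, which shows that every vertex $x\in H$ has $x_{d+1}\leq 1/2$. That part controls $\norm{\pi(a)-\pi(b)}$ from above by $\dists(a,b)$, whereas I need the reverse direction: an upper bound on $\dists$ in terms of $\norm{\pi(a)-\pi(b)}$. This reversal is the main subtlety. I have two ways to handle it.

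The first is to invoke inequality~\eqref{proofeq1} from the proof of Lemma~\ref{mindistproj}, which already records $\eta\leq 6\dists(x,y)\leq 12\eta$ for neighbours $x,y\in H$. Taken at face value, this gives $\dists(h_1,h_2)\leq 2\eta$ for the edge $h_1h_2$.

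The second, if I do not want to rely on~\eqref{proofeq1}, is to bound $\dists$ directly through $\pi^{-1}$, which is Lipschitz on bounded sets. Concretely, $\pi^{-1}(z)=\bigl(2z,\norm{z}^2-1\bigr)/(1+\norm{z}^2)$, and on the ball $\norm{z}\leq\sqrt3$ its derivative has operator norm $2/(1+\norm{z}^2)\leq 2$. The segment between $\pi(h_1)$ and $\pi(h_2)$ lies in the convex cube $D$, which is contained in $B(\typ,\sqrt3)$. Integrating along that segment gives a spherical curve of length at most $2\eta$, so $\dists(h_1,h_2)\leq 2\eta$.

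With either route, $\dists(h_1,h_2)\leq 2\eta$ and $\dists(h_3,h_4)\geq\eta$, hence
$$\dists(h_1,h_2)\leq 2\eta\leq 2\dists(h_3,h_4)\leq 12\dists(h_3,h_4).$$

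The only step needing care is the upper bound on $\dists$ via the inverse stereographic projection. The direct Lipschitz argument for $\pi^{-1}$ on the region $\norm{z}\leq\sqrt3$ is the most reliable route, and the constant $12$ leaves ample room.
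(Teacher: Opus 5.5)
Your upper bound $\dists(h_1,h_2)\le 2\eta$ is fine, and both of your routes to it work. In fact it is exactly part~(i) of Lemma~\ref{lem:metric}, which holds for all $a,b$ with no restriction.

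\textbf{The gap is in the lower bound.} Part~(i) states $\norm{\pi(a)-\pi(b)}\geq \frac12\dists(a,b)$, that is, $\dists(a,b)\le 2\norm{\pi(a)-\pi(b)}$. This is an \emph{upper} bound on $\dists$. It is not the inequality $\dists\ge 2\norm{\pi(\cdot)-\pi(\cdot)}$ you read off, so you have swapped the roles of parts~(i) and~(iii).

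Moreover, the inequality you use, $\dists(h_3,h_4)\ge\eta$, is false. By your own computation, the length distortion of $\pi^{-1}$ at $z$ is $2/(1+\norm{z}^2)$. This drops below $1$ as soon as $\norm{z}>1$, and near the corners of $D$ (where $\norm{z}\approx\sqrt3$, i.e.\ $x_{d+1}\approx 1/2$) it is about $1/2$. Edges of $H$ there have $\dists\approx\eta/2<\eta$. So the step $2\eta\le 2\dists(h_3,h_4)$ in your final chain fails.

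\textbf{The fix.} Take the lower bound from part~(iii) with $t=1/2$, which is exactly what \eqref{latticecap} licenses:
\[
\eta=\norm{\pi(h_3)-\pi(h_4)}\le \frac{2-1/2}{(1/2)^2}\,\dists(h_3,h_4)=6\,\dists(h_3,h_4).
\]
Combining this with your upper bound gives $\dists(h_1,h_2)\le 2\eta\le 12\,\dists(h_3,h_4)$, which is precisely the paper's proof. You actually had this lower bound in hand: it is the left half of \eqref{proofeq1}, which you quoted only for its right half.

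Note that with the tools available, the constant $12=2\cdot 6$ is used up exactly, so your remark that $12$ ``leaves ample room'' is mistaken.
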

\begin{proof}
	By \cref{lem:metric} we have that
	\begin{equation*}
		\begin{split}
			\eta/6=\norm{\pi(h_1)-\pi(h_2)}/6&\leq \dists(h_1,h_2)\leq 2\norm{\pi(h_1)-\pi(h_2)}=2\eta\leq  12\dists(h_3,h_4).
		\end{split}
	\end{equation*}
\end{proof}

\noindent
Each successful exploration step that we will define after this Lemma produces a seed that gets used for the next iterations. The following makes sure that the seeds are actually large enough after the area gets distorted by the stereographic projections. The lemma implies in the beginning of each exploration step there is a $ B_{x_i}(x,k)\subseteq B_{x_i}(\typ,\norm{g_{x_i}(y_i)}) $ such that $ g^{-1}_{x_i}(B_{x_i}(x,k)) $ is connected to $ B_k $ in the Borsuk graph while staying inside $ R_i $. Additionally, $ B_{x_i}(x,k) $ is covered with respect to $ \mathcal{V}_{\lambda,x_i} $. 
		\begin{lemma}\label{restart}
		Let $ h_1h_2 $ and $ h_2h_3 $ be edges in $ H $. Assume that $ x\in P_{h_1} $ and $ k\in\NN $ are such that $B_{h_1}(x,2k)\subseteq B_{h_1}(g_{h_1}(h_2),10^{-2}\norm{g_{h_1}(h_2)}) $. Then we can choose $ n=n(\ell_0,k,d) $ sufficiently large such that there is a $ y\in P_{h_2} $ with
		\begin{equation*}
			B_{h_2}(y,k)\subseteq g_{h_2}(g^{-1}_{h_1}(B_{h_1}(x,2k))) \subseteq B_{h_2}(\typ,\norm{g_{h_2}(h_3)}).
		\end{equation*} 
	\end{lemma}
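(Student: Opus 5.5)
The plan is to show that the change of chart $\Phi := g_{h_2}\circ g_{h_1}^{-1}$, restricted to the relevant region, is at scale $O(\ell_0)$ (in rescaled units) within $o(1)$ of an isometry of $\eR^d$. Concretely, $\Phi$ is the rescaling by $2/\alpha(n)$ of the map $\pi_{h_2}\circ\pi_{h_1}^{-1}$, which is a fixed smooth map (a Möbius-type transformation between tangent planes). All points involved lie within rescaled distance $O(\ell_0)$ of $\typ$ in $P_{h_1}$; this follows from Lemma~\ref{mindistproj}, since $\norm{g_{h_1}(h_2)}\le 24\ell_0$ and $B_{h_1}(x,2k)$ sits inside a ball of radius $10^{-2}\norm{g_{h_1}(h_2)}$ around $g_{h_1}(h_2)$. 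In unscaled coordinates this corresponds to a spherical region of diameter $O(\ell_0\alpha(n)) = o(1)$ around $h_1$, and $h_2$ lies in it as well.

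First, I would use Lemma~\ref{lem:metric} (parts \ref{itm:metricgenlb} and \ref{itm:metricub12}, applied via rotational symmetry to $\pi_{h_1}$ and $\pi_{h_2}$) to get bi-Lipschitz bounds between rescaled chart distances and $\dist/\alpha(n)$ with constants $1\pm\eps_1$. Here $\eps_1\to0$ as $n\to\infty$, because the whole region shrinks into a cap of vanishing radius around $h_1$, and around $h_2$ too. Composing these bounds shows that $\Phi$ distorts distances by a factor in $[(1-\eps_1)/(1+\eps_1),(1+\eps_1)/(1-\eps_1)]$ on this region. Choosing $n$ large, so that the factor exceeds $2/3$, the image $\Phi(B_{h_1}(x,2k))$ contains the ball $B_{h_2}(\Phi(x),k)$; set $y:=\Phi(x)$. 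Containment of a ball, rather than just a distance bound, needs an argument, and I would use a topological one: $\Phi$ is a homeomorphism onto its image, so the boundary sphere $\partial B_{h_1}(x,2k)$ maps to a set at distance $\ge(2/3)\cdot 2k>k$ from $y$. Hence every point of $B_{h_2}(y,k)$ lies in the bounded component containing $y$, that is, in the image. Invariance of domain, or simply the fact that $\Phi$ extends to a global diffeomorphism of the charts, justifies this.

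Second, for the outer inclusion, every point $w$ of $\Phi(B_{h_1}(x,2k))$ must satisfy $\norm{w}<\norm{g_{h_2}(h_3)}$. Using the same near-isometry, $\norm{w-\Phi(g_{h_1}(h_2))}=\norm{w}$, since $g_{h_2}(h_2)=\typ$. This is at most $(1+o(1))\cdot 10^{-2}\norm{g_{h_1}(h_2)}\le (1+o(1))\cdot 0.24\,\ell_0$. On the other hand, Lemma~\ref{mindistproj} gives $\norm{g_{h_2}(h_3)}\ge\ell_0$. Since $0.24(1+o(1))<1$, the inclusion holds for $n$ large.

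The main obstacle is making the uniform distortion estimate precise: the constants in Lemma~\ref{lem:metric} part \ref{itm:metricub12} must apply simultaneously for both projections and for all edges of $H$ (uniformity comes from rotational symmetry), and the degree/topology step that turns a distance bound into a ball inclusion must be handled. The second issue could alternatively be avoided by noting that $\pi_{h_2}\circ\pi_{h_1}^{-1}$ is a conformal Möbius map, which sends balls to balls; the image of $B_{h_1}(x,2k)$ is then a Euclidean ball of radius $\ge (2-o(1))k$ containing $\Phi(x)$ near its center, which gives the inner inclusion directly.
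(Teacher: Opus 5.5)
Your argument is correct. Its skeleton matches the paper's: set $y=g_{h_2}(g_{h_1}^{-1}(x))$ and compare both charts to $\dist/\alpha(n)$ via Lemma~\ref{lem:metric}. The differences are in how precise the distortion estimate is and in how the inner inclusion is obtained.

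\textbf{Distortion bound.} The paper uses only crude factor-$2$ bounds: $\norm{g_u(a)-g_u(b)}\ge \dist(a,b)/\alpha(n)$ from part (i), and $\norm{g_u(a)-g_u(b)}\le 2\dist(a,b)/\alpha(n)$ from \eqref{tcloseto1}. The slack in the statement ($2k$ versus $k$, and $10^{-2}$ versus the edge-length ratio $24$ from Lemma~\ref{maxspheredist}) absorbs the factor $2$. So your $(1\pm o(1))$ near-isometry via part (ii) is more than is needed.

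\textbf{Inner inclusion.} The step you flag as the main obstacle disappears if you run the estimate backwards. Since $g_{h_2}^{-1}$ is defined on all of $P_{h_2}$, for every $z_1\in B_{h_2}(y,k)$ the point $g_{h_1}(g_{h_2}^{-1}(z_1))$ exists. The same two inequalities then give $\norm{x-g_{h_1}(g_{h_2}^{-1}(z_1))}\le 2\norm{y-z_1}<2k$, which is exactly $z_1\in g_{h_2}(g_{h_1}^{-1}(B_{h_1}(x,2k)))$. No invariance of domain, connectedness argument or M\"obius structure is required.

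Your topological argument is nonetheless valid: the boundary sphere maps outside $B_{h_2}(y,k)$, and a connected set that contains an interior point of the image and misses its boundary lies inside the image. Your sharper estimate would also survive tighter radius ratios. The paper's version is shorter and purely metric.

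\textbf{Outer inclusion.} You use Lemma~\ref{mindistproj}, giving $0.24(1+o(1))\ell_0<\ell_0\le\norm{g_{h_2}(h_3)}$, whereas the paper uses Lemma~\ref{maxspheredist} together with \eqref{tcloseto1}, giving the bound $0.48\norm{g_{h_2}(h_3)}$. Both work.
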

	\begin{proof}
		As $ t $ is fixed, we can and will choose  $  n=n(\ell_0,k,d)  $ large enough such that all areas on the sphere that we consider in this proof lie in $ C_{t}(h_1) $. We let $ y= g_{h_2}(g^{-1}_{h_1}(x)) $. Then, by \cref{tcloseto1} and \cref{lem:metric} it holds for all $ z_1\in B_{h_2}(y,k) $ that
		\begin{equation*}
			\begin{split}
				k\geq \norm{ g_{h_2}(g^{-1}_{h_1}(x))-z_1} &\geq \frac{1}{\alpha(n)}\dists(g^{-1}_{h_1}(x),g_{h_2}^{-1}(z_1))\geq \frac{1}{2}\norm{x-g_{h_1}(g^{-1}_{h_2}(z_1))},
			\end{split}
		\end{equation*}
		and $ z_1\in g_{h_2}(g^{-1}_{h_1}(B_{h_1}(x,2k))) $ follows. For $ z_2\in B_{h_1}(x,2k)\subseteq B_{h_1}(g_{h_1}(h_2),10^{-2}\norm{g_{h_1}(h_2)}) $ we can bound
		\begin{equation*}
			\begin{split}
				10^{-2}{\norm{g_{h_1}(h_2)}}&\geq \norm{z_2-g_{h_1}(h_2)}\geq \frac{1}{\alpha(n)}\dists(g^{-1}_{h_1}(z_2),h_2)\geq \frac{1}{2}\norm{g_{h_2}(g^{-1}_{h_1}(z_2) )},
			\end{split}
		\end{equation*}
		where we utilized \cref{lem:metric} again. We are now done, as it further holds by \cref{maxspheredist} and \cref{tcloseto1} that
		\begin{equation*}
			\norm{g_{h_1}(h_2)}\leq \frac{2}{\alpha(n)}\dists(h_1,h_2)\leq \frac{24}{\alpha(n)} \dists(h_2,h_3)\leq 24 \norm{g_{h_2}(h_3)}.
		\end{equation*} 
	\end{proof}	  

    \noindent
	It is important that for the area that is relevant for the event, we did not encounter it in too many previous iterations. This is because if we define  
	\begin{equation*}
		a_{i}\coloneqq | \{1\leq j\leq i: g_{x_i}(A_j)\cap B_{x_i}(\typ,3\norm{g_{x_i}(y_i)})\neq \emptyset \}|,
	\end{equation*}
	to be the number of revealments, $ a_i $ is uniformly bounded from above, as shown in the following two lemmas. This is needed, as we have chosen $ A(d) $ different intensities $ \lambda_i $ in the beginning. For $ x\in H $ we will write $ x^o $ for the point of $ H $ that is closest to $ -x $. If this is not a unique point we pick a point out of the possible choice by some arbitrary deterministic rule.
	\begin{lemma}\label{nointersect}
	 Let $ r_1>0 $ and for all $ x\in H $ define an event $ E_x $ that depends only on the points of $ \mathcal{V}_{\lambda,x} $ inside $ B_x(\typ, r_1\ell_0 ) $. Then, there is a constant $M=M(r_1,d)>0 $ such that each $ E_x $ depends only on $ E_y $ for sites $ y\in H $ that have a 
	 graph distance of less than $M$ to $ x $ or $ x^o $.
	\end{lemma}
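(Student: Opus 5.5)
The plan is to translate the region on which $E_x$ depends back to the sphere, and then show that two such regions can overlap only if the corresponding sites of $H$ are close in graph distance, either directly or after replacing one site by the site nearest its antipode. Recall how $\mathcal{V}_{\lambda,x}$ arises via \cref{coupleab}. Its points are the $g_x$-images of points of $\mathcal{X}\cap C_t(x)$ (label $A$) and of $\mathcal{M}\cap C_t(x)$ (label $B$, antipodes of points of $\mathcal{X}\cap C_t(-x)$). Hence the event $E_x$ is a function of the restriction of $\mathcal{X}$ to
\[ g_x^{-1}(B_x(\typ,r_1\ell_0)) \cup \big(-g_x^{-1}(B_x(\typ,r_1\ell_0))\big). \]
Since the Poisson process $\mathcal{X}$ has independent restrictions to disjoint sets, $E_x$ and $E_y$ are independent as soon as these sets for $x$ and $y$ are disjoint. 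It therefore suffices to show that overlap forces $y$ to lie within bounded graph distance of $x$ or of $x^o$.

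\textbf{Step 1 (cap estimate).} By \cref{lem:metric} (i), every $z\in g_x^{-1}(B_x(\typ,r_1\ell_0))$ satisfies $\dists(x,z)\le \alpha(n)\norm{g_x(z)}< r_1\ell_0\alpha(n)$. Using $\eta=6\ell_0\alpha(n)$ from \cref{mindistproj}, this becomes $\dists(x,z)< r_1\eta/6$. So the region for $x$ lies in $\kap(x,r_1\eta/6)\cup\kap(-x,r_1\eta/6)$, and likewise for $y$.

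\textbf{Step 2 (overlap gives spherical closeness).} If the two regions intersect, the triangle inequality gives either $\dists(x,y)<r_1\eta/3$ or $\dists(-x,y)<r_1\eta/3$. In the second case, $x^o$ is the point of $H$ nearest $-x$. Here I need $-x$ to be close to $H$ at all, i.e.\ $-x$ should lie in the image region of $H$ up to distance $O(\eta)$. I expect this to be the main obstacle. I would try to handle it by noting that $\dists(-x,y)<r_1\eta/3$ with $y\in H$ already exhibits a point of $H$ near $-x$. Consequently $\dists(x^o,-x)\le\dists(y,-x)<r_1\eta/3$, which gives $\dists(x^o,y)<2r_1\eta/3$. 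In either case $y$ is within spherical distance $O(r_1\eta)$ of $x$ or of $x^o$.

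\textbf{Step 3 (spherical to graph distance).} All points of $H$ satisfy $x_{d+1}\le 1/2$. So \cref{lem:metric} (i), (iii) give $\norm{\pi(u)-\pi(v)}\le 6\,\dists(u,v)$ for $u,v\in H$, and hence $\norm{\pi(u)-\pi(v)}<6r_1\eta$ for the pair found in Step 2. The points $\pi(u),\pi(v)$ lie in the grid $\eta\ZZ^d\cap D$, and the box $D$ is convex, so the nearest-neighbour graph distance between them is their $\ell^1$ distance divided by $\eta$. This is at most $\sqrt d\cdot 6r_1\eta/\eta=6\sqrt d\, r_1$. Taking $M:=6\sqrt d\,r_1+1$ completes the argument, and $M$ depends only on $r_1$ and $d$.
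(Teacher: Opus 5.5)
Your proof is correct and follows essentially the same route as the paper's. Both arguments confine the dependency region of $E_x$ to caps of radius $O(r_1\eta)$ around $x$ and $-x$ via the lower bound $\norm{\pi(a)-\pi(b)}\geq \tfrac12\dist(a,b)$ of Lemma~\ref{lem:metric}, and then translate spherical proximity into graph distance in $H$. Your version is in fact slightly cleaner in two places:
\begin{itemize}
\item You use $y$ itself as the witness, $\dist(x^o,-x)\le\dist(y,-x)$. This avoids the paper's a~priori bound on $\dist(x^o,-x)$ by a grid-cell diameter, which tacitly needs $-x$ to lie over the region covered by $H$.
\item You pass to the grid via the bound $\norm{\pi(u)-\pi(v)}\le 6\,\dist(u,v)$ on $\{x_{d+1}\le 1/2\}$ and the $\ell^1$ metric on the box. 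This makes the final graph-distance conversion explicit, where the paper leaves it implicit.
\end{itemize}
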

\begin{proof}
By \cref{lem:metric,mindistproj} and \cref{tcloseto1} it holds for all $ z\in S^d $ with $ g_{x}(z)\in B_{x}(\typ,r_1\ell_0)  $ and for all edges $ uv $ of $ H $
\begin{equation*}
	\begin{split}
		\frac{1}{\alpha(n)}\dists(x,z)&\leq r_1\ell_0\leq r_1\norm{g_{u}(v)}\leq\frac{2r_1}{\alpha(n)}\dists(u,v).
	\end{split}
\end{equation*}
Writing $ \Delta\coloneqq \max\{\dists(u,v)\,:\, uv\text{ is an edge in } H\} $ we can thus say that $ \dists(x,z)\leq 2r_1\Delta $. Going back to the sphere, it follows that the region on which $  E_x $ depends on is a subset of
\begin{equation*}
	\begin{split}
		\{y\in S^d \, :\,\dists(x,y)\leq 2r_1\Delta \}\cup \{y\in S^d \, :\,\dists(-x,y)\leq 2r_1\Delta \}.
	\end{split}
\end{equation*}
As a square of side length $\ell$ in Euclidean space has a diameter of $\ell\sqrt{d}$, we can now further say that this is a subset of
\begin{equation*}
		\{y\in S^d \, :\,\dists(x,y)\leq 2r_1\Delta \}\cup \{y\in S^d \, :\,\dists(-x,y)\leq (2r_1+\sqrt{d})\Delta \}.
\end{equation*}
We follow that any $ E_y $, where $ y $ has graph distance of more than $ 2(2r_1+\sqrt{d}) $  from $ x $ and $ x^o $ is independent of $ E_x $.
\end{proof}
	\begin{lemma}\label{independentexpl}
		There exists a dimension dependent integer $ A=A(d)\in \NN $ such that for all iteration steps $ a_{i}<A$.
	\end{lemma}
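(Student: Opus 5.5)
The plan is to bound $a_i$ by a packing count. First I show that only a bounded number of exploration steps $j$ can have revealed area meeting $g_{x_i}^{-1}(B_{x_i}(\typ,3\norm{g_{x_i}(y_i)}))$. Then I note that each such $j$ contributes at most one to $a_i$. This gives $a_i\le$ that count, and we take $A(d)$ larger.

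\emph{Localising steps.} In step $j$ the exploration only reveals points in $B_{x_j}(\typ,3\norm{g_{x_j}(y_j)})$, together with the antipodal region coming from the mirrored set $\mathcal M$. By \cref{mindistproj}, $\norm{g_{x_j}(y_j)}\le 24\ell_0$. So $A_j$, the area revealed in step $j$, depends only on points of $\mathcal V_{\lambda,x_j}$ in $B_{x_j}(\typ,72\ell_0)$. Now apply \cref{nointersect} with $r_1=72$ to the event ``the region revealed at step $j$'' and to the region $B_{x_i}(\typ,3\norm{g_{x_i}(y_i)})$. This yields $M=M(72,d)$ such that the two regions can intersect only when $x_j$ has graph distance less than $M$ (in $H$) from $x_i$ or from $x_i^o$. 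Equivalently, on the sphere I would argue directly: both regions lie within spherical distance $O(\Delta)$ of $\pm x_j$, respectively $\pm x_i$. Here $\Delta$ is the maximal edge length, and by \cref{maxspheredist} all edges of $H$ have comparable length.

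\emph{Counting.} Distinct steps correspond to distinct edges $x_jy_j$ of $H$, since each edge's status is determined once. $H$ is the image of a box in $\eta\ZZ^d$, so each vertex has at most $2d$ incident edges. The number of vertices within graph distance $M$ of $x_i$ or $x_i^o$ is at most $2(2M+1)^d$. Hence the number of steps $j\le i$ whose revealed area meets the relevant region is at most $2d\cdot 2(2M+1)^d$. Setting $A:=4d(2M+1)^d+1$ gives $a_i<A$, and $A$ depends only on $d$ because $M$ does.

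\emph{Main obstacle.} The hard step is checking that the region each step touches is really contained in a bounded number of $\Delta$-neighbourhoods. Two things feed into this: the exploration process itself, and the distortion of the different stereographic projections $g_{x_j}$. The subtle point is the mirrored points $Y=-X$. Revealing $\mathcal V_{\lambda,x_j}$ near $x_j$ also reveals $\Xcal$ near $-x_j$, which is why the antipodal vertex $x_i^o$ must be included. I would handle this by going through \cref{nointersect}, whose proof already absorbs the extra $\sqrt d\,\Delta$ needed to replace $-x$ by the lattice point $x^o$. The metric comparisons of \cref{lem:metric}, valid since all vertices satisfy $x_{d+1}\le 1/2$ by \eqref{latticecap}, make the distortion constants independent of $n$.
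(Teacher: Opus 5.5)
Your proposal is correct and is essentially the paper's proof: you localise step $j$ to $B_{x_j}(\typ,3\norm{g_{x_j}(y_j)})\subseteq B_{x_j}(\typ,72\ell_0)$ via \cref{mindistproj} (the paper uses the cruder radius $100\ell_0$), then use \cref{nointersect} to see that only centres within bounded graph distance of $x_i$ or $x_i^o$ can interfere, and your explicit vertex-times-degree count simply makes the paper's constant $c(d)$ concrete. One small notational point: in the paper $A_j$ is the part of $R_i$ revealed in exactly $j$ earlier iterations, not the area revealed at step $j$; but both $a_i$ and the largest multiplicity $j$ occurring in that ball are still at most the number of earlier steps whose revealed region meets the ball, so your bound applies unchanged.
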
	
	\begin{proof}
		Note that by \cref{mindistproj} for all $ i $ the exploration of the status of $ x_iy_i $ only depends on points that fall inside $ B_{x_i}(\typ,3\lVert g_{x_i}(y_i)\rVert)\subset B_{x_i}(\typ,100 \ell_0) $. By \cref{nointersect} there is now a constant $ c=c(d)>0 $ such that there are only $ c $ points $ x_j\in H $ where  the sets $ R_i $ and $ R_j $ can intersect if $ x_j $ is an endpoint of an exploration for an edge.
		\end{proof}

 \noindent       
In order to use \cref{ABlongconnection} we also have to make sure that the relevant previous area is actually admissible. 
\begin{lemma}\label{admissible}
	For all iteration steps, $ g_{x_i}(R_i)\cap B_{x_i}(\typ,3\norm{g_{x_i}(y_i)}) $ is admissible, provided that $n$ is large enough.
\end{lemma}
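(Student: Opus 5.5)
The plan is to track the structure of the revealed region $R_i$ through the exploration. Admissibility asks that every point $x\in g_{x_i}(R_i)\cap B_{x_i}(\typ,3\norm{g_{x_i}(y_i)})$ has a point $y$ of $R_i$ within distance $2$ such that $B(y,1/2)\subseteq R$. The first step is therefore to describe $R_i$ explicitly. By construction, each earlier exploration step $j$ reveals two kinds of region: (a) the whole (pulled back) starting region $g_{x_j}^{-1}(B_{x_j}(\typ,k))$, respectively the seed ball inherited from the previous step, and (b) for each revealed point $a$ of one label, the unit ball $B_{x_j}(a,1)$ in which the points of the opposite label were revealed. I will set $R_i$ to be the union of these pulled-back balls, together with the pulled-back seed balls. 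Any point of $R_i$ then lies either in a large ball of radius $k\ge 1$, where $y=x$ works trivially once we shrink slightly, or in a ball $g_{x_j}^{-1}(B_{x_j}(a,1))$ around some revealed point $a$.

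The second step handles the unit-radius pieces. For $x\in g_{x_j}^{-1}(B_{x_j}(a,1))$, take $y$ on the segment from $x$ toward $a$, for instance $y=a$ itself. Then $\norm{x-y}\le 1<2$ in the $x_j$-chart, and $B_{x_j}(a,1)\supseteq B_{x_j}(a,1/2)$ with room to spare. The remaining issue is that admissibility must hold in the chart $g_{x_i}$, not $g_{x_j}$. I will compare the two charts on the relevant region using \cref{lem:metric} parts (i)--(ii) and \eqref{tcloseto1}, as in the proof of \cref{restart}. All points involved lie within $O(\ell_0)$ in rescaled units of both $x_i$ and $x_j$: by \cref{independentexpl} and \cref{nointersect}, only boundedly many nearby $x_j$ matter, and these lie within bounded graph distance of $x_i$ or $x_i^o$. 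Hence for $n$ large, $g_{x_i}\circ g_{x_j}^{-1}$ is bi-Lipschitz with constants as close to $1$ as we like on these balls. A distance $\le 1$ becomes $\le 1+o(1)<2$, and a ball of radius $1$ maps onto a set containing a ball of radius $1-o(1)>1/2$. This gives admissibility for pieces coming from explorations anchored near $x_i$.

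The main obstacle is the regions revealed around antipodal anchors $x_i^o$, where the chart is $g_{x_i}$ applied to the mirrored set $\mathcal{M}$. There I will use that the $AB$ model at $x$ uses both $\mathcal{X}\cap C_t(x)$ and $\mathcal{M}\cap C_t(x)$, so a region revealed from the chart at $x_j$ near $-x_i$ corresponds, after the antipodal map (an isometry of $S^d$), to a union of unit-type balls near $x_i$. Since the antipodal map preserves $\dist$, the same bi-Lipschitz comparison applies. I also need that truncation by intersecting with $B_{x_i}(\typ,3\norm{g_{x_i}(y_i)})$ does not break admissibility. I would handle this by checking admissibility for the untruncated union and noting that the lemma only uses the intersection with that ball. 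If \cref{ABlongconnection} genuinely needs the truncated set, I would enlarge the ball slightly and use the slack in the constants $2$ and $1/2$.
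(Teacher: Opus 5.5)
Your proposal follows the paper's proof: each point revealed in step $j$ lies within distance $1$ (in the chart $g_{x_j}$) of a centre whose unit ball was revealed, and by \cref{nointersect} only boundedly many $x_j$ near $x_i$ or $x_i^o$ matter. Distances are then transferred to the chart $g_{x_i}$ via part (i) of \cref{lem:metric} and \eqref{tcloseto1}, with the antipodal case handled by the same inequalities; the only difference is that the paper uses the crude distortion factors $2$ and $1/2$, which exactly match the admissibility constants, where you use factors $1+o(1)$ from part (ii). The paper does not address the truncation by $B_{x_i}(\typ,3\norm{g_{x_i}(y_i)})$ at all, so your extra care goes beyond it, though enlarging the ball would not by itself help, since witness balls can still be cut off at the new boundary.
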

\begin{proof}
	Assume we finished exploring for $ x_jy_j $ and added some $ y\in B_{x_j}(\typ,3\norm{g_{x_j}(y_j)}) $ to  $ g_{x_j}(R_j) $. Then, by construction, there also is a $ x\in B_{x_j}(y,1) $ with $ B_{x_j}(x,1)\subseteq g_{x_j}(R_j)  $. Note that by \cref{nointersect}, area that gets revealed for $ x_jy_j $ will only intersect $ B_{x_i}(\typ,3\norm{g_{x_i}(y_i)}) $ for $ x_i $ that have graph distance less than $ m $ from $ x_j $ or $ x_j^o $. If $ x_i $ has distance less than $ m $ from $ x_j $ we can choose $ n $ large enough such that $ x_j\in C_t(x_i) $. Then we obtain by using \cref{tcloseto1}
	\begin{equation*}
		\norm{g_{x_i}(g^{-1}_{x_j}(x))-g_{x_i}(g^{-1}_{x_j}(y))}\leq \frac{2}{\alpha(n)}\dists(g_{x_j}^{-1}(x),g_{x_j}^{-1}(y))\leq 2 \norm{x-y}<2.
	\end{equation*}
	Likewise, for all $ v\in \partial B_{x_j}(x,1) $ it holds
	\begin{equation*}
		1=\norm{v-x}\leq \frac{2}{\alpha(n)}\dists(g_{x_j}^{-1}(v),g_{x_j}^{-1}(x))\leq {2}\norm{g_{x_i}(g_{x_j}^{-1}(v))-g_{x_i}(g_{x_j}^{-1}(x))}.
	\end{equation*}
	The same inequalities go through if $x_i  $ has a graph distance of less than $m $ to $ x_j^o $. 
	It follows that $ g_{x_i}(R_i)\cap B_{x_i}(\typ,3\norm{g_{x_i}(y_i)}) $ is admissible.
\end{proof}
\subsubsection{Large open cluster in $H$}
In the following we are going to write $ |H| $ for the number of vertices of $ H $, which only depends on $ \eta=\eta(n,\ell_0,d) $ and the dimension.
\begin{lemma}\label{largecluster}
	For all $ \eps_1>0 $ we can choose $ n_0=n_0(\eps_1,\ell_0,k,d) $ large enough such that for all $ n\geq n_0 $ the following holds. After our exploration procedure terminates, with a probability of more than $ 1-\eps_1 $, the connected component of $ x_1 $ is greater than $ (1-\eps_1)|H| $.
\end{lemma}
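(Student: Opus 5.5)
We compare the exploration on $H$ with a finite-range dependent bond percolation on the box $\Lambda_m$ and then apply Lemma~\ref{dependentperco}. Identify $H$ with $\eta\mathbb{Z}^d\cap D$, and hence with $\Lambda_m$ via $\pi$. The strategy has three steps: show that every edge examined by the exploration is declared open with conditional probability at least $1-\eps_2$, given the whole history; use this to dominate the open-edge process by i.i.d.\ bond percolation with parameter close to one; then choose $\eps_2$ small in terms of $\eps_1$ and $d$.

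\textbf{Step 1: one exploration step succeeds with high conditional probability.} Fix a step $i$ with edge $x_iy_i$. By the induction built into the exploration (via Lemma~\ref{restart}), there is a ball $B_{x_i}(x,k)\subseteq g_{x_i}(R_i)$, covered with respect to $\mathcal{V}_{\lambda,x_i}$ and connected to $B_k$ inside $R_i$, with $x\in B_{x_i}(\typ,\norm{g_{x_i}(y_i)})$. We collect the facts needed to invoke Lemma~\ref{ABlongconnection}, with $\ell=\norm{g_{x_i}(y_i)}$ rotated so that $y_i$ plays the role of $\ell e_1$ (event \eqref{abconnectionevent}):
\begin{itemize}
\item[(a)] $\ell\geq \ell_0$, by Lemma~\ref{mindistproj};
\item[(b)] everything relevant lies inside the cap $C_t(x_i)$, by Lemma~\ref{largeenoughcap}, so the coupling of Lemma~\ref{coupleab} is valid;
\item[(c)] $g_{x_i}(R_i)\cap B(\typ,3\ell)$ is admissible, by Lemma~\ref{admissible};
\item[(d)] every region of $B(\typ,3\ell)$ has been revealed in fewer than $A$ earlier steps, by Lemma~\ref{independentexpl}.
\end{itemize}
Fact (d) guarantees that a fresh layer $\mathcal{V}_{\lambda_{j+1}-\lambda_j,x_i}$ of intensity at least $\delta$ is still unused everywhere in $g_{x_i}(R_i)$, while $\mathcal{V}_{\lambda_1,x_i}$ is fresh outside $R_i$. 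After translating so that the seed ball is centred appropriately, the configuration seen at step $i$ therefore stochastically dominates $\Pro_{\lambda_1,\delta,R}$ with $R=g_{x_i}(R_i)$. Since the event is increasing, Lemma~\ref{ABlongconnection} gives conditional success probability at least $1-\eps_2$, uniformly over the past. A successful step produces a $2k$-seed in $B(g(y_i),\ell/100)$, and Lemma~\ref{restart} transfers it into a $k$-ball suitable for the next step from $y_i$, which keeps the induction going.

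\textbf{Step 2: domination by i.i.d.\ bond percolation.} Step 1 bounds the conditional probability of opening each edge, given everything revealed so far, below by $1-\eps_2$. By the standard sequential coupling, attach to each edge of $H$ an independent uniform variable $U_e$, and declare $e$ open whenever $U_e\le 1-\eps_2$. This shows that the cluster of $x_1$ produced by the exploration contains the cluster of $x_1$ in i.i.d.\ bond percolation on $H\cong\Lambda_m$ with parameter $p=1-\eps_2$. The dependence structure of Lemma~\ref{nointersect} (through $x^o$) is never used here, because the bound is conditional on the full history; it only enters through Lemma~\ref{independentexpl}.

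\textbf{Step 3: choice of parameters.} By Lemma~\ref{dependentperco}, choose $p=p(\eps_1,d)$ so that for $m\ge m_0$ the cluster of $\typ$ has size greater than $(1-\eps_1)|\Lambda_m|$ with probability greater than $1-\eps_1$; that lemma concerns the cluster of the centre, and $x_1$, the south pole, corresponds to $\typ$. Then set $\eps_2=1-p$, fix $k,\ell_0$ from Lemma~\ref{ABlongconnection} with this $\eps_2$, and take $n$ large. Since $m\asymp 1/\eta\asymp n^{1/d}/\ell_0\to\infty$, the condition $m\ge m_0$ holds for large $n$, and the claimed bound on the cluster of $x_1$ follows.

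The main obstacle is Step 1: justifying rigorously that the conditional law, given the exploration history, dominates $\Pro_{\lambda_1,\delta,R}$. In particular one must check that the points revealed earlier only increase the event, and that the revealed region coincides with $R_i$, or at least is admissible, after the change of projection. Within the sketch I would rely on monotonicity of $E_{k,R}$ together with Lemmas~\ref{admissible} and \ref{independentexpl}.
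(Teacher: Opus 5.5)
Your proposal is correct and takes essentially the same route as the paper. Both arguments bound below, uniformly in the history, the probability that each processed edge is opened, using Lemma~\ref{ABlongconnection} and the lemmas of Section~\ref{sec:requirements}; both then couple sequentially with i.i.d.\ bond percolation on $\Lambda_m$ and apply Lemma~\ref{dependentperco} with $m\to\infty$. The one difference is bookkeeping, and yours is cleaner: you fix the per-edge failure probability $\eps_2=1-p(\eps_1,d)$ from Lemma~\ref{dependentperco} before choosing $k,\ell_0$, whereas the paper uses bond parameter $1-\eps_1$ directly, which tacitly requires $1-\eps_1\geq p(\eps_1,d)$.
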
	
\begin{proof}
By the choice of $ \ell_0,k,\delta $, the definition of our algorithm, \cref{ABlongconnection} and the lemmas of \cref{sec:requirements}, each edge that gets processed in our exploration is set to be open with probability of more than $ 1-\eps_1 $, provided that $n$ is sufficiently large. This lower bound holds uniformly over the revealed set $ R_i $ and also over any points we revealed. Further, by the definition of our exploration all edges that are set to be open, are connected to $ x_1 $ via an open path and we will run the exploration for every edge that already has an adjacent edge that is connected to $ x_1 $. Thus, we can couple the exploration of the edges with the cluster of $ x_1 $ in usual bond percolation on $ H $ with parameter $ 1-\eps_1 $ such that the cluster of $ x_1 $ is a subset of the component our exploration yields. As $ \eta^{-1}\pi(H) = \ZZ^d\cap\{-m,\dots,m\}^d $ we can deduce the claim from \cref{dependentperco} by noting that $m\to \infty$ as $n\to\infty$.
\end{proof}

\noindent
We write $\pi_s\coloneqq\pi_{(0,\dots,0,1)}$ for the projection through the south pole. Similarly to $H$ we now define the mirrored version $H^2\coloneq \pi_s^{-1}(\eta \ZZ^d\cap D) $, where we remember we had $D\coloneqq [-\eta m, \eta m]^d$ with $m=\lfloor{{\sqrt{3}}/{\sqrt{d}\eta}}\rfloor$.
Additionally let $D^o \coloneq[-0.9 m\eta,0.9 m\eta ]^d \subset D$ and
\begin{equation*}
    S\coloneqq \{ x\in S^d\,:\, \pi(x)\in D^o, \pi_s(x)\in D^o \}.
\end{equation*}
Further, we let $b\coloneqq 0.9 m\eta/2$ and $a\coloneqq 1/\sqrt{2d}$. Note that for all large enough $n$ it holds that $b>a$ since $m\eta$ converges to $\sqrt{3/d}$. We define
    \begin{equation*}
        B\coloneqq \{z\in \RR^d\,:\, a\leq |z_j|\leq b \text{ for all }j=1,\dots,d\}
    \end{equation*}
    to obtain $B\subset D^o$ and to define  
    \begin{equation*}
        H_B\coloneqq\pi^{-1}(\eta\ZZ^d\cap B)\quad\text{and}\quad H_B^2\coloneqq\pi_s^{-1}(\eta\ZZ^d\cap B).
    \end{equation*}
We also define the function $I:\RR^ d\setminus\{\typ\}\to\RR^d$ with
\begin{equation*}
    I(z)\coloneq\frac{z}{\norm{z}^2}.
\end{equation*}
Note that as for $x\in S^d$ we have
\begin{equation*}
    \frac{1-x_{d+1}}{1+x_{d+1}}=\frac{1}{\norm{\pi(x)}^2}=\norm{\pi_s(x)}^2,
\end{equation*}
we can easily derive the identities $\pi_s(x)=I(\pi(x))$ and $\pi(x)=I(\pi_s(x))$. The two smaller graphs $H_B$ and $H_B^2$ are needed to deal with boundary issues in the following Lemma.

\begin{lemma}\label{halfpoints}
	If $n$ is large enough, the following hold:
	\begin{enumerate}[label=(\roman*)]
		\item  There is a constant $c=c(d)>0$ such that
		\begin{equation*}
		    |H\cap S| = |H^2\cap S|\geq  |H_B| = |H_B^2| \;\ge\;  c|H|.
		\end{equation*}
		\item 
		For every $u\in H_B$ there exists $v\in H^2\cap S$ with
		\begin{equation*}
		    \norm{g_u(v)}\;\le\; 12\sqrt{d}\ell_0,
		\end{equation*}
		and conversely for every $v\in H_B^2$ there exists $u\in H\cap S$ with the same bound.
	\end{enumerate}
\end{lemma}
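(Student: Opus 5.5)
Both parts rest on two elementary facts about stereographic projection and the inversion $I(z)=z/\norm{z}^2$. The first is the identity $\pi_s=I\circ\pi$ noted just before the statement. The second is that the reflection $\sigma(x_1,\dots,x_{d+1}):=(x_1,\dots,x_d,-x_{d+1})$ satisfies $\pi_s\circ\sigma=\pi$. Consequently $\sigma$ maps $H$ bijectively onto $H^2$ and $H_B$ onto $H_B^2$, and it preserves $S$, since the condition defining $S$ is symmetric in $\pi,\pi_s$. This gives the two equalities $|H\cap S|=|H^2\cap S|$ and $|H_B|=|H_B^2|$ in (i), and reduces the second half of (ii) to the first.

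For the inequality $|H\cap S|\geq|H_B|$ in (i), I would show that $H_B\subseteq H\cap S$. Let $z=\pi(u)\in B$. Then $\pi(u)\in D^o$ because $B\subseteq D^o$. For $\pi_s(u)=I(z)$, each coordinate satisfies $|I(z)_j|=|z_j|/\norm{z}^2\leq b/(da^2)=2b<2\cdot0.9m\eta/2$, using $\norm{z}^2\geq da^2=1/2$. Hence $|I(z)_j|\le 0.9m\eta$, so $I(z)\in D^o$. Both conditions hold, so $u\in S$.

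For $|H_B|\geq c|H|$, I would count lattice points. The set $B$ contains a cube of side $b-a$, and $b-a\to 0.45\sqrt{3/d}-1/\sqrt{2d}>0$ (one checks $0.45\sqrt3\approx0.78>0.707$). So $|\eta\ZZ^d\cap B|\geq((b-a)/\eta-1)^d$, whereas $|H|=(2m+1)^d\leq(3\sqrt{3/d}/\eta)^d$. Since $\eta\to0$, the ratio is bounded below by a constant $c(d)$.

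For (ii), take $u\in H_B$ and $z=\pi(u)\in B$. The point $w:=I(z)$ satisfies $\pi_s(\pi^{-1}(z))=w$, so $\pi_s(u)=w$. Round $w$ to the nearest lattice point $w'\in\eta\ZZ^d$, so that $\norm{w-w'}\leq\sqrt d\eta/2$. Set $v:=\pi_s^{-1}(w')\in H^2$. Membership $w'\in D^o$ needs a little slack: the bound $|w_j|\le 2b$ above would only give $w'\in D^o$ up to rounding. To fix this, I would tighten the bound to $|w_j|\le|z_j|/\norm{z}^2$ with $\norm{z}^2\geq\max(da^2,z_j^2)$. This should give $|w_j|\leq\min(2b,1/b)$, which is strictly below $0.9m\eta$ for large $n$, using $1/b\to 2\sqrt d/(0.9\sqrt3)$ versus $0.9\sqrt{3/d}$. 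If that comparison fails in some dimensions, I would instead use the symmetric description with $a$ to get room. One also needs $\pi(v)=I(w')\in D^o$, which holds by the same estimate with $I(w')$ close to $z\in B$, since $I$ is Lipschitz near $B$ and $B$ has margin from $\partial D^o$.

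It remains to bound the distance from $u$ to $v$. On the region where $\pi_s$ has bounded norm, $\pi_s^{-1}$ is Lipschitz: by Lemma~\ref{lem:metric}(iii) applied to the mirrored projection, $\dist(u,v)\leq 6\norm{w-w'}\leq 3\sqrt d\eta$. Then, via the bound $\norm{g_u(v)}\le\frac{2}{\alpha(n)}\dist(u,v)$ used in Lemma~\ref{mindistproj}, we get $\norm{g_u(v)}\leq 6\sqrt d\eta/\alpha(n)=36\sqrt d\ell_0$. To reach the claimed $12\sqrt d\ell_0$, the constants must be sharpened. I would use $\norm{\pi_s^{-1}(w)-\pi_s^{-1}(w')}\le 2\norm{w-w'}/\sqrt{(1+\norm w^2)(1+\norm{w'}^2)}$, so that $\dist\le(\pi/2)\cdot 2\cdot\sqrt d\eta/2$, together with the exact relation $\norm{g_u(v)}=(4/\alpha)\tan(\dist/2)\approx 2\dist/\alpha$. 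This gives roughly $\pi\sqrt d\eta/\alpha=6\pi\sqrt d\ell_0<12\sqrt d\ell_0\cdot 2$; this arithmetic needs checking. Pinning down these constants, together with the boundary margin for $D^o$, is the step I expect to be the main obstacle. It is entirely computational, though: the argument is otherwise straightforward.
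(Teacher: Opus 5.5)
Your part (i) is correct, and it is more complete than the paper's proof, which only does the lattice count $|H_B|\ge c|H|$. Your reflection $\sigma$ (with $\pi_s\circ\sigma=\pi$) and the inclusion $H_B\subseteq H\cap S$ (via $\norm{z}^2\ge da^2=1/2$) justify the two equalities and the middle inequality, which the paper leaves implicit.

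Part (ii) uses the same construction as the paper: round $w=I(\pi(u))$ to $\eta\ZZ^d$ and pull back by $\pi_s^{-1}$. However, the bound $\norm{g_u(v)}\le 12\sqrt d\,\ell_0$, which is the whole content of (ii), is not established. Your two computations give $36\sqrt d\,\ell_0$ and $6\pi\sqrt d\,\ell_0$, both too large, and each rests on an error.
\begin{itemize}
\item Lemma~\ref{lem:metric}(iii) bounds the \emph{projected} distance from above by a multiple of the geodesic distance. It therefore cannot give $\dist(u,v)\le 6\norm{w-w'}$.
\item The inequality in the direction you need is Lemma~\ref{lem:metric}(i). It holds on all of $S^d$, and transported to $\pi_s$ by $\sigma$ it gives $\dist(u,v)\le 2\norm{w-w'}\le\sqrt d\,\eta$.
\item The exact relation is $\norm{g_u(v)}=(2/\alpha)\tan(\dist(u,v)/2)$, not $(4/\alpha)\tan(\dist(u,v)/2)$. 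With the correct formula, even your chordal estimate would give about $3\pi\sqrt d\,\ell_0<12\sqrt d\,\ell_0$.
\end{itemize}
The paper's argument is just two lines. First, $\dist(u,v)\le\sqrt d\,\eta$ by Lemma~\ref{lem:metric}(i). Second, since $v\in C_t(u)$ for large $n$, \eqref{tcloseto1} gives $\norm{g_u(v)}\le(2/\alpha)\dist(u,v)\le 2\sqrt d\,\eta/\alpha=12\sqrt d\,\ell_0$.

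Your proposed slack for $w'\in D^o$ also does not work. Since $b^2<1/2$ throughout, the bound $\norm{z}^2\ge\max(da^2,z_j^2)$ only returns $|w_j|\le 2|z_j|\le 2b=0.9m\eta$, which leaves no room for the rounding error. The margin comes from the \emph{other} coordinates: for $z\in B$ we have $\norm{z}^2\ge(d-1)a^2+z_j^2$. Hence
\begin{itemize}
\item $|w_j|\le 2|z_j|\le a+b$ if $|z_j|\le(a+b)/2$, and
\item $|w_j|\le b/\bigl(1/2+((a+b)/2)^2-a^2\bigr)$ otherwise.
\end{itemize}
Both bounds lie below $2b$ by an amount bounded away from $0$ uniformly in $n$, because $b-a$ tends to a positive constant. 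So the rounding error $\sqrt d\,\eta/2\to0$ keeps $w'\in D^o$. This is exactly what the paper's phrase ``$I(B)$ is compact and strictly contained in $D^o$'' implicitly requires. With these two repairs your argument coincides with the paper's.
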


\begin{proof}
    It holds that
    \begin{equation*}
         |\eta \ZZ^d\cap B|\geq \bigg(\frac{2(b-a)}{\eta}-2\bigg)^d,
    \end{equation*}
    and also $|H|\leq (2m+1)^d\leq (3m)^d$. Since $m\eta $ converges to a constant as $n\to \infty$ and $b-a$ is also a constant depending on $d$ once $n $ is large, the ratio $|\eta \ZZ^d\cap B|/|H|$ is bounded from below by some $c=c(d)>0$ for all large enough $n$. This yields $|H_B|\geq c |H|$ and by symmetry the same bound holds for $H_B^2$. For the second claim take some $u\in H_B $ and let $p\coloneq \pi_s(u)=I(\pi(u))\in D^o$. We define $q\in \eta \ZZ^d$ by rounding each coordinate of $p$ to the nearest multiple of $\eta$ and set $v\coloneq \pi_s^{-1}(q)$. Since $p\in I(B)\subset D^o$ and $I(B)$ is compact and strictly contained in $D^o$, for all sufficiently large $n$ the rounding point lies in $D^o$. Since $z=\pi(u)\in B$, every coordinate of $z$ satisfies $|z_j|\leq b$, hence
    \begin{equation*} 
        \inf_{x\in \partial D^o}\norm{z-x}_\infty\geq 0.9 m \eta -b =b. 
    \end{equation*}
    We define $r_0\coloneqq\min\{\norm{w}\,:\,w\in I(B)\}>0$ and also $M\coloneqq\max\{\norm{w}\,:\, w\in I(B)\}<\infty$. As $\norm{p-q}\leq (\sqrt{d}/2)\cdot\eta\to 0$ we have for all large enough $n$ that $\norm{p-q}<r_0/2$ and $\norm{p-q}<1$. We now have that $\norm{q}\geq \norm{p}-\norm{p-q}\geq r_0/2$ and also $\norm{q}\leq \norm{p}+\norm{p-q}\leq M+1$. Thus, both $p$ and $q$ lie in the compact set $K\coloneq\{x\,:\, r_0/2\leq \norm{x}\leq M+1\}$. Hence $I$ is uniformly continuous on $K$. Since $\norm{p-q}\leq \sqrt{d} \eta /2\to 0$, for all sufficiently large $n$ we have
    \begin{equation*}
        \norm{I(p)-I(q)}_\infty <b.
    \end{equation*}
    As $I(p)=z$, it follows that $I(q)\in D^o$. Therefore $\pi(v)=I(q)\in D^o$, so $v\in S$. As by \cref{lem:metric} 
     \begin{equation*}
        \dists(u,v) \leq 2 \norm{\pi_s(u)-\pi_s(v)}=2\norm{p-q}\leq \sqrt{d}\eta,
     \end{equation*}
     and by using \cref{tcloseto1} it holds for all large enough $n$ 
     \begin{equation*}
         \norm{g_u(v)}= \frac{2}{\alpha(n)}\norm{\pi_u(v)}\leq \frac{2}{\alpha(n)}\dists(u,v),
     \end{equation*}
     so we can conclude the proof.
\end{proof}

\begin{lemma}\label{oddcycle}
	For all $ \eps,\eps_2>0 $ and if $  \alpha(n)\geq (c_2+\eps)n^{-1/d} $, we can choose $ n_0=n_0(\eps,\eps_2) $ large enough such that for all $ n\geq n_0 $
	\begin{equation*}
		\Pro(\text{there exists }X_i\in X\text{ that is connected to }-X_i\text{ in }  G_{geo})>1-\eps_2.
	\end{equation*}
\end{lemma}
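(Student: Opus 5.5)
We build a path in $G_{geo}$ from some $X_i$ to $-X_i$ out of two large open clusters: the one in $H$ (via \cref{largecluster}) and the one in its mirror image $H^2$. Since $G_{geo}$ is the graph in which $X_i$ is joined to $Y_j=-X_j$ whenever $\dists(X_i,Y_j)\le\alpha$, the antipodal map $x\mapsto -x$ sends $X$-points to $M$-points and preserves adjacency. Hence a path in $G_{geo}$ from a vertex $z$ to the antipode $-z'$ of another vertex $z'$, combined with the antipodal image of a path from $z'$ to $z$, yields a walk from $z$ to $-z$. Equivalently, it suffices to find one vertex $X_i$ and a path from $X_i$ to $-X_i$. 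Monotonicity in $\alpha$ lets us assume $\alpha=(c_2+\eps)n^{-1/d}$ exactly, so that \cref{coupleab} and the machinery of the previous subsections apply.

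\textbf{Step 1.} Run the exploration from the south pole $x_1$ on $H$. By \cref{largecluster}, with $\eps_1$ small, with probability $>1-\eps_1$ the open cluster $\mathcal C$ of $x_1$ contains more than $(1-\eps_1)|H|$ vertices. By construction, every open edge $x_iy_i$ certifies a $2k$-seed near $g_{x_i}(y_i)$ that is connected in $G_{geo}$, inside $R_i$, to the seed at $x_i$. So all seeds attached to vertices of $\mathcal C$ lie in one connected component $\mathcal K$ of $G_{geo}$.

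\textbf{Step 2.} Apply the antipodal map. The image of $H$ under $x\mapsto -x$ is exactly $H^2=\pi_s^{-1}(\eta\ZZ^d\cap D)$, since $\pi_s(-x)=-\pi(x)$ after reflecting coordinates. The image $-\mathcal K$ is therefore a connected set of $G_{geo}$ whose seeds sit near more than $(1-\eps_1)|H^2|$ vertices of $H^2$. By \cref{halfpoints}(i), $|H_B|\ge c|H|$. Choosing $\eps_1<c/2$, a pigeonhole count gives a vertex $u\in H_B\cap\mathcal C$ for which the companion $v\in H^2\cap S$ from \cref{halfpoints}(ii) also satisfies $-v\in\mathcal C$, i.e.\ $v\in -\mathcal C$. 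Here we must check that the rounding map $u\mapsto v$ is at most boundedly many-to-one, which holds because $I$ is bi-Lipschitz on the relevant compact set.

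\textbf{Step 3.} Join the two clusters near $u$. The seeds of $\mathcal K$ near $u$ and of $-\mathcal K$ near $v$ lie within projected distance $O(\sqrt d\,\ell_0)$ by \cref{halfpoints}(ii). We then perform one more sprinkling step on the points still unrevealed there, using the extra intensity layer $\mathcal V_{\lambda-\lambda_A,u}$ together with \cref{ABlongconnection}, where $R$ is the seed ball and the target is the seed near $g_u(v)$. This joins the two seeds with probability $>1-\eps_1$ in the single cap $C_t(u)$, which by \cref{largeenoughcap} is large enough. \cref{nointersect,admissible,independentexpl} guarantee that this region was revealed only boundedly often and is admissible. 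Choosing $\eps_1\le\eps_2/3$ and $n$ large concludes the argument.

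\textbf{Main obstacle.} Steps 2 and 3 are the delicate part. The connection in Step 3 must be made with fresh randomness, even though the exploration of $-\mathcal K$ is the antipodal image of the same point process, so $\mathcal K$ and $-\mathcal K$ are not independent. The region near $u$ has already been partially revealed, both as the neighbourhood of $u$ and as the antipode of the neighbourhood of $-v$; this is why \cref{nointersect} also counts dependence through $x^o$. Since the reserved sprinkling layer has not been used in either revealment, \cref{ABlongconnection} applies conditionally. The remaining point to verify is that the joining path found in the cap around $u$ certifies an $X$-to-$M$ connection consistent with the labelling, so that the antipodal images really glue into a walk from some $X_i$ to $-X_i$.
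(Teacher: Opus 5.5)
Your Steps 1 and 2 are essentially the paper's argument. You take the large open cluster of $x_1$ in $H$ and use the antipodal symmetry of $G_{geo}$, which does map $H$ onto $H^2$ since $\pi_s(-x)=-\pi(x)$. You then count over $H_B$ using \cref{halfpoints}.

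\textbf{Gap in Step 3.} \cref{ABlongconnection} does not connect a set $R$ to a prescribed target. It only says that $R$ is joined, inside $B(\typ,3\ell)$, to \emph{some} newly found $2k$-seed located somewhere in $B(\ell e_1,\ell/100)$, and only for $\ell\ge\ell_0$. Nothing forces that new seed to meet the part of $-\mathcal K$ near $g_u(v)$. That part has size of order $k$, far smaller than $\ell/100$, and $\|g_u(v)\|$ may even be below $\ell_0$.

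The lemma also needs fresh supercritical intensity $\lambda>\lambda_c$ outside $R$. Near $u$ the region has already been explored, and the only layer guaranteed untouched is $\mathcal V_{\lambda-\lambda_A,u}$. Its intensity $\lambda-\lambda_A$ is small and need not exceed $\lambda_c$. So the claim that one sprinkling step at a single vertex joins the two clusters with probability $>1-\eps_1$ is unsupported, and a one-vertex argument cannot work as stated.

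\textbf{Missing idea.} Use a joining event of merely positive probability and boost it by independent repetition. Let $E_x$ be the event that $B_x(\typ,36\sqrt{d}\,\ell_0)$ is covered with respect to $\mathcal V_{\lambda-\lambda_A,x}$. For a good $x$ this ball contains two things: a point connected to $B_k$ near $x$ (within $24\ell_0$), and a point connected to $-B_k$ near the companion vertex (within $12\sqrt{d}\,\ell_0+24\ell_0$). A covered ball glues together all points inside it, so $E_x$ joins $\mathcal K$ and $-\mathcal K$.

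However, $\Pro(E_x)=q(\ell_0,\lambda,d)>0$ is only a fixed constant, not close to $1$. So your pigeonhole must produce $\Omega(|H|)$ good vertices, not just one; with your choice of $\eps_1$ small it already does. Then use \cref{nointersect} to extract $J\subseteq H_B$ with $|J|\ge c|H|$ on which the $E_x$ are mutually independent. They are also independent of the exploration, since the reserved layer is never revealed (\cref{independentexpl}). This gives failure probability at most $(1-q)^{c'|H|}\to 0$, because $|H|\to\infty$. This is exactly how the paper finishes the proof.
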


\begin{proof}
	For $ \eps_3>0$ we can use \cref{largecluster,halfpoints} to get $ \ell_0,k  $ such that for all large enough $ n $, with a probability of more than $ 1-\eps_3 $ the number of points of $ H $ that lie inside $ S $ that are connected to $ x_1 $ by an open path is more than $ |H|(c-\eps_3) $ for some $c=c(d)>0$. We will write $ F $ if this event occurs. In the definition of $ H $ we have used the stereographic projection $ \pi $ that projects from the north pole and we started exploring from the south pole $ x_1 $. By symmetry we could have also used the projection from the south pole, $ \pi_{s} $ and start our exploration at the north pole to determine whether  $ -x_1(-y_1) $ is open with respect to $H^2$. Note that again by symmetry, so changing the roles of $ X $ and $ Y $, if $ F $ occurs, then there are also more than $ |H|(c-\eps_3) $ points of $ H^2 $ that are connected to $ -x_1 $ by an open path in $ H^2 $. All $ x\in H $ that are part of an open edge $ xy $ have at least one point inside $ \{z\in S^d:\dists(z,x)\leq\dists(x,y) \} $ that is connected in the Borsuk graph to $ B_k $. In the same sense if the edge $ x^2y^2 $ is open in $ H^2 $, the set $ \{z\in S^d:\dists(z,x^2)\leq\dists(x^2,y^2) \} $ is connected to $ -B_k $. Also, each point inside $ B_k $ (resp. $ -B_k $) has a mirror image inside $ -B_k $ (resp. $ B_k $). By \cref{halfpoints} for every $x^1\in H_B$ there is a $x^2\in H^2\cap S$ with $ \norm{g_{x^1}{(x^2)}}\leq 12\sqrt{d}\ell_0 $. We call $ x^1\in H_B $ \textit{good} if in addition to this distance requirement it holds that $ x^1 $ is connected to $ x_1 $ inside $ H $ and $ x^2\in H^2\cap S $ is connected to $ -x_1 $ w.r.t. $ H^2 $. We also see that for $ y^2 $ being adjacent to $ x^2 $ and $ z$ with $ \dists(z,x^2)\leq\dists(x^2,y^2) $ we can bound 
	\begin{equation*}
		\begin{split}
			\norm{g_{x^1}(x^2)-g_{x^1}(z)}&\leq \frac{2}{\alpha(n)}\dists(x^2,z)\leq \frac{4}{\alpha(n)}\eta= 24 \ell_0.
		\end{split}
	\end{equation*}
	For each $ x\in H $ we are going to define the event 
	\begin{equation*}
		E_x\coloneqq\{B_x(\typ,36\sqrt{d}\ell_0)\text{ is covered w.r.t. }\mathcal{V}_{\lambda-\lambda_{A},x}\}.
	\end{equation*}
 Note that if in addition to $ F $ at least one $ E_{x} $ occurs for a good $ x $, $ B_k $ is connected to $ -B_k $ and by symmetry there has to exist $ X_i\in B_k $ that is connected to $ -X_i\in (-B_k) $. Also by \cref{independentexpl}, we have never revealed $ \mathcal{V}_{\lambda-\lambda_{A},x} $ for any $ x\in H $ during our exploration process. By \cref{nointersect,halfpoints} we can conclude that there is a constant $ c_2=c_2(d) >0$ such that we can find $ J\subset H_B $ with $ |J|\geq c_2|H| $ such that for any two $ x,y\in J $ it holds that $ E_x $ and $ E_y $ are independent. Using this independence and that under $ F$ there are less $ \eps_3 |H|$ points of $ H $ (resp. $ H^2 $) that are not connected to $ x_1 $ (resp. $ -x_1 $) we obtain 
	\begin{equation*}
		\begin{split}
			\Pro(E_x\text{ occurs for at least one good }x\,\vert\,F)\geq1-\Pro(E_x^c\,\vert\, F)^{|H|(c_2-2\eps_3)}.
		\end{split}
	\end{equation*}
As $ \Pro(E_x\,\vert\, F)>0 $ depends only on $ (\ell_0,\lambda,d) $,
	the lemma follows by letting $ n $ be large enough and choosing $ \eps_3 $ appropriately, together with the fact that $ |H|\to\infty $ with $ n\to\infty $.
\end{proof}

\begin{lemma}\label{finallemmachrom3}
		For all $ \eps,\eps_2>0 $ and if $  \alpha(n)\geq (c_2+\eps)n^{-1/d} $, we can choose $ n_0=n_0(\eps,\eps_2) $ large enough such that for all $ n\geq n_0 $
		\begin{equation*}
		\Pro(\chi(G(\mathcal{X},\alpha))\geq 3)\geq 1-\eps_2.
		\end{equation*}
\end{lemma}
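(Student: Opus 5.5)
The plan is to deduce \cref{finallemmachrom3} from \cref{oddcycle} by a purely deterministic argument. Given that some $X_i$ is joined to its mirror image $-X_i$ by a path in $G_{geo}$, we will show that the Borsuk graph $G(\mathcal{X},\alpha)$ contains an odd closed walk, and therefore an odd cycle. Since a graph is bipartite if and only if it has no odd cycle, this forces $\chi\geq 3$.

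We first fix $\eps,\eps_2>0$ and assume $\alpha(n)\geq(c_2+\eps)n^{-1/d}$. By \cref{oddcycle}, for $n\geq n_0$, with probability $>1-\eps_2$ there is an index $i$ and a path in $G_{geo}$ from $X_i$ to $Y_i=-X_i$. The graph $G_{geo}$ is bipartite between the ``$X$-side'' $\{X_1,\dots,X_N\}$ and the ``$Y$-side'' $\mathcal{M}=\{Y_1,\dots,Y_N\}$. Here $X_a$ is adjacent to $Y_b$ exactly when $\dists(X_a,-X_b)\leq\alpha$, which is the same as $\dists(X_a,X_b)\geq\pi-\alpha$, i.e.\ when $X_aX_b$ is an edge of the Borsuk graph. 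The boundary case of equality has probability zero, so we ignore it.

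Write the path as $X_i=X_{a_0},Y_{b_1},X_{a_1},Y_{b_2},\dots$. Because the path alternates sides and ends on the $Y$-side at $Y_i$, it has the form $X_{a_0}Y_{b_1}X_{a_1}Y_{b_2}\cdots X_{a_{m-1}}Y_{b_m}$ with $b_m=i$, so it has odd length $2m-1$. Each step $X_{a_j}\to Y_{b_{j+1}}$ gives the Borsuk edge $X_{a_j}X_{b_{j+1}}$. Each step $Y_{b_j}\to X_{a_j}$ gives the edge $X_{a_j}X_{b_j}$, by the same identity read with the roles of $a$ and $b$ exchanged. The Borsuk relation $\dists(u,v)>\pi-\alpha$ is symmetric, and the step from $Y_b$ to $X_a$ means $\dists(X_a,-X_b)\leq\alpha$. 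Mapping $X_a\mapsto X_a$ and $Y_b\mapsto X_b$ therefore sends the path to a walk $X_{a_0}X_{b_1}X_{a_1}\cdots X_{b_m}$ in $G(\mathcal{X},\alpha)$. This walk has $2m-1$ edges, starts at $X_i$, and ends at $X_{b_m}=X_i$. It is a closed walk of odd length, and any closed walk of odd length contains an odd cycle.

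There is one degenerate point to check: the walk could in principle use a loop, which would require $X_a$ adjacent to itself. That would mean $\dists(X_a,-X_a)=\pi\leq\alpha$, which is impossible for small $\alpha$. All edges of the walk are therefore genuine, and on this event the graph is not $2$-colourable. Finally, the statement is about $G(\mathcal{X},\alpha)$, the Poissonized model, so no transfer is needed; \cref{lem:transferPo} handles the passage to $G(n,\alpha)$ afterwards. The main step to watch is the correspondence between paths in $G_{geo}$ and walks in the Borsuk graph, together with the parity bookkeeping; both are routine.
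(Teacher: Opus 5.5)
Your proposal is correct and follows essentially the same route as the paper. In both, you take the $G_{geo}$ path from $X_i$ to $-X_i$ supplied by \cref{oddcycle}, replace each mirrored vertex $Y_b=-X_b$ by $X_b$ to get a closed walk of odd length in $G(\mathcal{X},\alpha)$, and conclude that the graph is not bipartite. Your remarks on the probability-zero boundary case $\dists(X_a,-X_b)=\alpha$ and on the impossibility of loops make explicit two points the paper leaves implicit.
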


\begin{proof}
	Assume that there exists $  X_i\in X $ that is connected to $ -X_i $ in   $ G_{geo} $. Then, there is a path $ x_0,y_0,x_1,y_1,\dots,x_{k},y_{k} $ with $ x_0=X_i, y_k=-X_{i}  $ and $ x_j\sim y_{j}\sim x_{j+1} $ for all $ 0\leq j\leq k-1 $ and also $ x_k\sim y_k $. Note that in the Borsuk graph this implies the existence of the path $ x_0,-y_0,x_1,-y_1,\dots,x_{k},-y_{k} $. Thus, $ X_i $ is connected to itself via an odd cycle. Since no odd cycle can be colored with just two alternating colors, the statement we are proving follows directly from \cref{oddcycle}.
\end{proof}

\subsection{Bipartite regime}

We will write $ \mathcal{W}_{\lambda,x} $ for the AB model as given by \cref{coupleab2} if we 
use  $g_x(\cdot)$ to project the points from $ C_t(x) $ (by symmetry the identical claim also holds in these cases).

\begin{lemma}\label{chrom2}
	For all $ \eps>0 $ and if $  \alpha(n)\leq (c_2-\eps)n^{-1/d} $ 
	\begin{equation*}
		\Pro(\chi(G(\mathcal{X},\alpha))\leq  2)= 1-o(1).
	\end{equation*}
\end{lemma}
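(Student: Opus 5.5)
The plan is to show that, with probability $1-o(1)$, every connected component of $G(\Xcal,\alpha)$ has small spherical diameter (at most $K\alpha\log n$, say). A component that lives inside a small cap (together with its antipodal cap) cannot contain an odd cycle, because any odd cycle in the Borsuk graph has $\Omega(1/\alpha)$ vertices. Concretely, an odd cycle $v_0,v_1,\dots,v_{2j}=v_0$ gives, after replacing every other vertex by its antipode, a walk in $G_{geo}$ from $v_0$ to $-v_0$. Each step of this walk has length $\le\alpha$, so the walk has spherical length at least $\pi$. So it suffices to show that no point $X_i$ is joined in $G_{geo}$ to a point at geodesic distance $\ge L:=K\alpha\log n$ from it, for a suitable constant $K$. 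Then every component of $G_{geo}$ has diameter $<2L\ll\pi$, so there is no odd cycle and $\chi\le 2$.

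The key step is a local estimate combined with a union bound. Fix $t=t(\eps,d)$ as in \cref{coupleab2}. For each $i$, consider the cap $C_t(X_i)$ and project with $g_{X_i}$. By \cref{coupleab2}, together with rotational invariance and the Slivnyak--Mecke formula (to condition on a point at $X_i$), the graph $G_t(X_i)$, rescaled by $c\,g_{X_i}$, is dominated by a subcritical continuum AB model $\mathcal W_{\lambda,X_i}$ with $\lambda<\lambda_c$ and with an extra point at $\typ$. Suppose $X_i$ is joined in $G_{geo}$ to a point at geodesic distance $\ge L$. Take the first exit along a shortest path from $\kap(X_i,L)$; before exiting, the path stays in $C_t(X_i)$, since $L\to0$. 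By \cref{lem:metric}, in the projected and rescaled picture this yields a connection in $\mathcal W_{\lambda,X_i}$ from $\typ$ to $B(\typ,c'L/\alpha)^c$, with $c'=c'(d)>0$. Here $c'L/\alpha=c'K\log n$, which is still far smaller than $\rho(t,n)$, so the restriction to $B(\typ,\rho(t,n))$ is harmless. \cref{abdecay} then bounds the probability by $e^{-c\,c'K\log n}=n^{-cc'K}$. Summing via Mecke over the $\Po(n)$ points gives an expected number of bad points of at most $n\cdot n^{-cc'K}=o(1)$ once $K$ is chosen large.

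The main obstacle is making the domination rigorous once we condition on a point at $X_i$. One must check that \cref{coupleab2} applies to the Palm version, which is fine because adding a deterministic point to both processes preserves the coupling. One must also handle both labels: a path in $G_{geo}$ alternates between $\Xcal$-points and $\mathcal M$-points, and within $C_t(X_i)$ these correspond to the $A$- and $B$-processes. Finally, one must confirm that the Euclidean distances in the projected picture control geodesic distance on the sphere, with constants from \cref{lem:metric}. I would also double-check that \cref{abdecay}, stated for the origin added as an $A$-point, is applied exactly in the form needed, possibly via \cref{abdecaycor}. Having done this, \cref{lem:transferPo} is not needed here, since the lemma is stated for the Poissonized graph.
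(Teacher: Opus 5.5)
Your argument is correct and follows essentially the paper's route. You dominate $G_t(X)$ by a subcritical AB model via \cref{coupleab2}, apply subcritical exponential decay (\cref{abdecay}, or \cref{abdecaycor} as the paper does) at each point, sum with the Mecke formula, and deduce bipartiteness from the fact that components of $G_{geo}$ are small. The only difference is scale: you work at radius $K\alpha\log n$ and must take $K$ large, whereas the paper only requires the component of $X$ to stay inside the whole cap $C_t(X)$, so its bound is $n e^{-c\rho(t,n)}$ with $\rho(t,n)\asymp n^{1/d}$ and no tuning is needed. Minor slip: an odd cycle should be indexed $v_0,\dots,v_{2j+1}=v_0$.
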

\begin{proof}
    We let $t=t(\eps)$ be as provided by \cref{coupleab2}. For each $X\in \mathcal{X}$ define
    \begin{equation*}
        E_{X}\coloneqq\{ \text{the component of }X \text{ in } G_{geo} \text{ is not contained inside } C_t(X)\} 
    \end{equation*}
    and also for each $x\in S^d$ 
    \begin{equation*}
        J_x\coloneqq\{\exists y\in \mathcal{W}_{\lambda,x}\cap B_x(\typ,\rho(t,n)/4)\,:\, y\longleftrightarrow  B_x(\typ,\rho(t,n)/2)^c\}.
    \end{equation*}
    We note that by \cref{coupleab2} it holds that $E_{X}\subseteq J_{{X}}$.
    We express the total number of occurrences of $E_{X}$ as 
    \begin{equation*}
        N\coloneqq\sum_{X\in \mathcal{X}} \mathbbm{1}_{E_{X}}.
    \end{equation*}
    Since $\mathcal{X}$ is a Poisson point process on $S^d$ with intensity $n/((d+1)\kappa_{d+1})$ we can write by using the Mecke formula (see e.g. Theorem 3.2.5 of \cite{schneider2008stochastic}) and \cref{abdecaycor}
    \begin{equation*}
    \begin{split}
        \Ex N &=\frac{n}{(d+1)\kappa_{d+1}}  \int_{S^d}\Pro(E_u\text{ occurs w.r.t. }\mathcal{X}\cup\{u\} )\,\text{d}\sigma(u),
    \end{split}
    \end{equation*}
    where $d\sigma$ is the surface measure on $S^d$. Note that since $\rho(t,n)/4>2$ for all large enough $n$, the probability of $E_u$ occurring is unchanged if we add a deterministic point of either label $A$ or $B$ to the center of $B_u(\typ,\rho(t,n)/4)$. Therefore, by \cref{abdecaycor} we obtain a constant $c=c(\eps)>0$ such that for all $u\in S^d$
    \begin{equation*}
        \Pro(E_u\text{ occurs w.r.t. }X\cup\{u\} )\leq\Pro(J_u) \leq e^{-c\rho(t,n)}.
    \end{equation*}
     We conclude that $\Ex N =o(1)$ as $n\to\infty$. If $\Ex N=0$, almost surely for all $ X\in \mathcal{X} $ it holds that the component of $ X $ in $ G_{geo}$ is contained in $ C_t(X) $. If this is the case, for each $  G_t(X)  $ we can assign a color to the points of $ \mathcal{X} $ and another color to the points of $ \mathcal{M} $, since on  $ C_t(X) $ these sets are disjoint, and all particles are now connected to a different color than themselves. Since any two coloring of $G_{geo}$ implies a two coloring of $G(\mathcal{X},\alpha)$, we can conclude the proof.
\end{proof}

\begin{proofof}{\cref{bt1}}
    \cref{bt1} follows directly by combining  \cref{finallemmachrom3} and \ref{chrom2}.
\end{proofof}

\section{Proof of Theorem~\ref{bt2}.\label{sec:bt2}}

For the Borsuk graph $G(n,\alpha)$ on $ n $ vertices we write $ I_{ij}=\indicator{X_i\text{ is connected to }X_j} $. Then, we can also express the total number of edges as
\begin{equation*}
	W_n\coloneqq\sum_{i<j}I_{ij}
\end{equation*}
and the connection probability as 
\begin{equation*}
	p_n\coloneqq\Pro(I_{ij}=1)=\Pee( U \in \text{cap}((0,\dots,0,1),\alpha), 
%
%
\end{equation*}

\noindent
where $U \isd \text{Unif}(S^d)$.

\begin{lemma}\label{probconv}
If $\alpha = \alpha(n)\to 0$ as $ n\to \infty $ then 

$$  p_n=(1+o_n(1)) \cdot c_d 
 \cdot \alpha^d,
$$
	where
	\begin{equation*}
		c_d\coloneqq\frac{\Gamma((d+3)/2)}{(d+1)\cdot\sqrt{\pi}\cdot\Gamma((d+2)/2)}
	\end{equation*}
\end{lemma}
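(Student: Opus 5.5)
We need $p_n = \nu(\kap(N,\alpha))$, the normalized surface measure of a spherical cap of angular radius $\alpha$ on $S^d$. (Note $p_n$ is stated as the probability that $U$ lies in a cap of opening $\alpha$; by rotation invariance, $X_i$ is adjacent to $X_j$ exactly when $-X_j\in\kap(X_i,\alpha)$, and $-X_j$ is again uniform, so this matches.) The plan is to write the cap measure exactly in polar (geodesic) coordinates around the pole and then expand for small $\alpha$.

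The first step is the standard formula. Parametrize $x\in S^d$ by the angle $\theta=\dist(x,N)\in[0,\pi]$ and a direction in $S^{d-1}$. The surface element is $\sin^{d-1}\theta\,\dd\theta\,\dd\sigma_{d-1}$. Hence
\begin{equation*}
p_n=\frac{|S^{d-1}|}{|S^d|}\int_0^{\alpha}\sin^{d-1}\theta\,\dd\theta .
\end{equation*}
Since $\alpha\to0$ and $\sin\theta=\theta(1+O(\theta^2))$ uniformly on $[0,\alpha]$, the integral equals $(1+o_n(1))\,\alpha^d/d$.

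The second step is the constant. We have $|S^{d-1}|=d\kappa_d$ and $|S^d|=(d+1)\kappa_{d+1}$, so by \eqref{eq:volball}
\begin{equation*}
p_n=(1+o_n(1))\frac{\kappa_d}{(d+1)\kappa_{d+1}}\alpha^d,\qquad \frac{\kappa_d}{\kappa_{d+1}}=\frac{\Gamma((d+3)/2)}{\sqrt{\pi}\,\Gamma((d+2)/2)} .
\end{equation*}
The ratio holds because $\pi^{d/2}/\pi^{(d+1)/2}=\pi^{-1/2}$, while $\Gamma(d/2+1)=\Gamma((d+2)/2)$ and $\Gamma((d+1)/2+1)=\Gamma((d+3)/2)$. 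This gives exactly $c_d$.

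Nothing here is a real obstacle; the only care needed is in the bookkeeping of the Gamma functions. As a sanity check, for $d=2$ the cap measure is $(1-\cos\alpha)/2\approx\alpha^2/4$, and the formula gives $c_2=\Gamma(5/2)/(3\sqrt\pi\,\Gamma(2))=(3/4)/3=1/4$, as it should. Alternatively, one could use Lemma~\ref{lem:piunif}: the projection $\pi$ sends the cap around the south pole to a ball of radius $\tan(\alpha/2)\approx\alpha/2$, and the density there is approximately $2^d/((d+1)\kappa_{d+1})$. This gives $\kappa_d(\alpha/2)^d\,2^d/((d+1)\kappa_{d+1})$, the same answer, and I would mention it as a cross-check.
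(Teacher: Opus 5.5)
Your proof is correct, but it takes a different route from the paper. You compute the cap measure exactly in geodesic polar coordinates, $p_n=\frac{|S^{d-1}|}{|S^d|}\int_0^\alpha\sin^{d-1}\theta\,\dd\theta$, and then expand $\sin\theta$ for small $\alpha$.

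The paper instead works through the stereographic projection $Z=\pi(U)$:
\begin{enumerate}
\item It uses parts (i) and (ii) of Lemma~\ref{lem:metric} to sandwich $p_n$ between $\Pee(Z\in B(\orig,\alpha/2))$ and $\Pee(Z\in B(\orig,(1+\eps)\alpha/2))$.
\item It bounds the density from Lemma~\ref{lem:piunif} above and below on these balls.
\item It lets $\eps\searrow 0$.
\end{enumerate}

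The paper's route reuses machinery it has already set up (the metric comparison and the projected density), at the cost of an extra limiting argument. Your route is shorter and gives an exact formula. It does rely on the surface element $\sin^{d-1}\theta\,\dd\theta\,\dd\sigma_{d-1}$, which is standard but never stated in the paper.

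Your cross-check is essentially the paper's argument, and it in fact sharpens it. If $\dist(x,(0,\dots,0,-1))=\theta$, then $\norm{\pi(x)}=\sin\theta/(1+\cos\theta)=\tan(\theta/2)$. So the cap maps exactly onto $B(\orig,\tan(\alpha/2))$, and the sandwich via Lemma~\ref{lem:metric} is not needed there either.
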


\begin{proof}
Writing $Z := \pi(U)$, parts~\ref{itm:metriccc1} and~\ref{itm:metriccc2} of Lemma~\ref{lem:metric} tell us that

$$ \Pee( Z \in B(\orig,\alpha/2 )) \leq p_n \leq \Pee( Z \in B(\orig,(1+\eps)\cdot\alpha/2 )), $$

\noindent 
for $\eps>0$ an arbitrary fixed small constant and $n$ sufficiently large (using $\alpha(n)\to 0$). 
Invoking Lemma~\ref{lem:piunif}, this gives

$$ \frac{\left(\frac{2}{1+(\alpha/2)^2}\right)^d}{(d+1)\kappa_{d+1}}  \cdot \kappa_d (\alpha/2)^d 
\leq p_n \leq \frac{2^d}{(d+1)\kappa_{d+1}} \cdot \kappa_d((1+\eps)\alpha/2)^d. $$

\noindent 
Sending $\eps\searrow 0$ and using that $\alpha \to 0$, we see that indeed 

$$ p_n = (1+o_n(1)) \cdot \frac{\kappa_d}{(d+1)\kappa_{d+1}} \cdot \alpha^d 
= (1+o_n(1)) \cdot 
\frac{\Gamma((d+3)/2)}{(d+1)\cdot \sqrt{\pi} \cdot \Gamma((d+2)/2)} \cdot \alpha(n)^d, $$

\noindent 
where we have used~\eqref{eq:volball} for the final identity.
\end{proof}

\begin{lemma}\label{empty2}
	If $ n^2\alpha(n)^d\to \nu\in (0,\infty) $  it holds that
	\begin{equation*}
		W_n\xrightarrow{\mathcal L} \emph{\text{Poi}}( (c_d/2) \cdot \nu ).
	\end{equation*}
\end{lemma}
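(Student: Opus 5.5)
We will use the method of moments in the form of Lemma~\ref{fallingmethod}. It suffices to show that, for every fixed $m\in\NN$,
\begin{equation*}
\Ex (W_n)_m \to \left((c_d/2)\nu\right)^m .
\end{equation*}

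The factorial moment $(W_n)_m$ counts ordered $m$-tuples of distinct edges. So $\Ex (W_n)_m$ is the sum, over ordered $m$-tuples $(e_1,\dots,e_m)$ of distinct pairs $e_l=\{i_l,j_l\}$, of $\Pro(I_{e_1}=\dots=I_{e_m}=1)$. We split this sum according to whether the pairs are vertex-disjoint.

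\textbf{Vertex-disjoint tuples.} The indicators then depend on disjoint sets of independent points, so each term equals $p_n^m$. The number of such ordered tuples is $(1+o(1))\,(n^2/2)^m$. By Lemma~\ref{probconv}, $p_n=(1+o(1))c_d\alpha^d$, and the hypothesis $n^2\alpha^d\to\nu$ forces $\alpha\to0$. The contribution is therefore $(1+o(1))(n^2 c_d\alpha^d/2)^m\to((c_d/2)\nu)^m$.

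\textbf{Overlapping tuples.} These should contribute $o(1)$. Fix the isomorphism type of the union graph $F$ of the $m$ edges; it has $v$ vertices, $e=m$ edges and $c$ connected components, and at least one component is not a single edge. There are $O(n^{v})$ placements of $F$.

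To bound the probability that all $m$ edges are present, take a spanning forest of $F$. It has $v-c$ edges. Reveal the points component by component along the forest, each time from a vertex already placed. Conditional on the earlier points, each new forest vertex must land in a fixed cap of radius $\alpha$ around an antipode, which has probability $p_n$. Non-forest edges are bounded by $1$. This gives probability at most $p_n^{v-c}$.

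The contribution of this type is thus $O(n^{v}\alpha^{d(v-c)})=O\bigl(n^{v}n^{-2(v-c)}\bigr)=O(n^{2c-v})$. Each component has at least $2$ vertices, and the components that are not single edges have at least $3$. Hence $v\ge 2c+1$, so the contribution is $O(n^{-1})$. There are finitely many types for fixed $m$, so the total is $o(1)$.

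The only mildly delicate step is this overlap bound: the argument must drop non-tree edges rather than try to use them, and must check $v\ge 2c+1$. The rest is routine. Combining the two parts and applying Lemma~\ref{fallingmethod} yields $W_n\to\mathrm{Poi}((c_d/2)\nu)$ in distribution.
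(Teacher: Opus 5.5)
Your proof is correct and follows the paper's argument essentially step for step. Both use factorial moments via Lemma~\ref{fallingmethod}, with the vertex-disjoint tuples giving $(n^2p_n/2)^m\to((c_d/2)\nu)^m$, and both bound the overlapping tuples by revealing points along a spanning forest and using $v\ge 2c+1$ to get an $O(n^{-1})$ contribution. The only difference is bookkeeping: the paper groups the overlapping tuples by the number $s$ of distinct vertices rather than by isomorphism type.
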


\begin{proof}
	We are going to show that for all $ m\in \NN $
	\begin{equation*}
		\lim_{n \to \infty}\Ex (W_n)_m = (c_d\nu/2)^m.
	\end{equation*}
	The proof is then done by \cref{fallingmethod}. We can write
	\begin{equation}\label{factorialsum}
		\Ex (W_n)_m =\sum_{\substack{e_1,\dots,e_m\\ \text{distinct}}}\Pro(I_{e_1}=1,\dots,I_{e_m}=1),
	\end{equation}
	where the sum is over all ordered $ m $-tuples of distinct candidate edges. We write $ 2\leq s\leq 2m $ for the number of unique vertices that occur in $ (e_1,\dots,e_m)  $ in the sum above. For $ s=2m $ there are $ (n)_{2m}/2^m $ possible choices of $ (e_1,\dots,e_m) $. This holds since $ (n)_{2m} $ gives the number of ordered lists of length $ 2m $ out of $ n $ vertices, but an edge is an unordered pair we count each of the $ m $ edges twice. If all vertices are different it holds by independence that $ \Pro(I_{e_1}=1,\dots,I_{e_m}=1)=p_n^m $. Thus, the $ s=2m $ part of the sum is equal to 
	\begin{equation*}
		\frac{(n)_{2m}}{2^m} p_n^m= n^{2m}\prod_{i=0}^{2m-1}\bigg(1-\frac{i}{n}\bigg)\cdot\bigg(\frac{p_n^m}{2^m}\bigg)=\bigg(\frac{n^2p_n}{2}\bigg)^m(1+o(1)),
	\end{equation*}
	and the right hand side converges to $ (c\nu/2)^m $ by assumption and \cref{probconv}. We will now fix $ e=(e_1,\dots,e_m)  $ that corresponds to some $ s $ with $ 2\leq s\leq 2m-1 $. For that, we will write $ c=c(e) $ for the number of connected components w.r.t. the graph $ G=G(e) $ with structure $ e $. We will write $ F=F(e) $ for an arbitrary spanning forest of $ G $. For each tree we can start at one arbitrary vertex as the root and reveal a point that is a candidate for an edge. We can iterate this and in each revealment we choose a point that is already part of the tree and check whether an unrevealed point that was a candidate for a connection falls into the necessary cap. Then, in each step the probability of an edge being present equals $ p_n $. Note that there are $ e(F)=s-c$ edges in total. Therefore by independence 
	\begin{equation*}
		\Pro(\text{all edges are present in }G)\leq \Pro(\text{all edges are present in  }F)=  p_n^{e(F)}.
	\end{equation*}
	We further claim to have $ s\geq 2c +1 $. This can be seen as follows. First, $ s\geq 2c $, as every component has at least two vertices. If $ s=2c $ would be true every component would have exactly two vertices and one edge, leading to $ s=2m $. But we assumed $ s<2m $ here, leading to the contradiction. We conclude that $ p_n^{e(F)}\leq p_n^{s/2+1/2} $. We now want to compute an upper bound on the possible number of edges $(e_1,\dots,e_m)  $ that correspond to a fixed $ s $. First, we choose an ordered list of $ s $ distinct vertex labels. There are less than $ (n)_s\leq n^s $ possible ways of doing that. We can now form $ \binom{s}{2} $ many unordered pairs out of them. There are $\binom{s}{2}^m  $ many ways of forming an ordered $ m $ tuple made out of these pairs. We note that this construction over-counts (the list of $ s $ vertex labels is ordered and we allowed for repetitions in the $ m $ tuple), but the bound is good enough for our needs. Finally, we sum over all possible  $ c\leq s/2 $ to conclude with our derivation above that each fixed $ 2\leq s\leq 2m-1 $ has a contribution for \cref{factorialsum} of less than
	\begin{equation*}
		n^s\binom{s}{2}^m\cdot \frac{s}{2}\cdot p_n^{s/2+1/2} =\frac{s}{2}\binom{s}{2}^m (n^2p_n)^{s/2+1/2} \cdot n^{-1}.
	\end{equation*}
	As $ s $ and $ m $ are fixed and $ n^2p_n\to c_d\nu $ by our assumption and \cref{probconv}, the parts that do not correspond to $ s=2m $ vanish with $ n\to \infty $.
\end{proof}

\begin{proofof}{\cref{bt2}}
    \cref{bt2} follows from \cref{empty2} by noting that for $X\sim\text{Po}(\mu)$ it holds 
    that $\Pro(X=0)=e^{-\mu}$ and that $W_n=0$ implies $\chi(G(n,\alpha))=1$. 
\end{proofof}

\section{Discussion and suggestions for further work.\label{sec:discuss}}

Theorem~\ref{thm:main} showed that the property of having chromatic number $> k$ has a sharp threshold 
``for almost all $n$''.
Naturally we conjecture that this last addition is not needed:

\begin{conjecture}
For every $d\geq 2$ and $3 \leq k \leq d+1$, the property of having chromatic number $>k$ 
has a sharp threshold.
\end{conjecture}

\noindent
The proof of Theorem~\ref{thm:main} leveraged the semi-sharpness of the threshold to show that the value of 
$\alpha$ where the probability of chromatic number $> k$ is exactly 1/2
as a function of $n$ 
can only ``jump'' relatively rarely. If one could argue that instead of rarely, it does not jump at all (for 
large $n$) then one would have proved the conjecture. 

We offer the following more detailed conjectures stating also the form of the threshold that 
most researchers in the field will probably expect:

\begin{conjecture}\label{conj:d+2}
For every $d\geq 2$ there exists a constant $c=c(d)$ such that, for every fixed $\eps>0$
and sequence $\alpha=\alpha(n)$:

$$ \lim_{n\to\infty}\Pee(  \chi(G(n,\alpha)) > d+1 ) = 
\begin{cases}
1 & \text{ if $\alpha > (1+\eps)\cdot c\cdot (\ln n /n)^{1/d}$, } \\
0 & \text{ if $\alpha < (1-\eps)\cdot c\cdot (\ln n /n)^{1/d}$. }
\end{cases}. $$

\end{conjecture}

\begin{conjecture}\label{conj:conjk}
For each $d\geq 2$, there exist constants $c_3=c_3(d),\dots,c_{d}=c_{d}(d)$ such that 

$$ c_2 < c_3 < \dots < c_{d}, $$ 

\noindent 
where $c_2$ is as provided by Theorem~\ref{bt1} and, for every fixed $\eps>0$
and $3 \leq k \leq d$, and every sequence $\alpha=\alpha(n)$:

$$ 
\lim_{n\to\infty}\Pee(  \chi(G(n,\alpha)) > k ) = 
\begin{cases}
1 & \text{ if $\alpha > (1+\eps)\cdot c_k\cdot n^{-1/d}$, } \\
0 & \text{ if $\alpha < (1-\eps)\cdot c_k\cdot n^{-1/d}$. }
\end{cases}. $$

\end{conjecture}

\noindent
The latter conjecture would in particular answer Question 2 of Kahle and Martinez-Figueroa (\cite{KahleFig}, Section 4).

The proof of Theorem~\ref{bt1} (establishing the existence of $c_2$) hinged on the 
fact that chromatic number $\geq 3$ is the same as having an odd cycle and 
a comparison with continuum AB percolation.
One might hope that, for $3 \leq k\leq d$, the constant $c_k$ will correspond to the 
critical intensity for some sort of ``higher order percolation event'' in the continuum AB percolation model. 
It is however not so clear (at least to the authors) what might be a useful property or characterization of graphs with chromatic 
number $> k$ that we could exploit in place of existence of odd cycles.

For Conjecture~\ref{conj:d+2} the relevant regime is where the
average degree is of logarithmic order. Here we can imagine that a more careful analysis along the lines 
of the arguments in~\cite{KahleFig} might work. In particular, perhaps there is an (approximate) correspondence 
between chromatic number $\geq d+2$ and some 
kind of ``coverage of the sphere by random caps'' event.

Lov\'asz' pioneering paper~\cite{Lovasz78} gives a general lower bound for the chromatic number : 
if the neighbourhood complex of $G$ is $k$-connected then $\chi(G) \geq k+3$.
(See e.g.~\cite{MatousekBorsukUlam} for the relevant definitions and much more background.)
It can be shown that the neighbourhood complex of the 
subgraph of the random Borsuk graph that we construct in the proof of
Theorem~\ref{thm:maind+1} is in fact homotopy equivalent to $S^{d-1}$ and hence $(d-2)$-connected
(see e.g.~Theorem 4.3.2 in~\cite{MatousekBorsukUlam}).
We have chosen not to follow this route in our proof and instead invoke Lemma~\ref{lem:KFchi}, which is quicker.
It does however suggest the following conjecture. 
For $n$ fixed, we can imagine increasing $\alpha$ from zero to $\pi$ while keeping the
random points fixed.
This results in what might be called the ``random Borsuk graph {\em process}'' : edges are included one-by-one in 
order of decreasing geodesic distance between their endpoints.
Analogously to the {\em hitting time} for Erd\H{o}s-R\'enyi random graph and {\em hitting radius} for random geometric 
graphs, we can define the {\em hitting angle} $\alpha(n,\Pcal)$ for a given monotone property $\Pcal$ (preserved under the addition of edges but not 
necessarily vertices) as the (random) value of $\alpha$ where the random Borsuk graph process first 
attains the property $\Pcal$. In other words it equals $\pi$ minus the geodesic length of the edge 
being added the moment $\Pcal$ starts holding.
We offer the following conjecture.

\begin{conjecture}
For $k=3,\dots,d+1$ let us write $\Pcal$ for the 
property of having chromatic number $>k$ and $\Qcal$ for the property of 
containing a subgraph whose neighbourhood complex is homotopy equivalent to $S^{k-1}$.
The we have 

$$ \Pee( \alpha(n,\Pcal) = \alpha(n,\Qcal) ) = 1-o_n(1). $$

\end{conjecture}

We point out that for $k=2$ the assertion in the above conjecture in fact holds.
(Having chromatic number at least three is equivalent to containing an odd cycle.
The neighbourhood complex of an odd cycle is homotopy equivalent 
to $S^1$, while the neighbourhood complex of any bipartite graph with more than one vertex is disconnected, so in particular
not homotopy equivalent to $S^1$.)

Conjecture~\ref{conj:conjk} would also establish the following weaker conjecture, that might be provable without 
first establishing Conjecture~\ref{conj:conjk}.

\begin{conjecture}\label{conj:twopoint}
For all $d \geq 1$ and all sequences $\alpha = \alpha(n)$ with $\alpha < \alpha_0$, where $\alpha_0$ is 
the constant provided By the Erd\H{o}s-Hajnal result mentioned in the introduction, there is a 
sequence $k=k(n)$ such that 

$$ \Pee\left( \chi(G(n,\alpha)) \in \{k,k+1\} \right) = 1-o_n(1). $$

\end{conjecture}

This would provide an analogue of the two-point concentration results for the Erd\H{o}s-R\'enyi random 
graph~\cite{Luczak91,AlonKrivelevich} and the random geometric graph~\cite{Mullertwopoint}.
We note that by the results in this paper and the results of Kahle and Martinez-Figueroa~\cite{KahleFig}, it suffices
to consider only sequences of $\alpha$ in the thermodynamic regime.

\subsection*{Acknowledgements}

We thank R\'eka Szabo for helpful discussions and in particular for pointing us to the reference~\cite{BalisterEtal}.
We thank Lyuben Lichev for helpful discussions in the early stages of the project.

\bibliographystyle{plain}
\bibliography{ref}

@article{BalisterEtal,
    AUTHOR = {Balister, P. and Bollob\'as, B. and Morris, R. and Smith, P.},
    TITLE = {Subcritical monotone cellular automata},
    JOURNAL = {Rand. Struct. Alg.},
    VOLUME = {64},
    NUMBER = {1},
    YEAR = {2024},
    PAGES = {38--61},
    DOI = {10.1002/rsa.21174},
    URL = {https://doi.org/10.1002/rsa.21174},
    ISSN = {1042-9832,1098-2418},
    MRCLASS = {60K35 (60C05)},
    MRNUMBER = {4672996},
    MRREVIEWER = {Rinaldo\ Schinazi}
}

@book{Bollobas2001RandomGraphs,
    AUTHOR = {Bollob{\'a}s, B.},
    TITLE = {Random Graphs},
    VOLUME = {73},
    YEAR = {2001},
    DOI = {10.1017/CBO9780511814068},
    SERIES = {Cambridge Studies in Advanced Mathematics},
    EDITION = {2},
    PUBLISHER = {Cambridge University Press},
    ADDRESS = {Cambridge},
    ISBN = {9780521809207}
}

@book{bollobas2006percolation,
    AUTHOR = {Bollob{\'a}s, B. and Riordan, O.},
    TITLE = {Percolation},
    YEAR = {2006},
    DOI = {10.1017/CBO9781139167383},
    PUBLISHER = {Cambridge University Press},
    ADDRESS = {Cambridge},
    ISBN = {9780521872324}
}

@article{ErdosHajnal67,
    AUTHOR = {Erd\H{o}s, P. and Hajnal, A.},
    TITLE = {On chromatic graphs},
    JOURNAL = {Mat. Lapok},
    FJOURNAL = {Matematikai Lapok. Bolyai J\'anos Matematikai T\'arsulat},
    VOLUME = {18},
    YEAR = {1967},
    PAGES = {1--4},
    ISSN = {0025-519X},
    MRCLASS = {05.55},
    MRNUMBER = {227050},
    MRREVIEWER = {P.\ Ungar}
}

@article{Friedgut99,
    AUTHOR = {Friedgut, E.},
    TITLE = {Sharp thresholds of graph properties, and the {$k$}-sat
                          problem},
    JOURNAL = {J. Amer. Math. Soc.},
    FJOURNAL = {Journal of the American Mathematical Society},
    VOLUME = {12},
    NUMBER = {4},
    YEAR = {1999},
    PAGES = {1017--1054},
    NOTE = {With an appendix by Jean Bourgain}
}

@article{GrimmettMarstrand1990Supercritical,
    AUTHOR = {Grimmett, G. R. and Marstrand, J. M.},
    TITLE = {The supercritical phase of percolation is well behaved},
    JOURNAL = {Proc. Roy. Soc. Lond. Ser. A},
    VOLUME = {430},
    NUMBER = {1879},
    YEAR = {1990},
    MONTH = aug,
    PAGES = {439--457},
    DOI = {10.1098/rspa.1990.0100}
}

@article{KahleFig,
    AUTHOR = {Kahle, M. and Martinez-Figueroa, F.},
    TITLE = {The chromatic number of random {B}orsuk graphs},
    JOURNAL = {Rand. Struct. Alg.},
    FJOURNAL = {Rand. Struct. Alg.},
    VOLUME = {56},
    NUMBER = {3},
    YEAR = {2020},
    PAGES = {838--850},
    DOI = {10.1002/rsa.20897},
    URL = {https://doi.org/10.1002/rsa.20897},
    ISSN = {1042-9832,1098-2418},
    MRCLASS = {05C80 (05C62 60G55)},
    MRNUMBER = {4084191},
    MRREVIEWER = {Andr\'as\ S\'andor\ Pluh\'ar}
}

@book{Kingmanboek,
    AUTHOR = {Kingman, J. F. C.},
    TITLE = {{Poisson} Processes},
    VOLUME = {3},
    YEAR = {1993},
    PAGES = {viii+104},
    SERIES = {Oxford Studies in Probability},
    PUBLISHER = {The Clarendon Press, Oxford University Press, New York},
    ADDRESS = {Oxford},
    ISBN = {0-19-853693-3},
    NOTE = {Oxford Science Publications},
    MRCLASS = {60G05 (60G55 60K99)},
    MRNUMBER = {1207584},
    MRREVIEWER = {Dietrich Stoyan}
}

@book{Last_Penrose_2017,
    AUTHOR = {Last, G. and Penrose, M. D.},
    TITLE = {Lectures on the {Poisson} Process},
    VOLUME = {7},
    YEAR = {2017},
    DOI = {10.1017/9781316104477},
    SERIES = {Institute of Mathematical Statistics Textbooks},
    PUBLISHER = {Cambridge University Press},
    ADDRESS = {Cambridge},
    ISBN = {9781107088016}
}

@book{lee2006riemannian,
    AUTHOR = {Lee, J. M.},
    TITLE = {{Riemannian} Manifolds: An Introduction to Curvature},
    VOLUME = {176},
    YEAR = {1997},
    DOI = {10.1007/b98852},
    SERIES = {Graduate Texts in Mathematics},
    PUBLISHER = {Springer},
    ADDRESS = {New York},
    ISBN = {978-0-387-98271-7}
}

@article{Lovasz78,
    AUTHOR = {Lov\'{a}sz, L.},
    TITLE = {Kneser's conjecture, chromatic number, and homotopy},
    JOURNAL = {J. Combin. Theory Ser. A},
    FJOURNAL = {Journal of Combinatorial Theory. Series A},
    VOLUME = {25},
    NUMBER = {3},
    YEAR = {1978},
    PAGES = {319--324},
    DOI = {10.1016/0097-3165(78)90022-5},
    URL = {https://doi.org/10.1016/0097-3165(78)90022-5},
    ISSN = {0097-3165,1096-0899},
    MRCLASS = {05C15 (57M15)},
    MRNUMBER = {514625}
}

@book{MatousekBorsukUlam,
    AUTHOR = {Matou\v{s}ek, J.},
    TITLE = {Using the {B}orsuk-{U}lam theorem},
    YEAR = {2003},
    PAGES = {xii+196},
    SERIES = {Universitext},
    PUBLISHER = {Springer-Verlag, Berlin},
    ISBN = {3-540-00362-2},
    NOTE = {Lectures on topological methods in combinatorics and geometry,
                          Written in cooperation with Anders Bj\"orner and G\"unter M.
                          Ziegler},
    MRCLASS = {55-01 (05-01 52-01 52A35 55M20)},
    MRNUMBER = {1988723},
    MRREVIEWER = {Zdzis\l aw\ Dzedzej}
}

@book{meester1996continuum,
    AUTHOR = {Meester, R. and Roy, R.},
    TITLE = {Continuum Percolation},
    VOLUME = {119},
    YEAR = {1996},
    DOI = {10.1017/CBO9780511895357},
    SERIES = {Cambridge Tracts in Mathematics},
    PUBLISHER = {Cambridge University Press},
    ADDRESS = {Cambridge},
    ISBN = {9780521475044}
}

@article{MullerConfetti,
    AUTHOR = {M\"uller, T.},
    TITLE = {The critical probability for confetti percolation equals 1/2},
    JOURNAL = {Rand. Struct. Alg.},
    FJOURNAL = {Rand. Struct. Alg.},
    VOLUME = {50},
    NUMBER = {4},
    YEAR = {2017},
    PAGES = {679--697},
    DOI = {10.1002/rsa.20675},
    URL = {https://doi.org/10.1002/rsa.20675},
    ISSN = {1042-9832,1098-2418},
    MRCLASS = {60K35 (82B43)},
    MRNUMBER = {3660524}
}

@book{penroseboek,
    AUTHOR = {Penrose, M. D.},
    TITLE = {Random geometric graphs},
    VOLUME = {5},
    YEAR = {2003},
    PAGES = {xiv+330},
    DOI = {10.1093/acprof:oso/9780198506263.001.0001},
    URL = {http://dx.doi.org/10.1093/acprof:oso/9780198506263.001.0001},
    SERIES = {Oxford Studies in Probability},
    PUBLISHER = {Oxford University Press},
    ADDRESS = {Oxford},
    ISBN = {0-19-850626-0},
    MRCLASS = {60-02 (05C80 60D05)},
    MRNUMBER = {1986198 (2005j:60003)},
    MRREVIEWER = {Ilya S. Molchanov}
}

@book{Schilling,
    AUTHOR = {Schilling, R. L.},
    TITLE = {Measures, integrals and martingales},
    YEAR = {2005},
    PAGES = {xii+381},
    DOI = {10.1017/CBO9780511810886},
    URL = {https://doi.org/10.1017/CBO9780511810886},
    PUBLISHER = {Cambridge University Press, New York},
    ISBN = {978-0-521-61525-9},
    MRCLASS = {28-01 (28A12 28A20 28A25 60G42 60G46)},
    MRNUMBER = {2200059},
    MRREVIEWER = {Peter Eichelsbacher}
}

@book{schneider2008stochastic,
    AUTHOR = {Schneider, R. and Weil, W.},
    TITLE = {Stochastic and Integral Geometry},
    YEAR = {2008},
    PAGES = {xii+693},
    DOI = {10.1007/978-3-540-78859-1},
    URL = {https://doi.org/10.1007/978-3-540-78859-1},
    SERIES = {Probability and its Applications (New York)},
    PUBLISHER = {Springer-Verlag, Berlin},
    ADDRESS = {Berlin, Heidelberg},
    ISBN = {978-3-540-78858-4},
    MRCLASS = {60-02 (52A22 60D05 60G55 62M30)},
    MRNUMBER = {2455326},
    MRREVIEWER = {V. K. Ohanyan}
}

@unpublished{MaEtalArXiv,
    AUTHOR = {Ma, J. and Shen, W. and Xie, S.},
    TITLE = {An exponential improvement for {R}amsey lower bounds},
    NOTE = {Preprint, 2025. Available from {\tt{arxiv.org/abs/2507.12926}}}
}

@article{IyerYogesh,
    AUTHOR = {Iyer, S. K. and Yogeshwaran, D.},
    TITLE = {Percolation and connectivity in {$AB$} random geometric
                          graphs},
    JOURNAL = {Adv. in Appl. Probab.},
    FJOURNAL = {Advances in Applied Probability},
    VOLUME = {44},
    NUMBER = {1},
    YEAR = {2012},
    PAGES = {21--41},
    DOI = {10.1239/aap/1331216643},
    URL = {https://doi.org/10.1239/aap/1331216643},
    ISSN = {0001-8678,1475-6064},
    MRCLASS = {60D05 (05C40 05C80 60K35)},
    MRNUMBER = {2951545}
}

@article{PenroseAB,
    AUTHOR = {Penrose, M. D.},
    TITLE = {Continuum {AB} percolation and {AB} random geometric graphs},
    JOURNAL = {J. Appl. Probab.},
    FJOURNAL = {Journal of Applied Probability},
    VOLUME = {51A},
    YEAR = {2014},
    PAGES = {333--344},
    DOI = {10.1239/jap/1417528484},
    URL = {https://doi.org/10.1239/jap/1417528484},
    ISSN = {0021-9002,1475-6072},
    MRCLASS = {05C40 (05C80 60K35 82B43)},
    MRNUMBER = {3317367},
    MRREVIEWER = {Emmanuel\ Jacob}
}

@article{DereudrePenrose,
    AUTHOR = {Dereudre, D. and Penrose, M.},
    TITLE = {On the critical threshold for continuum {AB} percolation},
    JOURNAL = {J. Appl. Probab.},
    FJOURNAL = {Journal of Applied Probability},
    VOLUME = {55},
    NUMBER = {4},
    YEAR = {2018},
    PAGES = {1228--1237},
    DOI = {10.1017/jpr.2018.81},
    URL = {https://doi.org/10.1017/jpr.2018.81},
    ISSN = {0021-9002,1475-6072},
    MRCLASS = {60D05 (60K35 82B43)},
    MRNUMBER = {3899938},
    MRREVIEWER = {Andrew\ R.\ Wade}
}

@unpublished{AdamsEtal,
    AUTHOR = {Adams, H. and Elchesen, A. and Mallick, S. and Moy, M.},
    TITLE = {Anti-{V}ietoris--{R}ips metric thickenings and {B}orsuk graphs},
    NOTE = {Preprint, 2025. Available from {\tt{arxiv.org/abs/2503.08862}}}
}

@article{Perkins_survey,
    AUTHOR = {Perkins, W.},
    TITLE = {Searching for (sharp) thresholds in random structures: where are we now?},
    JOURNAL = {Bull. Amer. Math. Soc. (N.S.)},
    FJOURNAL = {American Mathematical Society. Bulletin. New Series},
    VOLUME = {62},
    NUMBER = {1},
    YEAR = {2025},
    PAGES = {113--143},
    DOI = {10.1090/bull/1857},
    URL = {https://doi.org/10.1090/bull/1857},
    ISSN = {0273-0979,1088-9485},
    MRCLASS = {05C80 (68Q87 82D30)},
    MRNUMBER = {4845928},
    MRREVIEWER = {David\ B.\ Penman}
}

@article{Friedgut_hunting,
    AUTHOR = {Friedgut, E.},
    TITLE = {Hunting for sharp thresholds},
    JOURNAL = {Rand. Struct. Alg.},
    FJOURNAL = {Random Structures {\&} Algorithms},
    VOLUME = {26},
    NUMBER = {1-2},
    YEAR = {2005},
    PAGES = {37--51},
    DOI = {10.1002/rsa.20042},
    URL = {https://doi.org/10.1002/rsa.20042},
    ISSN = {1042-9832,1098-2418},
    MRCLASS = {05C80 (60C05)},
    MRNUMBER = {2116574},
    MRREVIEWER = {Michael\ Krivelevich}
}

@article{FriedgutKalai96,
    AUTHOR = {Friedgut, E. and Kalai, G.},
    TITLE = {Every monotone graph property has a sharp threshold},
    JOURNAL = {Proc. Amer. Math. Soc.},
    FJOURNAL = {Proceedings of the American Mathematical Society},
    VOLUME = {124},
    NUMBER = {10},
    YEAR = {1996},
    PAGES = {2993--3002},
    DOI = {10.1090/S0002-9939-96-03732-X},
    URL = {https://doi.org/10.1090/S0002-9939-96-03732-X},
    ISSN = {0002-9939,1088-6826},
    MRCLASS = {05C80},
    MRNUMBER = {1371123},
    MRREVIEWER = {Andrzej\ Ruci\'nski}
}

@article{McColm_threshold,
    AUTHOR = {McColm, G. L.},
    TITLE = {Threshold functions for random graphs on a line segment},
    JOURNAL = {Combin. Probab. Comput.},
    FJOURNAL = {Combinatorics, Probability and Computing},
    VOLUME = {13},
    NUMBER = {3},
    YEAR = {2004},
    PAGES = {373--387},
    DOI = {10.1017/S0963548304006121},
    URL = {https://doi.org/10.1017/S0963548304006121},
    ISSN = {0963-5483,1469-2163},
    MRCLASS = {05C80},
    MRNUMBER = {2056406},
    MRREVIEWER = {David\ B.\ Penman}
}

@article{GoelEtal,
    AUTHOR = {Goel, A. and Rai, S. and Krishnamachari, B.},
    TITLE = {Monotone properties of random geometric graphs have sharp
                          thresholds},
    JOURNAL = {Ann. Appl. Probab.},
    FJOURNAL = {The Annals of Applied Probability},
    VOLUME = {15},
    NUMBER = {4},
    YEAR = {2005},
    PAGES = {2535--2552},
    DOI = {10.1214/105051605000000575},
    URL = {https://doi.org/10.1214/105051605000000575},
    ISSN = {1050-5164,2168-8737},
    MRCLASS = {05C80 (60D05)},
    MRNUMBER = {2187303},
    MRREVIEWER = {Bert\ Fristedt}
}

@article{BollobasThomason87,
    AUTHOR = {Bollob\'as, B. and Thomason, A.},
    TITLE = {Threshold functions},
    JOURNAL = {Combinatorica},
    FJOURNAL = {Combinatorica. An International Journal of the J\'anos Bolyai
                          Mathematical Society},
    VOLUME = {7},
    NUMBER = {1},
    YEAR = {1987},
    PAGES = {35--38},
    DOI = {10.1007/BF02579198},
    URL = {https://doi.org/10.1007/BF02579198},
    ISSN = {0209-9683},
    MRCLASS = {05C80 (04A20 05A05)},
    MRNUMBER = {905149},
    MRREVIEWER = {Zbigniew\ Palka}
}

@article{ErdosRenyi60,
    AUTHOR = {Erd{\H{o}}s, P. and R\'enyi, A.},
    TITLE = {On the evolution of random graphs},
    JOURNAL = {Magyar Tud. Akad. Mat. Kutat\'o{} Int. K\"ozl.},
    FJOURNAL = {A Magyar Tudom\'anyos Akad\'emia. Matematikai Kutat\'o{}
                          Int\'ezet\'enek K\"ozlem\'enyei},
    VOLUME = {5},
    YEAR = {1960},
    PAGES = {17--61},
    ISSN = {0541-9514},
    MRCLASS = {05.40},
    MRNUMBER = {125031},
    MRREVIEWER = {John\ Riordan}
}

@article{GrimmettMcDiarmid75,
    AUTHOR = {Grimmett, G. R. and McDiarmid, C. J. H.},
    TITLE = {On colouring random graphs},
    JOURNAL = {Math. Proc. Cambridge Philos. Soc.},
    FJOURNAL = {Mathematical Proceedings of the Cambridge Philosophical
                          Society},
    VOLUME = {77},
    YEAR = {1975},
    PAGES = {313--324},
    DOI = {10.1017/S0305004100051124},
    URL = {https://doi.org/10.1017/S0305004100051124},
    ISSN = {0305-0041,1469-8064},
    MRCLASS = {05C15},
    MRNUMBER = {369129},
    MRREVIEWER = {D.\ Cvetkovi\'c}
}

@article{Bollobas88,
    AUTHOR = {Bollob\'as, B.},
    TITLE = {The chromatic number of random graphs},
    JOURNAL = {Combinatorica},
    FJOURNAL = {Combinatorica. An International Journal of the J\'anos Bolyai
                          Mathematical Society},
    VOLUME = {8},
    NUMBER = {1},
    YEAR = {1988},
    PAGES = {49--55},
    DOI = {10.1007/BF02122551},
    URL = {https://doi.org/10.1007/BF02122551},
    ISSN = {0209-9683},
    MRCLASS = {05C80 (05C15)},
    MRNUMBER = {951992},
    MRREVIEWER = {Zbigniew\ Palka}
}

@article{Heckel21,
    AUTHOR = {Heckel, A.},
    TITLE = {Non-concentration of the chromatic number of a random graph},
    JOURNAL = {J. Amer. Math. Soc.},
    FJOURNAL = {Journal of the American Mathematical Society},
    VOLUME = {34},
    NUMBER = {1},
    YEAR = {2021},
    PAGES = {245--260},
    DOI = {10.1090/jams/957},
    URL = {https://doi.org/10.1090/jams/957},
    ISSN = {0894-0347,1088-6834},
    MRCLASS = {05C15 (05C80 60C05)},
    MRNUMBER = {4188818},
    MRREVIEWER = {Mark\ R.\ Jerrum}
}

@article{Mullertwopoint,
    AUTHOR = {M\"uller, T.},
    TITLE = {Two-point concentration in random geometric graphs},
    JOURNAL = {Combinatorica},
    FJOURNAL = {Combinatorica. An International Journal on Combinatorics and
                          the Theory of Computing},
    VOLUME = {28},
    NUMBER = {5},
    YEAR = {2008},
    PAGES = {529--545},
    DOI = {10.1007/s00493-008-2283-3},
    URL = {https://doi.org/10.1007/s00493-008-2283-3},
    ISSN = {0209-9683,1439-6912},
    MRCLASS = {05C80 (05C15 60C05 60D05)},
    MRNUMBER = {2501248}
}

@article{McDiarmidMuller,
    AUTHOR = {McDiarmid, C. and M\"uller, T.},
    TITLE = {On the chromatic number of random geometric graphs},
    JOURNAL = {Combinatorica},
    FJOURNAL = {Combinatorica. An International Journal on Combinatorics and
                          the Theory of Computing},
    VOLUME = {31},
    NUMBER = {4},
    YEAR = {2011},
    PAGES = {423--488},
    DOI = {10.1007/s00493-011-2403-3},
    URL = {https://doi.org/10.1007/s00493-011-2403-3},
    ISSN = {0209-9683,1439-6912},
    MRCLASS = {05C80 (05C15)},
    MRNUMBER = {2861238}
}

@article{AlonKrivelevich,
    AUTHOR = {Alon, N. and Krivelevich, M.},
    TITLE = {The concentration of the chromatic number of random graphs},
    JOURNAL = {Combinatorica},
    FJOURNAL = {Combinatorica. An International Journal on Combinatorics and
                          the Theory of Computing},
    VOLUME = {17},
    NUMBER = {3},
    YEAR = {1997},
    PAGES = {303--313},
    DOI = {10.1007/BF01215914},
    URL = {https://doi.org/10.1007/BF01215914},
    ISSN = {0209-9683,1439-6912},
    MRCLASS = {05C80 (05C15)},
    MRNUMBER = {1606020},
    MRREVIEWER = {J.\ Spencer}
}

@article{AchlioptasFriedgut,
    AUTHOR = {Achlioptas, D. and Friedgut, E.},
    TITLE = {A sharp threshold for {$k$}-colorability},
    JOURNAL = {Rand. Struct. Alg.},
    FJOURNAL = {Rand. Struct. Alg.},
    VOLUME = {14},
    NUMBER = {1},
    YEAR = {1999},
    PAGES = {63--70},
    DOI = {10.1002/(SICI)1098-2418(1999010)14:1<63::AID-RSA3>3.0.CO;2-7},
    URL = {https://doi.org/10.1002/(SICI)1098-2418(1999010)14:1<63::AID-RSA3>3.0.CO;2-7},
    ISSN = {1042-9832,1098-2418},
    MRCLASS = {05C80 (05C15 60C05)},
    MRNUMBER = {1662274},
    MRREVIEWER = {Tomasz\ J.\ \L uczak}
}

@article{Luczak91,
    AUTHOR = {{\L}uczak, T.},
    TITLE = {A note on the sharp concentration of the chromatic number of
                          random graphs},
    JOURNAL = {Combinatorica},
    FJOURNAL = {Combinatorica. An International Journal on Combinatorics and
                          the Theory of Computing},
    VOLUME = {11},
    NUMBER = {3},
    YEAR = {1991},
    PAGES = {295--297},
    DOI = {10.1007/BF01205080},
    URL = {https://doi.org/10.1007/BF01205080},
    ISSN = {0209-9683},
    MRCLASS = {05C80 (05C15)},
    MRNUMBER = {1122014},
    MRREVIEWER = {Alan\ M.\ Frieze}
}

@article{AchlioptasNaor,
    AUTHOR = {Achlioptas, D. and Naor, A.},
    TITLE = {The two possible values of the chromatic number of a random
                          graph},
    JOURNAL = {Ann. of Math. (2)},
    FJOURNAL = {Annals of Mathematics. Second Series},
    VOLUME = {162},
    NUMBER = {3},
    YEAR = {2005},
    PAGES = {1335--1351},
    DOI = {10.4007/annals.2005.162.1335},
    URL = {https://doi.org/10.4007/annals.2005.162.1335},
    ISSN = {0003-486X,1939-8980},
    MRCLASS = {05C80 (60C05)},
    MRNUMBER = {2179732},
    MRREVIEWER = {Michael\ Krivelevich}
}

\appendix

\section{Proof of Lemma~\ref{lem:metric}.\label{sec:metric}}

The proof of Lemma~\ref{lem:metric} makes use of the following observation, which we separate out 
as a lemma.

\begin{lemma}
Let $f : [0,1] \to S^d \setminus \{(0,\dots,0,1)\}$ be a continuously differentiable curve, and set 
$g := \pi \circ f$.
Then 

$$ \norm{g'(t)} = \frac{1}{1-f_{d+1}(t)}\cdot\norm{f'(t)}, $$

\noindent
for all $0\leq t \leq 1$.
\end{lemma}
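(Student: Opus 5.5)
This is a direct chain-rule computation; the plan is to differentiate $\pi$ explicitly and use the constraint that $f$ stays on the sphere. Write $f=(f_1,\dots,f_{d+1})$ and set $s(t):=1-f_{d+1}(t)>0$. Then $g_i=f_i/s$ for $1\leq i\leq d$, and by the quotient rule
$$ g_i' = \frac{f_i'}{s} + \frac{f_i f_{d+1}'}{s^2}, $$
since $s'=-f_{d+1}'$.

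Squaring and summing over $i=1,\dots,d$ gives three groups of terms:
$$ \norm{g'}^2 = \frac{1}{s^2}\sum_{i\le d} f_i'^2 + \frac{2f_{d+1}'}{s^3}\sum_{i\le d} f_if_i' + \frac{f_{d+1}'^2}{s^4}\sum_{i\le d} f_i^2. $$
I will simplify each sum using only the sphere constraint $\sum_{i=1}^{d+1} f_i^2 = 1$.

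The constraint gives $\sum_{i\le d} f_i^2 = 1-f_{d+1}^2 = s(1+f_{d+1})$. Differentiating it gives $\sum_{i\le d} f_if_i' = -f_{d+1}f_{d+1}'$. Directly, $\sum_{i\le d} f_i'^2 = \norm{f'}^2 - f_{d+1}'^2$.

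Substituting these three identities, the coefficient of $f_{d+1}'^2$ is
$$ -\frac{1}{s^2} - \frac{2f_{d+1}}{s^3} + \frac{1+f_{d+1}}{s^3} = \frac{-s + 1 - f_{d+1}}{s^3} = 0. $$
Hence $\norm{g'}^2=\norm{f'}^2/s^2$, and taking square roots yields the claim.

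There is no real obstacle here. The only place that needs care is the sign bookkeeping in the cross term, which is what makes the $f_{d+1}'^2$ terms cancel exactly.
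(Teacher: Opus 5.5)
Your computation is correct and follows essentially the same route as the paper. Both arguments apply the chain/quotient rule to get $g_i' = f_i'/(1-f_{d+1}) + f_i f_{d+1}'/(1-f_{d+1})^2$, then use $\sum_{i\le d} f_i^2 = 1-f_{d+1}^2$ and its derivative to make the $(f_{d+1}')^2$ terms collapse; the only cosmetic difference is that the paper puts everything over $(1-f_{d+1})^4$, whereas you isolate the coefficient of $(f_{d+1}')^2$ and check that it vanishes.
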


\begin{proof}
Writing $f(t)=\left(f_1(t),\dots,f_{d+1}(t)\right), g(t)=\left(g_1(t),\dots,g_d(t)\right)$, the 
chain rule tells us that

$$ g_i'(t) = \sum_{j=1}^{d+1} \left(\partial \pi_i/\partial x_j\right)_{x=f(t)} \cdot f_j'(t), $$

\noindent
The partial derivatives of $\pi$ are given by:

$$ \left(\partial \pi_i/\partial x_j\right) = \begin{cases}
                                               \displaystyle \frac{1}{1-x_{d+1}} & \text{ if $i=j$, } \\[2ex]
                                               \displaystyle \frac{x_i}{(1-x_{d+1})^2} & \text{ if $j=d+1$,} \\[2ex]
                                               0 & \text{ otherwise. }
                                            \end{cases} $$
                                     
\noindent 
(Note $1\leq i \leq d$ while $1\leq j \leq d+1$.)
This translates to:

$$ g_i'(t) = \frac{f_i'(t)}{1-f_{d+1}(t)} + 
\frac{f_i(t) \cdot f_{d+1}'(t)}{\left(1-f_{d+1}(t)\right)^2} 
= 
\frac{f_i'(t) \cdot (1-f_{d+1}(t)) + f_i(t) \cdot f_{d+1}'(t)}{\left(1-f_{d+1}(t)\right)^2},
$$

\noindent
for all $0\leq t \leq 1$ and $1\leq i \leq d$.
We can now compute (suppressing the parameter $t$ everywhere for conciseness):

$$ \begin{array}{rcl}
    \norm{g'}^2 
    & = & 
    \frac{1}{\left(1-f_{d+1}\right)^4} \cdot \left( \sum_{i=1}^d (f_i')^2(1-f_{d+1})^2
    + \sum_{i=1}^d 2 f_i' f_i f_{d+1}' (1-f_{d+1}) 
    + \sum_{i=1}^d f_i^2 (f_{d+1}')^2 \right) \\
    & = & 
    \frac{1}{\left(1-f_{d+1}\right)^4} \cdot \left( \sum_{i=1}^d (f_i')^2(1-f_{d+1})^2
    + \left[ \sum_{i=1}^d f_i^2 \right]' f_{d+1}' (1-f_{d+1}) 
    + \sum_{i=1}^d f_i^2 (f_{d+1}')^2 \right) \\
    & = &
    \frac{1}{\left(1-f_{d+1}\right)^4} \cdot \left( \sum_{i=1}^d (f_i')^2(1-f_{d+1})^2
    + \left[ 1-f_{d+1}^2 \right]' f_{d+1}' (1-f_{d+1}) 
    + (1-f_{d+1}^2) (f_{d+1}')^2 \right) \\
    & = & 
    \frac{1}{\left(1-f_{d+1}\right)^4} \cdot \left( \sum_{i=1}^d (f_i')^2(1-f_{d+1})^2
    - 2 f_{d+1} (f_{d+1}')^2 (1-f_{d+1}) 
    + (1-f_{d+1}^2) (f_{d+1}')^2 \right) \\
    & = & 
    \frac{1}{\left(1-f_{d+1}\right)^4} \cdot \left( \sum_{i=1}^d (f_i')^2(1-f_{d+1})^2
    + (f_{d+1}')^2 (1-2f_{d+1} + f_{d+1}^2) \right) \\
    & = & \frac{1}{\left(1-f_{d+1}\right)^4} \cdot \left( \sum_{i=1}^{d+1} (f_i')^2(1-f_{d+1})^2
    \right) \\
    & = & \frac{1}{\left(1-f_{d+1}\right)^2} \cdot \norm{f'}^2.
   \end{array}
$$
\end{proof}

\begin{proofof}{Lemma~\ref{lem:metric}}
We use the previous lemma.
In order to prove part~\ref{itm:metricgenlb}, suppose that 
we chose $f$ in such a way that $g$ traces the line-segment between $\pi(a)$ and 
$\pi(b)$. (That is, $f(t) := \pi^{-1}\left( (1-t)\pi(a) + t\pi(b) \right)$ so that $g(t) = (1-t)\pi(a) + t\pi(b)$.)
We have 

$$ \norm{\pi(a)-\pi(b)} 
= \text{length}(g) = \int_0^1 \norm{g'(t)}\dd{t}  
\geq \frac12 \cdot \int_0^1 \norm{f'(t)}\dd{t} = \frac12 \cdot \text{length}(f) 
\geq \frac12 \cdot \dist(a,b), $$

\noindent
where in the first inequality we use that $-1 \leq f_{d+1}(t) < 1$ for all $t$, and 
in the last inequality we use that $f$ traces some curve connecting $a,b$ that 
stays inside $S^d$ -- hence is at least as long as the geodesic.
This establishes part~\ref{itm:metricgenlb}.

Now suppose that $a,b \in \kap( (0,\dots,0,-1),\delta)$ for some small $\delta>0$, to be determined.
We consider the situation where $f : [0,1] \to S^d \setminus \{(0,\dots,0,1)\}$ traces 
the geodesic from $a$ to $b$ (and again $g := \pi \circ f$).
We have $\dist(a,b) < 2\delta$, so that $f$ must stay inside $\kap((0,\dots,0,-1),3\delta)$.
In particular $f_{d+1}(t) < -\cos(3\delta)$ for all $0\leq t\leq 1$.
Arguing similarly to before, we find

$$ \begin{array}{rcl} 
\norm{\pi(a)-\pi(b)} 
& \leq & \displaystyle 
\text{length}(g)  = \int_0^1 \norm{g'(t)}\dd{t}  \leq 
\left(\frac{1}{1+\cos(3\delta)}\right) \cdot \int_0^1 \norm{f'(t)}\dd{t} \\[2ex]
& = & \displaystyle 
\left(\frac{1}{1+\cos(3\delta)}\right) \cdot \text{length}(f) =
\left(\frac{1}{1+\cos(3\delta)}\right) \cdot \dist(a,b). 
\end{array} $$

\noindent 
Since $\lim_{\delta\searrow 0} \cos(3\delta) = 1$, we can and do assume that we chose 
$\delta>0$ small enough in the beginning to ensure $1/(1+\cos(3\delta)) < \frac12+\eps$.
This concludes the proof of~\ref{itm:metricub12}.
\noindent
For~\ref{itm:metricgenub} we are going to write $ \widehat{x}=(x_1,\dots,x_d) $ for the first $ d $ coordinates to get 
	\begin{equation*}
		\pi(x)-\pi(y)=\frac{\widehat{x}-\widehat{y}}{1-x_{d+1}}+\widehat{y}\bigg(\frac{1}{1-x_{d+1}}-\frac{1}{1-y_{d+1}}\bigg)
	\end{equation*}
	and then use the triangle inequality to obtain
	\begin{equation*}
		\norm{\pi(x)-\pi(y)}\leq \frac{\norm{\widehat{x}-\widehat{y}}}{1-x_{d+1}}+\norm{\widehat{y}}\frac{|x_{d+1}-y_{d+1}|}{(1-x_{d+1})(1-y_{d+1})}.
	\end{equation*}
Now, if $  x_{d+1},y_{d+1}\leq t $ and $ \norm{\widehat{y}}\leq \norm{y}\leq 1 $ we can further estimate
\begin{equation*}
	\begin{split}
		\norm{\pi(x)-\pi(y)}\leq \frac{\norm{\widehat{x}-\widehat{y}}}{1-t}+\frac{\norm{x-y}}{(1-t)^2}\leq \frac{2-t}{(1-t)^2}\norm{x-y}\leq \frac{2-t}{(1-t)^2}\dists(x,y),
	\end{split}
\end{equation*}
as the chordal distance between $ x $ and $ y $ is less than the spherical distance.
\end{proofof}
\section{Proof of \cref{lem:piunif}.}\label{proof:piunif}
\begin{proofof}{\cref{lem:piunif}}
First, we are going to show that the density of $ H\coloneqq U_{d+1} $ satisfies
	\begin{equation*}
		f_{H}(h)=\frac{d\kappa_d}{(d+1)\kappa_{d+1}}(1-h^2)^{(d-2)/2}\indicator{-1\leq h\leq 1}.
	\end{equation*}
	For $ Y $ being distributed uniformly on the $ d+1 $ dimensional unit ball $ B^{d+1} $ it holds that $ U\stackrel{d}{=}Y/\norm{Y} $. We then have for all $ h\geq 0 $
	\begin{equation*}
		\begin{split}
			\Pro(H\leq h)&=\Pro(Y\in B^{d+1}\cap\{x_{d+1}\leq h \}\setminus \{\lambda x :x\in B^{d+1}\cap \{x_{d+1}= h\},0\leq \lambda\leq 1 \})
			\\&=\frac{1}{\kappa_{d+1}}\bigg(\int_{-1}^{h}(1-s^2)^{d/2}\kappa_d\,\text{d}s-\int_{0}^{h}\kappa_d(s\sqrt{1-h^2}/h)^d\,\text{d}s\bigg)\\&=\frac{\kappa_d}{\kappa_{d+1}}\bigg(\int_{-1}^{h}(1-s^2)^{d/2}\,\text{d}s-\frac{1}{d+1}h(1-h^2)^{d/2}\bigg).
		\end{split}
	\end{equation*}
Using this we can obtain the density by differentiating 
\begin{equation*}
    \begin{split}
        f_H(h)&=\frac{\text{d}}{\text{d} h}\Pro(H\leq h)=\frac{\kappa_d}{\kappa_{d+1}}\bigg((1-h^2)^{d/2}-\frac{1}{d+1}(1-h^2)^{d/2}+\frac{d}{d+1}h^2(1-h^2)^{(d-2)/2}\bigg)
			\\&=\frac{d\kappa_d}{(d+1)\kappa_{d+1}}(1-h^2)^{(d-2)/2}.
    \end{split}
\end{equation*}
Our claim for $ f_{H}(h)$ for arbitrary $ h $ follows by symmetry, as $ f_{H}(h)=f_{H}(-h) $. We can now relate the height $ H $ of $ U $ to the distance from the origin after projecting, namely $ R\coloneqq \norm{Z} $. 
From \cref{fig:stereorel}

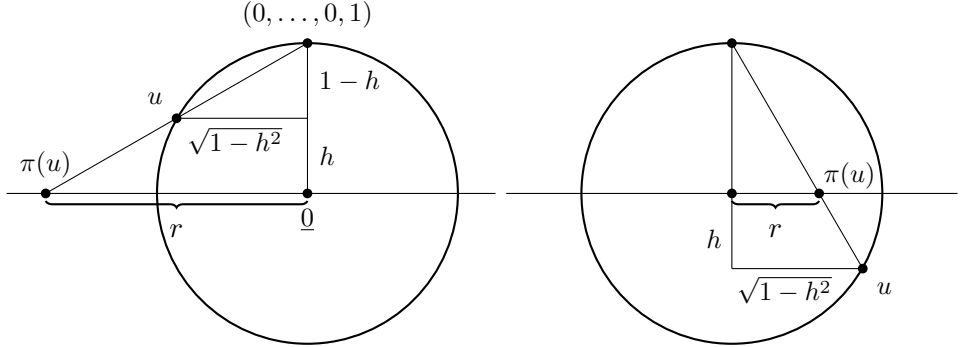
\begin{figure}[h!]
\centering
\begin{tikzpicture}[scale=2,
	axis/.style={very thin, gray,->},
	hidden/.style={gray, dashed},
	point/.style={fill=black,circle,inner sep=1.3pt},
	projline/.style={->,red!70},
	every label/.style={font=\small}
	]

	\draw[thick] (0,0) circle (1cm);
	
	\node[point,label=above:{$(0,\dots,0,1)$}] at (0,1) {};
	\node[point,label=below:{$\typ$}] at (0,0) {};
	\node[point,label=north west:{$u$}] at (-0.87,0.5) {};
	\node[point,label=above:{$\pi(u)$}] at (-1.74,0) {};
	\draw  (-2,0)--(1.25,0);
	\draw  (0,0)--(0,1);
	\draw (-1.74,0)--(0,1);
	\draw (-0.87,0.5)--(0,0.5);
	
	\node[anchor=east] at (0.55,0.75) {\small$1-h$};
	\node[anchor=east] at (0.25,0.25) {\small$h$};
	\node[anchor=east] at (-0.1,0.35) {\small$\sqrt{1-h^2}$};
	 \draw [decorate,decoration={brace,mirror}, thick]
	(-1.74,-0.05) -- (0,-0.05) node[midway,below=5pt] {$r$};
\end{tikzpicture}
\begin{tikzpicture}[scale=2,
	axis/.style={very thin, gray,->},
	hidden/.style={gray, dashed},
	point/.style={fill=black,circle,inner sep=1.3pt},
	projline/.style={->,red!70},
	every label/.style={font=\small}
	]
	
	\draw[thick] (0,0) circle (1cm);
	
	\node[point] at (0,1) {};
	\node[point] at (0,0) {};
	\node[point,label=south east:{$u$}] at (0.87,-0.5) {};
	\node[point,label={[label distance=-5pt]above right:{$\pi(u)$}}] at (0.58,0) {};
	\draw  (-1.5,0)--(1.5,0);
	\draw  (0,1)--(0,-0.5);
	\draw (0.87,-0.5)--(0,1);
	\draw (0.87,-0.5)--(0,-0.5);
	
	\node[anchor=east] at (-0.0,-0.3) {\small$h$};
	\node[anchor=south] at (0.35,-0.8) {\small$\sqrt{1-h^2}$};
	\draw [decorate,decoration={brace,mirror}, thick]
	(0,-0.05) -- (0.58,-0.05) node[midway,below=5pt] {$r$};
\end{tikzpicture}
\caption[Stereographic projection on $S^2$]{%
	Relation between $ H $ and $ R $ for $ r>1 $ (left) and $ r\leq 1 $ (right).%
}
\label{fig:stereorel}
\end{figure}

 we see that $ 1/r=(1-h)/\sqrt{1-h^2}=\sqrt{(1-h)/(1+h)} $, as the triangle with base $ r $ and height $ 1 $ is similar to the one with base $ \sqrt{1-h^2} $ and height $ 1-h $. Rearranging terms yields the relation
\begin{equation*}
	h(r)=\frac{r^2-1}{r^2+1}.
\end{equation*} 
By symmetry of the sphere it holds that $ f_{Z} $ only depends on $ R $, up to measure zero, meaning we can abuse the notation and write $f_Z(s)$ for $s\geq 0$. Thus, we can compute by making the change to $ d $-dimensional polar coordinates
\begin{equation*}
	\Pro(R\leq r)=\int_{0}^{r}f_{Z}(s)s^{d-1}d\kappa_d\,\text{d}s=\Pro(H\leq h(r))=\int_{-1}^{h(r)}\frac{d\kappa_d}{(d+1)\kappa_{d+1}}(1-s^2)^{(d-2)/2}\,\text{d}s.
\end{equation*}
After differentiating we obtain
\begin{equation*}
	\begin{split}
		f_{Z}(r)r^{d-1}d\kappa_d&=f_H(h(r))h'(r)=\frac{d \kappa_d}{(d+1)\kappa_{d+1}}(1-h(r)^2)^{(d-2)/2}\bigg[\frac{r^2-1}{r^2+1}\bigg]'
		\\&=\frac{d \kappa_d}{(d+1)\kappa_{d+1}}(1-h(r)^2)^{(d-2)/2}\frac{4r}{(1+r^2)^2}.
	\end{split}
\end{equation*}
Since additionally it holds that 
\begin{equation*}
	1-h(r)^2=1-\bigg(\frac{r^2-1}{r^2+1}\bigg)^2=\frac{4r^2}{(1+r^2)^2},
\end{equation*}
we can rearrange terms to find
\begin{equation*}
	f_{Z}(r)=\frac{1}{(d+1)\kappa_{d+1}}\bigg(\frac{2}{1+r^2}\bigg)^d,
\end{equation*}
as required.
\end{proofof}

\section{Proof of Lemma~\ref{dependentperco}\label{sec:depperco}}

\begin{proofof}{Lemma~\ref{dependentperco}}
    We let $\rho\coloneqq \eps^2/(4d)$ and consider bond percolation on $\ZZ^2$ with the usual dual graph definition. We choose $p$ so close to 1 that with $q\coloneqq 1-p$ we have 
    \begin{equation*}
        \alpha (q) \coloneqq\Pro( \text{there is a dual open circuit surrounding }(0,0))\leq \rho.
    \end{equation*}
    This is possible because of the famous Peierls contour argument that gives $\alpha(q)\leq 4\sum_{\ell\geq 4} (3q)^\ell$ by counting self avoiding paths of length $\ell$. Fix some $\beta>0$ such that $1+\beta \log(3q)<0$. For each $m$ set $r\coloneqq \ceil {\beta\log m}$ and 
    \begin{equation*}
        \Delta_m\coloneqq\{x\in \Lambda_m\,:\, \inf_{y\in \partial\Lambda_m} \norm{x-y}<r\},
    \end{equation*}
    which yields $\Delta_m=O(r m^{d-1})=o(|\Lambda_m|)$, so for all large $m$ it holds $|\Delta_m|\leq \rho|\Lambda_m|$. Now fix any $2$ coordinate plane $\Pi\subset \Lambda_m$ and 
    consider connectivity inside $\Pi$ using only edges of $\Pi$. If $u,v\in \Pi\setminus\Delta_m $ and $u$ is not connected to $v$ inside $\Pi$, then one of the following three states must occur: there is a dual open circuit surrounding $v$, or there is a dual open circuit surrounding $u$, or there is a dual open path attached to the dual boundary of $\Pi$ that reaches depth at least $r$ away from the boundary. The first two possibilities have probability $\leq \alpha(q)\leq \rho$ each. For the third we can again bound the self avoiding paths that yields constants $C_1,C_2>0$ with
    \begin{equation*}
    \begin{split}
        \Pro(\text{there is dual boundary path reaching depth}\geq r)&\leq  C_1m\sum_{\ell\geq r}(3q)^\ell \leq C_2 m(3q)^r \\&\leq C_2m^{1+\beta\log(3q)}.
    \end{split}
    \end{equation*}
    Thus, as the right hand side is less than $\rho$ for all large enough $m$, we have
    \begin{equation*}
        \Pro(u\text{ is not connected to }v \text{ in }\Pi)\leq 3\rho.
    \end{equation*}
     Now fix some $z\in \Lambda_m\setminus\Delta_m$. Note that by fixing two coordinates in each step, we can move from $0$ to $z$ by traversing $d-1$ planes. By symmetry of the planes, the error probability that inside a plane the connection to the specific coordinates of $z$ does not occur is upper bounded by $3\rho$. This yields
    $\Pro(0\nocon z)\leq 3\rho (d-1)\leq 3d\rho$, where the connection is meant to be inside $\Lambda_m$. Defining  $D_m\coloneqq |\Lambda_m\setminus C_m(\typ)|$ we can compute 
    \begin{equation*}
    \begin{split}
         \Ex D_m=\sum_{z\in \Lambda_m}\Pro(0\nocon z)\leq |\Delta_m|+\sum_{z\in \Lambda_m\setminus\Delta_m}3d\rho \leq \rho |\Lambda_m|+3d\rho |\Lambda_m|\leq \eps^2|\Lambda_m|.
    \end{split}
    \end{equation*}
    Therefore by Markov
    \begin{equation*}
        \Pro(D_m\geq \eps |\Lambda_m|)\leq \frac{\Ex D_m}{\eps |\Lambda_m|}\leq \eps,
    \end{equation*}
    which is equivalent to our statement.
\end{proofof}

\end{document}